\newtheorem{theorem}{Theorem}[section]
\newtheorem*{theoremnn}{Theorem}
\newtheorem{lemma}[theorem]{Lemma}
\newtheorem{corollary}[theorem]{Corollary}
\newtheorem*{corollarynn}{Corollary}
\theoremstyle{definition}
\newtheorem{definition}[theorem]{Definition}
\newtheorem{notation}[theorem]{Notation}
\newtheorem{terminology}[theorem]{Terminology}
\newtheorem{example}[theorem]{Example}
\theoremstyle{remark}
\newtheorem{remark}[theorem]{Remark}
\newcommand{\Si}{\mathfrak{S}}
\newcommand{\Ext}{{\mathrm{Ext}}}
\renewcommand{\hom}{{\mathrm{Hom}}}
\newcommand{\Id}{{\mathrm{Id}}}
\renewcommand{\k}{\mathbb{k}}
\newcommand{\pr}{\mathrm{pr}}
\newcommand{\op}{\mathrm{op}}
\newcommand{\cross}{\times}
\newcommand{\kmod}{{\text{$\k$-mod}}}
\newcommand{\kvect}{{\text{$\k$-vect}}}
\newcommand{\Gmod}{{\text{$G$-mod}}}
\renewcommand{\i}{\mathrm{in}}
\renewcommand{\P}{\mathcal{P}}
\newcommand{\V}{\mathcal{V}}
\newcommand{\A}{\mathcal{A}}
\newcommand{\C}{\mathcal{C}}
\newcommand{\F}{\mathcal{F}}
\renewcommand{\H}{H_{\mathrm{rat}}}
\newcommand{\Oplus}{{\textstyle\bigoplus}}
\title{Cohomology of classical algebraic groups from the functorial viewpoint}
\author{Antoine Touz\'e}
\date{\today}
\begin{document}

\maketitle
\sloppy
\begin{abstract}
We prove that extension groups in strict polynomial functor categories compute
the rational cohomology of classical algebraic groups. 
This result was 
previously known only for general linear groups. We give several applications 
to the study of classical algebraic groups, 
such as a cohomological stabilization property, the injectivity of
external cup products, and the existence of Hopf algebra structures on the
(stable) cohomology of a classical algebraic group with coefficients 
in a Hopf algebra.
Our result also opens the way to new explicit cohomology computations. We give
an example inspired by recent computations of Djament and Vespa.
\end{abstract}

\tableofcontents

\section{Introduction}

Over the past fifteen years, the relations between functor categories and
the cohomology of the algebraic general linear group $GL_n$ 
have been successfully used to
prove cohomological finite generation conjectures \cite{FS,TVdK}, and 
they have also proved very useful to perform explicit cohomology computations
\cite{FFSS,Chal1,FF}. 
The first purpose of this paper is to extend these relations 
to other classical algebraic groups. More specifically, we prove that if $G$ is a
symplectic group, an orthogonal group, a general linear group, or more
generally a finite product of these groups, then $\Ext$-groups in a suitable
functor category compute the cohomology of $G$. The second purpose of this
paper is to illustrate some advantages of the functorial point of view. In
particular, we obtain new cohomological results for classical algebraic
groups, whose proofs do not seem to belong to the usual algebraic group
setting.

The cohomology we treat here is the cohomology of algebraic groups of
\cite{Jantzen}, which was
introduced by Hochschild (it is often called `rational
cohomology' to emphasize that it arises from rational representations).    
The functors which play a role in the algebraic group setting are
the `strict polynomial functors' of Friedlander and Suslin 
\cite{FS}, and their multivariable analogues. 
Our results are the algebraic counterpart of
 recent results of Djament and Vespa
\cite{DjamentVespa} 
about the finite groups $O_{n,n}(\mathbb{F}_q)$, $Sp_n(\mathbb{F}_q)$.
However, the methods required for algebraic groups 
are very different from those needed for finite groups.
The cohomological stabilization property illustrates this difference vividly:
in the algebraic group setting, it is an immediate consequence of the link
between extension groups in functor categories and cohomology of algebraic
groups, while in the finite group setting these two results are independent.

What follows is a synopsis of the results of the paper. 

\subsubsection*{Relating functor categories to the cohomology of
classical groups}

In section \ref{sec-3}, we
establish the link between $\Ext$-groups 
in strict polynomial functor categories and rational cohomology of general
linear, orthogonal and symplectic groups.
For example, we prove:
\begin{theoremnn}[\ref{thm-Spn}, the symplectic case]
Let $\k$ be a commutative ring, and let $n$ be a positive integer.
For any $F\in\P$
we have a $*$-graded map,
natural in $F$:
$$\phi_{Sp_n,F}:\Ext^*_{\P}(\Gamma^\star(\Lambda^2),F)\to \H^*(Sp_n,F_n)\;. $$
The map $\phi_{Sp_n,F}$ is compatible with cup products:
$$\phi_{Sp_n,F\otimes F'}(x\cup y)= \phi_{Sp_n,F}(x)\cup
\phi_{Sp_n,F'}(y)\;.$$
Moreover, $\phi_{Sp_n,F}$ is an isomorphism whenever $2n\ge \deg(F)$. 
\end{theoremnn}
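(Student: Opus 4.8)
The plan is to construct the comparison map $\phi_{Sp_n,F}$ explicitly and then prove it is an isomorphism in the stable range by a degeneration/comparison argument. I should recall the general linear case first. For $GL_n$, the analogous theorem uses the functor $\Gamma^\star(\gl) = \Gamma^\star(V^\sharp \otimes V)$ (where $V$ is the standard functor), built from the adjoint representation, and the map $\Ext^*_{\P}(\Gamma^\star(\gl),F)\to \H^*(GL_n,F_n)$ is an isomorphism in the stable range. The symplectic group arises because $Sp_n$ is the stabilizer of a nondegenerate alternating form on $\k^{2n}$; infinitesimally, its Lie algebra $\mathfrak{sp}_n$ is cut out inside $\gl_{2n}$ by the symmetric condition, so the relevant functor is built from $\Lambda^2$ (the alternating square), reflecting that $\mathfrak{sp}$ is isomorphic as an $Sp$-module to $S^2(\k^{2n})$ and its dual to $\Lambda^2$. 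Let me plan the construction.

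First I would set up the map. The source $\Ext^*_{\P}(\Gamma^\star(\Lambda^2),F)$ should be interpreted via the bar-type / de Rham-type resolution machinery that presumably appears in section~\ref{sec-3}: the divided power functor $\Gamma^\star$ applied to $\Lambda^2$ encodes the standard complex computing cohomology of the additive group scheme underlying the alternating form. Evaluating at $n$, the object $\Gamma^\star(\Lambda^2)_n$ is a coalgebra whose Ext with $F_n$ maps to the Hochschild (rational) cohomology $\H^*(Sp_n,F_n)$. Concretely, I expect $\phi_{Sp_n,F}$ to be induced by a natural transformation realizing each class in the functorial Ext as a rational cohomology class of $Sp_n$ via evaluation on the $2n$-dimensional representation and restriction along $Sp_n \hookrightarrow GL_{2n}$. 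The naturality in $F$ and the cup-product compatibility would follow formally once the map is defined as a map of (graded) algebras/coalgebras, since the external and internal product structures are respected by evaluation and restriction; I would verify this by checking compatibility on the level of the defining resolutions, which is essentially bookkeeping.

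Next, the core assertion that $\phi_{Sp_n,F}$ is an isomorphism for $2n \ge \deg(F)$. My approach would be to reduce to the case $F = \Gamma^d$ (or more generally to injective/projective generators of $\P_d$), since both sides are compatible with direct sums, are additive in $F$, and an arbitrary $F$ can be resolved by such standard functors; the stable range $2n \ge \deg(F)$ is preserved under taking resolutions bounded in degree. For the standard functors one has explicit descriptions of both sides: the left-hand side is computed by known Ext-computations in $\P$ (building on the Troesch/Friedlander–Suslin style resolutions and the classical computation of $\Ext^*_{\P}(\Gamma^\star(\Lambda^2),\Gamma^d)$), while the right-hand side $\H^*(Sp_n, (\Gamma^d)_n)$ is accessible through the first fundamental theorem of invariant theory for the symplectic group and the Cauchy-type filtration. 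The comparison in the stable range amounts to matching these two computations; the stable-range hypothesis $2n\ge \deg(F)$ is exactly what guarantees that the invariant-theoretic relations (the ideal of relations among the invariants) do not yet interfere, so that both sides agree.

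The hard part will be establishing the isomorphism in the stable range, and specifically controlling the passage from the $GL$ case to the $Sp$ case. I expect the cleanest route is to deduce the symplectic statement from the already-available general linear statement by a restriction/transfer argument: restrict along $Sp_n \hookrightarrow GL_{2n}$ and identify the functorial source $\Gamma^\star(\Lambda^2)$ as the image of $\Gamma^\star(\gl_{2n})$ under the operation dual to the inclusion of Lie algebras, using that $\gl_{2n}\cong \mathfrak{sp}_n \oplus (\text{complement})$ splits $Sp_n$-equivariantly (at least in the stable range, where characteristic-$2$ subtleties are handled by the divided-power formalism rather than by a naive splitting). The genuine obstacle is therefore twofold: (i) making this splitting work integrally and in all characteristics, where $\Lambda^2$, $S^2$, and $\Gamma^2$ are not interchangeable, so I must track precisely which functor incarnation of $\mathfrak{sp}$ appears; and (ii) verifying that the comparison respects the full $*$-graded structure and not merely the associated graded, i.e.\ that no hidden extensions obstruct the isomorphism below the stable bound. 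I would handle (i) by working throughout with $\Gamma$ and $\Lambda$ (never $S$) to stay characteristic-free, and (ii) by a degree-counting/connectivity argument showing that the comparison map is an isomorphism on each graded piece as long as $2n \ge \deg(F)$, mirroring the stabilization phenomenon emphasized in the introduction.
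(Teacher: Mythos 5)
Your construction of $\phi_{Sp_n,F}$ is close in spirit to the paper's: the paper evaluates an admissible extension on the dual standard representation $(\k^{2n})^\vee$ and pulls it back along the equivariant map $\iota^d:\k\to\Gamma^d(\Lambda^2_n)$, $\lambda\mapsto\lambda\,\omega_n^{\otimes d}$, determined by the invariant symplectic form (not by restriction along $Sp_n\hookrightarrow GL_{2n}$, which plays no role). You also correctly identify the first fundamental theorem of invariant theory for $Sp_n$ as an essential input. But the heart of your argument for the isomorphism does not work. You propose either (a) to ``match'' explicit computations of $\Ext^*_{\P}(\Gamma^\star(\Lambda^2),\Gamma^d)$ against $\H^*(Sp_n,(\Gamma^d)_n)$, or (b) to deduce the symplectic case from the general linear one by restricting along $Sp_n\hookrightarrow GL_{2n}$ and splitting $\gl_{2n}$ as an $Sp_n$-module. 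Option (a) is circular: no independent computation of $\H^*(Sp_n,(\Gamma^d)_n)$ in all cohomological degrees is available --- such computations are exactly what the theorem is meant to produce, and the fundamental theorem of invariant theory only gives degree $0$. Option (b) is a genuinely different route whose obstacles you name but do not overcome: even away from characteristic $2$, where the restriction of $\gl_{2n}$ to $Sp_n$ does split as $\Lambda^2_n\oplus S^2_n$, this gives no control of the restriction map $\H^*(GL_{2n},-)\to\H^*(Sp_n,-)$, which is far from an isomorphism, and nothing in your sketch replaces that control.

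What is missing is the mechanism that makes degree-$0$ information suffice, namely the universal $\delta$-functor argument. The paper proves (Step 2) that $F\mapsto\H^*(Sp_n,F_n)$ vanishes in positive degrees on the standard injectives $I^d_k:V\mapsto S^d(V^{\oplus k})$ --- a good-filtration theorem of Andersen--Jantzen, reduced from commutative rings to fields by a Chevalley-group lemma --- so that both $F\mapsto\Ext^*_{\P}(\Gamma^\star(\Lambda^2),F)$ and $F\mapsto\H^*(Sp_n,F_n)$ are universal $\delta$-functors; Grothendieck's lemma \ref{lm-deltafct} then reduces the whole theorem to showing that $\phi^0_{Sp_n,F}$ is an isomorphism when $2n\ge\deg F$. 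In degree $0$, injectivity comes from an explicit epimorphism $P^{2d}_{(\k^{2n})^\vee}\twoheadrightarrow\Gamma^d(\Lambda^2)$ hitting $\omega_n^{\otimes d}$ (a consequence of the key surjectivity lemma \ref{lm-key} for $\Gamma^d\V_\k$, which is where the bound $2n\ge 2d$ is actually used), and surjectivity is the fundamental theorem of invariant theory \cite[Thm 6.6]{DCP} combined with cup-product compatibility, by exhibiting explicit natural transformations $S^2(\iota_{i,j})\circ\rho$ mapping to the generating contractions $(i|j)$. Your proposal contains neither the vanishing-on-injectives step nor this reduction-to-degree-$0$ scheme, so the stable-range hypothesis never enters your argument in a checkable way. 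A minor further point: the compatibility with cup products is not ``formal bookkeeping'' over an arbitrary commutative ring; since tensor product is not exact, the paper needs lemma \ref{lm-2cup} on $\k$-flat extensions to identify the extension-theoretic product with the Hochschild-complex cup product.
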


\noindent
Here `$\P$' refers to the category of strict polynomial functors of
Friedlander and Suslin. So if $\V_\k$ is the category of finitely generated
projective $\k$-modules, objects of $\P$ are functors
$F:\V_\k\to\V_\k$ with an additional `polynomial structure' which
ensures that the image $F(V)$ 
of a rational $G$-module $V$ is a
rational representation of the algebraic group $G$. The rational
$Sp_n$-module $F_n$ is obtained by evaluating $F$ on the dual of the standard
representation $\k^{2n}$ of $Sp_n$. The cup product on the left comes from the
usual coalgebra structure on the divided powers $\Gamma^\star(\Lambda^2)$.

Our method is based on classical invariant theory \cite{DCP}. The proofs
for orthogonal, symplectic and general linear groups are analogous. For the
orthogonal and symplectic groups the results are new. In the
general linear case, we obtain a new treatment (and a generalization over a
commutative ring $\k$) of previously known results: \cite[Cor 3.13]{FS}, \cite[Thm
1.5]{FF} and \cite[Thm 1.3]{Touze}.

In section \ref{sec-4}, we use K\"unneth formulas to
extend these results when $G_n$ is a finite product 
of general linear, orthogonal and/or symplectic groups. In that case, one has
to consider the category $\P_G$ of 
strict polynomial functors $F$ `adapted to $G_n$', that is
with a number of variables taking into account 
the number of factors in the product $G_n$. Evaluation of $F$ on
specific representations of the factors of $G_n$ yield a rational 
$G_n$-module $F_n$
and we have:
\begin{theoremnn}[\ref{thm-generique}]
Let $\k$ be a commutative ring, let $n$ be a positive integer
and let $G_n$ be a finite product of the algebraic groups (over $\k$)
$GL_n$, $Sp_n$ and $O_{n,n}$. For any $F\in\P_G$
 we have a $*$-graded map,
natural in $F$, which is compatible with cup products:
$$\phi_{G_n,F}:\Ext^*_{\P_G}(\Gamma^\star(F_G),F)\to \H^*(G_n,F_n) $$

Assume that $2n$ is greater or equal to the degree of $F$. If one of the factors of
$G_n$ equals $O_{n,n}$, 
assume furthermore that $2$ is invertible in $\k$. 
Then $\phi_{G_n,F}$ is an isomorphism. 
\end{theoremnn}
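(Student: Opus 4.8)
The plan is to deduce Theorem \ref{thm-generique} from the single-factor results (Theorem \ref{thm-Spn} and its orthogonal and general linear analogues established in section \ref{sec-3}) by a K\"unneth argument, reducing the product case to the factors one at a time. First I would fix notation: write $G_n = G_n^{(1)}\times\cdots\times G_n^{(r)}$ where each $G_n^{(i)}$ is one of $GL_n$, $Sp_n$, $O_{n,n}$, and recall that the multivariable category $\P_G$ is built so that a functor $F\in\P_G$ has one variable slot per factor, with $F_n$ obtained by evaluating each slot on the appropriate standard (or dual standard) representation of the corresponding factor. The object $\Gamma^\star(F_G)$ is the multivariable divided-power object whose $i$-th piece is exactly the generator appearing in the single-factor theorem for $G_n^{(i)}$ (namely $\Gamma^\star(\Lambda^2)$ for a symplectic factor, and the corresponding object for orthogonal and linear factors).

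The central computational step is a K\"unneth formula on each side. On the cohomology side, since $G_n$ is a direct product of algebraic groups and $F_n$ decomposes compatibly as an external tensor product over the factors, the standard K\"unneth isomorphism for rational cohomology of a product of algebraic groups gives $\H^*(G_n, F_n)\cong \bigotimes_i \H^*(G_n^{(i)}, (F^{(i)})_n)$, at least after imposing flatness/projectivity so that no Tor terms intervene. On the functor side I would establish the analogous K\"unneth isomorphism identifying $\Ext^*_{\P_G}(\Gamma^\star(F_G),F)$ with the tensor product $\bigotimes_i \Ext^*_{\P}(\Gamma^\star(F_G^{(i)}), F^{(i)})$; this rests on the product structure of $\P_G$ (its morphisms and projective generators factor through the individual variable categories) together with the coalgebra decomposition of $\Gamma^\star$ as an external tensor product of the single-variable divided-power objects, which is precisely what makes the cup product on the left side factor across variables.

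With both K\"unneth isomorphisms in hand, I would define $\phi_{G_n,F}$ as the tensor product of the single-factor maps $\phi_{G_n^{(i)}, F^{(i)}}$ supplied by section \ref{sec-3}, transported through the two K\"unneth identifications. Naturality in $F$ then follows slot by slot from the naturality of each $\phi_{G_n^{(i)},F^{(i)}}$, and compatibility with cup products follows because on both sides the cup product is the tensor (external) product of the per-factor cup products, so the multiplicativity statement for $\phi_{G_n,F}$ reduces to the single-factor multiplicativity already proved. For the isomorphism claim, if $2n\ge\deg(F)$ then each factor $F^{(i)}$ has degree at most $\deg(F)\le 2n$, so each $\phi_{G_n^{(i)},F^{(i)}}$ is an isomorphism by the single-factor theorem (invoking the hypothesis that $2$ is invertible in $\k$ exactly for the orthogonal factors, where that assumption is needed); a tensor product of isomorphisms is an isomorphism, and the degree-bound hypothesis also guarantees that the K\"unneth maps are isomorphisms without correction terms.

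The main obstacle I anticipate is the functor-side K\"unneth isomorphism and its precise compatibility with the cohomology-side K\"unneth isomorphism. Verifying that the two K\"unneth decompositions are compatible with the maps $\phi_{G_n^{(i)},F^{(i)}}$ — i.e.\ that the square relating $\Ext$-K\"unneth, cohomology-K\"unneth, and the tensor of single-factor $\phi$'s genuinely commutes — requires care, since it hinges on the external cup products being strictly multiplicative and on the evaluation functor $F\mapsto F_n$ respecting the external tensor structure on the nose. I would isolate this compatibility as the technical heart of the argument, likely proving it by reducing to the generators $\Gamma^\star(F_G)$ where the cup-product formulas in Theorem \ref{thm-Spn} apply directly. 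Controlling Tor terms in the cohomology K\"unneth formula over a general commutative ring $\k$ is a secondary concern that I expect the projectivity assumptions built into $\V_\k$ and the degree bound to handle cleanly.
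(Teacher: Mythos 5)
Your construction of $\phi_{G_n,F}$ has a genuine gap at its very first step: you assume that an arbitrary $F\in\P_G$ decomposes as an external tensor product $F\simeq\boxtimes_i F^{(i)}$ over the factors of $G_n$, with one single-variable (or bivariable, in the $GL_n$ case) functor per factor. This is false. For instance, if $G_n=GL_n\times Sp_n$, then $\P_G$ consists of functors of three variables $(V,W,U)$, and a functor such as $S^d(W\otimes U)$ is not an external tensor product of a functor of $(V,W)$ with a functor of $U$. Consequently "the tensor product of the single-factor maps $\phi_{G_n^{(i)},F^{(i)}}$" is simply not defined for general $F$, and neither K\"unneth identification you invoke applies to $F$ itself. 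The only objects of $\P_G$ that do split this way are (sums of summands of) the standard injectives, by lemma \ref{lm-inj-formecombinatoire} and the exponential isomorphism --- and that is precisely the only place where the paper uses the product structure. A second, independent problem is that the all-degrees K\"unneth isomorphisms you want on both sides (lemma \ref{lm-kunneth} for $\Ext$ in $\P_G$, lemma \ref{lm-kunnethgroupes} for rational cohomology) are only available over a field, whereas the theorem is stated over an arbitrary commutative ring; over a general $\k$ the paper only has degree-zero versions, under flatness and acyclicity hypotheses.

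The paper's proof avoids both issues by decoupling the construction of $\phi_{G_n,F}$ from any K\"unneth decomposition: the invariant element $e^n\in (F_G)_n$ gives an equivariant map $\iota^d:\k\to\Gamma^d((F_G)_n)$, and $\phi_{G_n,F}$ is defined for \emph{every} $F$ by evaluating an admissible extension on $V^n$ and pulling back along $\iota^d$, exactly as in the single-factor cases. This is automatically natural, a map of $\delta$-functors, and cup-product compatible, over any commutative ring. The product structure enters only afterwards: one shows $F\mapsto\H^*(G_n,F_n)$ is a universal $\delta$-functor by checking acyclicity of $J_n$ for injective $J$, which reduces (via lemma \ref{lm-inj-formecombinatoire}) to injectives of the form $\boxtimes_i I^{d_i}$, where the degree-zero, flat-coefficient K\"unneth lemma \ref{lm-kunnethgroupes} and the single-factor vanishing results apply; and one proves $\phi^0_{G_n,F}$ is an isomorphism again only for these external-product injectives, via the degree-zero K\"unneth squares, before invoking the $\delta$-functor comparison lemma \ref{lm-deltafct} to get all higher degrees. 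If you want to salvage your approach, you must restrict the tensor-product definition to injectives, extend to general $F$ by coresolutions, and work only in degree zero over a general ring --- at which point you have reconstructed the paper's argument.
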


\subsubsection*{Some applications of the functorial 
viewpoint in algebraic group cohomology}

As a first application, we deduce from theorem \ref{thm-generique} 
a cohomological stabilization property.

\begin{corollarynn}[\ref{cor-stab}]
Let $\k$ be a commutative ring, let $n$ be a positive integer
and let $G_n$ be a finite product of copies of
$GL_n$, $Sp_n$ or $O_{n,n}$. Let $F\in\P_G$ be a degree $d$ functor adapted to
$G_n$. Let $n,m$ be two positive integers such that $2m\ge 2n\ge d$. If 
the orthogonal group appears as one of the factors of $G_n$, assume
furthermore that $2$ is invertible in $\k$. Then we have an isomorphism
$$\phi_{n,m}:\H^*(G_m,F_m)\xrightarrow[]{\simeq} \H^*(G_n,F_n)\;.$$
\end{corollarynn}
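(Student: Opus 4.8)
The plan is to deduce the corollary directly from Theorem~\ref{thm-generique}, exploiting the fact that the \emph{source} of the comparison map does not depend on the rank. Indeed, both $\Gamma^\star(F_G)$ and the functor $F$ are objects of $\P_G$ defined independently of $n$, so the graded $\k$-module $\Ext^*_{\P_G}(\Gamma^\star(F_G),F)$ is a single rank-independent invariant; all the dependence on the rank in Theorem~\ref{thm-generique} lives on the target side, in the groups $G_n$ and the evaluations $F_n$. This is precisely what makes a stabilization statement plausible: two different ranks compute the same $\Ext$-group.

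First I would check that both ranks satisfy the numerical hypotheses of Theorem~\ref{thm-generique}. Since $F$ has degree $d$, the assumption $2n\ge d$ is exactly the bound needed for $\phi_{G_n,F}$, and from $2m\ge 2n\ge d$ we also get $2m\ge d$, so the hypothesis holds for $\phi_{G_m,F}$ as well. The extra requirement that $2$ be invertible in $\k$ when $O_{n,n}$ occurs as a factor is a condition on $\k$ alone, hence inherited by both ranks. Theorem~\ref{thm-generique} therefore yields two isomorphisms
$$\phi_{G_n,F}\colon \Ext^*_{\P_G}(\Gamma^\star(F_G),F)\xrightarrow{\simeq}\H^*(G_n,F_n),\qquad \phi_{G_m,F}\colon \Ext^*_{\P_G}(\Gamma^\star(F_G),F)\xrightarrow{\simeq}\H^*(G_m,F_m)\;.$$

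I would then define the stabilization map as the composite $\phi_{n,m}:=\phi_{G_n,F}\circ(\phi_{G_m,F})^{-1}$, which is an isomorphism because both factors are, and which has the asserted source and target (note $m\ge n$, consistent with restriction along $G_n\hookrightarrow G_m$). This already settles the bare statement that $\H^*(G_m,F_m)$ and $\H^*(G_n,F_n)$ are isomorphic under the stated degree bound.

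The one place where genuine content could hide — and the step I would treat most carefully — is the identification of $\phi_{n,m}$ with the geometrically natural map, namely the restriction homomorphism induced by the standard embedding $G_n\hookrightarrow G_m$ together with the associated map of coefficients $F_m\to F_n$. Making this identification requires the naturality of the comparison map of Theorem~\ref{thm-generique} with respect to the rank: one must exhibit a commutative square relating $\phi_{G_m,F}$, $\phi_{G_n,F}$, and the restriction map, which amounts to checking that the invariant-theoretic construction underlying $\phi_{G,F}$ is compatible with the stabilization maps. If Theorem~\ref{thm-generique} is invoked purely as a black box, this compatibility is the main obstacle; once it is established, the usual restriction map factors as the composite $\phi_{n,m}$ above and is therefore an isomorphism in the stated range.
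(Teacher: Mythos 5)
Your overall route is the same as the paper's: apply Theorem \ref{thm-generique} at both ranks (the bound $2m\ge 2n\ge d$ gives the hypothesis for both, and invertibility of $2$ is a condition on $\k$ alone), then conclude by a two-out-of-three argument. You also correctly recognize that the real content is not the abstract existence of an isomorphism but the identification of $\phi_{n,m}$ --- which the paper defines in \S\ref{subsec-cohom-stab} as the composite $F(\pi)_*\circ\iota^*$ for the standard embedding $\iota\colon G_n\hookrightarrow G_m$ and the standard $G_n$-equivariant projection $\pi\colon V^m\to V^n$ --- with the composite $\phi_{G_n,F}\circ(\phi_{G_m,F})^{-1}$. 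However, you leave exactly this point unproven: your last paragraph calls the compatibility ``the main obstacle'' and proceeds conditionally on it. As written, the proposal establishes only the weaker claim that $\H^*(G_m,F_m)$ and $\H^*(G_n,F_n)$ are abstractly isomorphic, not that the natural map $\phi_{n,m}$ of the statement is an isomorphism.

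The gap is real but easy to close, and closing it is the entirety of the paper's (one-line) proof. By construction, $\phi_{G_N,F}(x)$ (for $N=n,m$) is obtained by evaluating a representing admissible extension on $V^N$ and pulling back along $\iota^d\colon\k\to\Gamma^d((F_G)_N)$, $\lambda\mapsto\lambda\,(e^N)^{\otimes d}$. So the desired triangle $\phi_{G_n,F}=\phi_{n,m}\circ\phi_{G_m,F}$ follows from naturality of evaluation along the $G_n$-equivariant map $\pi$, once one checks that $F_G(\pi)\colon(F_G)_m\to(F_G)_n$ sends the invariant $e^m$ to $e^n$; this is immediate from the explicit formulas: $gl(\pi)$ sends $\Id_{\k^m}$ to $\Id_{\k^n}$, $\Lambda^2(\pi)$ sends $\omega_m$ to $\omega_n$, and $S^2(\pi)$ sends $q_m$ to $q_n$ (the terms involving basis vectors killed by $\pi$ drop out). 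This gives $\Gamma^d(F_G(\pi))\circ\iota^d=\iota^d$, hence the pullback extensions at rank $m$, pushed forward along $F(\pi)$ and restricted to $G_n$, represent the same class as the pullback extensions at rank $n$. With that check inserted, your argument is complete and coincides with the paper's.
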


\noindent
We shall denote by $\H^*(G_\infty,F_\infty)$ the stable value 
of $\H^*(G_n,F_n)$
(though this stable value is obtained for relatively small values of $n$).

As a second application we obtain a striking injectivity property for cup
products. In general, if $G$ is an algebraic group and if $c\in\H^*(G,M)$ and
$c'\in\H^*(G,N)$ are nontrivial cohomology classes, their (external)
cup product $c\cup c'\in\H^*(G,M\otimes N)$ may very well be zero. 
For example, if $\k$ is a field of odd 
characteristic and $G_a$ is the
additive group, then the cohomology algebra $\H^*(G_a,\k)$ is \cite{CPSVdK} a
free commutative graded algebra with generators $(x_i)_{i\ge 0}$ of degree $2$
and generators $(\lambda_i)_{i\ge 0}$ of degree one. Since the multiplication
$\k\otimes\k\to\k$ is an isomorphism, it is not hard to build pairs of non
trivial classes $(\alpha,\beta)$ whose external cup product $\alpha\cup\beta$
is zero. This cancellation phenomenon does not occur in (stable) cohomology of
classical groups over a field.

\begin{corollarynn}[\ref{cor-inj}]
Let $\k$ be a field. 
Let $G_n$ be a product of copies of the groups $GL_n, Sp_n$ or $O_{n,n}$, and
let $F_1,F_2$ be two functors of degree $d_1,d_2$ adapted to $G_n$. If $O_{n,n}$ is a factor in $G_n$,
assume that $\k$ has odd characteristic.
For all $n$ such that $2n\ge d_1+ d_2$, the cup product
induces a injection:
$$ \H^*(G_n, (F_1)_n)
\otimes \H^*(G_n, (F_2)_n)\hookrightarrow
\H^*(G_n, (F_1)_n\otimes (F_2)_n)\;.  $$ 
\end{corollarynn}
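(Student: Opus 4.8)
The plan is to transport everything to strict polynomial functors, where the external cup product has a clean description, and then to reduce the injectivity to a Künneth isomorphism together with the injectivity of a single restriction map. Write $C=\Gamma^\star(F_G)$, and for an integer $d$ write $C_d$ for the homogeneous summand of $C$ of degree $d$. First I would apply Theorem~\ref{thm-generique}: since $2n\ge d_1+d_2$ the maps $\phi_{G_n,F_1}$, $\phi_{G_n,F_2}$ and $\phi_{G_n,F_1\otimes F_2}$ are all isomorphisms (and $2$ is invertible in $\k$ in the orthogonal case). Because these isomorphisms are natural and compatible with cup products, it suffices to prove that the functorial external cup product
\[ \Ext^*_{\P_G}(C,F_1)\otimes\Ext^*_{\P_G}(C,F_2)\longrightarrow\Ext^*_{\P_G}(C,F_1\otimes F_2) \]
is injective. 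As $\P_G$ decomposes as a product over degrees and the $F_i$ may be assumed homogeneous of degrees $d_i$, only $C_{d_i}$ contributes to the $i$-th source factor and only $C_{d_1+d_2}$ to the target.

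Next I would factor this map through the bivariate category $\P_{G\times G}$, applying Theorem~\ref{thm-generique} also to $G_n\times G_n$, which is again a product of the allowed groups and for which $\deg(F_1\boxtimes F_2)=d_1+d_2\le 2n$ is in range. The exponential property of $\Gamma^\star$ gives $\Gamma^\star(F_{G\times G})\cong C\boxtimes C$, and the cup product factors as the external (Künneth) product
\[ \Ext^*_{\P_G}(C,F_1)\otimes\Ext^*_{\P_G}(C,F_2)\longrightarrow\Ext^*_{\P_{G\times G}}(C\boxtimes C,F_1\boxtimes F_2) \]
followed by restriction along the diagonal functor $V\mapsto(V,V)$ — which on the group side is exactly restriction along the diagonal $G_n\hookrightarrow G_n\times G_n$. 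Over a field the Künneth product is an isomorphism (this is the input used in Section~\ref{sec-4}), so injectivity of the external cup product is equivalent to injectivity of the diagonal restriction
\[ \res\colon\Ext^*_{\P_{G\times G}}(C\boxtimes C,F_1\boxtimes F_2)\longrightarrow\Ext^*_{\P_G}(C,F_1\otimes F_2), \]
that is, to the statement that the decomposable classes survive on the diagonal.

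The main obstacle is exactly this injectivity of $\res$, which I do not expect to be formal. The diagonal admits the two projections as retractions, yet a retraction of $\res$ on cohomology would force the coefficient $(F_1)_n\otimes(F_2)_n$ to be inflated from a single factor, which it is not; and while in characteristic zero one could split $\res$ by averaging over $G_n$, this fails in positive characteristic. Injectivity also genuinely uses the whole tensor product: composing $\res$ with the projection of $(F_1)_n\otimes(F_2)_n$ onto a symmetric or exterior summand is already non-injective, so the statement cannot be reduced to an internal product of classes. To prove it I would instead compute the relevant $\Ext$-groups explicitly in the exponential formalism — realizing the comultiplication component $C_{d_1+d_2}\to C_{d_1}\otimes C_{d_2}$ as the canonical inclusion inside the divided power Hopf algebra of $F_G$, resolving $C$ by the associated exponential (Troesch-type) complexes, and checking injectivity of $\res$ on the resulting explicit model for $\Ext^*_{\P_G}(C,F_1\otimes F_2)$, in the spirit of the Künneth computations of~\cite{FFSS}. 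Equivalently, the assertion is that the decomposable classes of the connected graded Hopf functor $C$ inject over a field, which is the characteristic-free replacement for the absent splitting. This explicit injectivity is where the real work lies; granting it, the reductions above deliver the stated injection of the external cup product.
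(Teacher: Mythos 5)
Your reductions are sound and in fact follow the paper's own framework: by theorem \ref{thm-generique} the statement is equivalent to injectivity of the external cup product on $\Ext^*_{\P_G}(\Gamma^\star(F_G),-)$, and by theorem \ref{thm-Hopfmon} (whose content is the key formula of lemma \ref{lm-2-description}) that cup product factors as the K\"unneth isomorphism $\kappa$ of lemma \ref{lm-kunneth}, followed by $\lambda^*$, followed by the sum--diagonal adjunction isomorphism $\alpha$. But the proposal stops exactly where the content of the corollary lies: you leave the injectivity of $\res=\alpha\circ\lambda^*$ as an unproven assertion (``granting it\dots''), and your stated reason for expecting it not to be formal is wrong. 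In the paper this step \emph{is} formal, and it is the whole point of theorem \ref{thm-hopfmonstr-gp}: because $\Gamma^\star$ is exponential, the Hopf monoidal structure on $C=\Gamma^\star(F_G)$ (with $\Gamma^d$ placed in degree $2d$, so that the axioms of definition \ref{def-Hopfmonstr} hold) has not only the comultiplication $\lambda:C(\Oplus)\to C\boxtimes C$ but also a multiplication $\mu:C\boxtimes C\to C(\Oplus)$, and a direct check on the exponential isomorphism gives $\lambda\circ\mu=\Id$. Hence $\mu^*\circ\lambda^*=\Id$ on $\Ext$-groups, so $\lambda^*$ is split injective, and the composite $\kappa^{-1}\circ\mu^*\circ\alpha^{-1}$ --- the coproduct of theorem \ref{thm-Hopfmon} --- is an explicit retraction of the cup product. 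This is what the paper means by ``the cup product is a section of the coproduct'', and corollary \ref{cor-inj} follows at once from theorem \ref{thm-generique} since $2n\ge d_1+d_2$.

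Your heuristic argument against the existence of a retraction (``a retraction of $\res$ on cohomology would force the coefficient $(F_1)_n\otimes(F_2)_n$ to be inflated from a single factor'') conflates two different maps. A natural splitting of restriction along the diagonal subgroup $G_n\subset G_n\times G_n$ is indeed unavailable in positive characteristic, but the paper's retraction is not of this kind: it is constructed entirely on the functor-cohomology side, using the sum--diagonal adjunction $\Ext^*_{\P_{G\times G}}(C(\Oplus),F_1\boxtimes F_2)\simeq\Ext^*_{\P_G}(C,F_1\otimes F_2)$ of \S\ref{subsec-sum-diag-adj} and the map $\mu$ above --- a feature with no counterpart in group cohomology, which is precisely why the result is accessible through functor categories and not otherwise. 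Consequently the explicit computations you propose as a substitute (Troesch-type resolutions, the calculations of \cite{FFSS}) are unnecessary, and they also could not carry the argument as stated: such computations exist only for specific exponential coefficients, whereas $F_1,F_2$ here are arbitrary functors adapted to $G_n$.
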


This results partially explains some non-vanishing phenomena, like
\cite[Lemma 4.13]{TVdK}. It follows from a more general result, namely
the existence of external coproducts in the stable
cohomology of classical groups. 

\begin{theoremnn}[\ref{thm-hopfmonstr-gp}]
Let $\k$ be a field. 
Let $G_n$ be a product of copies of the groups $GL_n, Sp_n$ or $O_{n,n}$, and
let $F_1,F_2$ be strict polynomial functors adapted to $G_n$. If $O_{n,n}$ is a factor in $G_n$,
assume that $\k$ has odd characteristic.
The stable rational
cohomology of $G_n$ is equipped with a coproduct:
$$\H^*(G_\infty, (F_1\otimes F_2)_\infty)\to \H^*(G_\infty, F_{1\;\infty})
\otimes \H^*(G_\infty, F_{2\;\infty})\;. $$
Together with the
usual cup product (cf. \S \ref{subsec-ratcohom}), 
they endow $\H^*(G_\infty,-)$ with the structure of a
graded Hopf monoidal functor (cf. definition \ref{def-Hopfmonstr}).

Moreover, the cup product is a section of the coproduct. 
\end{theoremnn}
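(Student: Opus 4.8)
The plan is to exhibit two independent natural structure maps on the stable cohomology $\H^*(G_\infty,-)$---one producing the cup product, the other the coproduct---and then to check that together they satisfy the Hopf axioms of Definition \ref{def-Hopfmonstr}. The cup product is the lax structure already discussed in \S\ref{subsec-ratcohom}: it is induced by the diagonal homomorphism $d\colon G_n\to G_n\cross G_n$, via restriction along $d$ and the Künneth isomorphism for a product of two groups (available since $\k$ is a field). The coproduct will be induced, dually, by the block-sum homomorphisms $s\colon G_m\cross G_{m'}\to G_{m+m'}$ that exist for each of $GL$, $Sp$ and $O$ (orthogonal direct sum of the standard spaces), and hence for their finite products. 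Throughout I work in the stable range where Theorem \ref{thm-generique} gives the isomorphism $\phi$ and Corollary \ref{cor-stab} identifies all the $\H^*(G_n,F_n)$ with the common stable value; in particular the odd-characteristic hypothesis on $O_{n,n}$ is exactly the one already needed for these two results.

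To build the coproduct I would restrict a class along $s$, apply the Künneth isomorphism $\H^*(G_m\cross G_{m'},M\boxtimes N)\cong\H^*(G_m,M)\otimes\H^*(G_{m'},N)$, and then insert a coefficient projection. The latter is forced by the behaviour of the coefficients: under $s$ the standard representation of $G_{m+m'}$ restricts to the direct sum of the standard representations of the two factors, so $(F_1\otimes F_2)_{m+m'}$ decomposes, through the cross-effects of $F_G$ on a direct sum, with a distinguished summand isomorphic to $(F_1)_m\otimes(F_2)_{m'}$. Projecting onto this summand and composing with the two previous maps yields $\H^*(G_{m+m'},(F_1\otimes F_2)_{m+m'})\to\H^*(G_m,(F_1)_m)\otimes\H^*(G_{m'},(F_2)_{m'})$, which after stabilization (Corollary \ref{cor-stab}) gives the desired natural coproduct. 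One must check that this map is independent of the splitting $n=m+m'$ and compatible with the stabilization maps; both reduce to the naturality of the cross-effect decomposition together with Corollary \ref{cor-stab}.

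The Hopf axioms then reflect the elementary fact that, stably, the diagonals $d$ make $G_\infty$ a cocommutative comonoid while the block sums $s$ make it a commutative monoid, the two being compatible in the usual way---this is the algebraic-group incarnation of the classical Hopf-algebra structures on $\H^*(BU)$, $\H^*(BO)$ and $\H^*(BSp)$. It is cleanest to verify coassociativity, counitality and the bialgebra interchange after transporting everything along the isomorphism $\phi$ of Theorem \ref{thm-generique} into the functor category $\P_G$: there the block sum $s$ corresponds to the multiplication of the divided-power Hopf algebra $\Gamma^\star(F_G)$ (induced by the fold map of $F_G$) and the diagonal $d$ to its comultiplication (induced by the diagonal of $F_G$), so that the monoidal Hopf axioms become the ordinary Hopf-algebra axioms for $\Gamma^\star(F_G)$ in $\P_G$, together with the naturality and associativity of the external Künneth isomorphism.

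For the final assertion, the composite coproduct$\,\circ\,$cup is computed from the group map $G_m\cross G_m\xrightarrow{s}G_{2m}\xrightarrow{d}G_{2m}\cross G_{2m}$, that is $(g,h)\mapsto(g\oplus h,\,g\oplus h)$. After the coefficient projection built into the coproduct---which retains only the summand on which the first standard block feeds $F_1$ and the second feeds $F_2$---the diagonal separates the two variables, and this composite becomes the identity of $\H^*(G_\infty,(F_1)_\infty)\otimes\H^*(G_\infty,(F_2)_\infty)$; the cross-effect terms responsible for the nontriviality of the internal \emph{squaring} $m\circ\Delta$ are precisely those annihilated by the projection. Thus the cup product is a section of the coproduct, consistent with its injectivity established in Corollary \ref{cor-inj}. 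The main obstacle, in my view, is not the formal bialgebra bookkeeping but the precise and natural control of the cross-effect decomposition of $F_G$ on a direct sum---making the coefficient projection canonical, independent of the chosen splitting, and compatible with stabilization; transporting to $\P_G$ via $\phi$ and reading the projection off the coalgebra structure of $\Gamma^\star(F_G)$ is the device I would use to overcome it.
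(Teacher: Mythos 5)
Your route is genuinely different from the paper's, and it is viable in principle---it is the algebraic-group analogue of the block-sum constructions used by Djament and Vespa \cite{DjamentVespa} for finite classical groups---but as written it has a real gap at its center. For comparison: the paper never leaves the functor category until the last step. It observes that $\Gamma^\star(F_G)$, with $\Gamma^d$ placed in degree $2d$, is a graded Hopf algebra functor via the exponential isomorphism, checks the single identity $\lambda\circ\mu=\Id$, invokes theorem \ref{thm-Hopfmon} (whose coproduct is manufactured from the sum-diagonal adjunction $\alpha$ and the functor-category K\"unneth isomorphism $\kappa$) to obtain the graded Hopf monoidal structure on $\Ext^*_{\P_G}(\Gamma^\star(F_G),-)$ together with the section property, and then applies the isomorphism $\phi$ of theorem \ref{thm-generique} once to carry the whole package to $\H^*(G_\infty,-)$. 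You instead construct the coproduct on the group side, from block-sum homomorphisms $s\colon G_m\times G_{m'}\to G_{m+m'}$, a cross-effect coefficient projection and the group-cohomology K\"unneth map, and propose to verify all axioms by transporting back into $\P_G$ along $\phi$.

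The gap is that your transport dictionary---``under $\phi$, restriction along $s$ corresponds to precomposition with the multiplication of $\Gamma^\star(F_G)$, and the diagonal to its comultiplication''---is asserted but never proved, and it is precisely where the content of this route lies. Recall that $\phi$ is defined on an extension class by evaluating on $V^n$ and pulling back along $\iota^d\colon 1\mapsto(e^n)^{\otimes d}$. Your claim therefore requires a genuine verification: that $e^{m+m'}$ restricts to $e^m+e^{m'}$ under the block decomposition, so that the pullback along $(e^{m+m'})^{\otimes d}=(e^m+e^{m'})^{\otimes d}$ factors through the divided-power multiplication applied to $\sum_{a+b=d}(e^m)^{\otimes a}\otimes(e^{m'})^{\otimes b}$; and that your coefficient projection and the group-level K\"unneth isomorphism (lemma \ref{lm-kunnethgroupes}) match, through $\phi$, the maps $\alpha$ and $\kappa$ entering theorem \ref{thm-Hopfmon}---an extension-level refinement of Step 3 in the proof of theorem \ref{thm-generique}. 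Until this is done, the Hopf axioms, the independence of the splitting $n=m+m'$, and the section property all remain open; note also that axiom (3) of definition \ref{def-Hopfmonstr} involves Koszul signs, exactly the point where the analogous claim in the literature fails (remark \ref{rk-degrees}), so the ``formal bialgebra bookkeeping'' cannot be waved through at group level either. A second, smaller gap: your computation of the composite (coproduct after cup product) from $(g,h)\mapsto(g\oplus h,\,g\oplus h)$ silently discards K\"unneth components lying in $\H^*(G_m,-)\otimes\H^{>0}(G_m,\k)$. These do vanish, but only because the constant functor is injective in $\P_G$, so that $\H^{>0}(G_m,\k)=0$ by the Step 2 lemmas of section \ref{sec-3}---and this genuinely uses the odd-characteristic hypothesis, since in characteristic $2$ one has $\H^*(O_{n,n},\k)\simeq\H^*(\mathbb{Z}/2\mathbb{Z},\k)\neq 0$ in all degrees (remark following lemma \ref{lm-van-Onn}); it is an input to your argument, not bookkeeping.
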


\noindent
The construction of the external coproduct
uses the sum-diagonal adjunction, 
a feature which is specific to functor categories. Some hints that such
coproducts exist were given in \cite{FFSS}, where the authors built Hopf
algebra structures on some specific extension groups in functor categories
(when all the functors in
play are `Hopf exponential functors'). We build the external coproducts 
in section \ref{sec-prodetcoprod}, where we make a more general
attempt to classify 
the Hopf monoidal structures that may arise for extension groups in functor
categories. 

As a consequence of theorem \ref{thm-hopfmonstr-gp}, we also obtain Hopf
algebra structures (without antipode) on rational cohomology of classical groups
(compare \cite[lemma 1.11]{FFSS}):
\begin{corollarynn}[\ref{cor-Hopfalggp}]
Let $\k$ be a field. 
Let $G_n$ be a product of copies of the groups $GL_n, Sp_n$ or $O_{n,n}$, and
let $A^*$ be an $n$-graded strict polynomial functor adapted to $G_n$, endowed
with the structure of a Hopf algebra. If $O_{n,n}$ is a factor in $G_n$,
assume that $\k$ has odd characteristic. The usual cup product
$\H^*(G_\infty,A^*_\infty)^{\otimes 2}\to \H^*(G_\infty,A^*_\infty)$ may be
supplemented with a coproduct 
$\H^*(G_\infty,A^*_\infty)\to\H^*(G_\infty,A^*_\infty)^{\otimes 2}$ which
endow $\H^*(G_\infty,A^*_\infty)$ with the structure of a $(1+n)$-graded Hopf
algebra. 
\end{corollarynn}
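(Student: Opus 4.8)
The plan is to recognize this as a special case of the general principle that a lax-and-oplax (``bilax'') symmetric monoidal functor carries bialgebras to bialgebras. Here the relevant functor is $\H^*(G_\infty,-)$, which by Theorem~\ref{thm-hopfmonstr-gp} is a graded Hopf monoidal functor in the sense of Definition~\ref{def-Hopfmonstr}: it is lax monoidal via the external cup product $\mu_{F,F'}:\H^*(G_\infty,F_\infty)\otimes\H^*(G_\infty,F'_\infty)\to\H^*(G_\infty,(F\otimes F')_\infty)$, it is oplax monoidal via the external coproduct $\delta_{F,F'}$ going the other way, and these two structures are compatible (in particular the cup product is a section of the coproduct). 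The input $A^*$ is by hypothesis a Hopf algebra object inside $\P_G$, so it carries morphisms of strict polynomial functors $m:A^*\otimes A^*\to A^*$, $\Delta:A^*\to A^*\otimes A^*$, a unit and a counit, satisfying the bialgebra axioms. The idea is to transport this internal structure through $\H^*(G_\infty,-)$ using $\mu$ and $\delta$.

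Concretely, the product on $\H^*(G_\infty,A^*_\infty)$ is the usual cup product already mentioned in the statement, namely the composite
$$\H^*(G_\infty,A^*_\infty)^{\otimes 2}\xrightarrow{\ \mu\ }\H^*(G_\infty,(A^*\otimes A^*)_\infty)\xrightarrow{\ m_*\ }\H^*(G_\infty,A^*_\infty)\;,$$
where $m_*$ is induced by functoriality from the multiplication $m$ of $A^*$. Dually, I would define the coproduct as the composite
$$\H^*(G_\infty,A^*_\infty)\xrightarrow{\ \Delta_*\ }\H^*(G_\infty,(A^*\otimes A^*)_\infty)\xrightarrow{\ \delta\ }\H^*(G_\infty,A^*_\infty)^{\otimes 2}\;,$$
with $\Delta_*$ induced by the comultiplication $\Delta$. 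The unit and counit of the resulting structure are obtained by composing the internal unit and counit of $A^*$ with the unit constraints of the (co)lax monoidal functor $\H^*(G_\infty,-)$.

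It then remains to verify the bialgebra axioms. Associativity of the product follows from associativity of $m$, naturality of $\mu$, and the lax associativity constraint; coassociativity is dual, using $\Delta$ and $\delta$. The compatibility of unit and counit with product and coproduct is similarly formal. I expect the main obstacle to be the bialgebra compatibility axiom, which asserts that $\Delta$ is an algebra morphism (equivalently that $m$ is a coalgebra morphism) at the level of cohomology. Verifying it requires feeding the internal bialgebra relation of $A^*$ through $\H^*(G_\infty,-)$ and interleaving it with the Hopf monoidal compatibility between $\mu$ and $\delta$ supplied by Definition~\ref{def-Hopfmonstr}; the delicate points are the correct insertion of the symmetry of the tensor product and the Koszul signs introduced by the cohomological grading. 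Once this diagram chase is carried out, the total grading given by the cohomological degree together with the $n$ internal gradings of $A^*$ makes $\H^*(G_\infty,A^*_\infty)$ a $(1+n)$-graded bialgebra, which is the structure the statement calls a Hopf algebra without antipode (the antipode of $A^*$ need not survive, since $\H^*(G_\infty,-)$ is only bilax and not strong monoidal). All constructions take place at the stable level, where the degree hypotheses of Corollary~\ref{cor-stab} and Theorem~\ref{thm-hopfmonstr-gp} are automatically met, so no bound on $n$ intervenes.
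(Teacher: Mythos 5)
Your proposal is correct and takes essentially the same route as the paper: the paper deduces this corollary by combining Theorem \ref{thm-hopfmonstr-gp} with (a graded version of) Lemma \ref{lm-Hopf-monoidal-funct-monoids}, which states precisely that a Hopf monoidal functor carries Hopf monoids to Hopf algebras, and your transported product $m_*\circ\mu$ and coproduct $\delta\circ\Delta_*$ are exactly the ones that lemma produces. Your inline verification of the bialgebra axioms---including the role of axiom (3) of Definition \ref{def-Hopfmonstr}, the symmetry, and the Koszul signs---is simply an unpacking of that lemma's proof, so no new idea is missing.
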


\noindent
Such Hopf algebra structures offer a nice framework in which we can reformulate some
previously known cohomological computations, such as the existence of the universal
classes of \cite[Thm 4.1]{TVdK}, cf. corollary \ref{cor-univ}.

Finally, $\Ext$-computations in strict polynomial functor categories is a
classical subject. Many results and computational techniques are already
available. So by expressing rational cohomology of orthogonal and symplectic
groups as extension in $\P$, we open the way to new cohomology
computations. To illustrate this fact, we give one example, 
which may be proved by the method of  
Djament and Vespa \cite[\S 4.2]{DjamentVespa} 
and the computations of \cite{FFSS}:
\begin{theoremnn}[\ref{thm-calcul}]
Let $\k$ be a field of odd characteristic. Let $r$ be a nonnegative integer.
Let $S^*(I^{(r)})$ denote the symmetric algebra over the $r$-th Frobenius
twist (with $S^d(I^{(r)})$ placed in degree $2d$) and let 
$\Lambda^*(I^{(r)})$ denote the exterior powers of the $r$-th Frobenius twist
(with $\Lambda^d(I^{(r)})$ placed in degree $d$).
\begin{itemize}
\item[(i)]
The bigraded Hopf algebra 
$\H^*(O_{\infty,\infty}, S^\star(I^{(r)})_\infty)$ 
is a symmetric Hopf algebra on
generators $e_m$ of bidegree $(2m, 4)$ for $0\le m< p^r$.
\item[(ii)] 
The bigraded Hopf algebra
$\H^*(Sp_\infty, S^\star(I^{(r)})_\infty)$ is trivial.
\item[(iii)] The bigraded Hopf algebra
$\H^*(O_{\infty,\infty}, \Lambda^\star(I^{(r)})_\infty)$
is trivial.
\item[(iv)] The bigraded Hopf algebra
$\H^*(Sp_\infty, \Lambda^\star(I^{(r)})_\infty)$ is a divided power Hopf algebra on generators $e_m$ of bidegree $(2m,2)$ 
for $0\le m< p^r$. 
\end{itemize}
\end{theoremnn}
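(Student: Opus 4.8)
The plan is to reduce the computation of these four stable cohomology rings to $\Ext$-computations in the strict polynomial functor category $\P$, using the isomorphism $\phi_{G_\infty,F}$ provided by Theorem \ref{thm-generique}. By that theorem, for $G$ equal to $Sp$ or $O_{n,n}$ (over a field $\k$ of odd characteristic in the orthogonal case), and for $F=S^d(I^{(r)})$ or $F=\Lambda^d(I^{(r)})$, we have
$$\H^*(G_\infty,F_\infty)\cong \Ext^*_{\P}(\Gamma^\star(F_G),F)\;,$$
where $F_G=\Lambda^2$ for the symplectic group and $F_G=S^2$ for the orthogonal group (this is the standard quadratic form data attached to each classical group). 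So the heart of the proof is to compute the four graded vector spaces
$$\Ext^*_{\P}\bigl(\Gamma^\star(\Lambda^2),\,S^\star(I^{(r)})\bigr),\quad
\Ext^*_{\P}\bigl(\Gamma^\star(S^2),\,S^\star(I^{(r)})\bigr),$$
and the two analogues with $S^\star$ replaced by $\Lambda^\star$, as bigraded objects (the first grading being the cohomological degree $*$, the second being the internal functor grading $\star$).

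First I would fix the cohomological degree $d$ and decompose. Because $\Gamma^\star(\Lambda^2)$ and $\Gamma^\star(S^2)$ are the divided-power (i.e.\ symmetric-coalgebra) functors on $\Lambda^2$ and $S^2$, the exponential/coalgebra structure lets one rewrite these $\Ext$-groups over $\P$ via an adjunction or a sum-decomposition that converts $\Ext$ out of a divided power $\Gamma^{\star}(Q)$ into a more tractable bifunctor $\Ext$ computation; concretely I expect to reduce to knowing the groups $\Ext^*_{\P}\bigl(\Gamma^{*}(\Lambda^2)\text{ or }\Gamma^{*}(S^2),\, S^{d}(I^{(r)})\text{ or }\Lambda^{d}(I^{(r)})\bigr)$ in each homogeneous weight. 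These are precisely the kinds of $\Ext$-groups between classical functors (divided powers of $\Lambda^2$/$S^2$ against Frobenius-twisted symmetric and exterior powers) computed by the methods of Franjou--Friedlander--Scorichenko--Suslin \cite{FFSS}. I would invoke those computations — transported through the method of Djament--Vespa \cite[\S 4.2]{DjamentVespa}, which is exactly the bridge that matches our stable classical-group situation — to read off the answer in each of the four cases, including the vanishing assertions (ii) and (iii).

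Next, having the underlying bigraded vector spaces, I would identify the Hopf algebra structure. The algebra structure is the cup product transported by $\phi_{G_\infty,F}$ (which is multiplicative by Theorem \ref{thm-generique}), and the coproduct is the external coproduct furnished by Theorem \ref{thm-hopfmonstr-gp}; together Corollary \ref{cor-Hopfalggp} tells us that $\H^*(G_\infty,A^*_\infty)$ is a bigraded Hopf algebra whenever $A^*$ is a Hopf algebra in $\P$. Here $A^*=S^\star(I^{(r)})$ and $A^*=\Lambda^\star(I^{(r)})$ carry their standard Hopf-algebra structures (symmetric and exterior algebra, each primitively generated), so the corollary applies and I only need to match generators and relations. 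The expected outcome — a symmetric (polynomial) Hopf algebra on generators $e_m$ of bidegree $(2m,4)$ for $0\le m<p^r$ in case (i), and a divided power Hopf algebra on generators of bidegree $(2m,2)$ for $0\le m<p^r$ in case (iv) — should fall out once one checks that the cup-product square of a generator behaves as in a symmetric versus a divided power algebra; the truncation range $0\le m<p^r$ reflects the Frobenius kernel $p^r$ appearing in the $\Ext$-computations of \cite{FFSS}.

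The main obstacle will be matching the multiplicative (Hopf) structure to the bare additive answer, rather than computing the additive answer itself. The $\Ext$ groups can be quoted from \cite{FFSS}, but determining whether the resulting algebra is a genuine polynomial (symmetric) algebra or a divided power algebra — the precise difference that distinguishes case (i) from case (iv) — requires tracking how the cup product interacts with the Frobenius twist, i.e.\ whether $p$-th powers of the generators vanish or not. This is exactly where one must use that $\phi_{G_\infty,F}$ is compatible with cup products and exploit the coalgebra structure on $\Gamma^\star$ to pin down the coproduct on generators, and I expect this structural identification, together with carefully justifying the bidegree conventions (the factor $4$ versus $2$ coming from $S^2$ versus $\Lambda^2$ contributing to the internal grading), to be the technical crux. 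Once the Hopf structure on generators is pinned down, freeness of the resulting algebra (symmetric in case (i), divided power in case (iv)) and the triviality statements (ii), (iii) follow directly from the additive computation.
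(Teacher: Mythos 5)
Your overall frame agrees with the paper's: reduce to functor cohomology via Theorem \ref{thm-generique}, feed in the computations of \cite{FFSS} through the method of Djament--Vespa, and get the Hopf structure from Theorem \ref{thm-hopfmonstr-gp} and Corollary \ref{cor-Hopfalggp}. But the middle of your argument --- the part that actually produces the four answers --- has a genuine gap. Your proposed mechanism (apply ``an adjunction or a sum-decomposition'' directly to $\Ext^*_{\P}(\Gamma^\star(S^2),-)$ and $\Ext^*_{\P}(\Gamma^\star(\Lambda^2),-)$, then ``read off'' the four cases from \cite{FFSS}) does not work as stated. The sum-diagonal adjunction (remark \ref{rk-2adj}) applies to functors that are diagonal restrictions of bifunctors: $\Gamma^\star(\otimes^2)=\Gamma^\star(I\boxtimes I)(D)$ is of this form, but $\Gamma^\star(S^2)$ and $\Gamma^\star(\Lambda^2)$ are not; and \cite{FFSS} computes no $\Ext$-groups whose source is $\Gamma^d(S^2)$ or $\Gamma^d(\Lambda^2)$ --- these composites are not twisted exponential functors of the identity, and computing $\Ext$ out of them is precisely the new content of the theorem. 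The missing idea is the paper's detour: since $p$ is odd, $\otimes^2\simeq S^2\oplus\Lambda^2$, so both groups in question are cut out of the \emph{single} group $\Ext^*_{\P}(\Gamma^\star(\otimes^2),E^\star)$ as the images of the idempotents $(1\pm\theta)/2$, where $\theta$ is induced by exchanging the two tensor factors; and only this one group is accessible, via
$$\Ext^*_{\P}(\Gamma^\star(\otimes^2),E^\star)\simeq\Ext^*_{\P(2)}(\Gamma^\star(I\boxtimes I),E^\star\boxtimes E^\star)\simeq\Ext^*_{\P}(E^{\star\,\sharp},E^\star)\;,$$
using the adjunction, the exponential isomorphism for $E^\star$, and \cite[Thm 1.5]{FF}; the right-hand side is what \cite[Thm 5.8]{FFSS} computes.

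The consequence of skipping this is that you have no mechanism for distinguishing the orthogonal from the symplectic answer, and in particular no source for the vanishing statements (ii) and (iii): for fixed coefficients $E^\star$, both answers are carved out of the same FFSS group, and everything is decided by the action of $\theta$. Under the displayed isomorphism, $\theta$ corresponds on $\Ext^*_{\P}(E^{i\,\sharp},E^j)$ to $(-1)^{ij}\widetilde{\theta}$, where the duality involution $\widetilde{\theta}$ is the identity (lemmas \ref{lm-dv2} and \ref{lm-dv3}, quoted from Djament--Vespa), and the generation statement of lemma \ref{lm-calcul-FFSS1} pins $\theta$ down everywhere. For $S^\star(I^{(r)})$ all internal degrees are even, every sign is $+1$, so the orthogonal side receives the whole FFSS answer and the symplectic side is zero; for $\Lambda^\star(I^{(r)})$ the generators live in $\Ext^*_{\P}(\Lambda^{1\,\sharp},\Lambda^1)$ with $ij=1$ odd, the signs flip, and everything lands on the symplectic side. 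This sign analysis, absent from your proposal, is the actual crux. By contrast, the issue you identify as the technical heart --- deciding symmetric versus divided-power behaviour by tracking cup products of generators --- requires no new work: it is inherited from the Hopf-algebra-level statements of \cite[Thm 5.8]{FFSS} once one knows that the displayed isomorphism respects Hopf structures, which is lemma \ref{lm-dv1} (\cite[Prop 4.10]{DjamentVespa}).
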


\section{Review of functor categories and group cohomology}
\label{sec-2}

\subsection{Notations}\label{subsec-nota}

If $\k$ is a commutative ring, we denote by $\V_\k$ 
the category of finitely generated projective $\k$-modules. The symbol
`$^\vee$' means $\k$-linear duality: $V^\vee:=\hom_\k(V,\k)$.

Let $V\in\V_\k$. For all $d\ge 0$, we denote by $\Gamma^d(V)$ the
$d$-th divided power of $V$, that is the invariants $(V^{\otimes d})^{\Si_d}$
where $\Si_d$ acts by permuting the factors of the tensor product (for
$d=0$, we let $\Gamma^0(V)=\k$). We also denote by $S^d(V)$, resp.
$\Lambda^d(V)$ the $d$-th
symmetric, resp. exterior, power of $V$. Let $A^*=S^*,\Lambda^*$ or
$\Gamma^*$. Then $A^*$ satisfies an `exponential isomorphism' natural in $V,W$
and associative in the obvious sense: $A^*(V\oplus W)\simeq A^*(V)\otimes
A^*(W)$. Let $\delta_2$ be the diagonal $V\to V\oplus V$, $x\mapsto (x,x)$,
and let $\Sigma_2$ be the sum $V\oplus V\to V$, $(x,y)\mapsto x+y$. 
`The' graded Hopf algebra structure on the divided
powers $\Gamma^*(V)$ (without further specification) means the following. We
consider $\Gamma^d(V)$ in degree $2d$, the unit is
$\k=\Gamma^0(V)\hookrightarrow \Gamma^*(V)$, the counit is 
$\Gamma^*(V)\twoheadrightarrow \Gamma^0(V)=\k$, the multiplication and the
comultiplication are:
$$\Gamma^*(V)^{\otimes 2}\simeq \Gamma^*(V\oplus
V)\xrightarrow[]{\Gamma^*(\Sigma_2)} \Gamma^*(V)\;,\,
\Gamma^*(V)\xrightarrow[]{\Gamma^*(\delta_2)}
\Gamma^*(V\oplus V)\simeq \Gamma^*(V)^{\otimes 2}\;.$$

\subsection{Strict polynomial
functors}\label{subsec-strpolfct}
Let $\k$ be a commutative ring and let $\A$ 
be a finite product of the categories $\V_\k$ and $\V_\k^{\op}$ (the
`$\op$' stands for the opposite category). We recall here the basic
definitions and properties of
the category of strict polynomial functors from $\A$ to $\V_\k$. The case 
$\A=\V_\k$ was introduced in \cite{FS} over a field and in \cite{SFB} over
an arbitrary commutative ring, the case $\A=\V_\k^{ \op}\times \V_\k$
corresponds to the category strict polynomial bifunctors, contravariant in the first
variable and covariant in the second one, used in \cite{FF}. The definitions
and the proofs generalize immediately when $\A$ is a more general
product.

\subsubsection*{Basic definitions}
A strict polynomial functor $F$ from $\A$ to $\V_\k$ is the following collection
of data: for each $X\in\A$, an element $F(X)\in\V_\k$ and for each $X,Y$ in
$\A$ a polynomial $F_{X,Y}\in S^*(\hom_\A(X,Y)^\vee)\otimes
\hom_\k(F(X),F(Y))$. These polynomials must satisfy two conditions: (1)
$F_{X,X}(\Id_X)=\Id_{F(X)}$, and (2) the \emph{polynomials} $(f,g)\mapsto 
F_{X,Y}(f)\circ F_{Y,Z}(g)$ and $(f,g)\mapsto F_{X,Z}(f\circ g)$ are equal.
Natural transformations between strict polynomial functors $F,G$ are linear
maps $\phi_X:F(X)\to G(X)$ such that the \emph{polynomials} $f\mapsto
G_{X,Y}(f)\circ \phi_X$ and $f\mapsto
\phi_Y\circ F_{X,Y}(f)$ are equal. Examples of strict polynomial
functors are $\hom_\A(X,-)$, 
the divided powers $\Gamma^d(\hom_\A(X,-))$ or the symmetric powers 
$S^d(\hom_\A(X,-))$. If $G$
is an affine algebraic group acting rationally on a $\k$-module $V$ and if
$F:\V_\k\to \V_\k$ is a strict polynomial functor, $F(V)$ is a rational
$G$-module ($g\in G$
acts on $F(V)$ by $v\mapsto F(g)(v)$). More generally:
\begin{lemma} Assume $\A=(\V_\k^{\op})^{\times k}\times
(\V_\k)^{\times\ell}$. Let $(G_i)_{1\le i\le k+\ell}$ be algebraic groups 
over $\k$, let
$(V_i)_{1\le i\le k}$ be right $G_i$-modules and $(V_i)_{k+1\le i\le k+\ell}$ 
be left
$G_i$-modules. Evaluation on $(V_1,\dots,V_n)$ yields a functor
from the category of strict polynomial functors with source $\A$ to the category of rational
$\textstyle\prod G_i$-modules.
\end{lemma}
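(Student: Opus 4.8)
The plan is to fix the object $X=(V_1,\dots,V_n)\in\A$, set $M:=F(X)\in\V_\k$, and produce the rational $\prod_i G_i$-module structure on $M$ simply by applying $F$ to the automorphisms of $X$ coming from the group actions. First I would package the module structures into a single structure morphism. Since $\A$ is a product category we have $\hom_\A(X,X)\cong\prod_i\hom_{\A_i}(V_i,V_i)$, where $\A_i$ is the $i$-th factor. For each covariant variable ($k<i\le n$) a left $G_i$-action gives a monoid homomorphism $g_i\mapsto(v\mapsto g_i\cdot v)$ into $\hom_{\V_\k}(V_i,V_i)$; for each contravariant variable ($1\le i\le k$) a right $G_i$-action gives $g_i\mapsto(v\mapsto v\cdot g_i)$, which is a monoid anti-homomorphism for the usual composition, hence a homomorphism into $\hom_{\V_\k^{\op}}(V_i,V_i)$ — this is precisely why right modules match the opposite-category variables. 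Taking the product over $i$ yields a monoid homomorphism, valued in automorphisms,
$$\rho:\prod_i G_i\to \hom_\A(X,X),\qquad g=(g_i)_i\mapsto \rho(g)=(\rho_i(g_i))_i\;.$$

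Then I would define the action of $g$ on $M$ by $g\cdot m:=F(\rho(g))(m)=F_{X,X}(\rho(g))(m)$. That this is a genuine action is immediate from the two defining conditions: condition (1) gives $F(\Id_X)=\Id_M$, so the identity acts trivially, while condition (2) expresses that $F$ preserves composition, whence $F(\rho(g))\circ F(\rho(h))=F(\rho(g)\circ\rho(h))=F(\rho(gh))$, using that $\rho$ is a homomorphism into $\mathrm{End}_\A(X)$. Thus $g\mapsto F(\rho(g))$ is a group homomorphism $\prod_i G_i\to GL(M)$.

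The real content is rationality, and this is where strict polynomiality is used essentially. By definition $F_{X,X}$ is an element of $S^*(\hom_\A(X,X)^\vee)\otimes\hom_\k(M,M)$, that is, a polynomial map $\hom_\A(X,X)\to\hom_\k(M,M)$. I would phrase rationality in terms of the coordinate Hopf algebras $A_i$ of the $G_i$ and their tensor product $A$, the coordinate Hopf algebra of $\prod_i G_i$: each rational module $V_i$ is an $A_i$-comodule, and assembling the comodule structure maps produces a generic automorphism $u\in\hom_\A(X,X)\otimes A$, namely the $A$-point of $\hom_\A(X,X)$ corresponding to $\rho$. Because a polynomial map is functorial in the base ring, $F_{X,X}$ may be evaluated on $u$ to give $F_{X,X}(u)\in\hom_\k(M,M)\otimes A$, and this element is exactly the coaction exhibiting $M$ as an $A$-comodule, i.e. a rational $\prod_i G_i$-module; its counit and coassociativity axioms translate back into conditions (1) and (2) together with the Hopf structure of $A$, so nothing new must be checked. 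Concretely, over a field this just says that the matrix coefficients of $g\mapsto F_{X,X}(\rho(g))$ are regular functions, being the composite of the regular map $\rho$ with the polynomial map $F_{X,X}$.

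Finally, functoriality in $F$ is formal: a natural transformation $\phi\colon F\to F'$ has components $\phi_X\colon F(X)\to F'(X)$, and its defining naturality condition, applied to the morphisms $\rho(g)$, yields $\phi_X\circ F(\rho(g))=F'(\rho(g))\circ\phi_X$ for all $g$, so $\phi_X$ is $\prod_i G_i$-equivariant. Hence evaluation on $(V_1,\dots,V_n)$ sends strict polynomial functors to rational $\prod_i G_i$-modules and natural transformations to equivariant maps, as claimed. I expect the only genuinely delicate point to be the rationality step over an arbitrary commutative ring $\k$: one must interpret "polynomial map" as a polynomial law, i.e. a functor on base-ring extensions, so that evaluation on the generic automorphism $u$ is legitimate; once this is set up correctly the comodule axioms fall out of the functor axioms, and the remaining verifications are routine.
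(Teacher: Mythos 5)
Your proposal is correct. Note, however, that the paper offers no proof of this lemma at all: it is stated as an immediate extension of the preceding remark (that $F(V)$ is a rational $G$-module when $F\in\P$ and $V$ is a rational $G$-module, with $g$ acting by $F(g)$), the point being regarded as evident from the definition of a strict polynomial functor. Your write-up supplies exactly the details the paper suppresses, and it identifies correctly the only step with real content: over an arbitrary commutative ring, rationality cannot be checked on abstract group elements, so one must read the polynomial $F_{X,X}\in S^*(\hom_\A(X,X)^\vee)\otimes\hom_\k(F(X),F(X))$ as a polynomial law (legitimate because the hom-modules are finitely generated projective, so $S^d(\hom_\A(X,X)^\vee)\otimes W\simeq\hom_\k(\Gamma^d(\hom_\A(X,X)),W)$), and evaluate it at the generic point $u\in\hom_\A(X,X)\otimes A$ assembled from the comodule structure maps of the $V_i$; conditions (1) and (2), being equalities of polynomials and hence of laws under every base change, then give precisely the counit and coassociativity axioms for the resulting coaction on $F(X)$, and the same base-change principle applied to the naturality condition at $u$ shows that natural transformations become comodule maps. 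Your observation that a right action is what turns the anti-homomorphism $g\mapsto(v\mapsto v\cdot g)$ into a homomorphism to $\hom_{\V_\k^{\op}}(V_i,V_i)$ is also the right explanation for why contravariant variables receive right modules (consistent with the paper's later convention $F(g^{-1},g)$ for $GL_n$). The only cosmetic criticism is that your first two paragraphs, phrased in terms of abstract elements $g$, are subsumed by the comodule argument and could be omitted; the group-theoretic axioms are just the comodule axioms seen on points.
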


A strict polynomial functor $F$ is homogeneous of degree $d$ if all the
polynomials $F_{X,Y}$ are homogeneous of degree $d$. It is of finite degree 
if the family of the degrees of the $F_{X,Y}$ is bounded. 
We denote by $\P_\A$ the category of strict polynomial functors of finite
degree with source
$\A$. Then the category $\P_\A$ splits as the direct sum of its full
subcategories $\P_{d,\A}$ of homogeneous functors of degree $d$:
$$\P_\A=\textstyle\bigoplus_{d\ge 0}\P_{d,\A}\;.$$
There is an equivalence of categories $\P_{0,\A}\simeq \V_\k$ induced by
$F\mapsto F(0,\dots,0)$. 
\begin{remark}
If $\A=(\V_\k^{\op})^{\times k}\times
(\V_\k)^{\times\ell}$, we could refine the splitting by introducing
multidegrees. Then the category $\P_{d,\A}$ would split as the
direct sum of its full subcategories of homogeneous functors of  multidegree
$(d_1,\dots,d_{k+\ell})$, with $\sum d_i =d$. For sake of simplicity, we don't
use multidegrees. Thus the term `degree' always refers to the total degree of
the functors.
\end{remark}

\subsubsection*{Another presentation of strict polynomial functors}
We have defined strict polynomial functors as functors from $\A$ to $\V_\k$
endowed with an additional structure (polynomials). Equivalently, one can define
degree $d$
homogeneous strict polynomial functors as $\k$-linear 
functors from a $\k$-linear category $\Gamma^d\A$ to $\V_\k$
(cf. \cite{Pira} where T. Pirashvili credits Bousfield for this presentation).
In this presentation, the polynomial structure is encoded in the source
category $\Gamma^d\A$, 
and strict polynomial functors are genuine $\k$-linear functors,
which may make
some statements clearer.

We recall the definition of $\Gamma^d\A$. Let $d\ge 0$, and let $\A$ be a
finite product of copies of $\V_k$ or its opposite category.
The objects of $\Gamma^d\A$ are the same as the objects of $\A$, and the sets
of morphisms are the $\k$-modules
$\hom_{\Gamma^d\A}(X,Y):=\Gamma^d(\hom_\A(X,Y))$. The identity of $X$ equals
$\Id_X^{\otimes d}$. Let's define the composition. If $U,V\in\V_\k$, 
the group $\Si_d\times\Si_d$ acts by permuting the factors of the tensor
product $U^{\otimes d}\otimes V^{\otimes d}$. The diagonal inclusion
$\Si_d\simeq \Delta\Si_d\subset \Si_d\times\Si_d$ induces a morphism
$j_d:\Gamma^d(U)\otimes\Gamma^d(V)\to \Gamma^d(U\otimes V)$. The composition
in $\Gamma^d\A$ is defined as the composite:
\begin{align*}\Gamma^d(\hom_\A(X,Y))\otimes
\Gamma^d(\hom_\A(Y,Z))\xrightarrow[]{j_d} & 
\Gamma^d(\hom_\A(X,Y)\otimes \hom_\A(Y,Z))\\&\to \Gamma^d(\hom_\A(X,Z))\;,
\end{align*}
where the last map is induced by the composition in $\A$.

The following key lemma (compare \cite[Lemma 2.8 and proof of Prop. 2.9]{FS}) induces the existence of projective resolutions, and
will also have an important role in our computations.
\begin{lemma}[key lemma]\label{lm-key}
Let $d\ge 0$. Let $Y=(Y_i)\in\A$ be a tuple of free $\k$-modules,
such that each $Y_i$ has rank greater or equal to $d$. Then for all $X,Z\in\A$
the composition in
$\Gamma^d\A$ induces  an epimorphism:
$$\Gamma^d(\hom_\A(X,Y))\otimes \Gamma^d(\hom_\A(Y,Z))\twoheadrightarrow 
\Gamma^d(\hom_\A(X,Z)) $$
\end{lemma}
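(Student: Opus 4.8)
The plan is to prove surjectivity by exhibiting explicit preimages for a basis of the target, after two reductions.

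First I would reduce to the case where $X$ and $Z$ are free. Every object of $\A$ is a tuple of finitely generated projective $\k$-modules, hence a retract in $\A$ of a tuple $X'$ (resp. $Z'$) with free components. The composition map of Lemma \ref{lm-key} is natural in $X$ (contravariantly) and in $Z$ (covariantly), so the retractions $X\to X'\to X$ and $Z\to Z'\to Z$ exhibit $\Gamma^d(\hom_\A(X,Z))$ as a retract of $\Gamma^d(\hom_\A(X',Z'))$ compatibly with the two composition maps. A short diagram chase then shows that surjectivity for $(X',Z')$ forces surjectivity for $(X,Z)$, so I may assume all components of $X$ and $Z$ are free.

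Second, I would reduce to the single-variable case $\A=\V_\k$. Writing $\A=\prod_i\A_i$ we have $\hom_\A(X,Z)=\bigoplus_i\hom_{\A_i}(X_i,Z_i)$, and the exponential isomorphism turns $\Gamma^d$ of this direct sum into the sum over multidegrees $\sum_i e_i=d$ of the tensor products $\bigotimes_i\Gamma^{e_i}(\hom_{\A_i}(X_i,Z_i))$. Since composition in $\A$, and hence in $\Gamma^d\A$, is performed componentwise, the composition map respects this splitting and is the tensor product of the componentwise maps. As a tensor product of surjections is a surjection, and as $\mathrm{rank}(Y_i)\ge d\ge e_i$, it suffices to treat one factor; the factors $\V_\k^{\op}$ are handled identically with the arrows reversed, so I focus on $\A=\V_\k$ with $X,Y,Z$ free and $\mathrm{rank}(Y)\ge d$. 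For the core computation I would use that the composition map equals $\Gamma^d(c)\circ j_d$, where $c\colon\hom(X,Y)\otimes\hom(Y,Z)\to\hom(X,Z)$ is the bilinear composition; on divided-power generators it sends $\gamma_d(f)\otimes\gamma_d(g)$ to $\gamma_d(g\circ f)$. Fixing bases, $\hom(X,Z)$ has the basis of rank-one maps $E_{rs}=z_r\otimes x_s^\vee$, and $\Gamma^d(\hom(X,Z))$ the basis of monomials $M=\prod_{r,s}\gamma_{a_{rs}}(E_{rs})$ with $\sum a_{rs}=d$. To realize such an $M$ I would route each nonzero block $(r,s)$ through its own basis vector of $Y$: choosing \emph{distinct} $y_{rs}$ indexed by the (at most $d$) pairs with $a_{rs}>0$, I set $f_{rs}=y_{rs}\otimes x_s^\vee$ and $g_{rs}=z_r\otimes y_{rs}^\vee$, so that $g_{rs}\circ f_{rs}=E_{rs}$.

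The main obstacle is to obtain $M$ with coefficient exactly $1$ over an arbitrary commutative ring, since in positive characteristic the naive full symmetrization produces the factor $\prod_{r,s}a_{rs}!$. The resolution exploits the orthogonality $g_{r's'}\circ f_{rs}=0$ for $(r',s')\ne(r,s)$, which holds precisely because the $y_{rs}$ are distinct basis vectors; this is exactly where $\mathrm{rank}(Y)\ge d$ is used, to supply enough distinct basis vectors to separate all blocks. Taking $\alpha=\prod_{r,s}\gamma_{a_{rs}}(f_{rs})$ and $\beta=\prod_{r,s}\gamma_{a_{rs}}(g_{rs})$ and expanding the composition through the tensor descriptions of $j_d$ and $\Gamma^d(c)$, every cross term between distinct blocks vanishes by orthogonality, while within a single block the divided powers match via $\gamma_{a}(f)\otimes\gamma_{a}(g)\mapsto\gamma_{a}(g\circ f)$ with coefficient $1$; the surviving terms assemble exactly into $M$. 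This places each basis element $M$ in the image and completes the proof.
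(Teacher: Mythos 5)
Your proof is correct and takes essentially the same approach as the paper's: the same two reductions (the exponential isomorphism to reach $\A=\V_\k$, and naturality to reach free modules), followed by the same key construction of routing each block of a basis element of $\Gamma^d(\hom(X,Z))$ through its own \emph{distinct} basis vector of $Y$, so that orthogonality kills all cross terms and the target is hit with coefficient exactly $1$ over any commutative ring. The only differences are notational (your divided-power monomials $\prod\gamma_{a_{rs}}(E_{rs})$ versus the paper's explicit symmetrized tensors $(b_{i_1},\dots,b_{i_n},I)$) and the order in which the two reductions are performed.
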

\begin{proof}
Using the exponential isomorphism for the divided power algebra, one reduces
to the case where $\A=\V_\k$. By naturality, one reduces furthermore to the
case where $X,Y$ are free $\k$-modules.

If $I=(d_1,\dots,d_n)$ is a tuple of positive
integers such that $\sum d_i=d$, we denote by $\Si_I$ the subgroup $\prod
\Si_{d_i}\subset \Si_d$. If $V$ is a free $\k$-module with basis $(b_i)$, and
if $b_{i_1},\dots, b_{i_n}$ are distinct elements of the basis
we let: 
$$(b_{i_1},\dots, b_{i_n},I):= \sum_{\sigma\in\Si_d/\Si_I}
\sigma.(\underbrace{b_{i_1}\otimes\dots\otimes b_{i_1}}_{\text{$d_1$
factors}}\otimes\dots\otimes \underbrace{b_{i_n}\otimes\dots\otimes b_{i_n}}
_{\text{$d_n$
factors}})\;.$$ 
Such elements form a basis of $(V^{\otimes d})^{\Si_d}$.
Now we may choose basis $(e^{Y,X}(j,i))$, $(e^{Z,Y}(k,j))$ and 
$(e^{Z,X}(k,i))$ of $\hom_\k(X,Y)$, $\hom_\k(Y,Z)$ and $\hom_\k(X,Z)$
respectively, such that $e^{Z,Y}(k,j_1)\circ e^{Y,X}(j_2,i)=e^{Z,X}(k,i)$ if
$j_1=j_2$, and $0$ in the other cases. 

To prove surjectivity, it suffices to show that for all tuple 
$I=(d_1,\dots,d_n)$ and all $n$-tuple of distinct elements 
$(e^{Z,X}(k_s,i_s))_{1\le s\le n}$, the map induced by the composition hits
$(e^{Z,X}(k_1,i_1),\dots,e^{Z,X}(k_n,i_n),I)\in 
(\hom_\k(X,Z)^{\otimes d})^{\Si_d}$.
To do this, we use that $\mathrm{rk} Y \ge d\ge n$. Thus we may choose 
\emph{distinct} 
indices $j_1,\dots,j_n$ and form the element:
$$(e^{Y,X}(j_1,i_1),\dots,e^{Y,X}(j_n,i_n),I)
\otimes (e^{Z,Y}(k_1,j_1),\dots,e^{Z,Y}(k_n,j_n),I)\;.$$
The map induced by the composition in $\Gamma^d\V_\k$ sends this element to
$(e^{Z,X}(k_1,i_1),\dots,e^{Z,X}(k_n,i_n),I)$ and we are done.
\end{proof}

\subsubsection*{Homological algebra}
Kernels, cokernels, products or sums of strict polynomial
 functors are computed in the target
category, so that categories of strict polynomial functor inherit 
the structure of $\V_\k$. Thus, if $\k$ is
a field, $\P_\A$ and $\P_{d,\A}$ are abelian categories. 
This is no longer the
case over an arbitrary commutative ring. Nonetheless, they are exact category
in the sense of Quillen \cite{Quillen}, with admissible exact sequences being the 
sequences $0\to F'\to F\to F''\to 0$
which are exact after evaluation on every object $X$.
The theory of extensions in exact categories is very similar to
the abelian one. One minor change is that $\Ext$-groups are defined in terms
of `admissible' extensions (ie: Yoneda composites of admissible short exact
sequences), so that we must use `admissible' projective or injective
resolutions to compute them (See also \cite{Buehler} for a recent
exposition).

The standard projectives are the 
functors:
$ P_X^d:=\Gamma^d(\hom_{\A}(X,-))$, for all $X\in\A$. They satisfy a 
Yoneda isomorphism, natural in $X,F$:
$$ \hom_{P_{d,\A}}(P^d_X,F)\simeq F(X)\;,\quad f\mapsto f_X(\Id_X^{\otimes
d})\;.$$
If $F$ is homogeneous of degree $d$ and if $X=(X_1,\dots,X_n)\in\A$ is a tuple of free $\k$-modules such that each
$X_i$ has a rank greater or equal to $d$, lemma \ref{lm-key} implies that the canonical map $F(X)\otimes
P_X^d\to F$ is an epimorphism. Since every epimorphism is admissible
(ie: they admit a kernel in $\P_{d,\A}$) this shows that 
$F$ has an admissible 
projective resolution by finite sums of standard projectives.

If $F\in\P_{d,\A}$, then $F^\vee:V\mapsto F(V)^\vee$ is a degree $d$
homogeneous strict
polynomial functor with source the opposite category $\A^\op$, and we have a
natural isomorphism:
$\hom_{\P_\A}(F,G^\vee)\simeq \hom_{\P_{\A^\op}}(G,F^\vee)$. By this duality,
the functors $I^d_X:=S^d(\hom_\A(X,-))=(\Gamma^d(\hom_{\A^\op}(X,-)))^\vee$
are injective. We call them `standard injectives'. They satisfy a Yoneda
isomorphism, natural in $F,X$:
$$\hom_{\P_{d,\A}}(F,I^d_X)\simeq F(X)^\vee\;,\quad f\mapsto
f_X^\vee(\Id_X^{\otimes d})\;,$$
and each $F\in\P_{d,\A}$ has an admissible injective resolution by direct sums
of standard injectives. In particular the injectives of $\P_{d,\A}$ are direct
summands of finite sums of standard injectives and we have:
\begin{lemma}\label{lm-inj-formecombinatoire}
Assume $\A=(\V_\k^{\op})^{\times k}\times
(\V_\k)^{\times\ell}$. Let $d\ge 0$.
Then for all tuple $(i_1,\dots,i_{k+\ell})$ of positive integers,
the functor 
$$I^{d}_{i_1,\dots,i_{k+\ell}}: (V_1,\dots,V_{k+\ell})\mapsto
S^d\left(\textstyle\bigoplus_{s=1}^k(V_s^\vee)^{\oplus i_s}\oplus 
\textstyle\bigoplus_{t=k+1}^{k+\ell}V_t^{\oplus i_t}\right) $$ is an injective of $\P_{d,\A}$. Moreover the
injectives of $\P_{d,\A}$ 
are direct summands of finite sums of such functors.
\end{lemma}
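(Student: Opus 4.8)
The plan is to recognize each functor $I^d_{i_1,\dots,i_{k+\ell}}$ as a standard injective, and then to show conversely that every standard injective is a direct summand of a finite sum of such functors.

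First I would compute the morphisms in the product category $\A=(\V_\k^{\op})^{\times k}\times(\V_\k)^{\times\ell}$. A Hom-set in a finite product decomposes as a product, hence as a direct sum, of the Hom-sets in the factors. Taking $X=(\k^{i_1},\dots,\k^{i_{k+\ell}})$ with each component free, and using $\hom_{\V_\k^{\op}}(\k^{i_s},V_s)=\hom_{\V_\k}(V_s,\k^{i_s})\simeq (V_s^\vee)^{\oplus i_s}$ in the $k$ contravariant slots and $\hom_{\V_\k}(\k^{i_t},V_t)\simeq V_t^{\oplus i_t}$ in the $\ell$ covariant slots, I obtain a natural isomorphism
$$\hom_\A(X,(V_1,\dots,V_{k+\ell}))\simeq \textstyle\bigoplus_{s=1}^k(V_s^\vee)^{\oplus i_s}\oplus\textstyle\bigoplus_{t=k+1}^{k+\ell}V_t^{\oplus i_t}\;.$$
Applying $S^d$ then identifies $I^{d}_{i_1,\dots,i_{k+\ell}}$ with the standard injective $I^d_X=S^d(\hom_\A(X,-))$, which is injective by the discussion preceding the statement. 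This settles the first assertion.

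For the ``moreover'' part I would invoke the fact, already established above, that every injective of $\P_{d,\A}$ is a direct summand of a finite sum of standard injectives $I^d_X$; it therefore suffices to show that each $I^d_X$ is itself a direct summand of some $I^{d}_{i_1,\dots,i_{k+\ell}}$. Here $X=(X_1,\dots,X_{k+\ell})$ has arbitrary finitely generated projective components, so I choose free modules together with splittings $X_i\xrightarrow{\iota_i}\k^{n_i}\xrightarrow{\pi_i}X_i$ with $\pi_i\iota_i=\Id_{X_i}$, and set $Y=(\k^{n_1},\dots,\k^{n_{k+\ell}})$. Applying $\hom$ slotwise, with the appropriate variance in each factor, these splittings induce a natural transformation exhibiting $\hom_\A(X,-)$ as a direct summand of $\hom_\A(Y,-)$. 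Since $S^d$ is a functor, it sends a split idempotent to a split idempotent, so $I^d_X=S^d(\hom_\A(X,-))$ is a direct summand of $I^d_Y=I^{d}_{n_1,\dots,n_{k+\ell}}$, as desired.

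I do not expect a genuine obstacle: the argument is essentially the Yoneda description of standard injectives combined with the observation that $S^d$ preserves retracts. The only point deserving care is the bookkeeping of variance in the $k$ contravariant factors, where one must check that $\pi_i\iota_i=\Id$ still induces the identity after passing to $\hom_{\V_\k^{\op}}(-,V_s)=\hom_{\V_\k}(V_s,-)$; this is routine, since the functoriality direction is reversed exactly once, preserving the relation $\pi_*\iota_*=(\pi\iota)_*=\Id$.
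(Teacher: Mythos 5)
Your proof is correct and follows essentially the same route as the paper, which in fact states this lemma without a separate proof because it is meant to be immediate from the preceding discussion: the functors $I^d_{i_1,\dots,i_{k+\ell}}$ are exactly the standard injectives $I^d_X$ for $X$ a tuple of free modules, and injectives are direct summands of finite sums of standard injectives. The only detail you add beyond the paper's implicit argument is the projective-to-free reduction via retracts, which is a valid (and necessary, if one starts from standard injectives with arbitrary projective $X$) fill-in; alternatively one can note that the injective copresentations constructed dually to Lemma \ref{lm-key} already use tuples of \emph{free} modules of rank at least $d$, so freeness comes out of the construction directly.
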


\subsubsection*{Examples}
We finish the presentation by giving ingredients to build examples.
First, the tensor product yields a functor $\P_{d,\A}\times\P_{d',\A}\to
\P_{d+d',\A}$. 
Let $\P_d$ be the category of degree $d$ 
homogeneous strict polynomial functors of
with source $\V_\k$. If $F\in\P_{d}$ and $G\in\P_{d',\A}$,
composition of polynomials endow $X\mapsto F(G(X))$ with the structure
of a strict polynomial functor. In that way we obtain a functor 
$\P_{d}\times\P_{d',\A}\to
\P_{dd',\A}$.
We can get numerous new examples by combining
these two methods with the following basic examples. 
The divided powers $\Gamma^d$, the symmetric powers $S^d$,
the exterior powers $\Lambda^d$ and the tensor products $\otimes^d$ are
objects of $\P_d$ (and more generally, so are the Schur functors $S_\lambda$
associated with a partition $\lambda$ of weight $d$). The natural transformations $\otimes^d\to \otimes^d$
induced by permuting the factors are morphisms in $\P_d$, as well as the
multiplication $A^{d-i}\otimes A^i\to A^d$ and the comultiplication 
$A^d\to A^{d-i}\otimes A^i$ if $A^*=S^*,\Gamma^*,\Lambda^*$. Finally, the
exponential isomorphisms $A^*(V\oplus W)\simeq A^*(V)\otimes A^*(W)$ are
morphisms of $\P_{\V_\k\times\V_\k}$.

\subsection{Functor cohomology and cup products}\label{subsec-fctcohom}

Let $E^*$ be an $n$-graded functor in $\P_\A$. We call `functor cohomology' 
the extension groups
$$\Ext^*_{\P_\A}(E^*,-)=\textstyle\bigoplus_{j,i_1,\dots,i_n}
\Ext^j_{\P_\A}(E^{i_1,\dots,i_n},-)\;.$$

If $F,G\in{\P_\A}$, we denote by $F\otimes G$ 
their tensor product $X\mapsto F(X)\otimes G(X)$. This yields a
biexact functor: 
${\P_\A}\times{\P_\A}\to {\P_\A}$.
Moreover if $F\hookrightarrow F_0\to \dots \to F_n\twoheadrightarrow E$ and 
$F'\hookrightarrow F_0'\to \dots \to F_m'\twoheadrightarrow
E'$ are two admissible extensions, their `cross product':
$$F\otimes F'\hookrightarrow  F_0\otimes F'_0\to
\dots \to (F_n\otimes E'\oplus E\otimes F_m')\twoheadrightarrow E\otimes E'$$
is once again an admissible extension (It is an exact sequence by the K\"unneth theorem,
to prove that it is admissible, one just needs to see that the kernels of its
differentials have projective values. To do this, use its exactness and that
for all $X\in\A$, $E(X)\otimes E'(X)$ is a projective $\k$-module). In this
way, we obtain an associative cross product in extension groups:
$$\cross:\Ext^*_{\P_\A}(E,F)\otimes \Ext^*_{\P_\A}(E',F')
\to \Ext^*_{\P_\A}(E\otimes E',F\otimes
F')\;.$$

Assume now that $E^*$ has an $n$-graded coalgebra structure: we have an
$n$-graded coproduct $\Delta_E:E^*\to E^*\otimes E^*$ and an augmentation
$\epsilon_E:E^*\to \k$, where $\k$ is considered as a functor of degree
$(0,\dots,0)$. 
Then we may define an external cup product 
$$\begin{array}[t]{cccc}
\cup:&\Ext^*_{\P_\A}(E^*,F)\otimes \Ext^*_{\P_\A}(E^*,F')&\to &
\Ext^*_{\P_\A}(E^*,F\otimes F')\\
& c\otimes c'&\mapsto &\Delta_E^*(c\cross c')
\end{array}\;,$$
and a unit 
$\k=\Ext^*_{\P_\A}(\k,\k)\xrightarrow[]{\epsilon_E^*}\Ext^*_{\P_\A}(E^*,\k)$, which
satisfy an associativity and a unit axiom. These axioms may be summarized by
saying that $\Ext^*_{\P_\A}(E^*,-)$ is a (multigraded) 
monoidal functor \cite[XI.2]{MLCat}.  

\subsection{Cohomology of algebraic groups and cup
products}\label{subsec-ratcohom}

Let $\k$ be a commutative ring and let $G$ be a flat algebraic group 
over $\k$ (ie: $G$
is a group scheme
represented by a $\k$-flat finitely generated Hopf algebra $\k[G]$). 
Then the category of rational $G$-modules is an abelian category with enough
injectives. The rational cohomology of $G$ with coefficients in a $G$-module
$M$ is defined as the extension groups $\H^*(G,M)=\Ext^*_{\Gmod}(\k,M)$ ($\k$ is the
trivial $G$-module).

These extension groups may be computed \cite[4.14-4.16]{Jantzen} 
as the homology of the Hochschild
complex $C^\bullet(G,M)$ with $M\otimes\k[G]^{\otimes i}$ in degree $i$.
Interpreting $C^i(G,M)$ as the set of functions $G^{\times i}\to M$, the
external cup
product 
$$\H^*(G,M)\otimes \H^*(G,N)\to \H^*(G,M\otimes N)$$
is defined at the chain level by sending $u\in C^r(G,M)$ and $v\in
C^s(G,M)$ to
$$(u\cup v)(g_1,\dots,g_{r+s}):= u(g_1,\dots,g_r)\otimes ^{g_1\dots g_r}
v(g_{r+1},\dots,g_{r+s})\;,$$
where $^gm$ denotes the image of $m\in M$ under the action of $g\in G$.
If $M=N=R$ is an algebra with a rational $G$-action, then the composite
$$C^\bullet(G,R)\otimes C^\bullet(G,R)\to C^\bullet(G,R\otimes R)
\xrightarrow[]{C^\bullet(G,m_R)}
C^\bullet(G,R)$$
is the internal cup product of \cite[Section 6.3]{TVdK}, which makes 
$\H^*(G,R)$
into a graded algebra. 

\subsubsection*{Another construction of cup products}
Now we want to give another construction of external cup products, in terms of
cross products of extensions, as we did for functor cohomology. Over a field
$\k$, this is an easy job: (i) the two constructions coincide in degree $0$,
and (ii) a $\delta$-functor argument 
\cite[XII, proof of thm 10.4]{ML} shows  
that the two constructions coincide in all degrees. Over an arbitrary ring, 
exactness of tensor products fails, so the cross product of two
extensions does not always make sense. We have a weaker statement, proved by 
\emph{ad hoc} methods.

\begin{lemma}\label{lm-2cup}Let $G$ be a flat algebraic group over a commutative
 ring $\k$ and let $M,M'$ be two $\k$-flat $G$-modules. 
 Assume that the classes $c\in H^r(G,M)$ and $c'\in H^s(G,M')$
are represented by extensions 
$M\hookrightarrow M_0\to \dots\to
M_r\to \k$ and $M'\hookrightarrow M_0'\to \dots\to
M_s'\to \k$ whose objects are $\k$-flat. Then the cross product is an exact sequence:
$$M\otimes M'\hookrightarrow M_0\otimes M'_0\to \dots\to
(M_r\otimes \k\oplus\k\otimes M_s')\twoheadrightarrow \k\otimes\k\;.$$
Its pullback by the diagonal $\Delta:\k\simeq\k\otimes\k$, $1\mapsto 1\otimes 1$
represents the external cup product $c\cup c'\in\H^{r+s}(G,M\otimes M')$.
\end{lemma}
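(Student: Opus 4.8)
The plan is to treat the two assertions separately: first the exactness of the cross product, then the identification of its pullback with the Hochschild cup product $c\cup c'$. Throughout, the $\k$-flatness of all the objects in play is the one hypothesis that replaces the free use of the Künneth theorem and of exactness of $\otimes$ available over a field.

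For exactness, I would view the two given extensions as bounded acyclic complexes $D_\bullet$ and $D'_\bullet$ of $\k$-flat $G$-modules, acyclicity meaning exactness of the underlying $\k$-module sequences, and then recognize the displayed cross product as the total complex of the double complex $D_\bullet\otimes_\k D'_\bullet$ built from the full sequences (so that its extreme terms are $M\otimes M'$ and $\k\otimes\k$, and the intermediate folding produces the direct sum $M_r\otimes\k\oplus\k\otimes M'_s$). Since every object of $D'_\bullet$ is $\k$-flat, tensoring the exact sequence $D_\bullet$ with a fixed term of $D'_\bullet$ preserves exactness; running the two spectral sequences of the double complex (or a direct diagram chase column by column) then shows the total complex is acyclic. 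All its differentials are $G$-module maps for the diagonal action, so the cross product is an exact sequence of $G$-modules, as claimed.

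For the identification I would argue by dimension shifting on $r$, keeping $c'\in\H^s(G,M')$ fixed. In the base case $r=0$, a class $c\in\H^0(G,M)=M^G$ is a $G$-morphism $\k\to M$, and one checks directly from the cochain formula for $\cup$ that both constructions send $c\otimes c'$ to the same class. For the inductive step, factor the extension representing $c$ as the Yoneda splice of a short exact sequence $0\to M\to M_0\to K\to 0$ with a shorter extension $0\to K\to M_1\to\dots\to M_r\to\k\to 0$ representing a class $\bar c$ of one lower degree, so that $c=\delta(\bar c)$ for the connecting homomorphism $\delta$ of the short exact sequence. Because $M'$ is $\k$-flat, the tensored sequence $0\to M\otimes M'\to M_0\otimes M'\to K\otimes M'\to 0$ stays exact and its connecting homomorphism $\delta'$ is defined. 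On the Yoneda side, the cross product of a spliced extension is the splice of the cross products (again using that $-\otimes M'$ keeps the short exact sequence exact), and pullback along $\Delta$ commutes with this splicing, whence $\Delta^*(\xi\cross\xi')=\delta'\bigl(\Delta^*(\bar\xi\cross\xi')\bigr)$. On the Hochschild side, the cup product is compatible with connecting homomorphisms in the first variable, $\delta(\bar c)\cup c'=\delta'(\bar c\cup c')$. Combining these two compatibilities with the inductive hypothesis $\Delta^*(\bar\xi\cross\xi')=\bar c\cup c'$ gives $\Delta^*(\xi\cross\xi')=c\cup c'$.

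The main obstacle I anticipate is the cochain-level verification that $\cup$ commutes with the connecting homomorphism over an arbitrary commutative ring: over a field this is subsumed in the $\delta$-functor argument cited from Mac Lane, but over a ring one must check by hand that the tensored short exact sequence remains exact, which is exactly where the flatness hypotheses are spent, and that the explicit Hochschild formula for $c\cup c'$ is transported correctly by $\delta'$, with the folding of the junction term $M_r\otimes\k\oplus\k\otimes M'_s$ and the signs of the total-complex differential accounted for. The remaining ingredients, namely the naturality of the cross product, its compatibility with Yoneda splicing, and the degree-$0$ base case, are formal.
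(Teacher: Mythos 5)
Your route is genuinely different from the paper's, and it is viable. The paper does no dimension shifting at all: it works with the Hochschild complex $C^\bullet(G,\k[G])$ of the left regular representation, which is a $G$-equivariant differential graded algebra homotopy equivalent to $\k$. Each Hochschild cocycle $f$ yields a canonical extension $E(f)$ cut out of $M\otimes C^\bullet$, which is shown to be \emph{contractible} (not merely exact); the cup product $[f]\cup[f']$ is identified with $\Delta^*(E(f)\cross E(f'))$ directly at the chain level, and an arbitrary $\k$-flat extension representing $c$ is then compared with $E(f)$, using that a cross product of extensions is exact as soon as one factor is $\k$-flat \emph{or} contractible. Your proposal instead runs the classical Mac Lane $\delta$-functor induction by hand, spending flatness on the exactness of the tensored sequences and on the Leibniz-rule compatibility of the Hochschild cup with connecting maps (which indeed holds with no sign in the first variable, since $d(u\cup v)=du\cup v$ when $v$ is a cocycle). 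What each buys: the paper gets a uniform, induction-free chain-level identification tied to the specific structure of rational cohomology; yours is more standard homological algebra, independent of the contractibility of $C^\bullet(G,\k[G])$.

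Two concrete points need repair. First, the displayed cross product is \emph{not} the total complex of the double complex $D_\bullet\otimes D'_\bullet$ built from the full sequences: that total complex has $M\otimes M'_0\oplus M_0\otimes M'$ in the degree following $M\otimes M'$, not $M_0\otimes M'_0$. The correct statement is that the cross product is the augmentation by $M\otimes M'$ of the total complex of $P\otimes P'$, where $P=(M_0\to\dots\to M_r\to\k)$ and $P'=(M'_0\to\dots\to M'_s\to\k)$. Your technique then applies verbatim to this double complex: each column $P^\bullet\otimes P'^j$ is a resolution of $M\otimes P'^j$ because $P'^j$ is flat, and the resulting row $M\otimes P'^\bullet$ is a resolution of $M\otimes M'$ because $M$ is flat; the spectral sequence (or staircase) argument finishes. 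Second, and more importantly, your inductive step invokes the lemma for $\bar c\in H^{r-1}(G,K)$ with $K=M_0/M$, so you must verify that $K$ is again $\k$-flat: the lemma's hypotheses require it, and a cokernel of an injection of flat modules is not flat in general. Here it is true, but only because the sequence terminates in $\k$: setting $B_i=\mathrm{im}(M_{i-1}\to M_i)$, one has short exact sequences $0\to B_i\to M_i\to B_{i+1}\to 0$ with $B_{r+1}=\k$, and the kernel of a surjection of a flat module onto a flat module is flat (long exact sequence of $\mathrm{Tor}$); descending induction from the $\k$-end gives flatness of every $B_i$, in particular of $K=B_1$. With these two repairs, plus the base case bookkeeping and the splice-compatibility of the cross product (which does hold, via an explicit morphism of extensions from the spliced complex into $\xi\cross\xi'$), your induction goes through.
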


\begin{proof}
{\bf Step 1.} 
Consider the algebra $\k[G]$ with $G$ acting by left translation. Then 
 $C^\bullet:= C^\bullet(G,\k[G])$ is a differential graded algebra 
with an action of $G$ \cite[Section 6.3]{TVdK}.
By \cite[Part I, Chap 4, sections 4.14 to 4.16]{Jantzen}, the complex $C^\bullet$ is \emph{homotopy equivalent} 
to $\k$ concentrated in degree $0$. Thus, for all $G$-modules $M,M'$, 
the multiplication of $C^\bullet$ induces a $G$-equivariant
morphism of acyclic resolutions
over $\Id_{M\otimes M'}$:
$M\otimes C^\bullet\otimes M'\otimes C^\bullet\to M\otimes M'\otimes
C^\bullet\;. $

Now $(M\otimes C^\bullet)^G=\hom_{G}(\k,M\otimes C^\bullet)$
equals the Hochschild complex
$C^\bullet(G,M)$. As a result, we have a commutative diagram:
$$\xymatrix{
\hom_G(\k,M\otimes C^\bullet)\otimes \hom_G(\k,M'\otimes
C^\bullet)\ar@{=}[r]\ar[d]^{f\otimes g\mapsto f\otimes g}
&
C^\bullet(G,M)\otimes C^\bullet(G,M')\ar[d]^-{\cup}
\\
\hom_G(\k\otimes\k,M\otimes C^\bullet\otimes M'\otimes
C^\bullet)\ar[d]^{-\circ\Delta}&
C^\bullet(G,M\otimes N)\ar@{=}[d]
\\
\hom_G(\k,M\otimes C^\bullet\otimes M'\otimes
C^\bullet)\ar[r]&
\hom_G(\k,M\otimes M'\otimes
C^\bullet)\;.
}$$
We deduce that if $c$ and $c'$ are cohomology classes represented by cycles 
$f\in\hom_G(\k,M\otimes C^\bullet)$  and 
$f'\in\hom_G(\k,M'\otimes C^\bullet)$, the cup product $c\cup c'$ is 
represented by $
(f\otimes f')\circ\Delta\in\hom_G(\k,M\otimes C^\bullet\otimes M'\otimes
C^\bullet)$.

{\bf Step 2.} Each cycle $f\in\hom_G(\k,M\otimes C^i)$ defines an
extension $E(f)$:
$M\hookrightarrow M\otimes C^0\to\dots\to M\otimes C^{i-2}\to
N^{i-1}\twoheadrightarrow \k$, where $N^{i-1}$ is the subset of all $x\in
M\otimes C^{i-1}$ such that $(\Id_M\otimes\partial)(x)$ is a multiple of
$f(1)$. 

We claim that $E(f)$ is not only exact, but also homotopy equivalent to the zero complex. 
Indeed, let $\widetilde{C}^\bullet$ denote the complex $\k\hookrightarrow C^0\to C^1\to\cdots$ (that is, $\widetilde{C}^i=C^i$ for $i\ge 0$ and $C^{-1}=\k$). Then $\widetilde{C}^\bullet$, hence $M\otimes \widetilde{C}^\bullet$, is homotopy equivalent to the zero complex.
If $s^n:M\otimes \widetilde{C}^n\to M\otimes \widetilde{C}^{n-1}$, $n\ge 0$ is the homotopy between $0$ and the identity map, then the formula: ${s^k}'=s^k$ for $k <i$ and
${s^i}'=s^i\circ f$ defines a homotopy between zero and the identity map for $E(f)$.

{\bf Step 3.} Now we turn to cross product of extensions. One easily shows
that if $E: M\hookrightarrow\dots \twoheadrightarrow\k$ and 
$E': M'\hookrightarrow\dots \twoheadrightarrow\k$ are two extensions, and if
\emph{one of the two} is either $\k$-flat or homotopy equivalent to  the zero complex, then their 
cross product $E\cross E'$ is an
exact sequence.
We derive two consequences from this: (1) $E(f)\cross E(f')$ is an extension,
and $\Delta^*(E(f)\cross E(f'))$ represents the cohomology class $[(f\otimes
f')\circ\Delta]=[f]\cup[f']$ (cf. step 1 for this equality). (2) If $E,E'$ are
$\k$-flat extensions equivalent to $E(f)$ and $E(f')$ then $\Delta^*(E\cross
E')$ is equivalent to $\Delta^*(E(f)\cross
E(f'))$. Putting (1) and (2) together, we conclude the proof. 
\end{proof}

\section{Rational cohomology of classical groups via strict
polynomial functor cohomology}\label{sec-3}

In this section, $\k$ is a commutative ring. We show that the rational  
cohomology of the general linear groups $GL_n$, the symplectic groups $Sp_n$
and the orthogonal groups $O_{n,n}$ with coefficients in functorial
representations may be computed as functor
cohomology. To be more specific, for $G=GL_n$, 
the rational cohomology is related to 
extensions in the category $\P(1,1)$ 
of functors with source $V_\k^{\op}\times \V_\k$
(ie: $\P(1,1)$ is the category of strict
polynomial bifunctors, contravariant in the first variable and covariant 
in the second
one \cite{FF}). For the orthogonal and symplectic case, the cohomology is
related to extensions in the category $\P$ of Friedlander and Suslin
\cite{FS} (ie: the category of functors with source $\V_\k$). 

Let us outline the proof. Let $G_n=Sp_n$, $O_{n,n}$ or $GL_n$. Set $\A=\V_\k$, or
$V_\k^{\op}\times \V_\k$ in the general linear case. To each $F\in\P_{\A}$, we
may associate a rational representation $F_n$ of $G_n$. 
In that way, we obtain a $\delta$-functor: $F\mapsto \H^*(G_n,F_n)$ (that is,
a nonnegatively graded functor, sending admissible short exact sequences in
$\P_\A$ to long exact sequences in $\kmod$, cf \cite{Grot}).

On the other hand, we associate to $G_n$ a `characteristic functor' 
$F_G\in\P_\A$. To be more specific, 
for $Sp_n$, resp. $O_{n,n}$, resp. $GL_n$, we take $F_G= \Lambda^2$, resp.
$S^2$, resp. $gl(-,-)=\hom_\k(-,-)$ (the characteristic functors $\Lambda^2$ and $S^2$ appear in the context of finite groups in \cite[Thm 3.21]{DjamentVespa} and $gl$ appears in \cite[Thm 1.5]{FF}).
Taking the divided powers of $F_G$, one obtains a $\delta$-functor 
$F\mapsto \Ext^*_{\P_\A}(\Gamma^\star(F_G),F)$, which is by definition
universal (ie: it vanishes on the injectives in positive $*$-degree).

Now we wish to compare these two $*$-graded $\delta$-functors (We don't
take the gradation of the divided power algebra into account) 
by the well-known elementary lemma \cite{Grot}:
\begin{lemma}\label{lm-deltafct}
Let $K^*,H^*$ be universal $\delta$-functors and let $\phi^*:K^*\to H^*$ be a
morphism of $\delta$-functors. If $\phi^0$ is an isomorphism, then for all
$i\ge 0$, $\phi^i$ is an isomorphism.
\end{lemma}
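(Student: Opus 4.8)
The plan is to argue by induction on $i$ using \emph{dimension shifting}. The two $\delta$-functors are defined on an exact category in which (as recalled in \S\ref{subsec-strpolfct}) every object admits an admissible injective resolution; in particular every $F$ fits into an admissible short exact sequence $0\to F\to J\to C\to 0$ with $J$ injective. Applying $K^*$ and $H^*$ to such a sequence produces long exact sequences, and the naturality of the connecting morphisms $\delta$ with respect to $\phi^*$ organizes these into commutative ladders. The universality hypothesis --- that $K^i(J)=0=H^i(J)$ for $i\ge 1$ and $J$ injective --- is what makes these ladders collapse into the isomorphisms we need.

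First I would settle the passage from degree $0$ to degree $1$. Choosing $J$ and $C$ as above, universality gives $K^1(J)=0=H^1(J)$, so the long exact sequences truncate to
$$0\to K^0(F)\to K^0(J)\to K^0(C)\to K^1(F)\to 0$$
and its exact analogue for $H^*$. The morphism $\phi^*$ sends the first to the second; since $\phi^0$ is an isomorphism by hypothesis, the maps $\phi^0_F$, $\phi^0_J$ and $\phi^0_C$ are all isomorphisms, and the five lemma forces $\phi^1_F$ to be an isomorphism. As $F$ is arbitrary, $\phi^1$ is an isomorphism.

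For the inductive step from $i$ to $i+1$ with $i\ge 1$, I would use the same sequence $0\to F\to J\to C\to 0$. Now both $K^i(J)$ and $K^{i+1}(J)$ vanish by universality (both degrees being $\ge 1$), and likewise for $H^*$, so the connecting morphisms are isomorphisms $K^i(C)\xrightarrow{\sim}K^{i+1}(F)$ and $H^i(C)\xrightarrow{\sim}H^{i+1}(F)$. Naturality of $\delta$ makes the square
$$\xymatrix{
K^i(C)\ar[r]^-{\sim}\ar[d]_-{\phi^i_C} & K^{i+1}(F)\ar[d]^-{\phi^{i+1}_F}\\
H^i(C)\ar[r]^-{\sim} & H^{i+1}(F)
}$$
commute, whence the inductive hypothesis that $\phi^i$ is an isomorphism yields at once that $\phi^{i+1}_F$, and hence $\phi^{i+1}$, is an isomorphism.

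Since this is a formal fact about universal $\delta$-functors, I do not expect a genuine obstacle. The only delicate point is the degree-$0$-to-$1$ transition: there the $K^0$ and $H^0$ terms do not vanish, so the connecting morphism alone is not an isomorphism and one must invoke the five lemma on the full five-term ladder rather than a bare dimension-shift argument. Everything else rests on the existence of admissible embeddings into injectives and on the vanishing built into the definition of universality.
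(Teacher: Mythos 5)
Your proof is correct. A point of comparison worth noting: the paper does not actually prove this lemma --- it states it as ``the well-known elementary lemma'' with a citation to Grothendieck's T\^ohoku paper \cite{Grot}, so there is no in-paper argument to match against. Your dimension-shifting argument is the standard self-contained proof, and it is precisely the one adapted to the paper's working definition of universality (vanishing on injectives in positive degree), together with the fact, established in \S\ref{subsec-strpolfct}, that every object of $\P_{d,\A}$ admits an admissible monomorphism into an injective. Grothendieck's own formal route is slightly different in organization --- he characterizes universal $\delta$-functors by a universal mapping property and shows effaceability implies that property, after which the lemma follows by abstract nonsense (extend $\phi^0$ and its inverse, and use uniqueness of extensions) --- but the dimension-shifting computation you perform is exactly the engine inside that proof. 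Your handling of the two cases is also right where it needs to be careful: the passage from degree $0$ to degree $1$ genuinely requires the five lemma (or the equivalent cokernel comparison) on the five-term sequence $0\to K^0(F)\to K^0(J)\to K^0(C)\to K^1(F)\to 0$, since $K^0(J)$ does not vanish, whereas for $i\ge 1$ the connecting maps $K^i(C)\to K^{i+1}(F)$ and $H^i(C)\to H^{i+1}(F)$ are isomorphisms and naturality of $\delta$ finishes the induction. No gaps.
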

This is done in four steps.
\begin{description}
\item[Step 1:] We build a morphism of 
$\delta$-functors:
$$\phi_{G_n,-}:\Ext^*_{\P_\A}(\Gamma^\star(F_G),-)\to \H^*(G_n,-_n)\;.$$
Moreover, we check that $\phi_{G_n,-}$ is compatible with cup products. To be
more specific, the cup product on the right is the usual cup product in
rational cohomology (cf. \S \ref{subsec-ratcohom}), and the cup product on the left is
induced (cf. \S \ref{subsec-fctcohom}) by the coalgebra structure on $\Gamma^\star(F_G)$
(cf. \S \ref{subsec-nota}).
\item[Step 2:] We prove that $F\mapsto \H^*(G_n,F_n)$ is
universal. This step involves good
filtrations of $G_n$-modules. 
\item[Step 3:] We prove that the degree zero map $\phi_{G_n,F}^0$ is injective
if $2n$ is greater than the degree of $F$. This step relies on an explicit 
functor computation.
\item[Step 4:] We prove that the degree zero map $\phi_{G_n,F}^0$ is an
isomorphism if $2n$ is greater than the degree of $F$. The surjectivity is
proved via classical invariant theory.
\end{description}
Let us now give the details.

\subsection{General linear groups}

Let $\k$ be a commutative ring, and let $\P(1,1)$ be the category of 
strict 
polynomial functors with source $\V^{\op}_\k\times \V_\k$.
For any $F\in\P(1,1)$, $F_n$ denotes the
rational representation of $GL_n$ with underlying $\k$-module $F(\k^n,\k^n)$,
and with action of $g\in GL_n$ given by $F(g^{-1},g)$. In particular,
for $gl(-,-):=\hom_\k(-,-)$, one recovers
the adjoint representation $gl_n$ of $GL_n$. 
Since $\Id_{\k^n}\in gl_n$ is
invariant under the action of $GL_n$, for all $d\ge 0$ we 
have an equivariant map: 
$$\iota^d:\k\to \Gamma^d(gl_n)\,,\quad \lambda\mapsto
\lambda\Id_{\k^n}^{\otimes d}\;.$$

{\bf Step 1: construction of $\phi_{GL_n,F}$.} 
Since $F$ splits naturally as a direct
sum of homogeneous bifunctors, it suffices to do the construction for a
homogeneous bifunctor $F$. The bifunctors $\Gamma^d(gl)$ are homogeneous of
degree $2d$. As a consequence, if $F$ is homogeneous of
odd degree, then $\Ext^*_{\P(1,1)}(\Gamma^\star(gl),F)=0$ and we define 
$\phi_{GL_n,F}$ as the zero map. If  $F$ is homogeneous of
even degree $2d$, a
class $x\in \Ext^j_{\P(1,1)}(\Gamma^\star(gl),F)$ 
is represented by an admissible extension
$$0\to F\to F^0\to\dots \to F^{j-1}\to \Gamma^d(gl)\to 0\;.$$
We define $\phi_{GL_n,F}(x)\in \H^j(GL_n,F_n)=
\Ext^*_{\text{$GL_n$-mod}}(\k,F_n)$ 
as the class of the extension obtained by
evaluation on $(\k^n,\k^n)$ and pullback along $\iota^d$:
$$\iota^{d\,*}
\left(0\to F_n\to F^0_n\to\dots \to F^{j-1}_n\to \Gamma^d(gl_n)\to 0\right)\;.$$

\begin{lemma}[Completion of Step 1]\label{lm-step1-GLn}
For all  $n\ge 0$,
the map $\phi_{GL_n,-}:\Ext^*_{\P(1,1)}(\Gamma^\star(gl),-)\to
\H^*(GL_n,-_n)$ 
is a map of $\delta$-functors.
Moreover it is compatible with cup products: 
$\phi_{GL_n,F\otimes F'}(x\cup y)= \phi_{GL_n,F}(x)\cup
\phi_{GL_n,F'}(y)$.
\end{lemma}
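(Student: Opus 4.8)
The plan is to factor $\phi_{GL_n,-}$ as a composite of two standard operations on Yoneda $\Ext$, each of which is transparently a morphism of $\delta$-functors and transparently multiplicative. Write $\mathrm{ev}_n$ for the evaluation functor $\P(1,1)\to GL_n\text{-mod}$, $F\mapsto F_n$. It is exact (a sequence in $\P(1,1)$ is admissible exact if and only if it is exact after evaluation on every object, in particular on $(\k^n,\k^n)$) and it carries the homogeneous component $\Gamma^d(gl)$ to $\Gamma^d(gl_n)$. Unwinding the construction of $\phi_{GL_n,F}$, on the even-degree part it is exactly the composite
$$\Ext^j_{\P(1,1)}(\Gamma^d(gl),F)\xrightarrow{\mathrm{ev}_n}\Ext^j_{GL_n\text{-mod}}(\Gamma^d(gl_n),F_n)\xrightarrow{(\iota^d)^*}\H^j(GL_n,F_n)\;,$$
where the first arrow is the map induced on Yoneda $\Ext$ by the exact functor $\mathrm{ev}_n$, the second is pullback along the fixed $GL_n$-equivariant morphism $\iota^d$, and I use $\H^j(GL_n,F_n)=\Ext^j_{GL_n\text{-mod}}(\k,F_n)$. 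In odd degrees both source and target are handled by the zero map. Presenting $\phi$ this way makes its well-definedness automatic, since each operation is defined on equivalence classes rather than on chosen representatives.

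First I would settle the $\delta$-functor statement, which is then formal. An exact functor between exact categories induces, for a fixed source object, a map of Yoneda $\Ext$ groups commuting with the connecting homomorphisms, because it preserves short exact sequences and Yoneda splicing; hence $\mathrm{ev}_n$ contributes a morphism of $\delta$-functors in the variable $F$. Likewise, pullback along a fixed morphism in the contravariant slot is natural in the covariant slot and commutes with the connecting maps arising from admissible short exact sequences of coefficients. The composite, assembled over all $d$, is therefore a morphism of $\delta$-functors.

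The main work is the cup product compatibility, and its crux is a single coalgebra identity. I would use that $\mathrm{ev}_n$ is strong monoidal, so it carries the cross product of \S\ref{subsec-fctcohom} to the cross product of extensions in $GL_n\text{-mod}$; over an arbitrary $\k$ this is legitimate because the values $F_n$, $F^i_n$ are finitely generated projective, hence flat, so the evaluated cross products remain exact (the cross product of flat admissible extensions is again an extension, cf. the proof of Lemma \ref{lm-2cup}). Tracing $x\cup y=\Delta_{\Gamma^\star(gl)}^*(x\cross y)$ through $\phi$, and using that $\mathrm{ev}_n$ commutes with both cross products and pullbacks, I reduce to the evaluated comultiplication precomposed with $\iota$. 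The key point is that $\Id_{\k^n}\in gl_n$ is a divided-power (grouplike) element: the comultiplication of $\Gamma^\star(gl_n)$ sends $\Id_{\k^n}^{\otimes(d+d')}$ to $\sum_{a+b=d+d'}\Id_{\k^n}^{\otimes a}\otimes\Id_{\k^n}^{\otimes b}$. Consequently, on the bidegree $(d,d')$ summand where $\mathrm{ev}_n(x)\cross\mathrm{ev}_n(y)$ is supported, the composite of the evaluated comultiplication with $\iota^{d+d'}$ agrees with $(\iota^d\otimes\iota^{d'})\circ\Delta$, where $\Delta\colon\k\simeq\k\otimes\k$. Combined with naturality of the cross product under pullback in each variable, this gives $\phi_{GL_n,F\otimes F'}(x\cup y)=\Delta^*\big(\phi_{GL_n,F}(x)\cross\phi_{GL_n,F'}(y)\big)$.

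Finally I would invoke Lemma \ref{lm-2cup} to identify $\Delta^*\big(\phi_{GL_n,F}(x)\cross\phi_{GL_n,F'}(y)\big)$ with the rational cohomology cup product $\phi_{GL_n,F}(x)\cup\phi_{GL_n,F'}(y)$; its flatness hypotheses hold since every module in sight is the value of a finite-degree strict polynomial functor, hence finitely generated projective. I expect the main obstacle to be precisely this cup product half, and within it the bookkeeping required to keep each construction valid over an arbitrary commutative ring: checking that $\mathrm{ev}_n$ preserves the cross product as an exact sequence (not merely a complex), and matching the grouplike identity for $\Id_{\k^n}$ with the diagonal $\Delta$ of Lemma \ref{lm-2cup} in the correct bidegree. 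The $\delta$-functor half, by contrast, is essentially formal once the factorization $\phi_{GL_n,-}=(\iota^d)^*\circ\mathrm{ev}_n$ is in place.
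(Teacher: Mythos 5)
Your proposal is correct and takes essentially the same route as the paper's proof: both reduce to homogeneous even-degree functors, use the projectivity (hence flatness) of the values of admissible extensions to apply Lemma \ref{lm-2cup}, and rest the whole computation on the divided-power identity $\Delta\circ\iota^{d+d'}=(\iota^d\otimes\iota^{d'})\circ\Delta_\k$ for $\Id_{\k^n}^{\otimes(d+d')}$, i.e.\ the paper's commutative square of diagonals. Your explicit factorization $\phi_{GL_n,-}=(\iota^d)^*\circ\mathrm{ev}_n$ is only a more detailed packaging of the $\delta$-functor half, which the paper dismisses as straightforward.
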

\begin{proof}
Straightforward, except for the compatibility with cup
products, which we now give in detail. Since a bifunctor splits naturally as a direct
sum of homogeneous bifunctors, it suffices to prove the compatibility for
homogeneous $F,F'$. Furthermore, one easily reduces to the case
where
$F$ and $F'$ have even degrees $2d$ and $2d'$.
Let $E$ and $E'$ be two admissible exact sequences representing classes 
$x\in\Ext^i_{\P(1,1)}(\Gamma^d(gl),F)$
and $y\in \Ext^j_{\P(1,1)}(\Gamma^{d'}(gl),F')$. Since $E$ and $E'$ are
admissible, their kernels are bifunctors with projective values. As a result,
evaluation on
$(\k^n,\k^n)$ and pullback by $\iota^d,\iota^{d'}$ yield \emph{$\k$-projective}
 extensions
$\iota^{d\,*}(E_n), \iota^{d'\,*}(E'_n)$. By lemma \ref{lm-2cup}, the
cohomology class $\phi_{GL_n,F}(x)\cup
\phi_{GL_n,F'}(y)$ is represented by the pullback of the cross product 
$\iota^{d\,*}(E_n)\cross\iota^{d'\,*}(E'_n)$ by the diagonal
$\k\to\k\otimes\k$.
Now the diagonals 
of $\k$ and $\Gamma^*(gl)$
induce a commutative diagram
$$\xymatrix{
\k\ar[r]^-{\Delta}\ar[d]^{\iota^{d+d'}} &\k\otimes\k
\ar[d]^{\iota^{d}\otimes\iota^{d'}}\\
\Gamma^{d+d'}(gl_n)\ar[r]^-{\Delta}&\Gamma^{d}(gl_n)\otimes
\Gamma^{d'}(gl_n)\;.
}$$
Thus $\iota^{d+d' *}((E\cup E')_n)$ equals 
$\iota^{d\,*}(E_n)\cup\iota^{d'\,*}(E'_n) $ and we are done.
\end{proof}

{\bf Step 2: $F\mapsto \H^*(GL_n, F_n)$ is a universal $\delta$-functor.} Now we prove that $\H^{>0}(GL_n, F_n)$ vanishes when $F$ is an injective functor of $\P(1,1)$. 

A Chevalley group scheme over $\mathbb{Z}$ is a connected split reductive algebraic $\mathbb{Z}$-group.  A Chevalley group scheme $G$ over a commutative ring $\k$ is a group scheme obtained by base change from a Chevalley group scheme $G_\mathbb{Z}$ over $\mathbb{Z}$: $G=(G_\mathbb{Z})_\k$. If we deal with Chevalley group schemes (such as $GL_n$, $SO_{n,n}$, $Sp_n$, etc.), cohomological vanishing over arbitrary ground rings $\k$ can often be reduced 
to the case where $\k$ is a field by the following standard lemma.
\begin{lemma}\label{lm-vanish-chevalley}
Let $G_\mathbb{Z}$ be a Chevalley group scheme over the integers, acting rationally on a free $\mathbb{Z}$-module $M$ of finite type. Denote by $G_\k$ the group obtained from $G_\mathbb{Z}$ by base change. The following assertions are equivalent:
\begin{enumerate}
\item[(i)] The cohomology groups $\H^i(G_\mathbb{Z},M)$ are trivial for $i>0$.
\item[(ii)] For all field $\k$, $\H^i(G_\mathbb{k},M\otimes\k)=0$ for $i>0$.
\item[(iii)] For all commutative ring $\k$, $\H^i(G_\mathbb{k},M\otimes\k)=0$ for $i>0$.
\end{enumerate}
\end{lemma}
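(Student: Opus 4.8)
The plan is to prove the equivalence of the three vanishing statements for a Chevalley group scheme $G_\mathbb{Z}$ acting on a free $\mathbb{Z}$-module $M$ of finite type. The natural logical structure is to show (i)$\Rightarrow$(iii)$\Rightarrow$(ii)$\Rightarrow$(i), since (iii) is the strongest and (ii) the weakest formulation, with (i) being an integral statement that should govern all base changes. The implication (iii)$\Rightarrow$(ii) is trivial, as fields are a special case of commutative rings. The implication (ii)$\Rightarrow$(i) requires recovering integral vanishing from vanishing over all fields; here I would use the fact that the integral cohomology $\H^i(G_\mathbb{Z},M)$ is computed by the Hochschild complex $C^\bullet(G_\mathbb{Z},M)$, whose terms $M\otimes\mathbb{Z}[G_\mathbb{Z}]^{\otimes i}$ are free $\mathbb{Z}$-modules because $M$ is free and $\mathbb{Z}[G_\mathbb{Z}]$ is $\mathbb{Z}$-flat (in fact free, as $G_\mathbb{Z}$ is a split Chevalley scheme).

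The main technical tool throughout is base change for the Hochschild complex: since $G_\mathbb{k}=(G_\mathbb{Z})_\mathbb{k}$, one has $\mathbb{k}[G_\mathbb{k}]\simeq\mathbb{Z}[G_\mathbb{Z}]\otimes_\mathbb{Z}\mathbb{k}$, and therefore the complex $C^\bullet(G_\mathbb{k},M\otimes\mathbb{k})$ is canonically isomorphic to $C^\bullet(G_\mathbb{Z},M)\otimes_\mathbb{Z}\mathbb{k}$. This is the key identity that lets one pass between the ground rings. For the implication (i)$\Rightarrow$(iii), I would apply the universal coefficient theorem to the complex of free $\mathbb{Z}$-modules $C^\bullet(G_\mathbb{Z},M)$: for any commutative ring $\mathbb{k}$, the cohomology of $C^\bullet(G_\mathbb{Z},M)\otimes_\mathbb{Z}\mathbb{k}$ fits into short exact sequences involving $\H^i(G_\mathbb{Z},M)\otimes_\mathbb{Z}\mathbb{k}$ and $\mathrm{Tor}_1^\mathbb{Z}(\H^{i+1}(G_\mathbb{Z},M),\mathbb{k})$. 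If $\H^i(G_\mathbb{Z},M)=0$ for all $i>0$, both contributions vanish for $i>0$, giving $\H^i(G_\mathbb{k},M\otimes\mathbb{k})=0$.

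For (ii)$\Rightarrow$(i), the argument runs in reverse but is slightly more delicate. Assuming vanishing over all fields, I would show the integral cohomology groups $\H^i(G_\mathbb{Z},M)$ vanish for $i>0$ by checking they have no torsion and no free part. Taking $\mathbb{k}=\mathbb{Q}$ forces the rational dimension $\dim_\mathbb{Q}\H^i(G_\mathbb{Z},M)\otimes\mathbb{Q}$ to be zero for $i>0$ (via the universal coefficient sequence over $\mathbb{Q}$, where $\mathrm{Tor}$ vanishes), so $\H^i(G_\mathbb{Z},M)$ is torsion. Taking $\mathbb{k}=\mathbb{F}_p$ for each prime $p$ and inspecting the universal coefficient sequence then controls the $p$-torsion: the vanishing of $\H^i(G_{\mathbb{F}_p},M\otimes\mathbb{F}_p)$ forces both $\H^i(G_\mathbb{Z},M)\otimes\mathbb{F}_p$ and the $p$-torsion of $\H^{i+1}(G_\mathbb{Z},M)$ to vanish. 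Combined with finite generation of the cohomology in each degree (which follows since each $M\otimes\mathbb{Z}[G_\mathbb{Z}]^{\otimes i}$ is a finitely generated $\mathbb{Z}$-module in the relevant range, or by a Noetherianity argument), this forces $\H^i(G_\mathbb{Z},M)=0$ for $i>0$.

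The main obstacle I anticipate is the bookkeeping in the universal coefficient / base change step, specifically ensuring that the Hochschild complex genuinely base-changes as claimed and that the terms are free (not merely flat) $\mathbb{Z}$-modules so that the universal coefficient theorem applies cleanly. One must verify that $\mathbb{Z}[G_\mathbb{Z}]$ is a free $\mathbb{Z}$-module for the Chevalley schemes in question --- this is standard for split reductive groups over $\mathbb{Z}$, whose coordinate rings admit explicit $\mathbb{Z}$-bases --- and that $M$ being free of finite type makes every term of $C^\bullet(G_\mathbb{Z},M)$ free. A secondary subtlety is the finite generation needed in (ii)$\Rightarrow$(i) to rule out a $\mathbb{Z}$-module that is torsion in every degree yet survives; invoking that cohomology in a fixed degree is finitely generated over the Noetherian ring $\mathbb{Z}$ resolves this.
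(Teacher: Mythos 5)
Your overall strategy is the same as the paper's: base-change the Hochschild complex, apply the universal coefficient theorem, and use finite generation of the integral cohomology to pass from vanishing over all fields to vanishing over $\mathbb{Z}$. The implications (i)$\Rightarrow$(iii)$\Rightarrow$(ii) are fine as you present them. The gap is in (ii)$\Rightarrow$(i), precisely at the finite generation step. Your justification --- that ``each $M\otimes\mathbb{Z}[G_\mathbb{Z}]^{\otimes i}$ is a finitely generated $\mathbb{Z}$-module in the relevant range, or by a Noetherianity argument'' --- is false. The coordinate ring $\mathbb{Z}[G_\mathbb{Z}]$ is a finitely generated $\mathbb{Z}$-\emph{algebra}, but as a $\mathbb{Z}$-module it is free of countably infinite rank (already $\mathbb{Z}[G_a]=\mathbb{Z}[t]$, or $\mathbb{Z}[GL_n]=\mathbb{Z}[x_{ij}][\det^{-1}]$). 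So every term of the Hochschild complex is an infinitely generated $\mathbb{Z}$-module, and no Noetherianity argument yields finite generation of its cohomology. Finite generation of $\H^i(G_\mathbb{Z},M)$ is a genuinely deep fact about \emph{Chevalley} group schemes --- this is what the paper cites as \cite[Part II, Lemma B.5]{Jantzen} --- and it is exactly the point of the lemma where the Chevalley hypothesis is used; your proof never uses that hypothesis at all, which is a sign something is missing.

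The finiteness input cannot be dispensed with by sharper bookkeeping, and your own UCT analysis shows why. In degrees $i\ge 2$ you are fine without it: hypothesis (ii) applied in degree $i-1\ge 1$ kills $\mathrm{Tor}_1^{\mathbb{Z}}(\H^i(G_\mathbb{Z},M),\mathbb{F}_p)$, so $\H^i$ is a torsion group with no $p$-torsion for any $p$, hence zero. But in degree $i=1$ the Tor term controlling the $p$-torsion of $\H^1(G_\mathbb{Z},M)$ sits in the universal coefficient sequence for $H^0$, about which (ii) asserts nothing. All you can extract from (ii) in degree $1$ is that $\H^1(G_\mathbb{Z},M)$ is torsion (take $\k=\mathbb{Q}$) and divisible (since $\H^1\otimes\mathbb{F}_p=\H^1/p\H^1=0$ for all $p$); a group such as $\mathbb{Q}/\mathbb{Z}$ satisfies both and is nonzero. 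Only finite generation (finitely generated $+$ divisible $\Rightarrow$ $0$) closes this loophole, so you must invoke the nontrivial theorem rather than derive finiteness from the complex.
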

\begin{proof}
$(iii)\Rightarrow (i)$ is trivial. $(i)\Rightarrow (iii)$ and $(i)\Rightarrow (ii)$ follow from the universal coefficient theorem \cite[Part I, Chap 4,  Prop 4.18]{Jantzen}. So it remains to prove $(ii)\Rightarrow (i)$. By the universal coefficient theorem, $(ii)$ implies that for all field $\k$, $\H^i(G_\mathbb{Z},M)\otimes\k=0$. But $G_{\mathbb{Z}}$ is a Chevalley group scheme, so the cohomology groups $\H^i(G_\mathbb{Z},M)$ are finitely generated by \cite[Part II, Lemma B.5]{Jantzen}. So the equality $\H^i(G_\mathbb{Z},M)\otimes\k=0$ for all field $\k$ implies that $\H^i(G_\mathbb{Z},M)=0$.
\end{proof}

\begin{lemma}\label{lm-step2-GLn} Let $\k$ be a commutative ring. 
Let $J$ be an injective in the category $\P(1,1)$ of bifunctors defined over
$\k$. Then $\H^i(GL_n, J_n)=0$ if $i>0$. As a
result, $F\mapsto \H^*(GL_n, F_n)$ is a universal $\delta$-functor.
\end{lemma}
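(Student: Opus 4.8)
The plan is to prove the injective $J$ has vanishing higher cohomology by reducing to a concrete combinatorial description of the injectives, then invoking the Chevalley reduction lemma. By Lemma~\ref{lm-inj-formecombinatoire} applied to $\A=\V_\k^{\op}\times\V_\k$ (so $k=\ell=1$), every injective of $\P_{d}(1,1)$ is a direct summand of a finite direct sum of standard injectives of the form $I^d_{i_1,i_2}:(V_1,V_2)\mapsto S^d\bigl((V_1^\vee)^{\oplus i_1}\oplus V_2^{\oplus i_2}\bigr)$. Since cohomology commutes with finite direct sums and direct summands, it suffices to show $\H^{i}(GL_n,(I^d_{i_1,i_2})_n)=0$ for $i>0$. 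Evaluating at $(\k^n,\k^n)$ with the $GL_n$-action given by $F(g^{-1},g)$, the dual of the first variable untwists the contravariance, so $(I^d_{i_1,i_2})_n$ becomes $S^d\bigl((\k^n)^{\oplus i_1}\oplus(\k^n)^{\oplus i_2}\bigr)$ as a $GL_n$-module, i.e.\ a symmetric power of a direct sum of copies of the standard representation $\k^n$.

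The heart of the argument is therefore the vanishing of $\H^{>0}(GL_n,S^d((\k^n)^{\oplus r}))$ for $r=i_1+i_2$. First I would treat this over a field $\k$. The symmetric power $S^d((\k^n)^{\oplus r})$ decomposes, via the exponential isomorphism, as a direct sum of tensor products $S^{d_1}(\k^n)\otimes\cdots\otimes S^{d_r}(\k^n)$ with $\sum d_j=d$. Each symmetric power $S^m(\k^n)$ of the standard representation has a good filtration (indeed it is a dual Weyl/costandard module $H^0(\lambda)$ for the one-row partition $\lambda=(m)$), and tensor products of modules with good filtrations again have good filtrations by the theorem of Mathieu/Donkin. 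A $GL_n$-module with a good filtration has no higher cohomology, by \cite[Part~II, 4.16]{Jantzen}, which gives the desired vanishing over any field.

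To pass from fields to an arbitrary commutative ring I would invoke Lemma~\ref{lm-vanish-chevalley}. Here $GL_n$ is a Chevalley group scheme, and the relevant module is $M=S^d((\mathbb{Z}^n)^{\oplus r})$, a free $\mathbb{Z}$-module of finite type carrying a rational $GL_{n,\mathbb{Z}}$-action whose base change to $\k$ is exactly $(I^d_{i_1,i_2})_n$. The field computation of the previous paragraph verifies hypothesis~(ii) of the lemma, and hence conclusion~(iii) yields $\H^{i}(GL_n,(I^d_{i_1,i_2})_n)=0$ for $i>0$ over every commutative ring $\k$. This establishes the vanishing on standard injectives, and by the summand argument on all injectives of $\P(1,1)$.

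The main obstacle I anticipate is the good-filtration step: one must know that $S^m$ of the standard representation is costandard and that good filtrations are stable under tensor products. The latter is a nontrivial theorem (Mathieu, Donkin, or Wang in characteristic $p$), so the proof genuinely rests on that input rather than on a direct computation. Everything else—the untwisting of the $GL_n$-action after evaluation, the reduction to standard injectives, and the Chevalley descent from fields to general rings—is formal given the results already assembled in the excerpt. Once $\H^{>0}(GL_n,J_n)=0$ is known for all injectives $J$, the $\delta$-functor $F\mapsto\H^*(GL_n,F_n)$ vanishes on injectives in positive degree and is therefore universal in the sense of Lemma~\ref{lm-deltafct}, completing the proof.
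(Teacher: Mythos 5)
There is a genuine error in your identification of the coefficient module, and it occurs at exactly the step where the content of the lemma lies. You claim that evaluating $I^d_{i_1,i_2}:(V_1,V_2)\mapsto S^d\bigl((V_1^\vee)^{\oplus i_1}\oplus V_2^{\oplus i_2}\bigr)$ at $(\k^n,\k^n)$ with the action $F(g^{-1},g)$ ``untwists'' the first variable, yielding $S^d\bigl((\k^n)^{\oplus i_1}\oplus(\k^n)^{\oplus i_2}\bigr)$, a symmetric power of copies of the standard representation. This is false. The inverse $g^{-1}$ does turn the contravariant variable into a \emph{left} action, but the resulting representation on each copy coming from $V_1^\vee$ is $g\cdot f=f\circ g^{-1}$, i.e.\ the \emph{dual} (contragredient) of the standard representation, which for $n\ge 2$ is not isomorphic to $\k^n$ as a $GL_n$-module (their characters are $\sum x_i$ and $\sum x_i^{-1}$). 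The module you must handle is $S^d\bigl(((\k^n)^\vee)^{\oplus i_1}\oplus(\k^n)^{\oplus i_2}\bigr)$, mixing the standard representation and its dual; your good-filtration argument, as written, proves vanishing for a different (and easier) module, so it does not establish the lemma. The distinction is not cosmetic for this paper: it is precisely the mixed module whose invariant ring is generated by the contractions $(i|j)$ of De Concini--Procesi, which is what Step 4 of the paper's argument rests on.

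The good news is that your skeleton is exactly the paper's (reduce to standard injectives via lemma \ref{lm-inj-formecombinatoire}, descend from fields to arbitrary $\k$ via the Chevalley lemma \ref{lm-vanish-chevalley}, then produce a good filtration over a field), and the gap is repairable with the tools you already invoke. Over a field, the exponential isomorphism decomposes the correct module into tensor products of factors $S^{a}(\k^n)$ and $S^{b}((\k^n)^\vee)$; both types of factor are costandard for $GL_n$ (the latter because $S^{b}((\k^n)^\vee)\cong\Gamma^{b}(\k^n)^\vee$ is the linear dual of a Weyl module, hence the induced module $H^0(0,\dots,0,-b)$), so the Donkin--Mathieu theorem gives a good filtration on the tensor product and Kempf vanishing finishes the field case. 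The paper shortcuts this by directly citing Andersen--Jantzen \cite[Section 4.9 p. 508]{AJ}, where a good filtration on $S^*\bigl(({\k^n}^\vee)^{\oplus k}\oplus(\k^n)^{\oplus \ell}\bigr)$ is established; your route through Donkin--Mathieu is a legitimate alternative, but only once it is applied to the right module.
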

\begin{proof}
By lemma \ref{lm-inj-formecombinatoire}, it suffices to prove the vanishing on
the injectives of the form
$I^d_{k,\ell}:(V,W)\mapsto S^d((V^{\vee})^{\oplus k}\oplus {W}^{\oplus
\ell})
$, 
for $k,\ell,d\ge 0$.
The $GL_n$-module associated to $I^d_{k,\ell}$ by evaluation on 
$(\k^n,\k^n)$ is a direct summand
of the polynomial algebra over the sum $(\k^n)^{\oplus k}\oplus 
({\k^n}^\vee)^{\oplus \ell}$. 
Thus, it suffices to prove that for all integer
$k,\ell$, and for all commutative ring $\k$, we have $\H^i(GL_n,S^*(({\k^n}^\vee)^{\oplus k}\oplus 
(\k^n)^{\oplus \ell}))=0$ for $i>0$. 

By lemma \ref{lm-vanish-chevalley}, this statement reduces to the case
where $\k$ is a field. In this latter case, 
$S^*(({\k^n}^\vee)^{\oplus k}\oplus 
(\k^n)^{\oplus \ell})$ has a good filtration \cite[Section 4.9 p. 508]{AJ}. In
particular, the cohomology vanishes in positive degree.
\end{proof}

{\bf Step 3: injectivity in degree $0$.} 

\begin{lemma}\label{lm-key-GL}Let $d\ge 0$, let $n\ge d$ and let $X=\k^n$. 
There is an epimorphism: 
$$\theta:P^{2d}_{(X,X)}\twoheadrightarrow \Gamma^d(gl).$$ 
Moreover, if we evaluate the bifunctors on $(X,X)$, then $\theta_{(X,X)}$ sends
${\Id_{(X,X)}}^{\otimes 2d}\in P^{2d}_{(X,X)}(X,X)$ to ${\Id_X}^{\otimes d}\in \Gamma^d(\hom(X,X))$. 
\end{lemma}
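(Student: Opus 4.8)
The goal is to construct, for $d \geq 0$ and $n \geq d$ with $X = \k^n$, an epimorphism $\theta : P^{2d}_{(X,X)} \twoheadrightarrow \Gamma^d(gl)$ of bifunctors sending the appropriate identity element to $\mathrm{Id}_X^{\otimes d}$. Recall that $P^{2d}_{(X,X)} = \Gamma^{2d}(\hom_{\V^{\op}_\k \times \V_\k}((X,X), -))$ is a standard projective of $\P(1,1)$. By the Yoneda isomorphism for standard projectives, $\hom_{\P(1,1)}(P^{2d}_{(X,X)}, \Gamma^d(gl)) \simeq \Gamma^d(gl)(X,X) = \Gamma^d(\hom_\k(X,X))$.

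**The construction via Yoneda.**

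The plan is to define $\theta$ by naming the element of $\Gamma^d(\hom_\k(X,X))$ that it should correspond to under Yoneda. The natural candidate is $\mathrm{Id}_X^{\otimes d} \in \Gamma^d(\hom_\k(X,X))$, which is the $\Si_d$-invariant diagonal element. This choice automatically guarantees the stated normalization: under the Yoneda isomorphism $f \mapsto f_{(X,X)}(\mathrm{Id}_{(X,X)}^{\otimes 2d})$, the morphism $\theta$ corresponding to $\mathrm{Id}_X^{\otimes d}$ sends $\mathrm{Id}_{(X,X)}^{\otimes 2d}$ precisely to $\mathrm{Id}_X^{\otimes d}$. Thus the only real content is to verify that this $\theta$ is an \emph{epimorphism}. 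Since $\Gamma^d(gl)$ is homogeneous of degree $2d$ and epimorphisms in $\P(1,1)$ are tested by evaluation on each object, it suffices to check that $\theta_{(Y_1, Y_2)}$ is surjective for every pair $(Y_1, Y_2)$ of free $\k$-modules.

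**Reducing surjectivity to the key lemma.**

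The hard part will be establishing surjectivity, and this is exactly where Lemma \ref{lm-key} (the key lemma) enters. I would unwind the Yoneda correspondence explicitly: the map $\theta_{(Y_1, Y_2)}$ factors through the composition morphism of the category $\Gamma^{2d}(\V^{\op}_\k \times \V_\k)$, sending a divided power $\gamma \in \Gamma^{2d}(\hom((X,X),(Y_1,Y_2)))$ to the image of $\mathrm{Id}_X^{\otimes d} \otimes \gamma$ under composition, landing in $\Gamma^d(\hom_\k(Y_2, Y_1))$ after using that $gl(Y_1,Y_2) = \hom_\k(Y_1, Y_2)$ with the bifunctor's contravariance. The key lemma asserts that when the intermediate object $X = \k^n$ has rank $n \geq d$, the composition map $\Gamma^d(\hom_\A(W, X)) \otimes \Gamma^d(\hom_\A(X, Z)) \twoheadrightarrow \Gamma^d(\hom_\A(W, Z))$ is surjective. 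The hypothesis $n \geq d$ in the statement is precisely the rank condition required to invoke Lemma \ref{lm-key}.

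**Assembling the argument.**

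Concretely, I would argue that every element of $\Gamma^d(gl)(Y_1, Y_2)$ is in the image by tracing the composition $\hom_\k(Y_1, X) \otimes \hom_\k(X, Y_2)$ — or rather the divided-power version — against the factorization of a general homomorphism through $X$. The subtlety is matching the degree bookkeeping: the projective $P^{2d}_{(X,X)}$ carries divided powers of degree $2d$ over the \emph{product} category, and the exponential isomorphism splits $\hom_{\V^{\op}_\k \times \V_\k}((X,X), (Y_1, Y_2)) = \hom_\k(Y_1, X) \oplus \hom_\k(X, Y_2)$, so $\Gamma^{2d}$ of this sum contains the tensor summand $\Gamma^d(\hom_\k(Y_1, X)) \otimes \Gamma^d(\hom_\k(X, Y_2))$. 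Restricting $\theta$ to this summand and composing reproduces exactly the key-lemma surjection onto $\Gamma^d(\hom_\k(Y_1, Y_2))$. I expect this degree matching to be the main obstacle, since one must carefully track how the diagonal $\mathrm{Id}_X^{\otimes d}$ interacts with the bidegree $(d,d)$ part of $\Gamma^{2d}$ and confirm that the relevant factors cancel to leave the key lemma's composition map; once that identification is made, surjectivity is immediate from Lemma \ref{lm-key} applied with the rank bound $n \geq d$.
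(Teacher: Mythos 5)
Your proof is correct and follows essentially the same route as the paper: both reduce surjectivity to Lemma \ref{lm-key} by identifying $\theta$, restricted to the summand $\Gamma^d(\hom(-,X))\otimes\Gamma^d(\hom(X,-))$ of $P^{2d}_{(X,X)}$ given by the exponential isomorphism, with the composition map of $\Gamma^d\V_\k$, which is surjective since $X=\k^n$ has rank $n\ge d$. The only difference is organizational: the paper constructs $\theta$ directly as this composite and then computes the image of $\Id_{(X,X)}^{\otimes 2d}$, whereas you define $\theta$ via the Yoneda isomorphism from the element $\Id_X^{\otimes d}$ (so the normalization is automatic) and then unwind; by uniqueness in the Yoneda correspondence these are the same morphism.
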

\begin{proof} The exponential isomorphism for the divided powers induce a
epimorphism of $P^{2d}_{(X,X)}$ onto $\Gamma^d(\hom(-,X))\otimes
\Gamma^d(\hom(X,-))$. Moreover, if we evaluate on $(X,X)$, this epimorphism
sends $\Id_{(X,X)}^{\otimes 2d}$ to $\Id_{(X,X)}^{\otimes d}\otimes 
\Id_{(X,X)}^{\otimes d}$. If we postcompose this map by the map from $\Gamma^d(\hom(-,X))\otimes
\Gamma^d(\hom(X,-))$ to $\Gamma^d(gl)$ induced by composition in
$\Gamma^d\V_\k$, then the resulting map sends $\Id_{(X,X)}^{\otimes 2d}$ to
${\Id_X}^{\otimes d}$, and is an epimorphism by lemma \ref{lm-key}.
\end{proof}
  
\begin{lemma}[Completion of Step 3]\label{lm-step3-GLn}
Let $F\in\P(1,1)$ be a bifunctor defined over a commutative ring $\k$. If $2n$
is greater than the total degree of $F$, then 
$\phi^0_{GL_n,F}:\hom_{\P(1,1)}(\Gamma^*(gl), F)\to \H^0(GL_n, F_n)$
is injective.
\end{lemma}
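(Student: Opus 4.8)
The plan is to identify $\phi^0_{GL_n,F}$, after reducing to homogeneous $F$, with precomposition by the epimorphism $\theta$ of Lemma~\ref{lm-key-GL} read through the Yoneda isomorphism, so that injectivity becomes nothing but the right-cancellability of an epimorphism.

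First I would reduce to the homogeneous case. Since each $\Gamma^d(gl)$ is homogeneous of degree $2d$ and a morphism between homogeneous functors of distinct degrees vanishes, the source decomposes as $\hom_{\P(1,1)}(\Gamma^\star(gl),F)=\bigoplus_{d}\hom_{\P(1,1)}(\Gamma^d(gl),F)$ according to the homogeneous summands of $F$, and $\phi^0_{GL_n,F}$ respects this decomposition because $\iota^d$ has image in $\Gamma^d(gl_n)$. The odd-degree summands of $F$ contribute nothing, so it suffices to prove injectivity when $F$ is homogeneous of even degree $2d$. Whenever $\hom_{\P(1,1)}(\Gamma^d(gl),F)\ne 0$ we have $F\ne 0$, hence $2d\le \deg F<2n$, so $n>d$; in particular $n\ge d$ and Lemma~\ref{lm-key-GL} applies with $X=\k^n$.

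Next I would unwind $\phi^0$ in this homogeneous situation. A class is a morphism $\alpha\colon\Gamma^d(gl)\to F$, and by the construction of Step~1 (Lemma~\ref{lm-step1-GLn}) its image $\phi^0_{GL_n,F}(\alpha)\in\H^0(GL_n,F_n)$ is the pullback of $1$ along $\k\xrightarrow{\iota^d}\Gamma^d(gl_n)\xrightarrow{\alpha_{(X,X)}}F_n$, that is $\phi^0_{GL_n,F}(\alpha)=\alpha_{(X,X)}(\Id_X^{\otimes d})$. Now I would feed in the epimorphism $\theta\colon P^{2d}_{(X,X)}\twoheadrightarrow\Gamma^d(gl)$ of Lemma~\ref{lm-key-GL}. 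Under the Yoneda isomorphism $\hom_{\P(1,1)}(P^{2d}_{(X,X)},F)\simeq F(X,X)=F_n$, $f\mapsto f_{(X,X)}(\Id_{(X,X)}^{\otimes 2d})$, the composite $\alpha\circ\theta$ corresponds to
$$\alpha_{(X,X)}\bigl(\theta_{(X,X)}(\Id_{(X,X)}^{\otimes 2d})\bigr)=\alpha_{(X,X)}(\Id_X^{\otimes d})=\phi^0_{GL_n,F}(\alpha),$$
the first equality being exactly the second assertion of Lemma~\ref{lm-key-GL}. Thus, on the degree-$2d$ summand, $\phi^0_{GL_n,F}$ is the composite of $\hom(\theta,F)\colon\hom_{\P(1,1)}(\Gamma^d(gl),F)\to\hom_{\P(1,1)}(P^{2d}_{(X,X)},F)$ with the Yoneda isomorphism onto $F_n$.

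Finally, the conclusion is formal: $\theta$ is surjective on every value, hence a right-cancellable epimorphism of $\P(1,1)$, so $\hom(\theta,F)$ is injective, and composing with the Yoneda isomorphism preserves injectivity. Concretely, $\phi^0_{GL_n,F}(\alpha)=0$ forces $\alpha\circ\theta=0$ by Yoneda, and right-cancellability of $\theta$ then gives $\alpha=0$. I do not expect a genuine obstacle here: once Lemma~\ref{lm-key-GL} is granted, the argument is pure bookkeeping, and the only delicate point — that $\theta_{(X,X)}$ carries $\Id_{(X,X)}^{\otimes 2d}$ to $\Id_X^{\otimes d}$, which is what makes the Yoneda image of $\alpha\circ\theta$ agree with $\phi^0_{GL_n,F}(\alpha)$ — is precisely the content of that lemma's explicit functor computation.
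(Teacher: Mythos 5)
Your proof is correct and follows essentially the same route as the paper: reduce to homogeneous $F$ (odd degrees vanishing), then factor $\phi^0_{GL_n,F}$ through $\hom_{\P(1,1)}(\theta,F)$ followed by the Yoneda isomorphism onto $F(X,X)$, using the epimorphism $\theta$ of Lemma~\ref{lm-key-GL} and the fact that $\theta_{(X,X)}$ sends $\Id_{(X,X)}^{\otimes 2d}$ to $\Id_X^{\otimes d}$. The only difference is cosmetic: you spell out the diagram chase and the identification $\phi^0_{GL_n,F}(\alpha)=\alpha_{(X,X)}(\Id_X^{\otimes d})$ explicitly, whereas the paper packages this into a commutative square.
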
 
\begin{proof}
Since $F$ splits as a direct sum of homogeneous functors, we can restrict to
the case of homogeneous functors. Moreover, if $F$ is homogeneous of odd
degree, then $\hom_{\P(1,1)}(\Gamma^*(gl), F)=0$ and $\phi^0_{GL_n,F}$ is
injective. Now we assume that $F$ is homogeneous of degree $2d$. Let $X=\k^n$,
with $n\ge d$. By lemma
\ref{lm-key-GL}, we have a commutative diagram:
$$\xymatrix{
\hom_{\P(1,1)}(P^{2d}_{(X,X)},F)\ar[rr]^-{\simeq}  && F(X,X)\;.\\
\hom_{\P(1,1)}(\Gamma^d(gl),F)\ar[u]^-{\hom_{\P(1,1)}(\theta,F)}
\ar[rr]^-{\phi^0_{GL_n,F}} && \H^0(GL_n, F_n)\ar@{^{(}->}[u]
}$$
The horizontal arrow is the Yoneda isomorphism. Since $\theta$ is an
epimorphism, $\hom(\theta,F)$ is injective. Thus, $\phi^0_{GL_n,F}$ is
injective.
\end{proof} 

{\bf Step 4: isomorphism in degree $0$.} 
Recall from lemma \ref{lm-inj-formecombinatoire} that for all $k,\ell\ge 0$,
$I^d_{k,\ell}$ denotes the $d$-th symmetric power of the bifunctor
$(V,W)\mapsto (V^\vee)^{\oplus k}\oplus W^{\oplus \ell}$. The evaluation of
$I^d_{k,\ell}$ on the pair $(\k^n,\k^n)$ equals the $GL_n$-module of
homogeneous polynomials of
degree $d$ on the vector space $(\k^n)^{\oplus k}\oplus 
({\k^n}^\vee)^{\oplus \ell} $.
For $1\le i\le k$ and $1\le j\le \ell$ we denote by $(i|j)$ the contraction:
$$\begin{array}[t]{lccc}
(i|j): & {(\k^n)}^{\oplus k}\oplus ({\k^n}^\vee)^{\oplus \ell}&\to & \k\\
& (v_1,\dots,v_k,f_1,\dots,f_\ell)&\mapsto & f_j(v_i)
\end{array}.
$$
The contractions are homogeneous polynomials of degree two (invariant under the
action of $GL_n$), hence elements of 
$(I^2_{k,\ell})_n$. 

In fact, by \cite[Theorem 3.1]{DCP}, these contractions generate the $GL_n$-invariant subalgebra of the algebra of polynomials over ${(\k^n)}^{\oplus k}\oplus ({\k^n}^\vee)^{\oplus \ell}$. We use this fact to prove surjectivity of the $\phi^0_{GL_n,F}$ below.
\begin{lemma}\label{lm-generateurs-GLn}For all $n\ge 1$ and all $k,\ell\ge 1$, 
the contractions lie in the image of
$\phi_{GL_n,I^2_{k,\ell}}^0$.
\end{lemma}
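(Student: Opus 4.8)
The plan is to exhibit an explicit preimage of each contraction under $\phi^0_{GL_n,I^2_{k,\ell}}$ by constructing a suitable natural transformation in $\P(1,1)$ and unwinding the definition of $\phi^0$. Since the bifunctor $(V,W)\mapsto (V^\vee)^{\oplus k}\oplus W^{\oplus\ell}$ is homogeneous of degree one, $I^2_{k,\ell}=S^2$ of it is homogeneous of degree two, so in the notation of Step 1 we are in the case $2d=2$, that is $d=1$ and $\Gamma^d(gl)=gl$. Thus $\phi^0_{GL_n,I^2_{k,\ell}}$ is the map $\hom_{\P(1,1)}(gl,I^2_{k,\ell})\to \H^0(GL_n,(I^2_{k,\ell})_n)$ sending a natural transformation $\alpha$ to $\alpha_{(X,X)}(\Id_X)$, where $X=\k^n$ and $\Id_X=\iota^1(1)\in gl(X,X)$.

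First I would identify the natural transformations at our disposal. Decomposing $S^2(A\oplus B)=S^2(A)\oplus(A\otimes B)\oplus S^2(B)$ with $A=(V^\vee)^{\oplus k}$ and $B=W^{\oplus\ell}$, the bidegree-$(1,1)$ summand of $I^2_{k,\ell}(V,W)$ is $(V^\vee)^{\oplus k}\otimes W^{\oplus\ell}=\bigoplus_{i,j}V^\vee\otimes W$, and each summand is canonically the value $gl(V,W)$. For each pair $(i,j)$ with $1\le i\le k$ and $1\le j\le\ell$, let $\alpha_{ij}\colon gl\to I^2_{k,\ell}$ be the inclusion of $gl$ as the $(i,j)$-th such summand. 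These are genuine morphisms of $\P(1,1)$, because the decomposition of $S^2$ and the projection onto a chosen summand are natural in $(V,W)$.

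Next I would compute $\phi^0_{GL_n,I^2_{k,\ell}}(\alpha_{ij})=(\alpha_{ij})_{(X,X)}(\Id_X)$. Choosing a basis $(e_a)$ of $X=\k^n$ with dual basis $(e_a^\vee)$, we have $\Id_X=\sum_a e_a^\vee\otimes e_a$ under the identification $gl(X,X)\simeq X^\vee\otimes X$. By construction $\alpha_{ij}$ sends this to $\sum_a (e_a^\vee)_i\cdot(e_a)_j\in S^2\big(((\k^n)^\vee)^{\oplus k}\oplus(\k^n)^{\oplus\ell}\big)$, where the subscripts record the copy in which each factor sits. Reading this element as a homogeneous degree-two polynomial on $(\k^n)^{\oplus k}\oplus((\k^n)^\vee)^{\oplus\ell}$ and evaluating it at $(v_1,\dots,v_k,f_1,\dots,f_\ell)$ yields $\sum_a e_a^\vee(v_i)\,f_j(e_a)=f_j\big(\sum_a e_a^\vee(v_i)e_a\big)=f_j(v_i)$, which is exactly the contraction $(i|j)$. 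Hence $(i|j)=\phi^0_{GL_n,I^2_{k,\ell}}(\alpha_{ij})$ lies in the image, as claimed.

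The argument is essentially a direct computation, so there is no serious obstacle; the one point demanding care is the dictionary between the abstract summand $S^2$ and the algebra of polynomial functions, together with the bookkeeping of which copy of $\k^n$ or $(\k^n)^\vee$ each tensor factor occupies. Getting these identifications right is what makes the evaluation of $\alpha_{ij}$ on $\Id_X$ produce precisely $f_j(v_i)$ rather than some permuted or rescaled variant.
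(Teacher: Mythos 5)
Your proof is correct and is essentially the paper's own argument: your $\alpha_{ij}$ is exactly the paper's composite $S^2(\iota_{i,j})\circ\rho$, where $\rho:gl(V,W)\simeq V^\vee\otimes W\hookrightarrow S^2(V^\vee\oplus W)$ comes from the exponential isomorphism and $\iota_{i,j}$ is the inclusion into the $i$-th and $j$-th summands. The key computation — that evaluating this natural transformation on $\Id_{\k^n}=\sum_a e_a^\vee\otimes e_a$ yields the polynomial $(v_1,\dots,v_k,f_1,\dots,f_\ell)\mapsto f_j(v_i)$ — is the same in both.
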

\begin{proof}
Let $\rho:gl(V,W)\simeq V^\vee\otimes W\hookrightarrow S^2(V^\vee\oplus W)$ be
the map induced by the exponential isomorphism for $S^2$. Let $(e_i)_{1\le
i\le n}$ be a basis of $\k^n$ and let $(e_i^\vee)_{1\le
i\le n}$ be the dual basis. Then for $V=W=\k^n$, $\rho$ sends $\Id_{\k^n}=\sum
e_i^\vee\otimes e_i$ to $\sum (e_i^\vee,0)(0,e_i)$ (we denote the elements of
${\k^n}^\vee\oplus {\k^n}$ as pairs). This latter polynomial is nothing
but the polynomial $\k^n\oplus {\k^n}^\vee\to \k$, $(v,f)\mapsto f(v)$.

Now denote by $\iota_{i,j}$ the inclusion of $V^\vee\oplus W$ in the $i$-th
and $j$-th term of $(V^\vee)^{\oplus k}\oplus W^{\oplus \ell}$. Then for all
$i,j$, $\phi_{GL_n,I^2_{k,\ell}}^0$ sends $S^2(\iota_{i,j})\circ \rho$ to
$(i|j)$.
\end{proof}

\begin{lemma}\label{lm-surjectivite-GLn}
For all $k,\ell,n\ge 1$ and all $d\ge 0$, $\phi^0_{GL_n,-}$ induces an epimorphism:
$$\hom_{\P(1,1)}(\Gamma^*(gl), I^d_{k,\ell})\twoheadrightarrow
\H^0(GL_n,(I^d_{k,\ell})_n)\;.$$
\end{lemma}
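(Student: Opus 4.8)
The plan is to show that the maps $\phi^0_{GL_n,I^d_{k,\ell}}$ assemble, as $d$ varies, into a homomorphism of graded algebras whose image contains the contractions. Surjectivity then follows at once from the First Fundamental Theorem \cite[Theorem 3.1]{DCP}, which says that the contractions generate the target invariant algebra. The only genuine input is this invariant-theoretic generation statement; everything else is formal manipulation of the cup products already constructed.

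First I would equip both sides with compatible product structures. The bifunctor $I^\star_{k,\ell}=S^\star((-)^\vee{}^{\oplus k}\oplus(-)^{\oplus\ell})$ is a commutative algebra object of $\P(1,1)$, its multiplication $m\colon I^d_{k,\ell}\otimes I^{d'}_{k,\ell}\to I^{d+d'}_{k,\ell}$ being a morphism of bifunctors. Since $\Gamma^\star(gl)$ carries a coalgebra structure, the cup product of \S\ref{subsec-fctcohom} composed with $m$ endows $\bigoplus_d\hom_{\P(1,1)}(\Gamma^\star(gl),I^d_{k,\ell})$ with a product $x\cdot y:=m_*(x\cup y)$. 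On the group side, evaluation on $(\k^n,\k^n)$ turns $I^\star_{k,\ell}$ into the polynomial algebra on $(\k^n)^{\oplus k}\oplus({\k^n}^\vee)^{\oplus\ell}$, so $\bigoplus_d\H^0(GL_n,(I^d_{k,\ell})_n)$ is precisely its $GL_n$-invariant subalgebra; here the internal product is the external cup product (which in degree $0$ is the canonical map $M^{GL_n}\otimes N^{GL_n}\to(M\otimes N)^{GL_n}$) followed by $\H^0(GL_n,m)$.

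Next I would verify that $\bigoplus_d\phi^0_{GL_n,I^d_{k,\ell}}$ is a homomorphism for these products. This is immediate from two facts already available: naturality of $\phi_{GL_n,-}$ in the bifunctor variable gives $\phi^0\circ m_*=\H^0(GL_n,m)\circ\phi^0$, while the cup-product compatibility of Lemma \ref{lm-step1-GLn} gives $\phi^0(x\cup y)=\phi^0(x)\cup\phi^0(y)$. Combining them yields $\phi^0(x\cdot y)=\phi^0(x)\cdot\phi^0(y)$, so the image of this graded map is a subalgebra of the invariant algebra.

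Finally I would conclude. By Lemma \ref{lm-generateurs-GLn} every contraction $(i|j)$ lies in the image of $\phi^0_{GL_n,I^2_{k,\ell}}$, hence in the image subalgebra. By \cite[Theorem 3.1]{DCP} the contractions generate the whole $GL_n$-invariant algebra, so the image subalgebra is everything and $\bigoplus_d\phi^0_{GL_n,I^d_{k,\ell}}$ is onto; restricting to the degree-$d$ summand gives the asserted epimorphism (for $d$ odd both sides vanish, so there is nothing to prove). The point requiring care is that the two product structures really correspond under $\phi^0$, which is exactly what Lemma \ref{lm-step1-GLn} together with naturality provides; granting that, the argument is purely formal.
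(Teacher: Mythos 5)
Your proposal is correct and follows essentially the same route as the paper: the paper likewise invokes the cup-product compatibility of Lemma \ref{lm-step1-GLn} to make $\bigoplus_d\phi^0_{GL_n,I^d_{k,\ell}}$ an algebra morphism onto a subalgebra of the invariant ring, then concludes from Lemma \ref{lm-generateurs-GLn} and \cite[Theorem 3.1]{DCP} that this subalgebra contains the generators, hence is everything. Your write-up merely makes explicit (via naturality plus external cup compatibility) what the paper compresses into the phrase ``we obtain an algebra morphism,'' which is a fine elaboration rather than a different argument.
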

\begin{proof}
By lemma \ref{lm-step1-GLn}, $\phi_{GL_n,-}$ is compatible with external cup products. In
particular, if $A^*$ is a graded bifunctor endowed with an 
algebra structure, we obtain an algebra morphism:
$$\phi_{GL_n,A^*}^0:\hom_{\P(1,1)}(\Gamma(gl),A^*)\to \H^0(GL_n,A^*_n)\;.$$
We apply this to $A^*=I^*_{k,\ell}$. By invariant theory \cite[Theorem 3.1]{DCP}, 
$\H^0(GL_n,(I^*_{k,\ell})_n)$ is generated by the contractions
$(i|j)$. By lemma \ref{lm-generateurs-GLn}, the contractions are in the image of 
$\phi_{GL_n,I^*_{k,\ell}}^0$. This proves surjectivity.
\end{proof}

\begin{lemma}[Completion of Step 4]\label{lm-step4-GLn}
Let $F\in\P(1,1)$ and let $n$ be an integer such that $2n\ge \deg F$. Then 
$\phi^0_{GL_n,F}$ is an isomorphism.
\end{lemma}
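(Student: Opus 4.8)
The plan is to run a dévissage that propagates to an arbitrary $F$ the two facts already established, namely injectivity of $\phi^0_{GL_n,-}$ in the admissible degree range (Lemma \ref{lm-step3-GLn}) and bijectivity on the standard injectives (Lemma \ref{lm-surjectivite-GLn} combined with \ref{lm-step3-GLn}). First I would reduce to homogeneous $F$: both $\hom_{\P(1,1)}(\Gamma^*(gl),-)$ and $\H^0(GL_n,-_n)$ are additive, $\phi^0$ is natural, and $\Gamma^*(gl)$ is concentrated in even degrees, so $\phi^0_{GL_n,-}$ respects the splitting $F=\bigoplus_m F_m$; the hypothesis $2n\ge\deg F$ says exactly that each homogeneous summand has degree $m\le 2n$.

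Next I would record that $\phi^0_{GL_n,J}$ is an isomorphism for every injective $J$ of $\P(1,1)$ that is homogeneous of some degree $m\le 2n$. For $J=I^m_{k,\ell}$ this is injectivity (Lemma \ref{lm-step3-GLn}) together with surjectivity (Lemma \ref{lm-surjectivite-GLn}); when $m$ is odd the source $\hom_{\P(1,1)}(\Gamma^*(gl),I^m_{k,\ell})$ vanishes, and surjectivity then forces $\H^0(GL_n,(I^m_{k,\ell})_n)=0$, so $\phi^0$ is again an isomorphism. By Lemma \ref{lm-inj-formecombinatoire} a general injective is a direct summand of a finite sum of such functors, and since both functors are additive the isomorphism restricts to the summand.

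Finally, for homogeneous $F$ of degree $m\le 2n$ I would choose an admissible injective copresentation $0\to F\to J^0\to J^1$ with $J^0,J^1$ again homogeneous of degree $m$ (the standard injective coresolution stays in a fixed degree), so that the previous paragraph applies to $J^0$ and $J^1$. Evaluation at $(\k^n,\k^n)$ is exact on admissible sequences, and both $\hom_{\P(1,1)}(\Gamma^*(gl),-)$ and $\hom_{GL_n}(\k,-)$ are left exact; hence applying the two functors and using naturality of $\phi^0$ (Lemma \ref{lm-step1-GLn}) yields a commutative diagram with exact rows
$$\xymatrix{
0\ar[r] & \hom_{\P(1,1)}(\Gamma^*(gl),F)\ar[r]\ar[d]_{\phi^0_{GL_n,F}} & \hom_{\P(1,1)}(\Gamma^*(gl),J^0)\ar[r]\ar[d]^{\wr} & \hom_{\P(1,1)}(\Gamma^*(gl),J^1)\ar[d]^{\wr}\\
0\ar[r] & \H^0(GL_n,F_n)\ar[r] & \H^0(GL_n,J^0_n)\ar[r] & \H^0(GL_n,J^1_n)
}$$
whose two right-hand verticals are isomorphisms. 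Since $\phi^0_{GL_n,F}$ is already injective by Lemma \ref{lm-step3-GLn}, it remains to prove surjectivity, which is the usual diagram chase: a class in $\H^0(GL_n,F_n)$ pushes to $\H^0(GL_n,J^0_n)$, is pulled back to $\hom_{\P(1,1)}(\Gamma^*(gl),J^0)$, lands in the kernel of the map to $\hom_{\P(1,1)}(\Gamma^*(gl),J^1)$ because the right vertical is injective, and therefore descends to $\hom_{\P(1,1)}(\Gamma^*(gl),F)$ by exactness of the top row.

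The argument is essentially formal once the earlier steps are in place, so the only real care is bookkeeping, and I expect the main (mild) obstacle to be precisely this degree tracking: one must be sure the coresolvents $J^0,J^1$ can be taken homogeneous of the same degree $m$ as $F$, so that the constraint $m\le 2n$ is inherited and Lemmas \ref{lm-step3-GLn} and \ref{lm-surjectivite-GLn} genuinely apply to them, and one must check left exactness of both functors on admissible sequences (this uses that evaluation at $(\k^n,\k^n)$ is exact, which holds by admissibility). All the genuine content has already been spent in the invariant-theoretic surjectivity of Lemma \ref{lm-surjectivite-GLn}; this final step merely organizes it.
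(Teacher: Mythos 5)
Your proposal is correct and follows essentially the same route as the paper: the paper's proof is exactly this dévissage, reducing via left exactness of $\hom_{\P(1,1)}(\Gamma^*(gl),-)$ and $\H^0(GL_n,-_n)$ and lemma \ref{lm-inj-formecombinatoire} to the standard injectives $I^d_{k,\ell}$, where lemmas \ref{lm-step3-GLn} and \ref{lm-surjectivite-GLn} give the isomorphism. Your write-up merely makes explicit the details the paper leaves implicit (the degree bookkeeping through the splitting $\P(1,1)=\bigoplus_d\P_d(1,1)$, the odd-degree case, and the left-exact diagram chase), all of which are handled correctly.
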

\begin{proof}
By lemma \ref{lm-inj-formecombinatoire} and by left exactness of $F\mapsto
\hom(\Gamma^*(gl), F)$ and $F\mapsto
\H^0(GL_n, F_n)$, it suffices to prove the statement for the 
$I^d_{k,\ell}$, $k,\ell\ge 1$, $d\ge 0$. For these bifunctors, the isomorphism
follows from lemmas \ref{lm-step3-GLn} and \ref{lm-surjectivite-GLn}.
\end{proof}

\begin{theorem}[The $GL_n$ case]\label{thm-GLn}
Let $\k$ be a commutative ring, and let $n$ be a positive integer.
For all $F\in\P(1,1)$
we have a $*$-graded map,
natural in $F$:
$$\phi_{GL_n,F}:\Ext^*_{\P(1,1)}(\Gamma^\star(gl),F)\to \H^*(GL_n,F_n) $$
The map $\phi_{GL_n,F}$ is compatible with cup products:
$$\phi_{GL_n,F\otimes F'}(x\cup y)= \phi_{GL_n,F}(x)\cup
\phi_{GL_n,F'}(y)\;.$$
Moreover, $\phi_{G_n,F}$ is an isomorphism whenever $2n\ge\deg(F)$. 
\end{theorem}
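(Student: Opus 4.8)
The plan is to assemble the four steps of the outline and then conclude by a universal $\delta$-functor comparison. The $*$-graded natural map $\phi_{GL_n,F}$ is furnished by the explicit construction of Step~1 (evaluate a representative extension on $(\k^n,\k^n)$ and pull back along $\iota^d$), and the fact that $\phi_{GL_n,-}$ is a morphism of $\delta$-functors which is compatible with cup products is precisely Lemma~\ref{lm-step1-GLn}. This already settles the first two assertions of the theorem, so the only remaining task is to prove that $\phi_{GL_n,F}$ is an isomorphism whenever $2n\ge\deg(F)$.

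For the isomorphism I would invoke Lemma~\ref{lm-deltafct}, after first reducing to the homogeneous case. Since $\P(1,1)=\bigoplus_d\P_{d,(1,1)}$ (writing $\P_{d,(1,1)}$ for the full subcategory of homogeneous functors of total degree $d$) and both $\Ext^*_{\P(1,1)}(\Gamma^\star(gl),-)$ and $\H^*(GL_n,-_n)$ are additive and respect this splitting, it is enough to treat $F\in\P_{d,(1,1)}$ with $d\le 2n$. On such a subcategory both sides are universal $\delta$-functors: the source because $\Ext^{>0}_{\P(1,1)}(\Gamma^\star(gl),J)=0$ for every injective $J$, which is a general property of $\Ext$; and the target by Lemma~\ref{lm-step2-GLn}. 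Moreover, \emph{every} object of $\P_{d,(1,1)}$ has total degree $d\le 2n$, so Lemma~\ref{lm-step4-GLn} shows that $\phi^0_{GL_n,-}$ is an isomorphism of functors on the whole of $\P_{d,(1,1)}$.

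With these inputs in place, Lemma~\ref{lm-deltafct} applies on $\P_{d,(1,1)}$ and yields that $\phi^i_{GL_n,-}$ is an isomorphism for every $i\ge 0$. To finish, I would de-homogenize: for an arbitrary $F$ with $2n\ge\deg(F)$, each homogeneous component $F_d$ satisfies $d\le\deg(F)\le 2n$, hence $\phi_{GL_n,F_d}$ is an isomorphism in every degree, and summing over $d$ gives the statement for $F$.

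The genuine content has already been discharged in the lemmas of Steps~2--4 (the good-filtration and Chevalley-reduction input behind universality of $\H^*(GL_n,-_n)$, and the invariant-theoretic surjectivity behind the degree-zero isomorphism), so the main point requiring care here is purely the bookkeeping of the degree bound. The $\delta$-functor comparison propagates the degree-zero isomorphism to higher degrees through injective resolutions, and one must ensure that the hypothesis $2n\ge\deg(F)$ is not destroyed by this dimension shifting. It is not, precisely because the resolutions can be carried out inside a single homogeneous block $\P_{d,(1,1)}$, where every syzygy again has total degree $d\le 2n$; this is exactly why restricting to $\P_{d,(1,1)}$ \emph{before} applying Lemma~\ref{lm-deltafct} is the right move, and is the only subtlety I would flag in an otherwise formal assembly.
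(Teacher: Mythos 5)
Your proposal is correct and follows exactly the paper's own argument: the first two assertions are Lemma \ref{lm-step1-GLn}, and the isomorphism is obtained by restricting to a homogeneous block $\P_d(1,1)$ with $d\le 2n$ and applying the universal $\delta$-functor comparison of Lemma \ref{lm-deltafct}, with universality of the target from Lemma \ref{lm-step2-GLn}, universality of the source from vanishing of $\Ext^{>0}$ on injectives, and the degree-zero isomorphism from Lemma \ref{lm-step4-GLn}. Your explicit remark that working inside a single homogeneous block is what preserves the degree bound under dimension shifting is precisely the point the paper leaves implicit in its one-line proof.
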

\begin{proof}
The first part of the theorem is given by lemma \ref{lm-step1-GLn}. It remains
to prove the isomorphism. By homogeneity, it suffices to prove the isomorphism
for homogeneous functors of degree $d\le 2n$. To do this, we restrict 
$\phi_{GL_n,-}$ to the subcategory $\P_d(1,1)$ of homogeneous 
functors of degree $d$ and
we apply lemma \ref{lm-deltafct}.
\end{proof}

\begin{remark}
This theorem was already known over a positive characteristic field $\k$: a $\k$-linear isomorphism is
built in \cite[Thm 1.5]{FF}, and compatibility with cup
products is proved in \cite[Thm 1.3]{Touze}. However,
our proof
is new and extends the result to arbitrary commutative rings. 
\end{remark}

\subsection{Symplectic groups}

Let $\k$ be a commutative ring, and let $\P$ be the category of 
strict polynomial functors with source $\V_\k$. Let $(e_i)_{1\le i\le 2n}$
be a basis of $\k^{2n}$ and let $(e_i)^\vee_{1\le i\le 2n}$ be its dual basis.
For all $n>0$ we denote by
$Sp_n$ the symplectic group, that is, the algebraic group of $2n\times 2n$
matrices preserving the skew-symmetric form: $\omega_n:=\sum_{i=1}^n
e_i^\vee\wedge e_{n+i}^\vee$. 
The standard representation
of $Sp_n$ is $\k^{2n}$ 
with left action given by matrix multiplication. For all
functor $F\in\P$, we denote by $F_n$ the rational $Sp_n$-module obtained by
evaluating $F$ on the dual $(\k^{2n})^\vee$ of the standard representation. 
In particular for $F=\Lambda^2$, $\Lambda^2_n$ is the $\k$-module of
skew-symmetric forms of degree 2. Since $\omega_n\in \Lambda^2_n$ is invariant
under the action of $Sp_n$, we have for all $d\ge 0$ an equivariant map:
$$\iota^d:\k\to \Gamma^d(\Lambda^2_n)\,,\quad \lambda\mapsto
\lambda\omega_n^{\otimes d}\;.$$

{\bf Step 1: construction of $\phi_{Sp_n,F}$.}  
By homogeneity, it suffices to do
the construction for a
homogeneous functor $F$ of degree $2d$.
In that case, a
class $x\in \Ext^j_{\P(1,1)}(\Gamma^*(\Lambda^2),F)$ 
is represented by an admissible extension
$$0\to F\to F^0\to\dots \to F^{j-1}\to \Gamma^d(\Lambda^2)\to 0\;.$$
We define $\phi_{Sp_n,F}(x)\in \H^j(Sp_n,F_n)=
\Ext^*_{\text{$Sp_n$-mod}}(\k,F_n)$ 
as the class of the extension obtained by first
evaluating on $(\k^{2n})^\vee$, and then taking the pullback along $\iota^d$.
The proof of the following lemma is analogous to the $GL_n$ case.

\begin{lemma}[Completion of Step 1]
For all $n\ge 0$,
the map $\phi_{Sp_n,-}:\Ext^\star_{\P}(\Gamma^*(\Lambda^2),-)\to
\H^\star(Sp_n,-_n)$ 
is a map of $\delta$-functors.
Moreover it is compatible with cup products: 
$\phi_{Sp_n,F\otimes F'}(x\cup y)= \phi_{Sp_n,F}(x)\cup
\phi_{Sp_n,F'}(y)$.
\end{lemma}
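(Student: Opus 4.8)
The plan is to follow, essentially verbatim, the strategy used for $GL_n$ in Lemma \ref{lm-step1-GLn}, replacing the invariant $\Id_{\k^n}\in gl_n$ by the invariant form $\omega_n\in\Lambda^2_n$ and the coalgebra $\Gamma^\star(gl)$ by $\Gamma^\star(\Lambda^2)$. The $\delta$-functor assertion is formal, exactly as in the general linear case: the map $\phi_{Sp_n,-}$ is built by representing a class by an admissible extension ending in $\Gamma^d(\Lambda^2)$, evaluating on $(\k^{2n})^\vee$, and pulling back along $\iota^d$. Evaluation on a fixed object is exact on admissible sequences (these are evaluation-wise exact), and pullback along a fixed morphism is natural; both commute with the Yoneda connecting homomorphisms. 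Hence $\phi_{Sp_n,-}$ sends the connecting maps of $\Ext^*_\P(\Gamma^\star(\Lambda^2),-)$ to those of $\H^*(Sp_n,-_n)$, so it is a morphism of $\delta$-functors.

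The real content is the compatibility with cup products, and here I would reproduce the $GL_n$ argument step by step. Using the splitting into homogeneous summands and the vanishing of $\Ext^*_\P(\Gamma^\star(\Lambda^2),F)$ for $F$ of odd degree, I reduce to $F,F'$ homogeneous of even degrees $2d,2d'$. Choose admissible extensions $E,E'$ representing classes $x\in\Ext^i_\P(\Gamma^d(\Lambda^2),F)$ and $y\in\Ext^j_\P(\Gamma^{d'}(\Lambda^2),F')$. Admissibility forces all their kernels to have projective values, so evaluation on $(\k^{2n})^\vee$ followed by pullback along $\iota^d,\iota^{d'}$ yields $\k$-projective extensions $\iota^{d\,*}(E_n)$ and $\iota^{d'\,*}(E'_n)$. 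Lemma \ref{lm-2cup} then applies and shows that $\phi_{Sp_n,F}(x)\cup\phi_{Sp_n,F'}(y)$ is represented by the diagonal pullback of the cross product $\iota^{d\,*}(E_n)\cross\iota^{d'\,*}(E'_n)$.

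It then remains to identify this with $\phi_{Sp_n,F\otimes F'}(x\cup y)$, and the crux of the whole matter is the commutativity of the square
$$\xymatrix{
\k\ar[r]^-{\Delta}\ar[d]^{\iota^{d+d'}} &\k\otimes\k
\ar[d]^{\iota^{d}\otimes\iota^{d'}}\\
\Gamma^{d+d'}(\Lambda^2_n)\ar[r]^-{\Delta}&\Gamma^{d}(\Lambda^2_n)\otimes
\Gamma^{d'}(\Lambda^2_n)\;,
}$$
whose bottom arrow is the relevant bidegree component of the comultiplication of the divided power Hopf algebra $\Gamma^\star(\Lambda^2_n)$ (cf. \S\ref{subsec-nota}). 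This is the only genuinely new point relative to the general linear case, and I expect it to be the main (if mild) obstacle: one must check that the chosen coalgebra structure makes $\omega_n$ behave like a divided power generator. Concretely, $\omega_n^{\otimes(d+d')}=\gamma_{d+d'}(\omega_n)$, and the comultiplication $\Gamma^*(\delta_2)$ sends $\gamma_{d+d'}(\omega_n)$ to $\sum_{a+b=d+d'}\gamma_a(\omega_n)\otimes\gamma_b(\omega_n)$; its $\Gamma^d\otimes\Gamma^{d'}$ component is $\omega_n^{\otimes d}\otimes\omega_n^{\otimes d'}=(\iota^d\otimes\iota^{d'})\circ\Delta_\k(1)$, so the square commutes on the nose. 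Granting it, the coproduct of $\Gamma^\star(\Lambda^2)$ pulls the cross product back to $\iota^{(d+d')\,*}$ of $(E\cup E')_n$, which by definition represents $\phi_{Sp_n,F\otimes F'}(x\cup y)$, completing the identification.
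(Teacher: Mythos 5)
Your proposal is correct and takes essentially the same approach as the paper, which simply declares the proof to be analogous to the $GL_n$ case (lemma \ref{lm-step1-GLn}): you carry out exactly that analogy, reducing to even homogeneous degrees, invoking lemma \ref{lm-2cup} via the $\k$-projectivity of the evaluated admissible extensions, and checking the commutative square relating $\iota^{d+d'}$ to $\iota^d\otimes\iota^{d'}$. Your explicit verification that the comultiplication sends $\omega_n^{\otimes(d+d')}$ to $\sum_{a+b=d+d'}\omega_n^{\otimes a}\otimes\omega_n^{\otimes b}$, so that its $(d,d')$-component is $\omega_n^{\otimes d}\otimes\omega_n^{\otimes d'}$, is precisely the point where the invariant $\omega_n$ replaces $\Id_{\k^n}$, and it is the only detail the paper leaves implicit.
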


\begin{lemma}[Step 2] Let $\k$ be a commutative ring. 
Let $J$ be an injective in the category $\P$ of functors defined over
$\k$. Then $\H^i(Sp_n, J_n)=0$ if $i>0$. As a
result, $F\mapsto \H^*(Sp_n, F_n)$ is a universal $\delta$-functor.
\end{lemma}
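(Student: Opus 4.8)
The plan is to follow the proof of Lemma~\ref{lm-step2-GLn} for the general linear group. Since $\A=\V_\k$ here, Lemma~\ref{lm-inj-formecombinatoire} (applied with $k=0$, $\ell=1$) reduces the vanishing to the standard injectives $I^d_m\colon V\mapsto S^d(V^{\oplus m})$, for $m,d\ge 0$. Evaluating on $(\k^{2n})^\vee$, the associated $Sp_n$-module $(I^d_m)_n=S^d\big(((\k^{2n})^\vee)^{\oplus m}\big)$ is a direct summand of the symmetric algebra $S^*\big(((\k^{2n})^\vee)^{\oplus m}\big)$, that is, of the ring of polynomial functions on $m$ copies of the standard representation. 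As cohomology respects the grading by symmetric degree, it therefore suffices to prove that $\H^i\big(Sp_n, S^*(((\k^{2n})^\vee)^{\oplus m})\big)=0$ for $i>0$, for all $m\ge 0$ and all commutative rings $\k$.

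I would then reduce to the case of a field. The symplectic group is a Chevalley group scheme and each $S^d(((\k^{2n})^\vee)^{\oplus m})$ is free of finite type over $\k$ and obtained by base change from $\mathbb{Z}$; hence Lemma~\ref{lm-vanish-chevalley} reduces the desired vanishing over an arbitrary commutative ring to the same vanishing over every field. Over a field the vanishing is a consequence of the good-filtration method: it is enough to show that $S^*\big(((\k^{2n})^\vee)^{\oplus m}\big)$ has a good filtration as an $Sp_n$-module, since the higher cohomology of a module with a good filtration is zero \cite[Part II, Chap 4]{Jantzen}. The standard representation $\k^{2n}$ is self-dual---the symplectic form yields $\k^{2n}\simeq(\k^{2n})^\vee$---and coincides with the dual Weyl module $\nabla(\omega_1)$ attached to the first fundamental weight, which is minuscule; in particular $(\k^{2n})^\vee$, and hence $((\k^{2n})^\vee)^{\oplus m}$, already carries a good filtration.

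The crux, which I expect to be the main obstacle, is to pass from this module to its symmetric powers, i.e.\ to prove that $S^d\big(((\k^{2n})^\vee)^{\oplus m}\big)$ has a good filtration; unlike the preceding reductions this does not follow from the functor formalism. I would obtain it from the general theory of good filtrations for reductive groups: the decomposition $S^d(V^{\oplus m})\simeq\bigoplus_{d_1+\dots+d_m=d} S^{d_1}(V)\otimes\dots\otimes S^{d_m}(V)$ with $V=(\k^{2n})^\vee$ reduces, via the (characteristic-free) theorem that a tensor product of modules with good filtrations again has a good filtration, to the single-variable statement that $S^d(V)$ has a good filtration for $V$ the minuscule standard representation. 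This is the symplectic counterpart of the good-filtration input used for $GL_n$ in Lemma~\ref{lm-step2-GLn} (cf.\ \cite[Section 4.9]{AJ}), and may alternatively be read off from the structure of the coordinate ring of tuples of vectors provided by classical invariant theory \cite{DCP}. Granting it, $\H^{>0}(Sp_n,J_n)$ vanishes for every injective $J$ of $\P$, which is exactly the assertion that the $\delta$-functor $F\mapsto\H^*(Sp_n,F_n)$ is universal.
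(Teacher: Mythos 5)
Your proposal is correct and follows essentially the same route as the paper: reduction to the standard injectives $I^d_k\colon V\mapsto S^d(V^{\oplus k})$ via Lemma~\ref{lm-inj-formecombinatoire}, reduction to the case of a field via the Chevalley-group-scheme argument of Lemma~\ref{lm-vanish-chevalley}, and then vanishing from the existence of a good filtration on $S^*\bigl(((\k^{2n})^\vee)^{\oplus k}\bigr)$, for which the paper simply cites \cite[Section 4.9 p.~508--509]{AJ}. The only difference is that you unpack that citation (self-duality and minusculity of the standard representation, Mathieu's tensor-product theorem for good filtrations, and the single-copy statement for $S^d(V)$), which is a legitimate elaboration rather than a different argument.
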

\begin{proof}
By lemma \ref{lm-inj-formecombinatoire}, it suffices to prove the vanishing on
the injectives 
$I^d_{k}:V\mapsto S^d(V^{\oplus k})$, 
for $k,d\ge 0$.
As in the case of $GL_n$, it suffices to show the vanishing 
of $\H^i(Sp_n,S^*(({\k^{2n}}^\vee)^{\oplus k}))$, $i>0$, when $\k$ is a field.
Once again, this vanishing comes from the existence of a good filtration
\cite[Section 4.9 p. 508-509]{AJ}.
\end{proof}

{\bf Step 3: injectivity in degree $0$.} 
We need a variant of lemma \ref{lm-key-GL}.
\begin{lemma}\label{lm-key-Spn}
Let $d\ge 0$, let $n\ge d$ and let $X,X'$ be two copies of $\k^n$ with
respective basis $(e_i)_{1\le i\le n}$ and $(e_i)_{n+1\le i\le 2n}$. There is
an epimorphism 
$$\widetilde{\theta}:P^{2d}_{X\oplus X'}\twoheadrightarrow
\Gamma^d(\otimes^2)\;.$$
Moreover, if we evaluate the functors on $X\oplus X'$,
then $\widetilde{\theta}_{X\oplus X'}$ sends $\Id_{X\oplus X'}^{\otimes 2d}$
 to
$(\sum_{i=1}^n e_i \otimes e_{n+i})^{\otimes d}$.
\end{lemma}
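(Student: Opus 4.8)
The plan is to mimic the construction of $\theta$ in Lemma \ref{lm-key-GL}, with the composition map replaced by the natural pairing into $\otimes^2$ determined by the chosen bases. Writing $P^{2d}_{X\oplus X'}=\Gamma^{2d}(\hom_\k(X\oplus X',-))$, I would introduce the morphism of strict polynomial functors
$$\mu:\hom_\k(X,-)\otimes\hom_\k(X',-)\longrightarrow \otimes^2\;,\qquad f\otimes g\longmapsto \textstyle\sum_{i=1}^n f(e_i)\otimes g(e_{n+i})\;,$$
and define $\widetilde\theta$ as the composite of: (a) the split epimorphism $P^{2d}_{X\oplus X'}\twoheadrightarrow \Gamma^d(\hom_\k(X,-))\otimes\Gamma^d(\hom_\k(X',-))$ obtained by projecting the exponential isomorphism for the divided powers onto its $(d,d)$-summand; (b) the map $j_d:\Gamma^d(U)\otimes\Gamma^d(W)\to\Gamma^d(U\otimes W)$; and (c) $\Gamma^d(\mu)$.

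Tracking the distinguished element is then a direct computation. Evaluating all functors on $X\oplus X'$, the element $\Id_{X\oplus X'}^{\otimes 2d}$ decomposes under the exponential isomorphism, and its $(d,d)$-component is $\iota_X^{\otimes d}\otimes\iota_{X'}^{\otimes d}$, where $\iota_X:X\hookrightarrow X\oplus X'$ and $\iota_{X'}:X'\hookrightarrow X\oplus X'$ are the canonical inclusions. The map $j_d$ sends this to $(\iota_X\otimes\iota_{X'})^{\otimes d}$, and since $\mu(\iota_X\otimes\iota_{X'})=\sum_{i=1}^n \iota_X(e_i)\otimes\iota_{X'}(e_{n+i})=\sum_{i=1}^n e_i\otimes e_{n+i}$, applying $\Gamma^d(\mu)$ yields $\left(\sum_{i=1}^n e_i\otimes e_{n+i}\right)^{\otimes d}$, as required.

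The main obstacle is the surjectivity of $\widetilde\theta$. As the exponential projection (a) is split epi, it suffices to show that $\Gamma^d(\mu)\circ j_d$ is an epimorphism. This is \emph{not} a formal consequence of the (easy) surjectivity of $\mu$ itself: the map factors through $j_d$, whose image consists only of `decomposable' divided powers, and it is precisely the role of the key lemma (Lemma \ref{lm-key}) to guarantee that such a composite stays surjective once the relevant object has rank at least $d$. Concretely, I would reduce to Lemma \ref{lm-key} as in the $GL_n$ case: using that every $V\in\V_\k$ is finitely generated projective, the chosen bases provide natural isomorphisms $\hom_\k(X',-)\simeq\hom_\k((-)^\vee,\k^n)$ and $\otimes^2\simeq \hom_\k((-)^\vee,-)$, under which—together with the transposition of the two tensor factors of $\otimes^2$—the map $\Gamma^d(\mu)\circ j_d$ becomes the composition morphism
$$\Gamma^d(\hom_\k(V^\vee,\k^n))\otimes \Gamma^d(\hom_\k(\k^n,V))\to \Gamma^d(\hom_\k(V^\vee,V))$$
of the category $\Gamma^d\V_\k$, with middle object $\k^n$. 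Since $n\ge d$, Lemma \ref{lm-key} shows this is an epimorphism for every $V$, hence so is $\widetilde\theta$. The only delicate part is bookkeeping the dualities and the factor transposition consistently; alternatively, one can bypass them and prove surjectivity of $\Gamma^d(\mu)\circ j_d$ by repeating verbatim the basis computation in the proof of Lemma \ref{lm-key}, lifting each standard generator of $\Gamma^d(\otimes^2)(V)$ by means of $n\ge d$ distinct indices.
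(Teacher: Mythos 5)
Your proposal is correct and takes essentially the same route as the paper's proof: project onto the $(d,d)$-summand via the exponential isomorphism, recognize the remaining map as the composition in $\Gamma^d\V_\k$ through a free middle object of rank $n\ge d$, and invoke Lemma \ref{lm-key}, with identical element-tracking. The only difference is bookkeeping: the paper dualizes the $X$-factor (via $X\simeq (X')^\vee$, turning $\hom_\k(X,-)$ into $\hom_\k(-^\vee,X')$), which makes the identification with the composition map immediate and avoids your extra transposition of the two factors of $\otimes^2$.
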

\begin{proof}
The exponential formula for the divided powers induce an epimorphism from
$P^{2d}_{X\oplus X'}$ onto $\Gamma^d(\hom_\k(X,-))\otimes
\Gamma^d(\hom_\k(X',-))$. If we evaluate the functors on $X\oplus X'$, this
epimorphism sends $\Id_{X\oplus X'}^{\otimes 2d}$ to $\i_X^{\otimes d}\otimes
\i_{X'}^{\otimes d}$, where $\i_X,\i_{X'}$ are the inclusions of $X,X'$
into $X\oplus X'$. Now there is an isomorphism $X\to (X')^\vee$ which sends $e_i$ to
$e_{i+n}^\vee$ for all $i$, where $(e_{i+n}^\vee)$ is the dual basis. This
induces an isomorphism from $\Gamma^d(\hom_\k(X,-))\otimes
\Gamma^d(\hom_\k(X',-))$ to $\Gamma^d(\hom_\k(-^\vee,X'))\otimes
\Gamma^d(\hom_\k(X',-))$, which sends (after evaluation on $X\oplus X'$) 
$\i_X^{\otimes d}\otimes
\i_{X'}^{\otimes d}$ to $(\sum e_i\otimes e_{i+n})^{\otimes d}\otimes (\sum
e_{i+n}^\vee\otimes  e_{i+n})^{\otimes d}$. If we postcompose by the
map from $\Gamma^d(\hom_\k(-^\vee,X'))\otimes
\Gamma^d(\hom_\k(X',-))$ to $\Gamma^d(\otimes^2)$ induced by the composition
in $\Gamma^d\V_\k$ then by lemma \ref{lm-key} we obtain the required
epimorphism.
\end{proof}

\begin{lemma}[Completion of Step 3]\label{lm-step3-Spn}
Let $F\in\P$ be a functor defined over a commutative ring $\k$. If $2n\ge \deg
F$, then 
$\phi^0_{GL_n,F}$
is injective.
\end{lemma}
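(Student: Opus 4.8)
The plan is to imitate the proof of the general linear case, lemma \ref{lm-step3-GLn}, replacing the epimorphism of lemma \ref{lm-key-GL} by its symplectic analogue, lemma \ref{lm-key-Spn}. Since $F$ splits as a direct sum of homogeneous functors and both sides of $\phi^0_{Sp_n,F}$ are additive, I would first reduce to $F$ homogeneous. If $F$ has odd degree then $\hom_\P(\Gamma^\star(\Lambda^2),F)=0$, because $\Gamma^d(\Lambda^2)$ is homogeneous of even degree $2d$, so $\phi^0_{Sp_n,F}$ is injective for trivial reasons. It remains to treat $F$ homogeneous of degree $2d$ under the hypothesis $2n\ge 2d$, that is $n\ge d$.

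The next step is to upgrade the epimorphism $\widetilde\theta$ of lemma \ref{lm-key-Spn} into an epimorphism onto $\Gamma^d(\Lambda^2)$. Let $q:\otimes^2\to\Lambda^2$ be the canonical projection. Divided powers preserve epimorphisms of functors with free values: if $A\twoheadrightarrow B$ is a surjection of free $\k$-modules, then lifting a basis of $B$ and using that $\Gamma^\star(-)$ is functorial as a map of divided power algebras shows that every divided power monomial in $\Gamma^d(B)$ is hit, so $\Gamma^d(A)\to\Gamma^d(B)$ is onto. Hence $\Gamma^d(q)$ is an epimorphism and so is the composite
$$\Theta:=\Gamma^d(q)\circ\widetilde\theta:P^{2d}_{X\oplus X'}\twoheadrightarrow\Gamma^d(\Lambda^2)\;.$$
By lemma \ref{lm-key-Spn}, evaluation of $\Theta$ on $X\oplus X'=\k^{2n}$ carries $\Id_{X\oplus X'}^{\otimes 2d}$ to $(\sum_{i=1}^n e_i\wedge e_{n+i})^{\otimes d}$. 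Let $\flat:\k^{2n}\xrightarrow{\simeq}(\k^{2n})^\vee$, $v\mapsto\omega_n(v,-)$, be the isomorphism induced by the symplectic form; a direct computation gives $\flat(e_i)=e_{n+i}^\vee$ and $\flat(e_{n+i})=-e_i^\vee$, whence $\Lambda^2(\flat)$ sends $\sum_i e_i\wedge e_{n+i}$ to $\omega_n$, and therefore $\Gamma^d(\Lambda^2)(\flat)$ sends $(\sum_i e_i\wedge e_{n+i})^{\otimes d}$ to $\omega_n^{\otimes d}$.

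With $\Theta$ in hand I would form the square
$$\xymatrix{
\hom_\P(P^{2d}_{X\oplus X'},F)\ar[rr]^-{\simeq} && F(X\oplus X')\\
\hom_\P(\Gamma^d(\Lambda^2),F)\ar[u]^-{\hom_\P(\Theta,F)}\ar[rr]^-{\phi^0_{Sp_n,F}} && \H^0(Sp_n,F_n)\ar@{^{(}->}[u]
}$$
whose top arrow is the Yoneda isomorphism and whose right vertical arrow is the inclusion of invariants $\H^0(Sp_n,F_n)\hookrightarrow F_n=F((\k^{2n})^\vee)$ followed by the isomorphism $F(\flat^{-1}):F((\k^{2n})^\vee)\xrightarrow{\simeq}F(X\oplus X')$; it is therefore injective, and the left vertical arrow is injective because $\Theta$ is an epimorphism. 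To see that the square commutes, note that for a natural transformation $\psi:\Gamma^d(\Lambda^2)\to F$ the upper path yields $\psi_{X\oplus X'}((\sum_i e_i\wedge e_{n+i})^{\otimes d})$ while the lower path yields $F(\flat^{-1})(\psi_{(\k^{2n})^\vee}(\omega_n^{\otimes d}))$, recalling that $\phi^0_{Sp_n,F}(\psi)=\iota^{d\,*}(\psi_{(\k^{2n})^\vee})=\psi_{(\k^{2n})^\vee}(\omega_n^{\otimes d})$; the naturality identity $F(\flat)\circ\psi_{X\oplus X'}=\psi_{(\k^{2n})^\vee}\circ\Gamma^d(\Lambda^2)(\flat)$ together with the computation of the previous paragraph equates the two. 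A diagram chase now gives injectivity of $\phi^0_{Sp_n,F}$.

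The step I expect to demand the most care is the duality bookkeeping: lemma \ref{lm-key-Spn} naturally produces an element of $\Gamma^d(\Lambda^2)$ evaluated at $\k^{2n}$, whereas the symplectic form $\omega_n$ and hence the class $\iota^d(1)$ defining $\phi^0_{Sp_n,F}$ live in $\Gamma^d(\Lambda^2)$ evaluated at the dual $(\k^{2n})^\vee$. Everything rests on checking that the self-duality $\flat$ afforded by $\omega_n$ matches these two elements, so that the right-hand edge of the diagram really is the inclusion of invariants up to the identification $F(\flat)$. By comparison the surjectivity of $\Gamma^d(q)$ and the final diagram chase are routine.
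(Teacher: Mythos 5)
Your proof is correct and takes essentially the same route as the paper's: you use lemma \ref{lm-key-Spn}, compose with the projection $\otimes^2\to\Lambda^2$ to get an epimorphism $P^{2d}_{X\oplus X'}\twoheadrightarrow\Gamma^d(\Lambda^2)$ hitting $\omega_n^{\otimes d}$ (after identifying $\k^{2n}$ with $(\k^{2n})^\vee$), and then factor $\phi^0_{Sp_n,F}$ through the Yoneda isomorphism and the injection $\hom_\P(\Theta,F)$. The paper's three-line proof silently performs exactly the steps you make explicit (surjectivity of $\Gamma^d(q)$, the duality identification, and the commuting square), so your write-up is a faithful, more detailed version of the same argument.
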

\begin{proof}
Using lemma \ref{lm-key-Spn}, we obtain an epimorphism
$\widetilde{\theta}:P^{2d}_{(\k^{2n})^\vee}\twoheadrightarrow \Gamma^d(\Lambda^2)$
which sends $\Id_{(\k^{2n})^\vee}^{\otimes 2d}$ to $ \omega_n^{\otimes d}$.
Thus, $\phi^0_{GL_n,F}$ factorizes as the composite of the injection 
$\hom_\P(\widetilde{
\theta},F)$ and the Yoneda isomorphism $\hom_\P(P^{2d}_{X\oplus X'},F)\simeq
F(X\oplus X')$. Hence $\phi^0_{GL_n,F}$ is injective.
\end{proof}

\begin{lemma}[Step 4] Let $F\in\P$ be a functor defined over a commutative ring $\k$. If $2n\ge \deg
F$, then 
$\phi^0_{Sp_n,F}$
is an isomorphism.
\end{lemma}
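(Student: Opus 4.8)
The plan is to follow the proof of the general linear case (Lemma~\ref{lm-step4-GLn}) step by step, replacing the general-linear contractions by their symplectic analogues. Since $\phi^0_{Sp_n,F}$ is already known to be injective when $2n\ge\deg F$ by Lemma~\ref{lm-step3-Spn}, the entire task reduces to surjectivity. I would first reduce to the standard injectives $I^d_k:V\mapsto S^d(V^{\oplus k})$: the functor $\P$ splits by degree, so one restricts to a homogeneous $F$ of degree $d\le 2n$ and embeds it into the beginning $0\to F\to J^0\to J^1$ of an admissible injective resolution by injectives of the \emph{same} degree $d$. By Lemma~\ref{lm-inj-formecombinatoire} each $J^i$ is a direct summand of finite sums of the $I^d_k$ (so the degree bound $d\le 2n$ is preserved), and both $F\mapsto\hom_\P(\Gamma^\star(\Lambda^2),F)$ and $F\mapsto\H^0(Sp_n,F_n)$ are left exact (the latter because evaluation is exact and $(-)^{Sp_n}$ is left exact). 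A diagram chase then shows that $\phi^0_{Sp_n,F}$ is an isomorphism once $\phi^0_{Sp_n,J^0}$ and $\phi^0_{Sp_n,J^1}$ are, so it suffices to treat the $I^d_k$.

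For the injectives, I would identify the target: the module $(I^d_k)_n=S^d\big(((\k^{2n})^\vee)^{\oplus k}\big)$ is the space of degree $d$ polynomial functions on $(\k^{2n})^{\oplus k}$, and the First Fundamental Theorem for the symplectic group (the characteristic-free invariant theory of \cite{DCP}) identifies its $Sp_n$-invariants as the subalgebra generated by the symplectic contractions $\langle i\,|\,j\rangle\colon(v_1,\dots,v_k)\mapsto\omega_n(v_i,v_j)$, $1\le i<j\le k$. Since $\phi_{Sp_n,-}$ is compatible with cup products (Step~1), its restriction to the graded algebra functor $A^*=I^*_k$ is an algebra map $\phi^0_{Sp_n,I^*_k}\colon\hom_\P(\Gamma^\star(\Lambda^2),I^*_k)\to\H^0(Sp_n,(I^*_k)_n)$ whose image is a subalgebra of the invariant algebra; hence it is enough to show that each contraction lies in its image, which is the symplectic analogue of Lemma~\ref{lm-generateurs-GLn}.

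To realize a contraction functorially I would use the composite
$$\Lambda^2\xrightarrow{\;\Delta\;}\otimes^2\xrightarrow{\;\i_i\otimes\i_j\;}(-^{\oplus k})\otimes(-^{\oplus k})\xrightarrow{\;\mathrm{mult}\;}S^2(-^{\oplus k})=I^2_k\;,$$
where $\Delta$ is the comultiplication $x\wedge y\mapsto x\otimes y-y\otimes x$ and $\i_i,\i_j$ (with $i\ne j$) are the inclusions into the $i$-th and $j$-th summands. Applying $\phi^0$ to this morphism feeds in $\omega_n=\sum_s e_s^\vee\wedge e_{n+s}^\vee\in\Lambda^2((\k^{2n})^\vee)$; tracing it through $\Delta$ yields $\sum_s(e_s^\vee\otimes e_{n+s}^\vee-e_{n+s}^\vee\otimes e_s^\vee)$, and placing the two tensor slots in positions $i$ and $j$ produces exactly the polynomial $(v_1,\dots,v_k)\mapsto\sum_s\big((v_i)_s(v_j)_{n+s}-(v_i)_{n+s}(v_j)_s\big)=\omega_n(v_i,v_j)$, where $(v)_s:=e_s^\vee(v)$. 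Thus $\langle i\,|\,j\rangle$ is in the image. I would stress that this computation involves no division by $2$, which explains why the symplectic case (unlike $O_{n,n}$) requires no invertibility hypothesis on $2$.

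The main obstacle is the surjectivity input, namely the correct characteristic-free form of the First Fundamental Theorem for $Sp_n$ over an arbitrary commutative ring~$\k$; the explicit realization of the contractions is a routine (if slightly delicate) functor computation, and the left-exactness reduction together with the injectivity of Lemma~\ref{lm-step3-Spn} is formal. Once the invariant algebra of the $I^d_k$ is known to be generated by the $\langle i\,|\,j\rangle$ and these are seen to lie in the image of the algebra map $\phi^0$, surjectivity on the $I^d_k$ follows, and combining it with injectivity completes the proof.
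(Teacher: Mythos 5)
Your proposal is correct and follows essentially the same route as the paper: reduction to the standard injectives $I^d_k$ via left exactness, injectivity from Step 3, the cup-product/algebra-morphism argument, the characteristic-free symplectic invariant theory of de Concini--Procesi, and a functorial realization of the contractions. Your composite $\Lambda^2\xrightarrow{\Delta}\otimes^2\to(-^{\oplus k})^{\otimes 2}\to S^2(-^{\oplus k})$ is the same natural transformation as the paper's $S^2(\iota_{i,j})\circ\rho$ (the paper merely factors it through the component $V\otimes V\hookrightarrow S^2(V\oplus V)$ of the exponential isomorphism for $S^2$), and both send $\omega_n$ to the contraction $(i|j)$.
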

\begin{proof}
By lemma
\ref{lm-inj-formecombinatoire}, and left exactness of 
$F\mapsto \hom_{\P}(\Gamma^*(\Lambda),F)$ and $F\mapsto \H^0(Sp_n,F_n)$, it
suffices to prove the isomorphism for the functors of the form
$I^d_{k}:V\mapsto S^d(V^{\oplus k})$, 
for $k\ge 1$ and $d\le 2n$. Lemma \ref{lm-step3-Spn} already gives 
injectivity. It remains to prove surjectivity. But
$\phi^0_{Sp_n,I^*_k}:\hom(\Gamma^*(\Lambda^2), I^*_k)\to H^0(Sp_n, (I^*_k)_n)$ is
an algebra morphism, so we only have to prove that the generators of
$H^0(Sp_n, (I^*_k)_n)$ lie in the image of $\phi^0_{I^*_k}$. Now
$(I^*_k)_n$ is the polynomial algebra over $k$ copies of the standard
representation of $Sp_n$. Invariant
theory gives \cite[Thm 6.6]{DCP} the generators of $H^0(Sp_n, (I^*_k)_n)$: they are
homogeneous polynomials of degree two $(i|j):(\k^{2n})^{\oplus k}\to \k$, $1\le i<j\le k$, 
sending $(v_1,\dots,v_n)$ to $\omega_n(v_i,v_j)$. In particular, if $k=1$
$H^0(Sp_n, (I^*_k)_n)=\k$ and the surjectivity of $\phi^0_{Sp_n I^2_1}$ is clear.
So the proof will be completed
if we show that the $(i|j)$ lie in the image of $\phi^0_{Sp_n I^2_k}$, for $k\ge 2$.

Let $V,V'$ be two copies of $V\in\V_\k$. The exponential isomorphism for
$S^2$ yields a monomorphism $V\otimes V'\hookrightarrow S^2(V\oplus V')$. Now
if we take $V'=V$, and if we precompose by the inclusion
$\Lambda^2(V)\to V^{\otimes 2}$, we get a natural transformation
$\rho:\Lambda^2(V)\to S^2(V\oplus V)$. If $V={\k^{2n}}^\vee$, 
with basis $(e_i^\vee)_{1\le i\le 2n}$, then $\rho$ sends $e^\vee_i\wedge
e^\vee_j$ to $(e_i^\vee,0)(0,e_j^\vee)-(e_j^\vee,0)(0,e_i^\vee)$ (we denote
the elements of ${\k^{2n}}^\vee\oplus {\k^{2n}}^\vee$ as pairs). Thus, $\rho$
sends $\omega_n$ to the sum $\sum_{i=1}^n 
(e_i^\vee,0)(0,e_{i+n}^\vee)- \sum_{i=1}^n(e_{i+n}^\vee,0)(0,e_i^\vee)$, which
is nothing but the polynomial $\k^{2n}\oplus \k^{2n}\to \k$,
$(x,y)\mapsto \omega_n(x,y)$. For $i<j$, we
denote by $\iota_{i,j}$ the inclusion of
$V\oplus V$ into the $i$-th and the $j$-th term of the sum $V^{\oplus k}$.
Then $\phi_{Sp_n}$ send the natural transformation $S^2(\iota_{i,j})\circ
\rho$ to $(i|j)$ and we are done. 
\end{proof}

\begin{theorem}[The $Sp_n$ case]\label{thm-Spn}
Let $\k$ be a commutative ring, and let $n$ be a positive integer.
For all $F\in\P$
we have a $*$-graded map,
natural in $F$:
$$\phi_{Sp_n,F}:\Ext^*_{\P}(\Gamma^\star(\Lambda^2),F)\to \H^*(Sp_n,F_n) $$
The map $\phi_{Sp_n,F}$ is compatible with cup products:
$$\phi_{Sp_n,F\otimes F'}(x\cup y)= \phi_{Sp_n,F}(x)\cup
\phi_{Sp_n,F'}(y)\;.$$
Moreover, $\phi_{Sp_n,F}$ is an isomorphism whenever $2n\ge \deg(F)$. 
\end{theorem}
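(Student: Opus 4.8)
The plan is to deduce the theorem from the four preparatory steps by the standard comparison of universal $\delta$-functors (Lemma \ref{lm-deltafct}), following verbatim the argument already used for Theorem \ref{thm-GLn}. The existence of the $*$-graded natural transformation $\phi_{Sp_n,F}$ together with its compatibility with cup products is exactly what the completion of Step 1 provides, so the first two assertions of the theorem need no additional argument, and I would simply invoke that lemma.

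To establish that $\phi_{Sp_n,F}$ is an isomorphism in the range $2n\ge\deg(F)$, I would first reduce to homogeneous coefficients. Since every $F\in\P$ splits canonically as a finite direct sum of homogeneous functors and $\phi_{Sp_n,-}$ respects this splitting, it suffices to treat $F$ homogeneous of some degree $m\le 2n$. If $m$ is odd then $\Ext^*_{\P}(\Gamma^\star(\Lambda^2),F)=0$ (each $\Gamma^e(\Lambda^2)$ being homogeneous of even degree $2e$) and $\phi_{Sp_n,F}$ is the zero map between zero groups, hence trivially an isomorphism. The substantial case is $m=2d$ even, where the source collapses to $\Ext^*_{\P}(\Gamma^d(\Lambda^2),F)$ because $\Ext$-groups between functors of distinct homogeneous degrees vanish.

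I would then restrict both sides to the category $\P_{2d}$ of homogeneous functors of degree $2d$ with source $\V_\k$ and verify that the two $*$-graded $\delta$-functors in play are universal there. On the source side, $F\mapsto\Ext^*_{\P}(\Gamma^d(\Lambda^2),F)$ vanishes on injectives in positive degrees simply because $\hom_{\P}(-,J)$ is exact for any injective $J$; on the target side, $F\mapsto\H^*(Sp_n,F_n)$ is universal by Step 2. Since $\phi^0_{Sp_n,F}$ is an isomorphism by Step 4 throughout the range $2n\ge 2d$, Lemma \ref{lm-deltafct} promotes it to an isomorphism $\phi^i_{Sp_n,F}$ in every cohomological degree $i$, which finishes the proof.

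I do not expect this final assembly to be the hard part: once Steps 1--4 are granted, the theorem is a formal consequence of the universality comparison. The genuine obstacle is buried in the preparatory work, and specifically in the surjectivity half of Step 4, which relies on the first fundamental theorem of invariant theory for $Sp_n$ to identify the invariant algebra on copies of the standard representation with the algebra generated by the contractions $(i|j)$, and then on exhibiting explicit functorial preimages through the exponential monomorphism $\rho:\Lambda^2\to S^2(V\oplus V)$ carrying $\omega_n$ to the invariant form.
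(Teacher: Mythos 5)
Your assembly is, in outline, exactly the paper's: the paper proves Theorem \ref{thm-GLn} by restricting $\phi_{GL_n,-}$ to each subcategory of homogeneous functors and applying Lemma \ref{lm-deltafct}, and the symplectic theorem is proved by the identical argument built on the symplectic Steps 1--4. Your handling of the even-degree case, and your identification of where the real work lies (the invariant-theoretic surjectivity in Step 4), are correct.

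There is, however, a genuine gap in your odd-degree case. For $F$ homogeneous of odd degree $m\le 2n$ you declare $\phi_{Sp_n,F}$ to be ``the zero map between zero groups, hence trivially an isomorphism.'' The source $\Ext^*_{\P}(\Gamma^\star(\Lambda^2),F)$ is indeed zero, but the vanishing of the target $\H^*(Sp_n,F_n)$ is not something you may assert for free: it is precisely part of what the theorem claims, and no elementary argument gives it over an arbitrary commutative ring. The natural attempt --- the central element $-\Id\in Sp_n$ acts on $F_n$ by $(-1)^m=-1$, whence $2\cdot\H^*(Sp_n,F_n)=0$ --- proves nothing when $2$ is not invertible; for instance, for $F=I$ and $\k$ a field of characteristic $2$, the vanishing of $\H^*(Sp_n,(\k^{2n})^\vee)$ is a nontrivial theorem (in essence Kempf vanishing for the induced module of highest weight $\omega_1$), not a formality. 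The repair is simply not to case-split at all: for every $m\le 2n$, odd or even, restrict both sides to the category $\P_m$ of homogeneous degree $m$ functors and apply Lemma \ref{lm-deltafct}. In odd degree the source $\delta$-functor is identically zero, hence universal; Step 2 makes the target universal; and Step 4 shows that $\phi^0_{Sp_n,F}$ is an isomorphism, i.e. that $\H^0(Sp_n,F_n)=0$ --- its proof reduces, via lemma \ref{lm-inj-formecombinatoire} and left exactness, to the injectives $I^m_k$, where the symplectic invariant ring of $S^*(((\k^{2n})^\vee)^{\oplus k})$ is generated by the quadratic contractions $(i|j)$ and therefore has no odd-degree invariants. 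Lemma \ref{lm-deltafct} then forces $\H^i(Sp_n,F_n)=0$ in all degrees, which is exactly the statement you skipped. With this correction your proof coincides with the paper's.
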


\subsection{Orthogonal groups}
Let $\k$ be a commutative ring, and let $\P$ be the category of 
strict polynomial functors with source $\V_\k$. Let $(e_i)_{1\le i\le 2n}$
be a basis of $\k^{2n}$ and let $(e_i)^\vee_{1\le i\le 2n}$ be its dual basis.
For all $n>0$ we denote by
$O_{n,n}$ the algebraic group of $2n\times 2n$
matrices preserving the quadratic form $q_n:=\sum_{i=1}^n e^\vee_i
e^\vee_{n+i}$. 
The standard representation
of $O_{n,n}$ is $\k^{2n}$ with left action given by matrix multiplication. For all
functor $F\in\P$, we denote by $F_n$ the rational $O_{n,n}$-module obtained by
evaluating $F$ on the dual $(\k^{2n})^\vee$ of the standard representation. 
In particular for $F=S^2$, $S^2_n$ is the $\k$-module of
polynomials of degree 2 over $\k^{2n}$. Since $q_n\in S^2_n$ is invariant
under the action of $O_{n,n}$, we have for all $d\ge 0$ an equivariant map:
$$\iota^d:\k\to \Gamma^d(S^2_n)\,\,\quad \lambda\mapsto
\lambda q_n^{\otimes d}\;.$$

The case of the orthogonal group is analogous to the case of the symplectic group, except for a restriction on the characteristic of the commutative ring $\k$ which is needed in step $2$ only.

\medskip

{\bf Step 1: construction of $\phi_{O_{n,n},F}$.} We follow rigorously
 the symplectic
case. If $F$ is homogeneous of degree $2d$,  a class
$x\in\Ext^j(\Gamma^*(S^2),F)$ is
 represented by an extension
$0\to F\to\dots\Gamma^d(S^2)\to 0$. We define $\phi_{O_{n,n},F}(x)$ as
 the class of
the extension obtained by evaluation on $(\k^{2n})^\vee$ and pullback along
$\iota^d$. We have:
\begin{lemma}[Completion of Step 1]
For all  $n\ge 0$,
the map $\phi_{O_{n,n},-}:\Ext^*_{\P}(\Gamma^\star(S^2),-)\to
\H^*(O_{n,n},-_n)$ 
is a map of $\delta$-functors.
Moreover it is compatible with cup products: 
$\phi_{O_{n,n},F\otimes F'}(x\cup y)= \phi_{O_{n,n},F}(x)\cup
\phi_{O_{n,n},F'}(y)$.
\end{lemma}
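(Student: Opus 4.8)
The plan is to follow the proof of lemma \ref{lm-step1-GLn} (and its symplectic analogue) essentially verbatim, substituting the characteristic functor $S^2$ for $gl$ and the invariant quadratic form $q_n\in S^2_n$ for the element $\Id_{\k^n}\in gl_n$. That $\phi_{O_{n,n},-}$ is a map of $\delta$-functors is straightforward: naturality in $F$ and compatibility with connecting homomorphisms both follow from the naturality of evaluation on $(\k^{2n})^\vee$ and of pullback along $\iota^d$, exactly as before. The substance of the lemma is therefore the compatibility with cup products, which I would establish as follows.

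First I would reduce to homogeneous $F,F'$, since every functor splits naturally as a direct sum of its homogeneous components; and then to even degrees $2d$ and $2d'$, since $\Gamma^e(S^2)$ is homogeneous of degree $2e$, so that $\Ext^*_{\P}(\Gamma^\star(S^2),F)$ vanishes whenever $F$ is homogeneous of odd degree. Let $E$ and $E'$ be admissible extensions representing classes $x\in\Ext^i_{\P}(\Gamma^d(S^2),F)$ and $y\in\Ext^j_{\P}(\Gamma^{d'}(S^2),F')$. Because $E$ and $E'$ are admissible, their kernels are functors with projective values; hence evaluation on $(\k^{2n})^\vee$ followed by pullback along $\iota^d,\iota^{d'}$ yields $\k$-projective extensions $\iota^{d\,*}(E_n)$ and $\iota^{d'\,*}(E'_n)$. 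Lemma \ref{lm-2cup} then applies and shows that $\phi_{O_{n,n},F}(x)\cup\phi_{O_{n,n},F'}(y)$ is represented by the pullback along the diagonal $\k\to\k\otimes\k$ of the cross product $\iota^{d\,*}(E_n)\cross\iota^{d'\,*}(E'_n)$.

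The only genuinely substantive point is then to identify this with $\phi_{O_{n,n},F\otimes F'}(x\cup y)$, where $x\cup y$ is computed using the comultiplication of $\Gamma^\star(S^2)$. This reduces to the commutativity of the square
$$\xymatrix{
\k\ar[r]^-{\Delta}\ar[d]^{\iota^{d+d'}} &\k\otimes\k
\ar[d]^{\iota^{d}\otimes\iota^{d'}}\\
\Gamma^{d+d'}(S^2_n)\ar[r]^-{\Delta}&\Gamma^{d}(S^2_n)\otimes
\Gamma^{d'}(S^2_n)\;,
}$$
where the horizontal maps are the comultiplications. The square commutes because the $(d,d')$-graded component of the comultiplication of the divided power coalgebra $\Gamma^\star(S^2_n)$ sends the divided power $q_n^{\otimes(d+d')}$ to $q_n^{\otimes d}\otimes q_n^{\otimes d'}$, in perfect parallel with the role played by $\Id_{\k^n}$ in the general linear case. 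Granting this, $\iota^{(d+d')\,*}((E\cup E')_n)$ equals $\iota^{d\,*}(E_n)\cup\iota^{d'\,*}(E'_n)$, which is the desired identity. I expect this coalgebra compatibility of $q_n$ to be the only place where anything specific to the orthogonal group enters, and it is entirely formal; the remainder is a transcription of the $GL_n$ argument.
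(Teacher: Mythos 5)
Your proposal is correct and is essentially the paper's own proof: the paper handles this lemma by declaring it analogous to the symplectic case, which in turn follows the $GL_n$ argument (lemma \ref{lm-step1-GLn}), and your transcription—admissible extensions with projective-valued kernels, lemma \ref{lm-2cup}, and the commuting square expressing that the comultiplication of $\Gamma^\star(S^2_n)$ sends $q_n^{\otimes(d+d')}$ to $q_n^{\otimes d}\otimes q_n^{\otimes d'}$—is precisely that argument with $q_n$ playing the role of $\Id_{\k^n}$. You have in fact written out the details the paper leaves implicit, so there is nothing to correct.
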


{\bf Step 2: $F\mapsto \H^*(O_{n,n}, F_n)$ is a universal $\delta$-functor.} We want to prove that $\H^*(O_{n,n}, F_n)$ vanishes in positive cohomological degree when $F$ is an injective of $\P$.  But the case of the orthogonal group is slightly different from the general linear and symplectic cases. Define $SO_{n,n}$ as the kernel of the Dickson invariant, or equivalently as the kernel of the determinant if $2$ is invertible in $\k$ (see \cite[p. 348]{BKINV} or \cite{Chaput} for details). Then  we have an extension of group schemes:
$$SO_{n,n}\;\triangleleft\; O_{n,n} \twoheadrightarrow \mathbb{Z}/2\mathbb{Z}\;.$$
And $SO_{n,n}$ is a Chevalley group scheme. Now \cite[section 4.9 p.509]{AJ} gives vanishing results for $SO_{n,n}$:
\begin{lemma}\label{lm-vanish-SOnn}
Let $\k$ be a commutative ring and let $J$ be an injective in the category $\P$. Then $\H^i(SO_{n,n}, J_n)=0$ for $i>0$. 
\end{lemma}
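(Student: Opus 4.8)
The plan is to imitate the Step~2 arguments already carried out for $GL_n$ (Lemma~\ref{lm-step2-GLn}) and $Sp_n$, now exploiting the fact—just recorded—that $SO_{n,n}$ is a genuine Chevalley group scheme. First I would reduce to standard injectives. Since every object of $\P$ has finite degree, an injective $J$ decomposes as a finite direct sum $J=\bigoplus_d J_d$ of its homogeneous components, each $J_d$ injective in $\P_d$; by Lemma~\ref{lm-inj-formecombinatoire} (with $\A=\V_\k$, so $k=0,\ \ell=1$) each $J_d$ is a direct summand of a finite sum of the standard injectives $I^d_k:V\mapsto S^d(V^{\oplus k})$. As $\H^{>0}(SO_{n,n},-)$ commutes with finite direct sums and sends direct summands to direct summands, it suffices to prove $\H^i(SO_{n,n},(I^d_k)_n)=0$ for $i>0$ and all $k,d\ge 0$.

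Next I would identify the relevant module. Evaluating $I^d_k$ on $(\k^{2n})^\vee$ gives $(I^d_k)_n=S^d\!\big(((\k^{2n})^\vee)^{\oplus k}\big)$, the degree-$d$ component of the polynomial algebra $S^*\!\big(((\k^{2n})^\vee)^{\oplus k}\big)$ on $k$ copies of the dual standard representation. Each such component is a free $\k$-module of finite rank, obtained by base change from a free $\mathbb{Z}$-module of finite type. Because $SO_{n,n}$ is a Chevalley group scheme, Lemma~\ref{lm-vanish-chevalley}—applied to each graded piece $S^d$ separately—reduces the desired vanishing over an arbitrary commutative ring $\k$ to the single case where $\k$ is a field.

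Over a field I would invoke good filtrations exactly as in the $GL_n$ and $Sp_n$ cases. By \cite[section 4.9 p.509]{AJ} the symmetric algebra $S^*\!\big(((\k^{2n})^\vee)^{\oplus k}\big)$ on copies of the (dual) standard representation of $SO_{n,n}$ carries a good filtration; hence so does each graded summand $(I^d_k)_n$. Since a rational module with a good filtration has vanishing higher rational cohomology, the claim $\H^i(SO_{n,n},(I^d_k)_n)=0$ for $i>0$ follows over every field, and therefore—via the reduction of the previous paragraph—over the given ring $\k$.

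The substantive input, and the only real obstacle, is the good-filtration statement over a field: one must know that the symmetric algebra on copies of the standard representation of $SO_{n,n}$ is good-filtered, which is precisely what \cite{AJ} supplies. It is worth stressing why the lemma is stated for $SO_{n,n}$ rather than $O_{n,n}$: the Chevalley reduction of Lemma~\ref{lm-vanish-chevalley} requires a connected split reductive group, whereas $O_{n,n}$ fails to be connected, sitting in the extension $SO_{n,n}\triangleleft O_{n,n}\twoheadrightarrow \mathbb{Z}/2\mathbb{Z}$. The passage from this $SO_{n,n}$-vanishing to the full orthogonal group is then handled separately by analysing that extension, and it is there—not here—that the hypothesis that $2$ be invertible will be needed.
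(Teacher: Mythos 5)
Your proof is correct and follows essentially the same route as the paper's own: reduction to the standard injectives $I^d_k:V\mapsto S^d(V^{\oplus k})$ via lemma \ref{lm-inj-formecombinatoire}, reduction from an arbitrary commutative ring to a field via lemma \ref{lm-vanish-chevalley} (legitimate precisely because $SO_{n,n}$ is a connected split reductive, hence Chevalley, group scheme over $\mathbb{Z}$, with each $S^d$ of copies of the standard representation a finite free $\mathbb{Z}$-module), and then the good-filtration vanishing of Andersen--Jantzen over a field. Your concluding remarks on why the statement is for $SO_{n,n}$ rather than $O_{n,n}$, and on where invertibility of $2$ will later be needed, accurately anticipate the paper's subsequent Lyndon--Hochschild--Serre argument for lemma \ref{lm-van-Onn}.
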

\begin{proof} By lemma \ref{lm-inj-formecombinatoire}, it suffices to prove the statement for the injectives $I^d_{k}:V\mapsto S^d(V^{\oplus k})$, 
for $k,d\ge 0$.
By lemma \ref{lm-vanish-chevalley}, it suffices to prove the vanishing over a field $\k$. In that case, \cite[section 4.9 p.509]{AJ} yields a good filtration on $S^*((\k^{2n\,\vee})^{\oplus k})$, whence the result.
\end{proof}

But we want a vanishing result for the cohomology of $O_{n,n}$, not for $SO_{n,n}$.
The Lyndon-Hochschild-Serre spectral sequence \cite[Part I, Prop 6.6(3)]{Jantzen} yields a graded isomorphism
$$\H^*(\mathbb{Z}/2\mathbb{Z}, \H^0(SO_{n,n},J_n))\simeq \H^*(O_{n,n}, J_n)\;.$$ Here comes our restriction on the characteristic. If $2$ is invertible in $\k$, then $\mathbb{Z}/2\mathbb{Z}$ is linearly reductive (Maschke's theorem) hence has no cohomology, so we get:
\begin{lemma}\label{lm-van-Onn}
Assume $2$ is invertible in $\k$. Then for all $J$ injective in $\P$, and for all positive $i$, $\H^i(O_{n,n}, J_n)$ equals zero.
So $F\mapsto \H^*(O_{n,n}, F_n)$ is a universal $\delta$-functor.
\end{lemma}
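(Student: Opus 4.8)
The plan is to obtain the vanishing for $O_{n,n}$ from the vanishing for $SO_{n,n}$ (Lemma \ref{lm-vanish-SOnn}) by means of the Lyndon-Hochschild-Serre spectral sequence attached to the extension $SO_{n,n} \triangleleft O_{n,n} \twoheadrightarrow \mathbb{Z}/2\mathbb{Z}$ displayed above, together with the linear reductivity of $\mathbb{Z}/2\mathbb{Z}$.

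First I would use Lemma \ref{lm-vanish-SOnn} to force $\H^{>0}(SO_{n,n}, J_n)=0$, so that the spectral sequence collapses onto its bottom row and produces the graded isomorphism $\H^*(\mathbb{Z}/2\mathbb{Z}, \H^0(SO_{n,n}, J_n)) \simeq \H^*(O_{n,n}, J_n)$ recalled before the statement. It then suffices to analyse the left-hand side. Since $2$ is invertible in $\k$, Maschke's theorem makes the constant group scheme $\mathbb{Z}/2\mathbb{Z}$ linearly reductive, so $\H^{i}(\mathbb{Z}/2\mathbb{Z}, M)$ vanishes for every $i>0$ and every coefficient module $M$. Taking $M = \H^0(SO_{n,n}, J_n)$ and transporting through the isomorphism gives $\H^i(O_{n,n}, J_n)=0$ for $i>0$, which is the first assertion of the lemma.

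For the universality statement I would repeat the argument already used for $GL_n$ and $Sp_n$: the assignment $F \mapsto \H^*(O_{n,n}, F_n)$ is a $\delta$-functor by Step 1, and we have just checked that it vanishes in positive degree on every injective of $\P$, which is precisely the criterion for universality invoked throughout this section (cf. \cite{Grot}).

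The computational content is negligible once the previous lemmas are in place; the genuine subtlety, and the sole source of the hypothesis on $\k$, lies in the linear reductivity step. Without $2$ invertible the quotient $\mathbb{Z}/2\mathbb{Z}$ acquires nonzero higher cohomology, the spectral sequence no longer degenerates to a vanishing statement, and the universality of $F \mapsto \H^*(O_{n,n}, F_n)$ breaks down --- which is why, unlike the symplectic and general linear cases, the orthogonal case is forced to restrict the characteristic at precisely this point.
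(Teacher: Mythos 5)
Your proof is correct and follows essentially the same route as the paper: the vanishing of $\H^{>0}(SO_{n,n},J_n)$ from Lemma \ref{lm-vanish-SOnn} collapses the Lyndon--Hochschild--Serre spectral sequence for $SO_{n,n}\triangleleft O_{n,n}\twoheadrightarrow \mathbb{Z}/2\mathbb{Z}$ onto the row $\H^*(\mathbb{Z}/2\mathbb{Z},\H^0(SO_{n,n},J_n))$, and linear reductivity of $\mathbb{Z}/2\mathbb{Z}$ (Maschke, using $2$ invertible) kills everything in positive degree, whence universality by the standard criterion. Your closing observation about the necessity of the hypothesis on $\k$ also matches the remark the paper makes immediately after the lemma.
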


\begin{remark}
If $2$ is not invertible in $\k$, then the finite group $\mathbb{Z}/2\mathbb{Z}$ may have non trivial cohomology, so the above argument does not work. In fact, not only the proof but also the statement of lemma \ref{lm-van-Onn} is false when $2$ is not invertible in $\k$. So our restriction on the characteristic is necessary. Indeed, consider the constant functor $\k\in \P$. Then $\k$ is injective in $\P$, and $\H^0(SO_{n,n},\k)=\k$, so $\H^*(O_{n,n}, \k)\simeq\H^*(\mathbb{Z}/2\mathbb{Z},\k)$. Take  $\k$ a field of characteristic $2$. Then $\H^i(\mathbb{Z}/2\mathbb{Z},\k)\simeq \k$ for all $i$, so $F\mapsto \H^*(O_{n,n}, F_n)$ is not universal.
\end{remark}

\begin{lemma}[Step 3]\label{lm-step3-Onn}
Let $F\in\P$ be a functor defined over a commutative ring $\k$. If $2n$
is greater than the total degree of $F$, then 
$\phi^0_{O_{n,n},F}$ is injective.
\end{lemma}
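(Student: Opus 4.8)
The plan is to reproduce the symplectic argument (lemma~\ref{lm-step3-Spn}) word for word, with the symmetric square $S^2$ in place of the exterior square $\Lambda^2$ and the quadratic form $q_n$ in place of the symplectic form $\omega_n$. As in the general linear and symplectic cases, the idea is to factor $\phi^0_{O_{n,n},F}$ through a standard projective, so that injectivity becomes a formal consequence of the Yoneda isomorphism. Since $F$ splits as a direct sum of homogeneous functors and $\Gamma^d(S^2)$ is homogeneous of degree $2d$, I may assume $F$ homogeneous; the odd degree case is trivial because then $\hom_\P(\Gamma^\star(S^2),F)=0$. So I assume $F$ homogeneous of degree $2d$, the hypothesis $2n>\deg F$ giving $n\ge d$.

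The ingredient to produce is the $S^2$-analogue of the key lemma~\ref{lm-key-Spn}: for $n\ge d$ there is an epimorphism
$$\widetilde{\theta}:P^{2d}_{(\k^{2n})^\vee}\twoheadrightarrow\Gamma^d(S^2)$$
whose evaluation on $(\k^{2n})^\vee$ carries $\Id_{(\k^{2n})^\vee}^{\otimes 2d}$ to $q_n^{\otimes d}$. Its construction is that of lemma~\ref{lm-key-Spn} verbatim. The exponential isomorphism for the divided powers and the composition in $\Gamma^d\V_\k$ produce, via lemma~\ref{lm-key}, an epimorphism $P^{2d}_{(\k^{2n})^\vee}\twoheadrightarrow\Gamma^d(\otimes^2)$ sending the identity tensor to $(\sum_{i=1}^n e_i^\vee\otimes e_{n+i}^\vee)^{\otimes d}$; the only change is that in the last step I post-compose with the natural transformation $\Gamma^d(\otimes^2)\to\Gamma^d(S^2)$ induced by the canonical quotient $\otimes^2\twoheadrightarrow S^2$, rather than by $\otimes^2\twoheadrightarrow\Lambda^2$. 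Since $\sum e_i^\vee\otimes e_{n+i}^\vee$ maps to $\sum e_i^\vee e_{n+i}^\vee=q_n$, the identity tensor lands on $q_n^{\otimes d}$.

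The one point of genuine content, and the step I would treat most carefully, is that $\Gamma^d$ sends the epimorphism $\otimes^2\twoheadrightarrow S^2$ to an epimorphism; this is not automatic, since $\Gamma^d$ is not right exact over a general commutative ring. It follows, however, from the orbit-sum basis of the divided powers used in the proof of lemma~\ref{lm-key}: an element $(b_{i_1},\dots,b_{i_m},I)$ of that basis of $\Gamma^d(S^2)$ is the image of $(a_{i_1},\dots,a_{i_m},I)\in\Gamma^d(\otimes^2)$ for any choice of lifts $a_{i_s}$ of the $b_{i_s}$ along $\otimes^2\twoheadrightarrow S^2$. I emphasize that, unlike step 2, no hypothesis on the characteristic of $\k$ enters here: over any commutative ring $S^2$ is a canonical quotient of $\otimes^2$, so the restriction that $2$ be invertible is confined to step 2.

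With $\widetilde\theta$ at hand, the conclusion is formal and identical to the symplectic case. Unwinding the definition of $\phi^0_{O_{n,n},F}$, this map is the composite of the injection $\hom_\P(\widetilde\theta,F):\hom_\P(\Gamma^d(S^2),F)\to\hom_\P(P^{2d}_{(\k^{2n})^\vee},F)$ with the Yoneda isomorphism $\hom_\P(P^{2d}_{(\k^{2n})^\vee},F)\simeq F((\k^{2n})^\vee)$, the identification resting on the fact that $\widetilde\theta$ carries the identity tensor to $q_n^{\otimes d}=\iota^d(1)$. As $\widetilde\theta$ is an epimorphism, $\hom_\P(\widetilde\theta,F)$ is injective; being the composite of an injection with an isomorphism, $\phi^0_{O_{n,n},F}$ is therefore injective. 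Beyond the bookkeeping in the key lemma variant, the only substantive verification is the preservation of epimorphisms by $\Gamma^d$ just discussed, so no real obstacle arises.
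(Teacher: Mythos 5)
Your proposal is correct and follows essentially the same route as the paper: the paper's own proof simply invokes lemma~\ref{lm-key-Spn} to produce the epimorphism $\widetilde{\theta}:P^{2d}_{(\k^{2n})^\vee}\twoheadrightarrow\Gamma^d(S^2)$ (by post-composing with $\Gamma^d(\otimes^2)\to\Gamma^d(S^2)$) and then factors $\phi^0_{O_{n,n},F}$ as the Yoneda isomorphism composed with the injection $\hom_\P(\widetilde{\theta},F)$, exactly as you do. Your explicit verification that $\Gamma^d$ carries the quotient $\otimes^2\twoheadrightarrow S^2$ to an epimorphism---via the orbit-sum basis on free modules, with the general case following by naturality as in lemma~\ref{lm-key}---is precisely the detail the paper leaves implicit, and it is sound.
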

\begin{proof}
We use lemma \ref{lm-key-Spn} to produce a suitable epimorphism
$\widetilde{\theta}:P^{2d}_{(\k^{2n})^\vee}\twoheadrightarrow \Gamma^d(S^2)$,
so that $\phi^0_{O_{n,n},F}$ is the composite of a Yoneda isomorphism and the
injective map $\hom_\P(\widetilde{\theta},F)$.
\end{proof}

\begin{lemma}[Step 4] Let $F\in\P$ be a functor defined over a commutative ring $\k$. If $2n\ge \deg
F$, then $\phi^0_{O_{n,n},F}$
is an isomorphism.
\end{lemma}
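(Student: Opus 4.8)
The plan is to follow the symplectic Step~4 proof essentially verbatim, accounting for the two features specific to the orthogonal form. First I would use Lemma~\ref{lm-inj-formecombinatoire} together with the left exactness of the two functors $F\mapsto\hom_\P(\Gamma^*(S^2),F)$ and $F\mapsto\H^0(O_{n,n},F_n)$ to reduce the claim to the standard injectives $I^d_k:V\mapsto S^d(V^{\oplus k})$ with $k\ge 1$ and $d\le 2n$. Injectivity of $\phi^0_{O_{n,n},I^d_k}$ is already Lemma~\ref{lm-step3-Onn}, so only surjectivity is at stake. Since $\phi^0_{O_{n,n},-}$ is compatible with cup products (Step~1), the map $\phi^0_{O_{n,n},I^*_k}:\hom_\P(\Gamma^*(S^2),I^*_k)\to\H^0(O_{n,n},(I^*_k)_n)$ is a morphism of graded algebras, and it suffices to exhibit preimages for a set of algebra generators of the invariant ring.

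Here $(I^*_k)_n$ is the algebra of polynomial functions on $k$ copies of the standard representation $\k^{2n}$. By the characteristic-free first fundamental theorem for the orthogonal group \cite{DCP}, its $O_{n,n}$-invariants are generated in degree two by the quadratic invariants $q_n(v_i)$, $1\le i\le k$, and the polarizations $b_n(v_i,v_j)$, $1\le i<j\le k$, where $b_n$ is the symmetric bilinear form attached to $q_n$. In particular the invariant algebra is concentrated in even degrees, so for odd $d$ both sides vanish and there is nothing to prove. The genuinely new feature compared with the symplectic case is the presence of the \emph{diagonal} generators $q_n(v_i)$: the alternating form $\omega_n$ satisfied $\omega_n(v,v)=0$, whereas $q_n(v_i)$ is a nonzero invariant that must be realized separately, and crucially realized without inverting $2$, since in characteristic $2$ it is not recovered from $b_n$.

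To produce the preimages I would work with the degree-two component $\Gamma^1(S^2)=S^2$, on which $\phi^0$ sends a natural transformation $f:S^2\to I^2_k$ to $f_{(\k^{2n})^\vee}(q_n)$. For the diagonal generators I take $f=S^2(\iota_i)$, where $\iota_i:V\hookrightarrow V^{\oplus k}$ is the $i$-th inclusion; this sends $q_n$ to the polynomial $(v_1,\dots,v_k)\mapsto q_n(v_i)$. For the off-diagonal generators I imitate the symplectic $\rho$: let $\sigma:S^2\to\otimes^2$ be the symmetrization $xy\mapsto x\otimes y+y\otimes x$ (a natural transformation defined over any commutative ring, not requiring $2$ to be invertible), compose it with the middle-factor inclusion $V\otimes V\hookrightarrow S^2(V\oplus V)$ coming from the exponential isomorphism to get $\rho:S^2\to S^2(-\oplus-)$, and then apply $S^2(\iota_{i,j})$ with $\iota_{i,j}$ the inclusion into the $i$-th and $j$-th summands. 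A direct computation gives $\rho(q_n)=b_n$, the polarization regarded as a polynomial on two copies of $\k^{2n}$, so that $S^2(\iota_{i,j})\circ\rho$ is sent to $b_n(v_i,v_j)$. The only point requiring care, and hence the main obstacle, is to route everything through $q_n$ and the characteristic-free symmetrization $\sigma$ so that the realization of the diagonal invariants survives in characteristic $2$; this is exactly why the orthogonal Step~4 needs no hypothesis on $2$, the restriction being confined to the universality established in Step~2.
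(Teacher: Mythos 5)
Your proposal is correct and takes essentially the same route as the paper's own proof: the same reduction to the standard injectives $I^d_k$ via left exactness and compatibility with cup products, the same generators $q_n(v_i)$ and $b_n(v_i,v_j)$ from de Concini--Procesi, and the same realizations $S^2(\iota_{i,i})$ and $S^2(\iota_{i,j})\circ\rho$, where the paper's first map $S^2(V)\to V\otimes V$ in the definition of $\rho$ is precisely your characteristic-free symmetrization $\sigma$. The observations you add (vanishing in odd degree, and that no hypothesis on $2$ is needed here because the restriction enters only through Step~2) are accurate but cosmetic differences.
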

\begin{proof}
As in the symplectic case, it suffices to prove surjectivity for
the functors $I^d_k(V)=S^d(V^{\oplus k})$, $d\ge 0,k\ge 1$. Using compatibility with cup
products, the proof reduces furthermore to proving that
$\phi^0_{O_{n,n},I^*_k}$ hits the generators of the invariant ring
$\H^0(O_{n,n}, (I^*_k)_n)=\H^0(O_{n,n}, S^*(({\k^{2n}}^\vee)^{\oplus k}))$,
for all $k\ge 1$.

Let $b_n$ be the bilinear form associated to $q_n$. By \cite[Thm 5.6]{DCP}, a set of generators is given by
 the homogeneous polynomials $(i|j)_{1\le
i< j\le k}$ of degree
$2$, which send $(v_1,\dots,v_k)$ to $b_n(v_i,v_j)$, and by the $(i|i)_{1\le
i\le n}$ of degree $2$, which send $(v_1,\dots,v_k)$ to $q(v_i)$.
 For $1\le i\le k$, 
let $\iota_{i,i}$ be the inclusion of $V$ into the $i$-th term of $V^{\oplus
k}$. Then $\phi_{O_{n,n},I^2_k}^0$ sends $S^2(\iota_{i,i})$ to $(i|i)$. Assume
now that $1\le i<j\le k$. Denote by $\iota_{i,j}$ the inclusion of $V\oplus V$
in the $i$-th and the $j$-th terms of $V^{\oplus k}$. Let also $\rho$ be the 
composite $S^2(V)\to V\otimes V \to S^2(V\oplus V)$, where the second map is
induced by the exponential isomorphism for $S^2$. If we take 
$V={\k^{2n}}^\vee$, then $\rho$ sends $q_n$ to the sum $\sum_{i=1}^n 
(e_i^\vee,0)(0,e_{i+n}^\vee)+ \sum_{i=1}^n(e_{i+n}^\vee,0)(0,e_i^\vee)$, which
is nothing but the polynomial $\k^{2n}\oplus \k^{2n}\to \k$, $(v,w)\mapsto
b_n(v,w)$. Thus, $\phi_{O_{n,n},I^2_k}^0$ sends the natural transformation
$S^2(\iota_{i,j})\circ \rho$ to $(i|j)$. This concludes the proof.
\end{proof}

\begin{theorem}[The $O_{n,n}$ case]\label{thm-Onncase}
Let $\k$ be a commutative ring, and let $n$ be a positive integer.
For all $F\in\P$
we have a $*$-graded map,
natural in $F$:
$$\phi_{O_{n,n},F}:\Ext^*_{\P}(\Gamma^\star(S^2),F)\to \H^*(O_{n,n},F_n)\;. $$
The map $\phi_{O_{n,n},F}$ is compatible with cup products:
$$\phi_{O_{n,n},F\otimes F'}(x\cup y)= \phi_{O_{n,n},F}(x)\cup
\phi_{O_{n,n},F'}(y)\;.$$
Moreover, if $2n$ is greater or equal to the degree of $F$ and if $2$ is
invertible in $\k$,
then $\phi_{O_{n,n},F}$ is an isomorphism. 
\end{theorem}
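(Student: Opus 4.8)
The plan is to mirror the assembly used for Theorems \ref{thm-GLn} and \ref{thm-Spn}, combining the four steps through the elementary comparison lemma for universal $\delta$-functors (Lemma \ref{lm-deltafct}). The first two assertions---the existence of the $*$-graded natural transformation $\phi_{O_{n,n},F}$ and its compatibility with cup products---are exactly the content of the Step~1 lemma above, so nothing further is needed for them.

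For the isomorphism statement I would first reduce to the homogeneous case. Since every $F\in\P$ splits naturally as a direct sum of its homogeneous components, and both $F\mapsto\Ext^*_\P(\Gamma^\star(S^2),F)$ and $F\mapsto\H^*(O_{n,n},F_n)$ commute with finite direct sums, it suffices to treat a homogeneous $F$ of degree $d\le 2n$. I would then restrict $\phi_{O_{n,n},-}$ to the subcategory $\P_d$ of degree $d$ homogeneous functors and invoke Lemma \ref{lm-deltafct}, which reduces the isomorphism in all degrees to two facts: that source and target are both universal $\delta$-functors, and that $\phi^0_{O_{n,n},F}$ is an isomorphism.

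The universality of the source $\Ext^*_\P(\Gamma^\star(S^2),-)$ is formal, as $\Ext$ vanishes on injectives in positive degree. The universality of the target is supplied by Lemma \ref{lm-van-Onn}, and this is the only place where the hypothesis that $2$ be invertible in $\k$ intervenes: the vanishing $\H^{>0}(O_{n,n},J_n)=0$ on injectives $J$ is deduced from the analogous vanishing for the Chevalley group $SO_{n,n}$ (Lemma \ref{lm-vanish-SOnn}) through the Lyndon--Hochschild--Serre spectral sequence of the extension $SO_{n,n}\triangleleft O_{n,n}\twoheadrightarrow \mathbb{Z}/2\mathbb{Z}$, whose collapse requires $\mathbb{Z}/2\mathbb{Z}$ to be linearly reductive. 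The isomorphism of $\phi^0_{O_{n,n},F}$ in the range $2n\ge d$ combines injectivity (Step~3, Lemma \ref{lm-step3-Onn}) with surjectivity (Step~4, proved via the invariant theory of \cite{DCP} applied to the standard injectives $I^d_k$). With these inputs, Lemma \ref{lm-deltafct} yields at once that $\phi^i_{O_{n,n},F}$ is an isomorphism for every $i$.

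The genuine obstacle in this final argument is ensuring that the target really is a universal $\delta$-functor; everything else is formal bookkeeping already carried out in the symplectic case. This obstacle is exactly what the characteristic restriction resolves, and the remark following Lemma \ref{lm-van-Onn} shows the restriction is unavoidable: over a field of characteristic $2$ the constant functor $\k$ is injective yet $\H^*(O_{n,n},\k)\simeq\H^*(\mathbb{Z}/2\mathbb{Z},\k)$ is nonzero in positive degrees, so the target fails to be universal and the comparison argument breaks down.
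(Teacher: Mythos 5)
Your proposal is correct and follows essentially the same route as the paper: Step~1 supplies the map and its cup-product compatibility, and the isomorphism is obtained by splitting $F$ into homogeneous pieces, restricting to $\P_d$ with $d\le 2n$, and applying lemma \ref{lm-deltafct} to the two universal $\delta$-functors, with the degree-zero isomorphism coming from lemma \ref{lm-step3-Onn} together with the invariant-theoretic surjectivity on the injectives $I^d_k$. You also correctly isolate the only use of the hypothesis that $2$ is invertible (universality of the target via lemmas \ref{lm-vanish-SOnn} and \ref{lm-van-Onn}) and its necessity, exactly as the paper's remark does.
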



\section{Products of classical groups and cohomological
stabilization}\label{sec-4}

In this section, we use K\"unneth formulas to extend the link
between functor cohomology and rational cohomology to products of classical
groups. We also prove the cohomological
stabilization property for classical groups and their products.

\subsection{External tensor products and K\"unneth isomorphisms}
Let $\k$ be a commutative ring and for $i=1,2$, let $\A_i$ be a finite product
of copies of $\V_\k$ or its opposite category. If $F_i\in\P_{\A_i}$, $i=1,2$,
their external tensor product $F_1\boxtimes F_2$ is the functor sending
$(X,Y)$ to $F(X)\otimes F(Y)$. This yields a biexact bifunctor
$$-\boxtimes -:\P_{\A_1}\times\P_{\A_2}\to \P_{\A_1\times\A_2}\;.$$
Let us give some well-known \cite{SFB,FFSS,FF}
properties of external tensor products:
\begin{lemma}\label{lm-kunneth} 
For all $X_i\in\A_i$, external tensor product of standard injectives satisfy the
formula $I^*_{X_1}\boxtimes I^*_{X_2}\simeq I^*_{(X_1,X_2)}$, and we
have a commutative diagram:
$$
\xymatrix{
\hom_{\P_{\A_1}}(F_1,I^*_{X_1})\otimes
\hom_{\P_{\A_2}}(F_2,I^*_{X_2})\ar[r]^-{-\boxtimes-}\ar[d]^{\simeq}&
\hom_{\P_{\A_1\times\A_2}}(F_1\boxtimes F_2,I^*_{(X_1,X_2)})\ar[d]^{\simeq}\\
F_1(X_1)^\vee\otimes F_2(X_2)^\vee\ar@{=}[r]& F_1(X_1)^\vee\otimes
F_2(X_2)^\vee\;,
}
$$
where the vertical arrows are Yoneda isomorphisms.
Moreover, if $\k$ is a field, then for all $F_1,F_2,G_1,G_2$, $-\boxtimes -$ induces an
isomorphism:
$$\Ext^*_{\P_{\A_1}}(F_1,G_1)\otimes \Ext^*_{\P_{\A_2}}(F_2,G_2)\simeq
\Ext^*_{\P_{\A_1\times\A_2}}(F_1\boxtimes F_2,G_1\boxtimes G_2)\;. $$ 
\end{lemma}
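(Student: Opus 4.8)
The plan is to treat the three assertions in turn, the first two being formal and the third carrying the real content. For the identification $I^*_{X_1}\boxtimes I^*_{X_2}\simeq I^*_{(X_1,X_2)}$, I would evaluate both sides on a pair $(Y_1,Y_2)\in\A_1\times\A_2$. The right-hand side is $S^*(\hom_{\A_1\times\A_2}((X_1,X_2),(Y_1,Y_2)))$, and since morphisms in a product category split as $\hom_{\A_1}(X_1,Y_1)\oplus\hom_{\A_2}(X_2,Y_2)$, the exponential isomorphism for $S^*$ (cf. \S\ref{subsec-nota}) rewrites this as $S^*(\hom_{\A_1}(X_1,Y_1))\otimes S^*(\hom_{\A_2}(X_2,Y_2))$, which is precisely the value of the left-hand side. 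Naturality and associativity of the exponential isomorphism make this identification natural in $(Y_1,Y_2)$ and compatible with the grading. For the commutative diagram I would simply unwind the Yoneda isomorphisms, which send a morphism $f\colon F\to I^*_X$ to $f_X^\vee(\Id_X^{\otimes\bullet})\in F(X)^\vee$: tracking $f_1\boxtimes f_2$ through the identification just obtained and evaluating at $(X_1,X_2)$, the element $\Id_{X_1}^{\otimes\bullet}\otimes\Id_{X_2}^{\otimes\bullet}$ is carried to the tensor product of the two Yoneda images. This is a routine diagram chase using only the definitions.

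The third part is where the work lies. Over a field, every object of $\P_{d,\A}$ admits a genuine injective resolution by finite direct sums of standard injectives (cf. the homological algebra paragraph of \S\ref{subsec-strpolfct}). I would fix such resolutions $G_i\hookrightarrow J_i^\bullet$ in $\P_{\A_i}$ and form the total complex $J_1^\bullet\boxtimes J_2^\bullet$ of the associated double complex in $\P_{\A_1\times\A_2}$. By the first part, each term is a direct sum of standard injectives, hence injective; and evaluating on any $(Y_1,Y_2)$ yields the total complex of the tensor product of the two complexes $J_1^\bullet(Y_1)$ and $J_2^\bullet(Y_2)$ of $\k$-vector spaces, which by the Künneth theorem is a resolution of $G_1(Y_1)\otimes G_2(Y_2)$. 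Thus $G_1\boxtimes G_2\hookrightarrow J_1^\bullet\boxtimes J_2^\bullet$ is an injective resolution in the product category.

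It then remains to compute $\Ext^*_{\P_{\A_1\times\A_2}}(F_1\boxtimes F_2,G_1\boxtimes G_2)$ as the cohomology of $\hom_{\P_{\A_1\times\A_2}}(F_1\boxtimes F_2,J_1^\bullet\boxtimes J_2^\bullet)$. Applying the commutative diagram of the second part (extended by additivity to the direct sums of standard injectives making up the $J_i^\bullet$) in each bidegree, this Hom-complex is naturally isomorphic to the tensor product of complexes $\hom_{\P_{\A_1}}(F_1,J_1^\bullet)\otimes\hom_{\P_{\A_2}}(F_2,J_2^\bullet)$, the isomorphism being exactly the one induced by $-\boxtimes-$. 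A final application of the Künneth theorem over the field $\k$ identifies the cohomology of this tensor product of complexes with $\Ext^*_{\P_{\A_1}}(F_1,G_1)\otimes\Ext^*_{\P_{\A_2}}(F_2,G_2)$, which gives the stated isomorphism. I expect the main obstacle to be verifying carefully that $J_1^\bullet\boxtimes J_2^\bullet$ is genuinely an injective resolution — this is where both the first part (for injectivity of the terms) and the Künneth theorem (for exactness) are really needed — together with checking that the resulting isomorphism is the map induced by the external tensor product rather than merely an abstract one.
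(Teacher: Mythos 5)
Your proposal is correct, and it is essentially the standard argument: the paper itself gives no proof of this lemma (it is stated as well-known, with references to \cite{SFB,FFSS,FF}), and the route you take — identifying $I^*_{X_1}\boxtimes I^*_{X_2}\simeq I^*_{(X_1,X_2)}$ via the exponential isomorphism for $S^*$, then resolving $G_1\boxtimes G_2$ by the total complex of external tensor products of standard injective resolutions and applying the K\"unneth theorem twice — is exactly the argument those references use. Your attention to the two genuine points of care (injectivity and exactness of the resolution $J_1^\bullet\boxtimes J_2^\bullet$, and the identification of the resulting isomorphism with the map induced by $-\boxtimes-$ via the degree-zero diagram) is well placed; note only that the finite-dimensionality of values, guaranteed since functors in $\P_{\A_i}$ take values in $\V_\k$, is what makes the chain-level identification $\hom(F_1\boxtimes F_2,J_1^p\boxtimes J_2^q)\simeq\hom(F_1,J_1^p)\otimes\hom(F_2,J_2^q)$ valid.
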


Representations of algebraic groups have a similar external product. For
$i=1,2$, let $G_i$ be an algebraic group over $\k$ and let $M_i$ be a
$G_i$-module. The $\k$-module $M_1\otimes M_2$ is naturally endowed with the structure of a
$G_1\times G_2$-module, which we denote by $M_1\boxtimes M_2$. A computation
on the Hochschild complex gives:
\begin{lemma}\label{lm-kunnethgroupes}For
$i=1,2$, let $G_i$ be a flat algebraic group over $\k$ and let $M_i$ be a
$\k$-flat acyclic $G_i$-module.
Assume furthermore that $\H^0(G_1,M_1)$ is $\k$-flat. Then $M_1\boxtimes M_2$
is an acyclic $G_1\times G_2$-module and we have an isomorphism:
$$\H^0(G_1,M_1)\otimes \H^0(G_2,M_2)\simeq \H^0(G_1\times G_2,M_1\boxtimes
M_2)\;.$$
Moreover, if $\k$ is a field,  then for all $M_1,M_2$, $-\boxtimes -$ induces
an isomorphism:
$$\H^*(G_1,M_1)\otimes \H^*(G_2,M_2)\simeq \H^*(G_1\times G_2,M_1\boxtimes
M_2)\;.$$ 
\end{lemma}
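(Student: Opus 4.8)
The plan is to compute both sides through the Hochschild complex and to exploit the factorization $\k[G_1\times G_2]=\k[G_1]\otimes\k[G_2]$ of the representing Hopf algebra. Recall from \S\ref{subsec-ratcohom} that $\H^*(G,M)$ is the cohomology of the complex $C^\bullet(G,M)$ with $C^n(G,M)=M\otimes\k[G]^{\otimes n}$, which is the cochain complex associated to the cobar cosimplicial $\k$-module of the comodule $M$. Since the comultiplication, the coaction and the counit for $G_1\times G_2$ on $M_1\boxtimes M_2$ are the tensor products of those for the two factors, the cosimplicial module computing $\H^*(G_1\times G_2,M_1\boxtimes M_2)$ is the levelwise tensor product of those computing $\H^*(G_1,M_1)$ and $\H^*(G_2,M_2)$; concretely $C^n(G_1\times G_2,M_1\otimes M_2)\cong C^n(G_1,M_1)\otimes C^n(G_2,M_2)$ after reordering tensor factors, compatibly with the cosimplicial structure maps. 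First I would record the Eilenberg--Zilber theorem in this setting: the external cup product at the cochain level (the Alexander--Whitney-type formula of \S\ref{subsec-ratcohom}, in which the twist ${}^{g}(-)$ by an element of $G_1$ acts trivially on $M_2$ and vice versa) is a chain homotopy equivalence
$$\mathrm{Tot}\big(C^\bullet(G_1,M_1)\otimes_\k C^\bullet(G_2,M_2)\big)\;\xrightarrow{\ \simeq\ }\;C^\bullet(G_1\times G_2,M_1\otimes M_2)\;,$$
valid over any $\k$ (the shuffle and Alexander--Whitney maps carry integral homotopies) and compatible with $-\boxtimes-$. This reduces the lemma to computing the cohomology of the first-quadrant double complex $K^{p,q}=C^p(G_1,M_1)\otimes C^q(G_2,M_2)$.

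For the first assertion I would run the spectral sequence of $K^{p,q}$ obtained by taking cohomology in the $p$-direction first. For fixed $q$, the module $C^q(G_2,M_2)=M_2\otimes\k[G_2]^{\otimes q}$ is $\k$-flat (both $M_2$ and $\k[G_2]$ are), so tensoring by it commutes with taking cohomology; together with the acyclicity of $M_1$ this collapses the page onto the single line $p=0$, with entries $\H^0(G_1,M_1)\otimes C^q(G_2,M_2)$. Writing $N_1=\H^0(G_1,M_1)$, the abutment is therefore $H^n\big(N_1\otimes C^\bullet(G_2,M_2)\big)$. Here I would invoke the hypothesis that $N_1$ is $\k$-flat to pull it through the cohomology, obtaining $N_1\otimes\H^n(G_2,M_2)$, which equals $\H^0(G_1,M_1)\otimes\H^0(G_2,M_2)$ for $n=0$ and $0$ for $n>0$ by the acyclicity of $M_2$. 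This simultaneously shows that $M_1\boxtimes M_2$ is acyclic and identifies its invariants, and it explains the asymmetry of the hypotheses: flatness is consumed for $C^\bullet(G_2,M_2)$ (automatic) and for $N_1$ (assumed), but never for $N_2$, precisely because one takes $G_1$-cohomology first.

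For the second assertion $\k$ is a field, so every module is flat and the algebraic K\"unneth theorem applies to $\mathrm{Tot}(C^\bullet(G_1,M_1)\otimes C^\bullet(G_2,M_2))$ with no $\mathrm{Tor}$ correction terms. Combined with the Eilenberg--Zilber equivalence this yields the graded isomorphism $\H^*(G_1\times G_2,M_1\otimes M_2)\simeq\H^*(G_1,M_1)\otimes\H^*(G_2,M_2)$, and by construction it is induced by $-\boxtimes-$.

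The main obstacle is the first, structural, step: checking carefully that the Hochschild complex of the product is the total complex of the tensor product of the factor complexes, that is, that the two cosimplicial structures match and that the external cup product realizes the Eilenberg--Zilber equivalence. Once this identification is in hand, the spectral-sequence and K\"unneth arguments are formal; the only genuine bookkeeping is tracking which flatness hypothesis is used at which stage.
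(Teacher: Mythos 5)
Your argument is correct and follows essentially the same route as the paper: the paper offers no details beyond the phrase ``a computation on the Hochschild complex gives,'' and your identification of $C^\bullet(G_1\times G_2, M_1\boxtimes M_2)$ with the diagonal of the double complex $C^p(G_1,M_1)\otimes C^q(G_2,M_2)$ via Eilenberg--Zilber, followed by the column-filtration spectral sequence (using flatness of $M_2\otimes\k[G_2]^{\otimes q}$ and then of $\H^0(G_1,M_1)$) and the K\"unneth theorem over a field, is a careful implementation of exactly that computation. The bookkeeping of where each flatness hypothesis enters matches the asymmetric hypotheses of the lemma, so there is nothing to correct.
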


\subsection{Application to products of classical groups}
\label{subsec-productsclassicalgps}

Let $\k$ be a commutative ring. We want to extend the results of section
\ref{sec-3} to algebraic groups $G_n$ over $\k$ which are finite products of
classical groups. 

To deal with products, we need some notations. Assume that $G_n=\prod_{i=1}^N
G^i_n$, where $G^i_n=GL_n, Sp_n$ or $O_{n,n}$. To each factor $G_n^i$ we
associate a category $\A_i$, a `characteristic functor'
$F_{G^i}\in\P_{\A_i}$ of degree two, a representation $V_i^n\in\A_i$ and an invariant
 $e^n_i\in F_{G^i}(V^n_i)$ like in
section \ref{sec-3}:
\begin{center}
\begin{tabular}{|c|ccc|}
\hline
$G^i_n$ & $GL_n$ & $Sp_n$ & $O_{n,n}$ \\
\hline
$\A_i$ & $ \V_\k^\op\times\V_\k$ & $\V_\k$ & $\V_\k$\\
$F_{G^i}$& $gl$ & $\Lambda^2$ & $S^2$\\
$V^n_i$  & $(\k^n,\k^n)$ & ${\k^{2n}}^\vee$ & ${\k^{2n}}^\vee$\\
$e_i^n$  & $\Id_{\k^n}$  & $\omega_n$   & $q_n$ \\
\hline
\end{tabular}
\end{center}
For all $d\ge 0$, let $\boxplus: \prod_{i=1}^N\P_{\A_i}\to \P_{\prod\A_i}$
be the functor induced by the direct sum. We define:
$$\A:=\textstyle\prod_i{\A_i}\,,\quad
V^n:=(V^n_i)\,,\quad F_G:=\boxplus_i F_{G^i} \,,\quad 
e^n:= (e^n_i)\,. $$

\begin{terminology}\label{terminology}
Let $G_n$ be a finite product of the $GL_n, Sp_n$ or $O_{n,n}$. We shall often
denote by $\P_G$ the category of strict polynomial functors with source $\A$
as above.
We refer to these functors as the functors `adapted to
$G_n$'. Indeed for all $n\ge 1$, 
since the $V_i^n$ have a structure of $G_i$-module, 
evaluation on $V^n\in\A$ yields a functor 
$$\P_G\to\text{$G_n$-mod}\,,\quad F\mapsto F_n:= F(V^n)\;.$$
\end{terminology}

\begin{example}
If $G_n=GL_n\times Sp_n$, then $\P_G$ is the category of strict polynomial
functors with source $\V_\k^\op\times\V_\k\times\V_\k$. For all $n\ge 1$ and
any functor $F$ adapted to $G_n$, the rational $G_n$-module $F_n$ equals
$F(\k^n,\k^n,(\k^{2n})^\vee)$ as a $\k$-module, and an element $(g,s)\in G_n$
acts by the formula $v\mapsto F(g^{-1},g,s)(v)$.  
\end{example}

\begin{theorem}\label{thm-generique}Let $\k$ be a commutative ring, let $n$ be a positive integer
and let $G_n$ be a finite product of the algebraic groups (over $\k$)
$GL_n$, $Sp_n$ and $O_{n,n}$. For all $F\in\P_G$
 we have a $*$-graded map,
natural in $F$:
$$\phi_{G_n,F}:\Ext^*_{\P_G}(\Gamma^\star(F_G),F)\to \H^*(G_n,F_n)\;. $$
The map $\phi_{G_n,-}$
is compatible with cup products:
$$\phi_{G_n,F\otimes F'}(x\cup y)= \phi_{G_n,F}(x)\cup
\phi_{G_n,F'}(y)\;.$$

Assume that $2n$ is greater or equal to the degree of $F$. If one of the factors of
$G_n$ equals $O_{n,n}$, 
assume furthermore that $2$ is invertible in $\k$. 
Then $\phi_{G_n,F}$ is an isomorphism. 
\end{theorem}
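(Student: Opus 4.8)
The plan is to follow the four-step template of Section~\ref{sec-3}, and then to use the external tensor product decomposition of the characteristic functor $F_G$ to reduce the product case to the single-group theorems \ref{thm-GLn}, \ref{thm-Spn} and \ref{thm-Onncase}, via the two K\"unneth statements of Lemmas~\ref{lm-kunneth} and \ref{lm-kunnethgroupes}.

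First I would construct $\phi_{G_n,-}$ verbatim from the single-group cases. The tuple $e^n=(e^n_i)$ is a $G_n$-invariant element of $F_G(V^n)=\bigoplus_i F_{G^i}(V^n_i)$, so it yields an equivariant map $\iota^d\colon\k\to\Gamma^d(F_G(V^n))$, $\lambda\mapsto\lambda\,(e^n)^{\otimes d}$; for homogeneous $F$ of degree $2d$ I define $\phi_{G_n,F}(x)$ as the class obtained by evaluating a representing extension of $x$ on $V^n$ and pulling back along $\iota^d$. That $\phi_{G_n,-}$ is a morphism of $\delta$-functors compatible with cup products is the argument of Lemma~\ref{lm-step1-GLn} word for word (using Lemma~\ref{lm-2cup} and the compatibility of $\iota$ with diagonals). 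This gives the first, unconditional part of the theorem. Since the source $\delta$-functor $\Ext^*_{\P_G}(\Gamma^\star(F_G),-)$ is universal by construction, Lemma~\ref{lm-deltafct} reduces the isomorphism statement to proving (a) that $F\mapsto\H^*(G_n,F_n)$ is universal and (b) that $\phi^0_{G_n,F}$ is an isomorphism whenever $2n\ge\deg F$.

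The structural input that makes the reduction work is the exponential isomorphism for divided powers: because $F_G=\boxplus_i F_{G^i}$, one has $\Gamma^\star(F_G)\simeq\boxtimes_i\Gamma^\star(F_{G^i})$ in $\P_G$, compatibly with the coproducts and with $\iota^d$ (which decomposes as the product of the single-group maps $\iota^{d_i}$); hence $\phi_{G_n,-}$ is compatible with $-\boxtimes-$. By Lemma~\ref{lm-inj-formecombinatoire} together with the formula $I^*_{(X_1,\dots,X_N)}\simeq\boxtimes_i I^*_{X_i}$ of Lemma~\ref{lm-kunneth}, every injective of $\P_G$ is a summand of an external tensor product $\boxtimes_i J^i$ of standard injectives. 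For such a functor $(\boxtimes_i J^i)_n=\boxtimes_i (J^i)_n$, and each $(J^i)_n$ is acyclic by Step~2 of the corresponding single-group proof (imposing $2$ invertible whenever an $O_{n,n}$ factor occurs). For (b), the commutative diagram of Lemma~\ref{lm-kunneth} computing $\hom$ into standard injectives identifies $\phi^0_{G_n,\boxtimes_i J^i}$ with $\bigotimes_i\phi^0_{G^i_n,J^i}$; the single-group theorems make each factor an isomorphism, and left exactness of both sides in $F$ propagates this from standard injectives to all of $\P_G$.

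The main obstacle is (a), because the full K\"unneth isomorphism of Lemma~\ref{lm-kunnethgroupes} holds automatically only over a field, whereas here $\k$ is an arbitrary commutative ring. The way around this is that Lemma~\ref{lm-kunnethgroupes} already produces acyclicity of $M_1\boxtimes M_2$ over any ring, provided the two factors are $\k$-flat and acyclic and $\H^0(G_1,M_1)$ is $\k$-flat. So the genuine work is flatness bookkeeping: each $(J^i)_n$ is a summand of a symmetric power, hence $\k$-flat, and its invariants $\H^0(G^i_n,(J^i)_n)$ are $\k$-flat by the good filtration arguments underlying Step~2 --- and it is exactly here that the $O_{n,n}$ factor forces $2$ to be invertible, as in the single-group case. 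Feeding these facts into an induction on the number of factors, each step of which applies Lemma~\ref{lm-kunnethgroupes} to split off one factor, yields both the acyclicity needed for (a) and the degree-zero K\"unneth isomorphism $\H^0(G_n,(\boxtimes_i J^i)_n)\simeq\bigotimes_i\H^0(G^i_n,(J^i)_n)$ used in (b). This completes the verification of the hypotheses of Lemma~\ref{lm-deltafct} and hence the proof.
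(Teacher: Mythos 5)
Your proposal is correct and follows essentially the same route as the paper's own proof: construct $\phi_{G_n,-}$ by pullback along $\iota^d$ exactly as in the single-group cases, decompose the injectives of $\P_G$ as external tensor products of standard injectives (Lemma \ref{lm-inj-formecombinatoire} plus Lemma \ref{lm-kunneth}), deduce universality of $F\mapsto\H^*(G_n,F_n)$ from Lemma \ref{lm-kunnethgroupes}, identify $\phi^0_{G_n,\boxtimes_i J^i}$ with $\bigotimes_i\phi^0_{G^i_n,J^i}$ via the K\"unneth diagrams, and conclude with the $\delta$-functor Lemma \ref{lm-deltafct}. If anything, your explicit flatness bookkeeping for the hypotheses of Lemma \ref{lm-kunnethgroupes} (flatness of the modules $(J^i)_n$ and of their invariants, handled by induction on the number of factors) spells out a point the paper leaves implicit.
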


\begin{proof}
Once again we use a $\delta$-functor argument.
{\bf Step 1.} We build $\phi_{G_n,F}$. First for all $d$, $\Gamma^d(F_G)$ is
homogeneous of degree $2d$, so by homogeneity it suffices to do the
construction for a degree $2d$ homogeneous functor $F$. The element
$e^n\in(F_G)_n= F_G(V_n)$ is $G_n$-invariant, so we have a $G_n$-equivariant
map $\iota^d:\k\to \Gamma^d((F_G)_n)$, $\lambda\mapsto \lambda (e^n)^{\otimes d}$. 

Now a class in $x\in\Ext^i_{\P_\A}(\Gamma^\star(F_G),F)$ is represented by an
extension $F\hookrightarrow \dots\twoheadrightarrow \Gamma^{2d}(F_G)$. We
define $\phi_{G_n,F}(x)$ as the class of the extension obtained by evaluation
on $V^n$ and pullback by $\iota^d$.
Following the proof of lemma \ref{lm-step1-GLn}, 
we check that $\phi_{G_n,F}(x)$ is a map of $\delta$-functors, compatible
with cup products.

{\bf Step 2.} Using the exponential isomorphism for $S^*$ and lemma
\ref{lm-inj-formecombinatoire}, we see that the injectives of $\P_\A$ are
(direct summands in) finite direct sums of injectives of the form
$\boxtimes_{i=1}^N I^{d_i}$, where $I^{d_i}$ is either an injective of the
form $I^{d_i}_{k,\ell}$ or $I^{d_i}_k$, according to the fact that
$\A_i=\V^\op_\k\times \V_\k$ or $\V_\k$.

Using this and lemma \ref{lm-kunnethgroupes}, we obtain that $F\mapsto
\H^*(G_n,F_n)$ is a universal $\delta$-functor. By definition, $F\mapsto
\Ext^*_{\P_\A}(\Gamma^\star(F_G),F)$ is also a universal $\delta$-functor.

{\bf Step 3.} So to finish the proof, it suffices to prove that $\phi_{G_n,F}^0$ is an
isomorphism if $2n\ge d$, where $d$ is the degree of $F$. 
By left exactness of $F\mapsto
\H^0(G_n,F_n)$ and $F\mapsto
\hom_{\P_\A}(\Gamma^\star(F_G),F)$, it suffices to prove the isomorphism 
for $F=\boxtimes_{i=1}^N I^{d_i}$, with $\sum d_i\le d$. But in that case we
have a commutative diagram:
$$\xymatrix{
\bigotimes_i
\hom_{\P_{\A_i}}(\Gamma^\star(F_{G^i}),I^{d_i})\ar[rr]^-{\otimes\phi_{G^i_n,I^{d_i}}^0}
\ar[d]^-{\simeq}
&&
\bigotimes_i \H^0(G^i_n,(I^{d_i})_n)\ar[d]^-{\simeq}\;.\\
\hom_{\P_{\A}}(\boxtimes_{i=1}^N\Gamma^\star(F_{G^i}),\boxtimes_{i=1}^N I^{d_i})
\ar[d]^-{\simeq}&&
\H^0(G_n,(\boxtimes_{i=1}^N I^{d_i})_n)\\
\hom_{\P_{\A}}(\Gamma^\star(F_{G}),\boxtimes_{i=1}^N I^{d_i})
\ar[rru]_-{\phi_{G_n,\boxtimes_{i=1}^N I^{d_i}}^0}
&&
}$$
Since for all $i$, $I^{d_i}$ is a functor of degree $d_i\le d\le 2n$, we
deduce that the horizontal map of the diagram, hence 
$\phi^0_{G_n,\boxtimes_{i=1}^N I^{d_i} }$, is an isomorphism. This
concludes the proof.
\end{proof}

\subsection{Cohomological stabilization}\label{subsec-cohom-stab}

We keep the notations of paragraph \ref{subsec-productsclassicalgps}. In
particular, $\k$ is a commutative ring and $G_n=\prod_i G^i_n$,
where the $G^i_n$ are copies of the algebraic groups
$GL_n,Sp_n$ or $O_{n,n}$ over $\k$, and $V^n$ denotes the tuple $(V_i^n)$
where the $V^n_i$ are $G^i_n$-modules (or pairs of $G_n^i$-modules in the
general linear case). 
 
Let $n\le m$ be two positive integers. For all $i$ we have a standard
embedding $\iota_i :G^i_n\hookrightarrow G^i_m$ and a standard 
$G^i_n$-equivariant map $\pi_i: V^m_i\twoheadrightarrow V^n_i$. 
Let $\iota=\prod\iota_i$ and $\pi=\prod\pi_i$. The pair $(\iota,\pi)$ induces
a morphism in rational cohomology:
$$\phi_{n,m}:=\H^*(G_m,F(V^m))\xrightarrow[]{\iota^*}
\H^*(G_n,F(V^m))\xrightarrow[]{F(\pi)_*} \H^*(G_n,F(V^n))\;.$$
Now theorem
\ref{thm-generique} implies:
\begin{corollary}\label{cor-stab}
Let $\k$ be a commutative ring, let $n$ be a positive integer
and let $G_n$ be a finite product of copies of
$GL_n$, $Sp_n$ or $O_{n,n}$. Let $F\in\P_G$ be a degree $d$ functor adapted to
$G_n$. Let $n,m$ be two positive integers such that $2m\ge 2n\ge d$. If 
the orthogonal group appears as one of the factors of $G_n$, assume
furthermore that $2$ is invertible in $\k$. Then the morphism
$$\phi_{n,m}:\H^*(G_m,F_m)\xrightarrow[]{\simeq} \H^*(G_n,F_n)$$
is an isomorphism.
\end{corollary}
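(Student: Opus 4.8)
The plan is to deduce the isomorphism directly from Theorem \ref{thm-generique}, exploiting the fact that the functor-cohomology side is independent of $n$. Since $2m\ge 2n\ge d$ — and, in the orthogonal case, $2$ is invertible in $\k$ — Theorem \ref{thm-generique} yields isomorphisms
$$\phi_{G_n,F}\colon\Ext^*_{\P_G}(\Gamma^\star(F_G),F)\xrightarrow{\simeq}\H^*(G_n,F_n)\,,\qquad \phi_{G_m,F}\colon\Ext^*_{\P_G}(\Gamma^\star(F_G),F)\xrightarrow{\simeq}\H^*(G_m,F_m)\,,$$
sharing one and the same source. It therefore suffices to prove that the triangle relating $\phi_{G_n,F}$, $\phi_{G_m,F}$ and $\phi_{n,m}$ commutes, i.e. that $\phi_{n,m}\circ\phi_{G_m,F}=\phi_{G_n,F}$; the corollary then follows at once, since $\phi_{n,m}=\phi_{G_n,F}\circ\phi_{G_m,F}^{-1}$ is a composite of isomorphisms.

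The one point that I would check by hand is a compatibility between the stabilization data and the characteristic invariants. For each factor I would fix the standard embedding $\iota_i\colon G^i_n\hookrightarrow G^i_m$ and the standard map $\pi_i\colon V^m_i\to V^n_i$ so that (a) $\pi_i$ is $G^i_n$-equivariant, $G^i_n$ acting on $V^m_i$ through $\iota_i$, and (b) $F_{G^i}(\pi_i)$ sends the invariant $e^m_i$ to $e^n_i$. Concretely one takes the rank-$2n$ symplectic (resp. quadratic) summand of $\k^{2m}$ spanned by $e_1,\dots,e_n,e_{m+1},\dots,e_{m+n}$, and the analogous block in the general linear case; a short computation with dual bases then gives (b). For instance, in the symplectic case $\Lambda^2(\pi)$ sends $\sum_{i=1}^m e_i^\vee\wedge e_{m+i}^\vee$ to $\sum_{i=1}^n e_i^\vee\wedge e_{n+i}^\vee$, the summands with $i>n$ being annihilated. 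Setting $\pi=\prod_i\pi_i$ and taking $d$-th powers, (b) gives the identity $\Gamma^d(F_G(\pi))\circ\iota^d_m=\iota^d_n$ of maps $\k\to\Gamma^d((F_G)_n)$, where $\iota^d_m,\iota^d_n$ are the invariant maps of the proof of Theorem \ref{thm-generique}. This case-by-case verification is the only genuinely computational point of the argument.

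Granting this compatibility, the commutativity of the triangle is formal. Represent a class $x\in\Ext^j_{\P_G}(\Gamma^\star(F_G),F)$, with $F$ homogeneous of degree $2d$, by an admissible extension $E\colon F\hookrightarrow F^0\to\cdots\to F^{j-1}\twoheadrightarrow\Gamma^d(F_G)$. Applying $E$ to the morphism $\pi\colon V^m\to V^n$ of $\A$ produces a morphism of extensions of $G_n$-modules from $\res_{G_n}E(V^m)$ to $E(V^n)$ (restriction being along $\iota$), whose map on sub-objects is $F(\pi)$ and whose map on quotients is $\Gamma^d(F_G(\pi))$; it is $G_n$-equivariant exactly because $\pi$ is. The Yoneda push–pull identity then gives, in $\Ext^j_{G_n\text{-mod}}(\Gamma^d((F_G)_m),F(V^n))$,
$$F(\pi)_*\,[\res_{G_n}E(V^m)]=\Gamma^d(F_G(\pi))^*\,[E(V^n)]\,.$$
I would finish by pulling back along $\iota^d_m$. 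Recall that $\phi_{G_m,F}(x)=(\iota^d_m)^*[E(V^m)]$ and that $\iota^*$ is restriction along $\iota$, so that, pushing forward by $F(\pi)$,
$$\phi_{n,m}(\phi_{G_m,F}(x))=F(\pi)_*\,(\iota^d_m)^*[\res_{G_n}E(V^m)]\,.$$
Since pushforward in the coefficient variable commutes with pullback in the other, this equals $(\iota^d_m)^*\,F(\pi)_*[\res_{G_n}E(V^m)]$, which by the displayed push–pull identity is $(\iota^d_m)^*\,\Gamma^d(F_G(\pi))^*[E(V^n)]=(\Gamma^d(F_G(\pi))\circ\iota^d_m)^*[E(V^n)]$. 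The compatibility $\Gamma^d(F_G(\pi))\circ\iota^d_m=\iota^d_n$ identifies this last expression with $(\iota^d_n)^*[E(V^n)]=\phi_{G_n,F}(x)$. Hence $\phi_{n,m}\circ\phi_{G_m,F}=\phi_{G_n,F}$, as required.
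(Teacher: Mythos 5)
Your proof is correct and takes essentially the same route as the paper: the paper's own (very terse) proof likewise checks that $F_G(\pi)$ sends $e^m$ to $e^n$, deduces the factorization $\phi_{G_n,F}=\phi_{n,m}\circ\phi_{G_m,F}$, and then applies theorem \ref{thm-generique} to both isomorphisms $\phi_{G_n,F}$ and $\phi_{G_m,F}$. You have simply written out in full the explicit verification on invariants and the Yoneda push--pull argument that the paper leaves implicit.
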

\begin{proof}
We check that $(F_G)_m\xrightarrow[]{F_G(\pi)} (F_G)_n$ sends $e^m$ to $e^n$.
Thus $ \phi_{G_n,F}=\phi_{n,m}\circ\phi_{G_m,F}$, and we apply theorem
\ref{thm-generique}.
\end{proof}

\begin{remark}
Corollary \ref{cor-stab} is a good illustration of the differences between our
methods for classical algebraic groups and
the methods of \cite{FFSS,DjamentVespa} where classical groups over
finite fields are considered as finite groups.  
Indeed, in our case the cohomological stabilization is a byproduct of the
proof, whereas in the finite group case it is needed as 
an input for the proof.   
\end{remark}


\begin{notation}
If $G_n$ is a product of copies of $GL_n$, $Sp_n$ or $O_{n,n}$, and if $F$ is
a strict polynomial functor adapted to $G_n$, we denote by 
$\H^*(G_\infty,F_\infty)$ the stable value of the $\H^*(G_n,F_n)$.
\end{notation}

\section{Products and coproducts on functor cohomology}
\label{sec-prodetcoprod}

In this section, $\k$ is a field (we need this condition because we use 
in many places 
the K\"unneth isomorphism of lemma \ref{lm-kunneth}). We study product and
coproduct structures which arise on functor cohomology
$\Ext^*_{\P_\A}(E^*,-)$. Our purpose is to generalize and
clarify the tools of \cite[Lemma 1.10 and 1.11]{FFSS}.

Sections \ref{subsec-hopfalg}, \ref{subsec-hopfmonstr} and \ref{subsec-sum-diag-adj} are introductory. We recall the definition of `Hopf algebra functor', we introduce the notion of `Hopf monoidal functor' (which is useful to describe structures on strict polynomial functors $E^*$, as well as the
structures on functor cohomology $\Ext^*(E^*,-)$).
Then we recall a classical tool of functor categories \cite{FFSS,FF},
namely, the sum-diagonal adjunction. This tool is the key for the
existence of coproducts and more generally of Hopf monoidal structures on
functor cohomology.

With these tools at our disposal, we make an attempt to classify the Hopf monoidal structure which may arise on extension groups of the form $\Ext^*(E^*,-)$. To be more specific, we give in 
section \ref{subsec-class1} bijections between:
\begin{itemize}
\item[(1)] Hopf algebra structures on $E^*$ (denoted by  $(m_E,1_E,\Delta_E,\epsilon_E)$),
\item[(2)] Hopf monoidal structures on $E^*$ (denoted by  $(\mu,\eta,\lambda,\epsilon)$),
\item[(3)] Hopf monoidal structures on $\hom(E^*,-)$.
\end{itemize}
Taking injective resolutions, these structures yield Hopf
monoidal structures on $\Ext^*(E^*,-)$.

In fact, we don't need the classification of Hopf monoidal structures on $\Ext^*(E^*,-)$ for our applications. We only need theorem \ref{thm-Hopfmon} which states that a Hopf algebra structure on $E^*$ induces a Hopf monoidal structure on $\Ext^*(E^*,-)$, and gives two equivalent descriptions of the external cup product. But \cite[Lemma 1.10 and 1.11]{FFSS} use a superflous hypothesis (the functors need not be exponential), and also has a sign problem, so we thought it was worth clarifying the situation.

\subsubsection*{Convention on gradings} If $n\ge 0$ is an integer an $n$-graded
object is a family of objects indexed by $n$-tuples of nonnegative integers
(Thus, a $0$-graded object is a family indexed by the empty tuple `$(\,)$', in
other words a $0$-graded object is
just a non-graded object). 
We denote $n$-gradations by a single `$*$' sign. 
If $*=(i_1,\dots,i_n)$ and $\star=(j_1,\dots,j_n)$, then $*+\star$ is the
tuple $(i_1+j_1,\dots,i_n+j_n)$, $*\star=(i_1j_1,\dots,i_nj_n)$ and $|*|$ is
the integer $\sum i_k$ (in particular $|(\,)|=0$). 

We often drop the gradings and write $X$ for a multigraded object instead of
$X^*$ when no confusion is possible.

\subsection{Hopf algebra functors}\label{subsec-hopfalg}
In this section and in the
remainder of the paper, we define Hopf algebras as in \cite{ML}, that is
without requiring an antipode.

Thus if $F^*$ is a $n$-graded functor from a category $\C$ to the category of $\k$-vector spaces,  a 
`$n$-graded Hopf algebra structure on
$F^*$' is a tuple $(m_F,1_F,\Delta_F,\epsilon_F)$ of $n$-graded
natural maps
$$F^*(X)^{\otimes 2}\xrightarrow[]{m_F} F^*(X)\,,\;
\k\xrightarrow[]{1_F} F^*(X)\,,\;
F^*(X)\xrightarrow[]{\Delta_F} F^*(X)^{\otimes 2}\,,\;
F^*(X)\xrightarrow[]{\epsilon_F}\k\,,$$
such that for all $X\in\C$, $F^*(X)$ is an $n$-graded Hopf algebra.

\subsection{Hopf monoidal functors}\label{subsec-hopfmonstr}
Let $\k$ be a field, and let $(\C,\square,e)$ be a symmetric monoidal category
\cite[VII.7]{MLCat}. We consider the category $\kvect$ of $\k$-vector spaces
as a symmetric monoidal category, with monoidal product the usual tensor
product over $\k$. We fix an $n$-graded functor
$F^*:\C\to\kvect$. 
We regard $\k$  as an $n$-graded constant functor concentrated in
degree $(0,\dots,0)$.
A \emph{$n$-graded monoidal structure} on
$F^*$ is a pair $(\mu,\eta)$ of $n$-graded maps:
\begin{align*}
&\mu: \textstyle 
F^*(X)\otimes F^\star(Y)\to F^{*+\star}(X\square Y)\;,
&\eta: \k \to F^*(e) \;,
\end{align*} 
which satisfy an associativity and a unit condition
\cite[XI.2]{MLCat}. By reversing the arrows, one obtains the notion of an
$n$-graded comonoidal structure $(\lambda,\epsilon)$ on $F^*$. A monoid in
$\C$ is an object equipped with a multiplication $M\square M\to M$ and a unit
$e\to M$ satisfying an associativity and a unit condition \cite[VII.3]{MLCat}.
By reversing the arrows one gets the definition of a comonoid in $\C$. The
following lemma is straightforward from the axioms:
\begin{lemma}\label{lm-monoidal-funct-monoids}
Let $F^*:\C\to\kvect$ be an $n$-graded monoidal functor and let 
$M$ be a monoid in $\C$. The maps:
\begin{align*}
&
F^*(M)\otimes F^\star(M)\xrightarrow[]{\mu} 
F^{*+\star}(M\square M)
\xrightarrow[]{} F^{*+\star}(M)\;,
&\k\xrightarrow[]{\eta} F^*(e) \xrightarrow[]{} F^*(M)\;,
\end{align*}
make $F^*(M)$ into an $n$-graded algebra. In particular, $F^*(e)$ is an
$n$-graded algebra. Similarly, an $n$-graded
comonoidal functor sends a comonoid to an $n$-graded coalgebra, and $F^*(e)$ is
an $n$-graded coalgebra.
\end{lemma}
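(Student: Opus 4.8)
The plan is to verify the two algebra axioms—associativity and unitality—directly, the entire argument being a diagram chase that assembles three already-available ingredients: the associativity and unit coherences of the monoidal structure $(\mu,\eta)$, the naturality of $\mu$ and $\eta$ in their arguments, and the defining axioms of the monoid $M$. Write $m\colon M\square M\to M$ and $u\colon e\to M$ for the multiplication and unit of $M$, and abbreviate the induced structure maps on $F^*(M)$ by $\pi=F^{*+\star}(m)\circ\mu$ and $1=F^*(u)\circ\eta$.

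First I would treat associativity of $\pi$. Using the naturality of $\mu$ on each side (with respect to $m\square\Id$ on the left and $\Id\square m$ on the right) one slides the inner copy of $F(m)$ past the outer instance of $\mu$, so that $\pi\circ(\pi\otimes\Id)$ is rewritten as the double product $\mu\circ(\mu\otimes\Id)\colon F^*(M)^{\otimes 3}\to F((M\square M)\square M)$ followed by $F$ of the composite $m\circ(m\square\Id)$, and symmetrically $\pi\circ(\Id\otimes\pi)$ as $\mu\circ(\Id\otimes\mu)\colon F^*(M)^{\otimes 3}\to F(M\square(M\square M))$ followed by $F$ of $m\circ(\Id\square m)$. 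The associativity coherence of $(\mu,\eta)$ identifies $\mu\circ(\mu\otimes\Id)$ with $\mu\circ(\Id\otimes\mu)$ up to the associator $\alpha$ of $\square$, while the monoid law gives $m\circ(m\square\Id)=m\circ(\Id\square m)\circ\alpha$; the two occurrences of $\alpha$ cancel, leaving $\pi\circ(\pi\otimes\Id)=\pi\circ(\Id\otimes\pi)$.

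The unit axiom follows by the same pattern, combining the unit coherence of $(\mu,\eta)$, the naturality of $\mu$ with respect to $u\square\Id$ and $\Id\square u$, and the monoid unit law $m\circ(u\square\Id)=\Id=m\circ(\Id\square u)$, which together show that $1$ is a two-sided unit for $\pi$. The ``in particular'' assertion is the case $M=e$ with its canonical monoid structure (multiplication the coherence isomorphism $e\square e\simeq e$, unit the identity), in which $\pi$ and $1$ collapse to the maps coming straight from $(\mu,\eta)$. Finally, the comonoidal statement is obtained by reversing every arrow throughout: this turns the monoidal coherences into comonoidal ones, the monoid $M$ into a comonoid, and $\pi,1$ into a coproduct and counit, so the very same cells read backwards show that $F^*$ sends a comonoid to an $n$-graded coalgebra, with $F^*(e)$ a coalgebra as the case of the trivial comonoid $e$. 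As indicated in the statement, none of this presents a real obstacle; the only care needed is to track the grading shift $*\mapsto *+\star$ correctly through each naturality square.
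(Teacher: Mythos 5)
Your proof is correct and is precisely the routine verification the paper has in mind: the paper offers no written proof at all, declaring the lemma ``straightforward from the axioms,'' and your diagram chase (naturality of $\mu$ and $\eta$, the monoidal coherences, and the monoid laws, with the associators cancelling) is exactly that straightforward argument, including the correct handling of the case $M=e$ and the dualization for the comonoidal statement.
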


Let $\tau$ be the isomorphism $X\otimes Y\xrightarrow[]{\simeq}
 Y\otimes X$, and let
$\tau^*$ be its $n$-graded version, which sends the tensor product $x\otimes y$
of an element $x$ of $n$-degree $*$ and an element $y$ of $n$-degree $\star$
to $(-1)^{|*\star|}y\otimes x$.  

\begin{definition}\label{def-Hopfmonstr}
A \emph{$n$-graded Hopf
monoidal structure} on $F^*$ is a tuple $(\mu,\lambda,\eta,\epsilon)$ such
that:
\begin{enumerate}
\item[(0)] $(\mu,\eta)$ is an $n$-graded monoidal structure on $F^*$ and
$(\lambda,\epsilon)$ is an $n$-graded comonoidal structure on $F^*$.
\item[(1)] $\eta:\k\to F^*(e)$ is a morphism of $n$-graded coalgebras.
\item[(2)] $\epsilon:F^*(e)\to\k$ is a morphism of $n$-graded algebras.
\item[(3)] The following diagram commutes~:
$$\xymatrix{
F(X\square Y) \otimes F(Z\square T) 
\ar[r]_-{\lambda\otimes
\lambda}\ar[d]^{\mu}&
F(X) \otimes F(Y) \otimes F(Z) \otimes F(T)
 \ar[d]_{F(X)\square \tau^{*}\square F(T)}\\
F(X\square Y\square Z\square
T)\ar[d]^{F(X\square\tau\square T)}&
F(X) \otimes F(Z) \otimes F(Y)\otimes F(T) 
\ar[d]_{\mu\otimes \mu}
\\
F(X\square Z\square Y\square T)\ar[r]^-{\lambda}
& F(X\square Z)\otimes F(Y\square T)\;.
}$$
\end{enumerate}
\end{definition}
A Hopf monoid in $\C$ is an object $M$ which is both
a monoid and a comonoid, and such that (1) the unit $e\to M$ is a map of
comonoids, (2) the counit $M\to e$ is a map of monoids, and (3) the
comultiplication $M\to M\square M$ is a map of monoids ($M\square M$ can be made
into a monoid in the obvious way because $\C$ is \emph{symmetric} monoidal).
For example, a Hopf monoid in $\kvect$ is nothing but a Hopf algebra.
With this definition
we immediately obtain the Hopf analogue of lemma
\ref{lm-monoidal-funct-monoids}:
\begin{lemma}\label{lm-Hopf-monoidal-funct-monoids}
Let $F^*:\C\to\kvect$ be an $n$-graded Hopf monoidal functor and let 
$M$ be a Hopf monoid in $\C$. The monoid and the comonoid structures on
$F^*(M)$ given by lemma \ref{lm-monoidal-funct-monoids}
make $F^*(M)$ into an $n$-graded Hopf algebra. In particular, $F^*(e)$ is an
$n$-graded Hopf algebra.
\end{lemma}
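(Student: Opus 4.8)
The plan is to observe that Lemma \ref{lm-monoidal-funct-monoids}, applied to the monoid and the comonoid underlying $M$, already equips $F^*(M)$ with an $n$-graded algebra structure (multiplication $m_F = F^*(m_M)\circ\mu$, unit $1_F = F^*(u_M)\circ\eta$) and an $n$-graded coalgebra structure (comultiplication $\Delta_F = \lambda\circ F^*(c_M)$, counit $\epsilon_F = \epsilon\circ F^*(\epsilon_M)$), where $(m_M,u_M,c_M,\epsilon_M)$ denote the structure maps of the Hopf monoid $M$. Thus the only thing left to prove is that these four maps satisfy the three Hopf-algebra compatibility axioms: the unit is a morphism of coalgebras, the counit is a morphism of algebras, and the comultiplication is a morphism of algebras. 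First I would check that each of these three axioms for $F^*(M)$ is forced by the corresponding axiom of the Hopf monoid $M$, combined with naturality of $\mu$, $\lambda$ and the matching clause of Definition \ref{def-Hopfmonstr}. The final assertion then follows by specializing to $M=e$, which is canonically a Hopf monoid.

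For the two easier axioms I would argue by a short naturality chase. To see that $1_F$ is a coalgebra map, I would rewrite $\Delta_F\circ 1_F = \lambda\circ F^*(c_M\circ u_M)\circ\eta$; since $u_M$ is a morphism of comonoids (the first Hopf-monoid axiom) one has $c_M\circ u_M=(u_M\square u_M)\circ\delta_e$ for the canonical comultiplication $\delta_e$ of the unit object, and naturality of $\lambda$ pulls $F^*(u_M)\otimes F^*(u_M)$ outside, leaving exactly the comultiplication of $F^*(e)$ precomposed with $\eta$. Clause (1) of Definition \ref{def-Hopfmonstr}, which says that $\eta$ is a coalgebra map, then finishes this identity, while $\epsilon_F\circ 1_F=\Id$ reduces (using $\epsilon_M\circ u_M=\Id_e$) to $\epsilon\circ\eta=\Id$, itself a consequence of clause (2). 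The statement that $\epsilon_F$ is an algebra map is handled dually, replacing the comonoid axiom of $u_M$ by the fact that $\epsilon_M$ is a morphism of monoids (second Hopf-monoid axiom), naturality of $\lambda$ by naturality of $\mu$, and clause (1) by clause (2).

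The real content is the third axiom, that $\Delta_F$ is multiplicative, and this is the step I expect to be the main obstacle, since it is the only one invoking the nontrivial clause (3) of Definition \ref{def-Hopfmonstr}. Here I would start from $\Delta_F\circ m_F = \lambda\circ F^*(c_M\circ m_M)\circ\mu$ and use the last Hopf-monoid axiom, that the comultiplication $c_M$ is a morphism of monoids, to substitute $c_M\circ m_M=(m_M\square m_M)\circ(\Id\square\tau\square\Id)\circ(c_M\square c_M)$. Applying naturality of $\lambda$ on the left (to extract $F^*(m_M)\otimes F^*(m_M)$) and naturality of $\mu$ on the right (to extract $F^*(c_M)\otimes F^*(c_M)$) brings the expression into the form $(F^*(m_M)\otimes F^*(m_M))\circ\big(\lambda\circ F^*(\Id\square\tau\square\Id)\circ\mu\big)\circ(F^*(c_M)\otimes F^*(c_M))$, in which the central bracket is precisely the left-then-bottom edge of the square in clause (3) specialized to $X=Y=Z=T=M$. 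Replacing that bracket by the right-hand path $(\mu\otimes\mu)\circ(\Id\otimes\tau^*\otimes\Id)\circ(\lambda\otimes\lambda)$ and regrouping, I would recognize $m_F\otimes m_F$ and $\Delta_F\otimes\Delta_F$ at the two ends and the signed symmetry $\tau^*$ in the middle, which is exactly the assertion that $\Delta_F$ is an algebra map for the graded tensor-product algebra structure on $F^*(M)^{\otimes 2}$.

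The only subtlety worth flagging is bookkeeping rather than mathematics: the Koszul signs never have to be inserted by hand, because clause (3) of Definition \ref{def-Hopfmonstr} is already stated with the signed symmetry $\tau^*$, so the graded commutativity constraint is transported automatically through the chase. What remains is simply to keep track of which of the four slots $X,Y,Z,T$ each copy of $M$ occupies when specializing clause (3), and to note that all the required morphism identities for $M$ are exactly the defining axioms of a Hopf monoid.
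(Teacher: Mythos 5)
Your proof is correct and is exactly the verification the paper has in mind: the paper states this lemma with no proof at all (``With this definition we immediately obtain the Hopf analogue of lemma \ref{lm-monoidal-funct-monoids}''), and your diagram chase --- naturality of $\mu$ and $\lambda$ combined with the three clauses of definition \ref{def-Hopfmonstr}, matched one-for-one against the three Hopf-monoid axioms for $M$, with clause (3) specialized to $X=Y=Z=T=M$ supplying the signed symmetry $\tau^*$ --- is precisely the routine check the author leaves to the reader. Your handling of the multiplicativity of $\Delta_F$, where the bracket $\lambda\circ F^*(\Id\square\tau\square\Id)\circ\mu$ is replaced by $(\mu\otimes\mu)\circ(\Id\otimes\tau^*\otimes\Id)\circ(\lambda\otimes\lambda)$, is the correct and only substantive step.
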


We finish the presentation by giving examples. 

\begin{lemma}\label{lm-ExpHopf}
Let $(\C,\square,e)$ be a symmetric monoidal category and let
$(F^*,\mu,\eta)$ be an $n$-graded symmetric monoidal functor from $\C$ to $\kvect$, 
such that $F^*$ has finite dimensional values, and for all $X,Y$, $\mu_{X,Y}:F^*(X)\otimes F^*(Y)\to F^*(X\square Y)$
is an isomorphism. We have:
\begin{itemize}
\item[(a)] the unit $\eta$ induces an isomorphism
 $\k\xrightarrow[]{\simeq}F^{(0,\dots,0)}(e)$.
\item[(b)] Let $\epsilon$ denote the composite $F^*(e)\twoheadrightarrow
F^{(0,\dots,0)}(e)\simeq \k$. Then $(\mu,\eta,\mu^{-1},\epsilon)$ is an
$n$-graded Hopf monoidal structure on $F^*$ if and only if for all $Y,Z$, the
following diagram commutes:
$$\xymatrix{
F^*(Y)\otimes F^*(Z)\ar[d]^-{\tau^*}\ar[r]^-{\mu_{Y,Z}}
& F^*(Y\square Z)\ar[d]^-{F^*(\tau)}\\
F^*(Z)\otimes F^*(Y)\ar[r]^-{\mu_{Z,Y}}& F^*(Z\square Y)\;.
}$$
\end{itemize}
\end{lemma}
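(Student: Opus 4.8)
The plan is to treat (a) and (b) in turn, the real content being the equivalence in (b) between the Hopf monoidal axiom and the (super)symmetry of $\mu$.

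For (a) I would first observe that the unit object $e$ is a monoid in $\C$ (via the unitor $e\square e\simeq e$), so Lemma \ref{lm-monoidal-funct-monoids} endows $A:=F^*(e)$ with the structure of an $n$-graded algebra whose multiplication is, up to the canonical isomorphism $F^*(e\square e)\simeq F^*(e)$, the map $\mu_{e,e}$. Since $\mu_{e,e}$ is an isomorphism by hypothesis, the multiplication $A\otimes A\to A$ is an isomorphism of graded vector spaces. Using that the values are finite dimensional, the Poincaré series $P$ of $A$ then satisfies $P^2=P$ in the domain $\mathbb{Z}[[t_1,\dots,t_n]]$, so $P\in\{0,1\}$ and $A$ is either $0$ or $\k$ placed in degree $(0,\dots,0)$; the case $A=0$ forces $F\equiv 0$ through the isomorphisms $\mu_{e,-}$ and is discarded. (Concretely, inspecting total degree $(0,\dots,0)$, where the only contribution to the source is $F^{(0,\dots,0)}(e)\otimes F^{(0,\dots,0)}(e)$, already gives $V\otimes V\simeq V$ for $V:=F^{(0,\dots,0)}(e)$, hence $\dim_\k V\le 1$.) The unit axiom for $(\mu,\eta)$ shows that $\eta(1)=1_A$ is a nonzero element of the one-dimensional space $V$, so $\eta\colon\k\to V$ is an isomorphism.

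For (b) I would first record that, once (a) is known, $A=F^*(e)\simeq\k$ concentrated in degree $(0,\dots,0)$, so axioms (1) and (2) of Definition \ref{def-Hopfmonstr} hold automatically ($\eta$ and $\epsilon$ are mutually inverse isomorphisms of $\k$, hence morphisms of the trivial $n$-graded (co)algebra $\k$). I would also argue that axiom (0) is formal: the comonoidal associativity of $\lambda=\mu^{-1}$ is obtained by inverting the monoidal associativity of $\mu$, and the comonoidal counit axiom for $\epsilon$ reduces, via the monoidal unit axiom, to the identity $\epsilon\circ\eta=\mathrm{id}_\k$, which holds by the very definition of $\epsilon$ as the composite $F^*(e)\twoheadrightarrow F^{(0,\dots,0)}(e)\xrightarrow{\eta^{-1}}\k$. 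Consequently $(\mu,\eta,\mu^{-1},\epsilon)$ is a Hopf monoidal structure if and only if the single compatibility diagram (3) commutes, and the whole lemma comes down to proving that (3) is equivalent to the symmetry diagram in the statement. For the easy implication I would specialize (3) to $X=T=e$; using the unit identifications $F^*(e\square W)\simeq F^*(W)\simeq F^*(W\square e)$ and $F^*(e)\simeq\k$, both legs collapse and the commutativity of (3) becomes exactly $\mu_{Z,Y}\circ\tau^*=F^*(\tau)\circ\mu_{Y,Z}$. For the converse I would run a coherence computation: precomposing both legs of (3) with the isomorphism $\mu_{X,Y}\otimes\mu_{Z,T}$ and using associativity of $\mu$, the first leg becomes $(\mu_{X,Z}\otimes\mu_{Y,T})\circ(\mathrm{id}\otimes\tau^*\otimes\mathrm{id})$, while the second becomes $\mu^{-1}_{XZ,YT}\circ F(\mathrm{id}_X\square\tau_{Y,Z}\square\mathrm{id}_T)\circ\mu_4$, where $\mu_4\colon F(X)\otimes F(Y)\otimes F(Z)\otimes F(T)\to F(X\square Y\square Z\square T)$ is the fourfold product. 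Bracketing the middle object as $X\square(Y\square Z)\square T$ and applying naturality of the threefold product transports $F(\mathrm{id}\square\tau\square\mathrm{id})$ across $\mu$ as $\mathrm{id}\otimes F(\tau_{Y,Z})\otimes\mathrm{id}$; feeding in the symmetry relation $F(\tau_{Y,Z})\circ\mu_{Y,Z}=\mu_{Z,Y}\circ\tau^*$ and reassembling by associativity turns the second leg into $(\mu_{X,Z}\otimes\mu_{Y,T})\circ(\mathrm{id}\otimes\tau^*\otimes\mathrm{id})$ as well, so the two legs agree and (3) holds.

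I expect the main obstacle to be the bookkeeping in this last coherence argument: keeping the unitor and associator identifications of $\C$ mutually consistent, and checking that the Koszul sign carried by $\tau^*$ matches the one produced when $F(\tau_{Y,Z})$ is pushed through $\mu$ via the symmetry relation. Apart from these sign and coherence verifications, every step is a formal consequence of $(\mu,\eta)$ being an associative, unital, natural monoidal structure with $\mu$ invertible, together with the identification $F^*(e)\simeq\k$ from part (a).
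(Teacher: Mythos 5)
Your proof is correct and follows essentially the same route as the paper's: part (a) rests on the isomorphism $F^{(0,\dots,0)}(e)\simeq F^{(0,\dots,0)}(e)^{\otimes 2}$ plus finite dimensionality (your Poincar\'e-series argument is just a mild sharpening showing $F^*(e)$ is all of $\k$ in degree $(0,\dots,0)$), and part (b) proceeds exactly as in the paper --- axioms (0)--(2) are routine, the symmetry square follows from axiom (3) by setting $X=T=e$, and conversely axiom (3) is recovered by tensoring the square with $F^*(X)$ on the left and $F^*(T)$ on the right and using associativity and naturality of $\mu$. The only differences are expository: you spell out the coherence computation and the degenerate case $F\equiv 0$ that the paper dismisses as a ``trivial verification'' or leaves implicit.
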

\begin{proof}
(a) Because of the unit axiom for $(F^*,\mu,\eta)$, we know that $\eta$ is
injective. Since the $\lambda_{X,Y}$ are isomorphisms, we have
$F^{(0,\dots,0)}(e)\simeq F^{(0,\dots,0)}(e\square e)\simeq 
F^{(0,\dots,0)}(e)^{\otimes 2}$. Using finite dimension of these 
vector spaces, we deduce
that  $F^{(0,\dots,0)}(e)$ is one dimensional, whence the result.

(b) A trivial verification shows that $(\mu,\eta,\mu^{-1},\epsilon)$ satisfies
axioms (0-2) of definition \ref{def-Hopfmonstr} (without assuming that the
diagram commutes). Now we check that axiom (3) is satisfied if and
only if the diagram commutes. To prove the `only if' part, evaluate axiom (3) 
on $X=T=e$. To prove the `if' part, tensor the commutative diagram on the left
by $F^*(X)$, on the right by $F^*(Y)$ and use the associativity of $\mu$. 
\end{proof}

\begin{example}
Let $(\C,\square,e)$ be the category
$(\V_\k,\oplus,0)$ of finite
dimensional vector spaces. For all $V\in\V_\k$ we consider
the divided powers $\Gamma^*(V)$ with $\Gamma^d(V)$ in degree $2d$.
Then the exponential isomorphism (cf \S\ref{subsec-nota}) $\Gamma^*(V)\otimes
\Gamma^*(W)\xrightarrow[]{\simeq} \Gamma^*(V\oplus W)$ and 
the unit $\k=\Gamma^0(0)=\Gamma^*(0)$ satisfy the hypothesis of lemma
\ref{lm-ExpHopf}. Thus, they induce a graded Hopf monoidal structure on
$\Gamma^*$. Similarly, $S^*(V)$
(with $S^d(V)$ placed in degree $2d$) and $\Lambda^*(V)$ 
(with $\Lambda^d(V)$ placed in degree $d$) have a Hopf monoidal structure
defined by the exponential isomorphism.
\end{example}

\begin{remark}\label{rk-degrees}
We warn the reader that for $\Gamma^*(V)$ with $\Gamma^d(V)$ in degree $d$, the
above structure is \emph{not} a Hopf monoidal structure (axiom (3) fails). For
an analogous reason, $\Gamma^*(V)$ with $\Gamma^d(V)$ in degree $d$ is not a
graded Hopf algebra. In particular, \cite[Lemma 1.10]{FFSS} is
false as stated, and our lemma \ref{lm-ExpHopf}(b) indicates the missing
hypothesis. To be more specific, 
the only `Hopf exponential functors' which satisfy the conclusion
of \cite[Lemma 1.10]{FFSS} are the `skew commutative' ones.

In general, axiom (3) of definition \ref{def-Hopfmonstr} 
is a constraint for the gradings. 
For example if $F^*$ is a graded Hopf monoidal functor, by `forgeting' 
the grading, one does not obtain a non graded Hopf monoidal functor (except if
$\k$ has characteristic two or if $F^*$ is concentrated in even degrees). The
same defect arises for multigraded Hopf algebras and lemma
\ref{lm-Hopf-monoidal-funct-monoids} explains the link.
\end{remark}

\subsection{The sum-diagonal adjunction}\label{subsec-sum-diag-adj}

General statements about adjunction isomorphisms in functor categories are
given for example in \cite{Pira}. We sketch here the arguments in our
specific case and give explicit formulas.

As usual, $\A$ is a finite product of copies of $\V_\k$ and 
$\V^{\op}_\k$. In particular, $\A$ is an additive category.
The diagonal
functor $D:\A\to \A\times\A$, $X\mapsto (X,X)$ is left adjoint to the sum functor
$\Oplus:\A\times\A\to \A$, $(X,Y)\mapsto X\oplus Y$. 
To be more specific, if $\delta_2$ is the diagonal $V\to
V\oplus V$, $v\mapsto (v,v)$ and $\pr_i:V_1\oplus V_2\to V_i$, $i=1,2$, 
is the projection onto the $i$-th factor, we easily check that the unit, 
resp. the counit, of this adjunction equals:
$$\delta_2:\Id_\A\to \Oplus\circ D  \;,\text{ resp.}\quad
(\pr_1,\pr_2):D\circ \Oplus\to \Id_{\A\times\A}\;.$$
Precomposition by $D$ and $\Oplus$ yields adjoint functors 
$-\circ \Oplus:
\P_\A\to \P_{\A\times\A}$ and
$-\circ D:\P_{\A\times\A}\to
\P_\A$. Let's be more explicit. We denote by
$F(\Oplus)$ and $G(D)$ 
the functors $F$ and $G$ precomposed by $\Oplus$ and $D$. Then the
adjunction isomorphism is given by:
$$\begin{array}[t]{ccc}
\hom_{\P_{\A\times\A}}(F(\Oplus),G)
&\xrightarrow[]{\simeq} 
&\hom_{\P_\A}(F,G(D))\\
f&\mapsto& f(D)\circ F(\delta_2)
\end{array},
$$ 
with inverse $g\mapsto G(\pr_1,\pr_2)\circ g(\Oplus)$. For all $X,Y\in\A$ we
have $I^*_{(X,Y)}(D)\simeq I^*_{X\oplus Y}$. Hence $-\circ D$ preserves the
injectives. 
One easily computes:
\begin{lemma}\label{lm-adj-inj}
Let $X,Y\in\A$. Denote by $\pr_X,\pr_Y$ the projections of $X\oplus Y$ onto
$X,Y$. Then we have a commutative diagram:
$$\xymatrix{
\hom_{\P_{\A\times\A}}(K,I^*_{(X,Y)})\ar[r]^-{-\circ D}\ar[d]^{\simeq}&
\hom_{\P_\A}(K(D),I^*_{X\oplus Y})\ar[d]^{\simeq}\\
K(X,Y)^\vee\ar[r]^-{K(\pr_X,\pr_Y)^\vee}& K(X\oplus Y,X\oplus Y)^\vee \;,
}$$
in which the vertical arrows are Yoneda isomorphisms. As a consequence,
the adjunction fits into a commutative diagram, in which
the vertical arrows are Yoneda isomorphisms:
$$\xymatrix{
\hom_{\P_{\A\times\A}}(F(\Oplus),I^*_{(X,Y)})\ar[r]^-{\alpha}\ar[d]^{\simeq}&
\hom_{\P_\A}(F,I^*_{X\oplus Y})\ar[d]^{\simeq}\\
F(X\oplus Y)^\vee\ar@{=}[r]& F(X\oplus Y)^\vee \;.
}$$
\end{lemma}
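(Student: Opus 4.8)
The plan is to verify the first square by a direct chase on a test morphism, and then to deduce the second square from it together with a triangle identity of the adjunction. Since $I^*_{(X,Y)}=\bigoplus_d I^d_{(X,Y)}$ and the Yoneda isomorphisms are stated degreewise, I first pass to homogeneous components and assume $K$ (resp.\ $F$) homogeneous of degree $d$ and the standard injective $I^d_{(X,Y)}$. Throughout, $\A$ is additive whatever its factors are, so the biproduct data $\i_X,\i_Y,\pr_X,\pr_Y$ and the relations $\pr_X\i_X=\Id_X$, $\pr_Y\i_Y=\Id_Y$ and $\i_X\pr_X+\i_Y\pr_Y=\Id_{X\oplus Y}$ are available.

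For the first square, I would trace $f\in\hom_{\P_{\A\times\A}}(K,I^d_{(X,Y)})$. Because $D(X\oplus Y)=(X\oplus Y,X\oplus Y)$, the component of $f\circ D$ at $X\oplus Y$ is exactly $f_{(X\oplus Y,X\oplus Y)}$. Naturality of $f$ at the counit morphism $(\pr_X,\pr_Y)\colon(X\oplus Y,X\oplus Y)\to(X,Y)$ gives $f_{(X,Y)}\circ K(\pr_X,\pr_Y)=I^d_{(X,Y)}(\pr_X,\pr_Y)\circ f_{(X\oplus Y,X\oplus Y)}$; dualizing and evaluating on the Yoneda element $\Id_{(X,Y)}^{\otimes d}$ rewrites the down-then-across composite as $f_{(X\oplus Y,X\oplus Y)}^\vee$ applied to $I^d_{(X,Y)}(\pr_X,\pr_Y)^\vee(\Id_{(X,Y)}^{\otimes d})$. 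Since the across-then-down composite is $f_{(X\oplus Y,X\oplus Y)}^\vee$ applied to the transported element $\Id_{X\oplus Y}^{\otimes d}$, the square commutes once these two Yoneda elements are matched.

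That matching is the one point needing care, and I expect it to be the main (if minor) obstacle. It is cleanest to recall that the injective Yoneda element $\Id_X^{\otimes d}$ is the honest divided power, living in $\Gamma^d\hom_\A(X,X)\cong I^d_X(X)^\vee$ (equivalently, to dualize the whole square back to the standard projectives $P^d_X=\Gamma^d\hom_\A(X,-)$, where the Yoneda element is manifestly the divided power of the identity and naturality is transparent). Under the identification $I^d_{(X,Y)}(D)\simeq I^d_{X\oplus Y}$, which is $S^d$ of $\hom_\A(X\oplus Y,-)\simeq\hom_{\A\times\A}((X,Y),D(-))$, $\phi\mapsto(\phi\,\i_X,\phi\,\i_Y)$, the element $\Id_{X\oplus Y}^{\otimes d}$ corresponds to $(\pr_X,\pr_Y)^{\otimes d}$ (using $\i_X\pr_X+\i_Y\pr_Y=\Id_{X\oplus Y}$ on the dual side); and $I^d_{(X,Y)}(\pr_X,\pr_Y)^\vee$ sends $\Id_{(X,Y)}^{\otimes d}$ to $(\Id_{(X,Y)}\circ(\pr_X,\pr_Y))^{\otimes d}=(\pr_X,\pr_Y)^{\otimes d}$. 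The two elements agree, so both paths of the square coincide.

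For the second square, I would recall that the adjunction isomorphism $\alpha$ (the case $G=I^*_{(X,Y)}$ of the formula recalled before the lemma) is $f\mapsto f(D)\circ F(\delta_2)$, with $F(\delta_2)\colon F\to F(\Oplus)(D)$ precomposition with the unit. Thus $\alpha$ factors as $-\circ D$ followed by precomposition with $F(\delta_2)$; the latter corresponds under the Yoneda isomorphisms to $(F(\delta_2)_{X\oplus Y})^\vee$ by naturality in the functor variable. Feeding this through the first square with $K=F(\Oplus)$, the right-hand Yoneda image of $\alpha(f)$ equals $\big(F(\Oplus)(\pr_X,\pr_Y)\circ F(\delta_2)_{X\oplus Y}\big)^\vee$ applied to the left-hand Yoneda image of $f$. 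The inner composite is $F\big((\pr_X\oplus\pr_Y)\circ\delta_2^{X\oplus Y}\big)$, where $\delta_2^{X\oplus Y}$ is the diagonal of $X\oplus Y$, and $(\pr_X\oplus\pr_Y)\circ\delta_2^{X\oplus Y}=\Id_{X\oplus Y}$ is precisely the triangle identity of the sum-diagonal adjunction evaluated at $(X,Y)$. Hence the composite is $\Id_{F(X\oplus Y)}$, its dual is the identity of $F(X\oplus Y)^\vee$, and the bottom of the second square is indeed the identity.
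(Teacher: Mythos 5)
Your proof is correct, and it is precisely the routine computation the paper leaves implicit (the lemma is introduced by ``One easily computes''): naturality of a test morphism at the counit $(\pr_X,\pr_Y)$ together with the matching of Yoneda elements $\Id_{X\oplus Y}^{\otimes d}\leftrightarrow(\pr_X,\pr_Y)^{\otimes d}$ gives the first square, and then the triangle identity $(\pr_X\oplus\pr_Y)\circ\delta_2^{X\oplus Y}=\Id_{X\oplus Y}$ of the sum-diagonal adjunction gives the second. Both steps check out, including the one delicate point you flagged (the transport of the canonical element under $I^d_{(X,Y)}(D)\simeq I^d_{X\oplus Y}$), so there is nothing to add.
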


Since $-\circ D$ preserves the injectives, we may take injective resolutions
to obtain:
\begin{lemma} For all $F\in\P_\A$ and all $G\in\P_{\A\times\A}$, 
there is an isomorphism, natural in $F,G$:
$$\alpha: \Ext^*_{\P_{\A\times\A}}(F(\Oplus),G)\xrightarrow[]{\simeq}
\Ext^*_{\P_\A}(F,G(D))\;.
$$
\end{lemma}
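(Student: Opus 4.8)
The plan is to deduce the $\Ext$-level statement from the $\hom$-level adjunction by the standard device of deriving both sides through a \emph{single} injective resolution. Concretely, I would fix an admissible injective resolution $G\hookrightarrow J^\bullet$ of $G$ in $\P_{\A\times\A}$, which exists by the discussion of homological algebra above (one may take the $J^i$ to be finite direct sums of standard injectives $I^*_{(X,Y)}$). The whole argument then rests on two properties of the precomposition functor $-\circ D\colon\P_{\A\times\A}\to\P_\A$: that it is exact, and that it preserves injectives.

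First I would check that $-\circ D$ is exact for the admissible structures. Admissible exactness in these exact categories is tested by evaluation on objects, and precomposition with the diagonal $D$ merely restricts evaluation to the objects of the form $(X,X)$; since an admissible exact sequence in $\P_{\A\times\A}$ is exact at \emph{every} object, in particular at those of the form $(X,X)$, its image under $-\circ D$ is again admissible exact in $\P_\A$. Combined with the identity $I^*_{(X,Y)}(D)\simeq I^*_{X\oplus Y}$ recorded above (so that $-\circ D$ preserves injectives), this shows that $G(D)\hookrightarrow J^\bullet(D)$ is an admissible injective resolution of $G(D)$ in $\P_\A$.

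With these resolutions in hand, both sides are computed by the same recipe:
\[
\Ext^*_{\P_{\A\times\A}}(F(\Oplus),G)=H^*\bigl(\hom_{\P_{\A\times\A}}(F(\Oplus),J^\bullet)\bigr),\qquad
\Ext^*_{\P_\A}(F,G(D))=H^*\bigl(\hom_{\P_\A}(F,J^\bullet(D))\bigr).
\]
The adjunction isomorphism $\hom_{\P_{\A\times\A}}(F(\Oplus),-)\simeq\hom_{\P_\A}(F,-(D))$ is natural in its variable, so applied degreewise to $J^\bullet$ it yields an \emph{isomorphism of cochain complexes} between the two complexes above. Passing to cohomology produces the desired graded isomorphism $\alpha$; lemma \ref{lm-adj-inj} moreover pins down what it does on standard injectives.

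Finally I would address naturality. Naturality in $F$ is immediate from the naturality of the $\hom$-adjunction in its first argument. Naturality in $G$ follows because admissible injective resolutions are functorial up to chain homotopy, so a morphism $G\to G'$ induces a map of resolutions, unique up to homotopy, which in turn gives a well-defined and compatible map on the cohomology of the hom-complexes. I do not expect any genuine obstacle here: the only point requiring care, rather than deep input, is the verification that $-\circ D$ respects the \emph{admissible} (rather than merely abstract) exact structure, which is exactly what the exactness observation above secures; everything else is the usual independence-of-resolution bookkeeping.
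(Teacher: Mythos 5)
Your proposal is correct and follows exactly the paper's route: the paper derives the $\Ext$-isomorphism from the $\hom$-level sum--diagonal adjunction by taking an injective resolution of $G$, using precisely the fact that $-\circ D$ preserves injectives (via $I^*_{(X,Y)}(D)\simeq I^*_{X\oplus Y}$). Your write-up merely makes explicit the supporting details the paper leaves implicit (exactness of $-\circ D$ on admissible sequences, and the resolution-independence bookkeeping for naturality), all of which are verified correctly.
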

\begin{remark}\label{rk-2adj}
The functors $D$ and $\Oplus$ are adjoint on both sides. Using that $D$ is
right adjoint of $\Oplus$ one can get another adjunction isomorphism:
$\beta:\Ext^*_{\P_\A}(G(D),F)\simeq 
\Ext^*_{\P_{\A\times\A}}(G, F(\Oplus))
$. We don't use this latter isomorphism in this section.
\end{remark}

\subsection{Hopf monoidal structures on functor cohomology}
\label{subsec-class1}
In this paragraph, $\k$ is a field and we fix an $n$-graded functor
$E^*\in\P_\A$.
To avoid cumbersome notations, we drop the `$\P_\A$' index on $\hom$ or
$\Ext$-groups,  
as well as the grading on $E$ when no confusion is possible.   

We first examine structures which may equip $E^*$. First, $E^*$ may be endowed with a 
$n$-graded Hopf algebra structure on
$E^*$' is a tuple $(m_E,1_E,\Delta_E,\epsilon_E)$ of $n$-graded
natural maps
$$E(V)^{\otimes 2}\xrightarrow[]{m_E} E(V)\,,\;
\k\xrightarrow[]{1_E} E(V)\,,\;
E(V)\xrightarrow[]{\Delta_E} E(V)^{\otimes 2}\,,\;
E(V)\xrightarrow[]{\epsilon_E}\k\,,$$
such that for all $V\in\A$, $E^*(V)$ is an $n$-graded Hopf algebra.

On the other hand, the direct sum endows $\A$ with the structure of a symmetric
monoidal category. So we may also consider $n$-graded Hopf monoidal structures
on $E^*$, that is tuples $(\mu,\eta,\lambda,\epsilon)$ with $\mu: E(V)\otimes
E(W)\to E(V\oplus W)$, etc.
These two kinds of structure are equivalent:
\begin{lemma}\label{lm-A} To any $n$-graded Hopf monoidal
structure $(\mu,\eta,\lambda,\epsilon)$  on $E^*$, we associate
an $n$-graded Hopf algebra structure on $E^*$ defined as
follows:
$$m_E:E(V)^{\otimes 2}\xrightarrow[]{\mu_{V,V}} 
E(V\oplus V)\xrightarrow[]{E(\Sigma_2)}E(V), \quad 1_E: 
\k \xrightarrow[]{\eta}E(0)\xrightarrow[]{E(0)} E(V), $$
$$\Delta_E:E(V)\xrightarrow[]{E(\delta_2)}E(V\oplus V)
\xrightarrow[]{\lambda_{V,V}} 
E(V)^{\otimes 2}, \quad \epsilon_E: 
E(V)\xrightarrow[]{E(0)} E(0)\xrightarrow[]{\epsilon} \k. $$
This yields a bijection between the set of $n$-graded Hopf monoidal structures
$(\mu,\eta,\lambda,\epsilon)$  on $E^*$ and the set of
$n$-graded Hopf algebra structures
$(m_E,1_E,\Delta_E,\epsilon_E)$ on $E^*$.

\end{lemma}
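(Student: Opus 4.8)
The plan is to identify the forward construction in the statement with a special case of lemma \ref{lm-Hopf-monoidal-funct-monoids}, and then to produce an explicit inverse built from the (co)unit and (co)multiplication together with the biproduct structure of $\A$.

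First I would record the elementary fact that, since $\A$ is additive and $\oplus$ is simultaneously a product and a coproduct, every object $V\in\A$ carries a canonical Hopf monoid structure for $(\A,\oplus,0)$: the multiplication is $\Sigma_2\colon V\oplus V\to V$, the unit is the zero map $0\to V$, the comultiplication is $\delta_2\colon V\to V\oplus V$, and the counit is $V\to 0$. The monoid, comonoid and bialgebra axioms (in particular the compatibility relating $\delta_2$ and $\Sigma_2$, whose middle term is the interchange $\Id\oplus\tau\oplus\Id$ of the two inner summands) are exactly the identities valid in any additive category, and every morphism $f$ of $\A$ is automatically a morphism of Hopf monoids because $f$ is additive. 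Feeding this canonical Hopf monoid into lemma \ref{lm-Hopf-monoidal-funct-monoids} yields, for each $V$, an $n$-graded Hopf algebra structure on $E^*(V)$ whose structure maps are precisely the $m_E,1_E,\Delta_E,\epsilon_E$ of the statement. Naturality in $V$ is then the one-line check that $E(f)$ is a Hopf algebra morphism, which follows from naturality of $\mu,\lambda,\eta,\epsilon$ and additivity of $f$; e.g. $m_E^W\circ(E(f)\otimes E(f))=E(\Sigma_2)\circ E(f\oplus f)\circ\mu_{V,V}=E(f)\circ m_E^V$. This shows the assignment is well defined.

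Next I would write down the candidate inverse. Denoting by $\i_V,\i_W$ the inclusions and by $\pr_V,\pr_W$ the projections of $V\oplus W$, set
\[
\mu_{V,W}=m_E\circ(E(\i_V)\otimes E(\i_W)),\qquad
\lambda_{V,W}=(E(\pr_V)\otimes E(\pr_W))\circ\Delta_E,
\]
and take $\eta,\epsilon$ to be $1_E,\epsilon_E$ evaluated at $0$. Checking that $(\mu,\eta,\lambda,\epsilon)$ satisfies axioms (0)--(2) of definition \ref{def-Hopfmonstr} is a routine diagram chase: associativity and the unit law for $\mu$ come from those for $m_E$, together with the biproduct relations $\pr_j\circ\i_k=\Id$ if $j=k$ and $0$ otherwise, and the fact that a composite of inclusions is again an inclusion; the comonoidal axioms are dual. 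The real work is axiom (3): after substituting the formulas for $\mu$ and $\lambda$ and using naturality to move every $E(\i)$ and $E(\pr)$ past $m_E$ and $\Delta_E$, the diagram collapses to the bialgebra axiom — that $\Delta_E$ is a morphism of algebras — for the relevant values of $E^*$, with the braiding $\tau^*$ accounting exactly for the Koszul sign built into the graded symmetry. I expect this compatibility, and the matching of signs between $\tau$ on $\A$ and $\tau^*$ on $\kvect$, to be the main obstacle.

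Finally I would verify that the two constructions are mutually inverse; both round trips reduce to the biproduct identities $\Sigma_2\circ\i_k=\Id_V$ and $\pr_k\circ\delta_2=\Id_V$ combined with naturality. For instance, starting from a Hopf monoidal structure, naturality of $\mu$ gives $\mu_{V\oplus W,V\oplus W}\circ(E(\i_V)\otimes E(\i_W))=E(\i_V\oplus\i_W)\circ\mu_{V,W}$, and since $\Sigma_2\circ(\i_V\oplus\i_W)=\Id_{V\oplus W}$ one recovers $\mu$ exactly; the computation for $\lambda$ is dual via $\pr_k\circ\delta_2=\Id$, and $\eta,\epsilon$ are recovered by evaluating at $0$. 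Starting instead from a Hopf algebra structure, naturality of $m_E$ and $\Sigma_2\circ\i_k=\Id_V$ give back $m_E$, and dually for $\Delta_E$, while $1_E$ and $\epsilon_E$ are recovered from their values at $0$ by naturality. This establishes the claimed bijection.
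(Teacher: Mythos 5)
Your proposal is correct and takes essentially the same route as the paper: the forward direction via the canonical Hopf monoid structure $(\Sigma_2,\delta_2)$ on objects of the additive category $\A$ combined with lemma \ref{lm-Hopf-monoidal-funct-monoids}, and the inverse given by exactly the paper's formulas $\mu_{V,W}=m_E\circ(E(\i_V)\otimes E(\i_W))$ and $\lambda_{V,W}=(E(\pr_V)\otimes E(\pr_W))\circ\Delta_E$, with $\eta,\epsilon$ obtained from $1_E,\epsilon_E$ at $0$. The only difference is that you spell out the verifications (naturality in $V$, the reduction of axiom (3) to the graded bialgebra axiom with its Koszul signs, and the round-trip identities from the biproduct relations) which the paper compresses into ``a straightforward verification.''
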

\begin{proof}
For all $V\in\A$, the sum $\Sigma_2:V\oplus V\to V$ and the diagonal
$\delta_2:V\to V\oplus V$ turn $V$ into a Hopf monoidal
object in $(\A,\oplus,0)$. Hence, by
lemma \ref{lm-Hopf-monoidal-funct-monoids}, $(m_E,1_E,\Delta_E,\epsilon_E)$
is actually a Hopf algebra
structure. To prove the bijection, 
we give its
inverse. If $V_i\in\A$, $i=1,2$, we denote by $\i_i$ the
inclusion of $V_i$ into $V_1\oplus V_2$ and by $\pr_i$ the projection of
$V_1\oplus V_2$ 
onto its $i$-th
factor. Now from a Hopf algebra structure $(m_E,1_E,\Delta_E,\epsilon_E)$
we define:
$$\textstyle\bigotimes E(V_i)\xrightarrow[]{\otimes E(\i_i)}
E(\Oplus V_i)^{\otimes 2} \xrightarrow[]{m_E}E(\Oplus V_i)\,,
\quad \k\xrightarrow[]{1_E}E(V)\xrightarrow[]{E(0)} E(0)\,,
$$
$$E(\Oplus V_i)\xrightarrow[]{\Delta_E}E(\Oplus V_i)^{\otimes 2}
\xrightarrow[]{\otimes E(\pr_i)}\textstyle\bigotimes E(V_i)\,,
\quad E(0)\xrightarrow[]{E(0)} E(V)\xrightarrow[]{\epsilon_E}\k\;.$$
A straightforward verification shows that this actually gives an $n$-graded
Hopf
monoidal structure on $E^*$, and that this yields the inverse of the map of
the lemma.
\end{proof}

\begin{lemma}\label{lm-B}
To any $n$-graded Hopf monoidal
structure $(\mu,\eta,\lambda,\epsilon)$  on $E^*$, we associate
an $n$-graded Hopf monoidal structure on $\hom(E,-)$ defined as
follows:
\begin{align*}
\hom_{\P_{\A}}(E,F)\otimes &\hom_{\P_{\A}}(E,G) \xrightarrow[\simeq]{\kappa}
\hom_{\P_{\A\times\A}}(E^{\boxtimes 2}, F\boxtimes G)
\\&\xrightarrow[]{\lambda^*}
\hom_{\P_{\A\times\A}}(E(\Oplus), F\boxtimes G)\xrightarrow[]{\alpha}
\hom_{\P_{\A}}(E, F\otimes G)\;,\\
\k= \hom(\k,\k)& \xrightarrow[]{\epsilon^*}\hom(E(0),\k)\to \hom(E ,\k)\,,
\end{align*}
\begin{align*}
\hom_{\P_{\A}}&( E  ,F\otimes G)\xrightarrow[\simeq]{\alpha^{-1}}
\hom_{\P_{\A\times\A}}(E(\Oplus),  F\boxtimes G)\\&
\xrightarrow[]{\mu^*} \hom_{\P_{\A\times\A}}(E^{\boxtimes 2}, F\boxtimes G)
\xrightarrow[\simeq]{\kappa^{-1}}
\hom_{\P_{\A}}(E,F)\otimes \hom_{\P_{\A}}(E,G)\,,
\\
\hom(E &,\k )\to \hom(E(0),\k )\xrightarrow[]{\eta^*}\hom(\k,\k)=\k
\,\;.
\end{align*}
This yields a bijection between the $n$-graded Hopf monoidal structures on
$E^*$ and the $n$-graded Hopf monoidal structures on $\hom(E^*,-)$.
\end{lemma}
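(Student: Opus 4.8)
The plan is to establish two things: that the four natural maps written in the statement genuinely form a Hopf monoidal structure on $\hom(E,-)$ in the sense of Definition \ref{def-Hopfmonstr}, and that the resulting assignment is a bijection. I would organize the whole argument around a single observation: each of the four structure maps on $\hom(E,-)$ is built from two \emph{fixed} natural isomorphisms --- the K\"unneth isomorphism $\kappa$ of Lemma \ref{lm-kunneth} and the sum-diagonal adjunction $\alpha$ of Section \ref{subsec-sum-diag-adj} --- together with a pullback $\lambda^*$, $\mu^*$, $\eta^*$ or $\epsilon^*$ along one of the structure maps of $E^*$. Note the contravariance: the monoidal product $\mu_{\hom}$ on $\hom(E,-)$ is manufactured from the \emph{comultiplication} $\lambda$ of $E^*$, and the comonoidal $\lambda_{\hom}$ from the multiplication $\mu$. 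Everything then reduces to transporting statements across $\kappa$ and $\alpha$.

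For well-definedness I would verify the axioms of Definition \ref{def-Hopfmonstr} one at a time. Because $\kappa$ and $\alpha$ are isomorphisms, conjugating a given axiom on the $\hom(E,-)$ side by $\kappa$ and $\alpha$ turns it into the corresponding axiom for the tuple $(\mu,\eta,\lambda,\epsilon)$ on $E^*$, which holds by hypothesis. Concretely, associativity and unit of $(\mu_{\hom},\eta_{\hom})$ reduce to coassociativity and counit of $(\lambda,\epsilon)$, dually for $(\lambda_{\hom},\epsilon_{\hom})$, and the compatibilities (1)--(2) reduce to the facts that $\eta$ is a coalgebra map and $\epsilon$ an algebra map. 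The inputs needed are that $\kappa$ is associative and compatible with the graded symmetry $\tau^*$, and that $\alpha$ intertwines the external tensor on $\P_{\A\times\A}$ with the internal tensor on $\P_\A$; both are read off from the commutative diagrams of Lemmas \ref{lm-kunneth} and \ref{lm-adj-inj}.

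The main obstacle will be axiom (3). It is the only axiom coupling $\mu$ and $\lambda$ simultaneously, and the intervening symmetry $\tau^*$ carries the Koszul signs $(-1)^{|*\star|}$. Unwinding axiom (3) for $(\mu_{\hom},\lambda_{\hom})$ produces a large diagram in which $\alpha$, $\kappa$ and $\tau^*$ must be commuted past one another; the content is precisely that, by naturality of $\alpha$ and $\kappa$, this diagram collapses to axiom (3) for $E^*$. Tracking the gradings here is delicate --- this is exactly the location of the sign defect of \cite[Lemma 1.10]{FFSS} flagged in Remark \ref{rk-degrees} --- so I would keep the signs explicit throughout rather than working up to sign.

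For the bijection I would exhibit the inverse explicitly. By contravariant Yoneda in $\P_{\A\times\A}$, the assignment $\lambda\mapsto\lambda^*$ is a bijection from morphisms $E(\Oplus)\to E^{\boxtimes 2}$ to natural transformations $\hom(E^{\boxtimes 2},-)\Rightarrow\hom(E(\Oplus),-)$; since $\kappa$ and $\alpha$ are isomorphisms, $\lambda\mapsto\mu_{\hom}$ is injective, and similarly for the other three maps. To recover $E^*$'s data from a Hopf monoidal structure on $\hom(E,-)$, I would restrict each structure map to the standard injectives $I^*_X$: using $I^*_X\otimes I^*_Y\simeq I^*_{X\oplus Y}$ (which is $I^*_{(X,Y)}(D)\simeq I^*_{X\oplus Y}$, from Lemma \ref{lm-kunneth} and Section \ref{subsec-sum-diag-adj}) together with the Yoneda isomorphism $\hom(E,I^*_X)\simeq E(X)^\vee$, each structure map dualizes to a candidate map on $E^*$. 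Since we work over a field, the functors in play are left exact and the standard injectives cogenerate $\P_\A$, so a natural transformation is determined by its restriction to injectives; the commutative diagrams of Lemmas \ref{lm-kunneth} and \ref{lm-adj-inj} then show that this restrict-and-dualize operation carries Hopf monoidal structures to Hopf monoidal structures and is two-sided inverse to the construction, which completes the proof.
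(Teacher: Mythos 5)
Your proposal is correct and follows essentially the same route as the paper: both arguments hinge on reducing all verifications, via left exactness of $\hom(E,-)$ and its tensor products, to the standard injectives $I^*_X$, where the diagrams of Lemmas \ref{lm-kunneth} and \ref{lm-adj-inj} identify the structure maps through Yoneda with the duals $\lambda^\vee$, $\mu^\vee$ of the structure maps of $E^*$, and both obtain the inverse bijection by restricting a structure on $\hom(E^*,-)$ to standard injectives and dualizing. Your extra scaffolding (the conjugation framing, the explicit Yoneda injectivity argument, and the attention to signs in axiom (3)) is sound but amounts to the same underlying reduction.
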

\begin{proof}
By left exactness of the functor $\hom(E,-)$ and its tensor products, it
suffices to check the axioms when $F$ and $G$ are injective. Since the
injectives of $\P_\A$ are direct summands of (sums of) standard injectives, 
we can
furthermore assume that $F=I^*_X$ and $G=I^*_Y$, for $X,Y\in\A$. But in that
case, by lemmas \ref{lm-kunneth} and \ref{lm-adj-inj}, we have a commutative
diagram (in which the vertical arrows are Yoneda isomorphisms):
$$\xymatrix{
\hom(E,I^*_X)\otimes \hom(E,I^*_Y)\ar[r]\ar[d]^{\simeq}&
\hom(E,I^*_{X\oplus Y})\ar[d]^{\simeq}\\
E(X)^\vee \otimes E(Y)^\vee \ar[r]^{\lambda^\vee}& E(X\oplus Y)^\vee\;,
}$$
and also a similar diagram involving $\mu^\vee$. Using these two diagrams, we
easily check the
Hopf monoidal axioms for
$\hom(E,-)$ from the axioms satisfied by
$(\mu,\eta,\lambda,\epsilon)$.

Now it remains to show the bijection. Let us give the inverse.
If we have an $n$-graded 
Hopf monoidal structure on
$\hom(E^*,-)$, we may restrict to the standard injectives $I^*_X$, $X\in\A$.
By the Yoneda isomorphisms, we obtain a Hopf monoidal structure on $E^*$. The
diagrams mentioned above make it clear that this yields the inverse.
\end{proof}

We have proved:
\begin{theorem}\label{thm-mon-str-Hopf} Let $\k$ be a field,
 and let $E^*\in\P_\A$ be an $n$-graded functor. 
There are bijections between:
\begin{enumerate}
\item[(1)] The set of $n$-graded Hopf algebra structures on $E^*$.
\item[(2)] The set of $n$-graded Hopf monoidal structures on $E^*$.
\item[(3)] The set of $n$-graded Hopf monoidal structures on $\hom_{\P_\A}(E^*,-)$.
\end{enumerate}
\end{theorem}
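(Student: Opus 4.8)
The plan is to obtain the two nontrivial bijections (1)$\leftrightarrow$(2) and (2)$\leftrightarrow$(3) separately, since the passage between Hopf algebra and Hopf monoidal structures on $E^*$ is of a genuinely different nature from the passage between structures on $E^*$ and structures on $\hom_{\P_\A}(E^*,-)$. I would then compose them to link all three sets at once.

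First I would establish the bijection between set (1) and set (2). This is exactly the content of Lemma \ref{lm-A}: each $V\in\A$ is a Hopf monoid in $(\A,\oplus,0)$ via the sum $\Sigma_2$ and the diagonal $\delta_2$, so Lemma \ref{lm-Hopf-monoidal-funct-monoids} guarantees that the data $(m_E,1_E,\Delta_E,\epsilon_E)$ built from a Hopf monoidal structure $(\mu,\eta,\lambda,\epsilon)$ genuinely satisfy the Hopf algebra axioms. Lemma \ref{lm-A} also exhibits the explicit inverse, reconstructing $\mu$ and $\lambda$ from $m_E$ and $\Delta_E$ by inserting the inclusions $\i_i$ and the projections $\pr_i$, and a straightforward verification shows the two assignments are mutually inverse. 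Hence sets (1) and (2) are in bijection.

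Second I would establish the bijection between (2) and (3) using Lemma \ref{lm-B}. The key inputs here are the sum-diagonal adjunction isomorphism $\alpha$ of \S\ref{subsec-sum-diag-adj} together with the K\"unneth isomorphism $\kappa$ of Lemma \ref{lm-kunneth}, which transport a monoidal/comonoidal structure $(\mu,\lambda)$ on $E^*$ to one on $\hom(E^*,-)$. The verification of the Hopf monoidal axioms reduces, by left exactness of $\hom(E,-)$ and of its tensor products, to the case of standard injectives $F=I^*_X$ and $G=I^*_Y$, where Lemmas \ref{lm-kunneth} and \ref{lm-adj-inj} identify everything with the $\k$-linear dual $(-)^\vee$ of the structure maps on $E^*$ through the Yoneda isomorphisms. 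The inverse is obtained by restricting a Hopf monoidal structure on $\hom(E^*,-)$ to the standard injectives and applying Yoneda again.

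Finally, composing the bijection of Lemma \ref{lm-A} with that of Lemma \ref{lm-B} yields the three-way bijection asserted in the theorem. I do not expect any genuine obstacle at the level of the theorem itself: the entire analytic content is absorbed into the two preceding lemmas, and the theorem is merely the packaging of their composite. The one point that does demand care --- and which has already been dispatched inside Lemma \ref{lm-B} --- is the reduction to injective coefficients, which is precisely what makes the abstract adjunction and K\"unneth identifications applicable; the sign and grading subtleties flagged in Remark \ref{rk-degrees} are handled once and for all by carrying the graded symmetry $\tau^*$ through every axiom.
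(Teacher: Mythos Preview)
Your proposal is correct and matches the paper's approach exactly: the theorem is stated immediately after Lemmas \ref{lm-A} and \ref{lm-B} with the words ``We have proved,'' and the paper explicitly notes that the bijections $(2)\leftrightarrow(1)$ and $(2)\leftrightarrow(3)$ are supplied by those two lemmas. There is nothing more to add; the theorem is precisely the packaging you describe.
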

Explicit formulas for the bijection between (2) and (1), and 
between (2) and (3) are
given in lemmas \ref{lm-A} and \ref{lm-B}. For further use, we also
need an explicit link
between the product 
$\hom(E,F)\otimes \hom(E,G)\to \hom(E,F\otimes G)$
and the $n$-graded Hopf algebra
structure of $E^*$.

\begin{lemma}[Key formula]\label{lm-2-description}
Let $(\mu,\eta,\lambda,\epsilon)$ be an $n$-graded 
Hopf monoidal structure on $E$, 
and let $(m_E,1_E,\Delta_E,\epsilon_E)$ be the associated
Hopf algebra structure (cf. lemma \ref{lm-A}). For any functors
$F_i$, $i=1,2$, the
following two composites are equal:
\begin{align}\nonumber
\hom(E,F_1)&\otimes \hom(E,F_2) \xrightarrow[\simeq]{\kappa}
\hom(E^{\boxtimes 2}, F_1\boxtimes F_2)
\\&\xrightarrow[]{\lambda^*}
\hom(E(\Oplus), F_1\boxtimes F_2)\xrightarrow[]{\alpha}
\hom(E, F_1\otimes F_2)\\
\hom(E,F_1)&\otimes \hom(E,F_2)\xrightarrow[]{\otimes }
\hom(E^{\otimes 2},\textstyle\bigotimes_i F_i)\xrightarrow[]{\Delta_E^*} 
\hom(E,\textstyle\bigotimes_i F_i)
\end{align}
\end{lemma}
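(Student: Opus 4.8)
The plan is to prove the identity by unwinding both composites into explicit natural transformations and checking that they agree objectwise. Since both sides are $\k$-linear in their argument, it suffices to evaluate them on an elementary tensor $f\otimes g$, with $f\in\hom(E,F_1)$ and $g\in\hom(E,F_2)$, and to compare the resulting maps $E\to F_1\otimes F_2$ at each object $V\in\A$.

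First I would trace composite (1). By definition of the external-product map $\kappa$, the image of $f\otimes g$ is $f\boxtimes g\colon E^{\boxtimes 2}\to F_1\boxtimes F_2$, whose value at $(V,W)$ is $f_V\otimes g_W$. Applying $\lambda^*$ precomposes with the comonoidal structure map $\lambda\colon E(\Oplus)\to E^{\boxtimes 2}$, yielding $(f\boxtimes g)\circ\lambda\colon E(\Oplus)\to F_1\boxtimes F_2$. Now I would invoke the explicit formula for the sum-diagonal adjunction ($\S\ref{subsec-sum-diag-adj}$), namely $\alpha(h)=h(D)\circ E(\delta_2)$, together with the identification $(F_1\boxtimes F_2)(D)=F_1\otimes F_2$. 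Because precomposition by $D$ is functorial, $\bigl((f\boxtimes g)\circ\lambda\bigr)(D)=(f\boxtimes g)(D)\circ\lambda(D)$, so composite (1) applied to $f\otimes g$ is the natural transformation
$$(f\boxtimes g)(D)\circ\lambda(D)\circ E(\delta_2)\colon E\to F_1\otimes F_2\;.$$

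The key observation is then twofold. On one hand, restricting $f\boxtimes g$ along the diagonal gives at each $V$ the map $f_V\otimes g_V$; that is, $(f\boxtimes g)(D)$ is exactly the internal tensor product $f\otimes g\colon E^{\otimes 2}\to F_1\otimes F_2$ occurring in composite (2). On the other hand, at each $V$ the map $\lambda(D)\circ E(\delta_2)$ equals $\lambda_{V,V}\circ E(\delta_2)_V$, which is precisely the comultiplication $\Delta_E$ as defined in lemma \ref{lm-A}. Substituting these two identifications, composite (1) evaluated on $f\otimes g$ becomes $(f\otimes g)\circ\Delta_E=\Delta_E^*(f\otimes g)$, which is exactly composite (2) applied to $f\otimes g$. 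This proves the equality.

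I do not expect any genuine obstacle here: no braiding $\tau^*$ enters either composite, so no Koszul signs intervene, and the argument is a direct diagram chase. The only point requiring care is the bookkeeping, namely keeping track of the three successive precompositions (by $\lambda$, then by $D$, then by $\delta_2$) and correctly recognizing the restriction of the external data ($\boxtimes$ and $\lambda$) along the diagonal functor $D$ as the corresponding internal data ($\otimes$ and $\Delta_E$). Once these identifications are in place, the definition of $\Delta_E$ in lemma \ref{lm-A} closes the argument immediately.
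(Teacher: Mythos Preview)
Your proof is correct. You trace both composites directly on an elementary tensor $f\otimes g$ and observe that the adjunction $\alpha$ together with the definition of $\Delta_E$ in lemma \ref{lm-A} immediately identifies composite (1) with composite (2). The key steps---that $(f\boxtimes g)(D)=f\otimes g$ and that $\lambda(D)\circ E(\delta_2)=\Delta_E$---are exactly right, and no signs intervene since no braiding is involved.

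The paper takes a different route: it reduces to the case $F_1=I^*_X$, $F_2=I^*_Y$ by left exactness of $\hom(E,-)$, and then identifies both composites through Yoneda isomorphisms with maps $E(X)^\vee\otimes E(Y)^\vee\to E(X\oplus Y)^\vee$, showing that each equals $\lambda^\vee$. This mirrors the strategy of lemma \ref{lm-B} and keeps the argument uniform with the surrounding material, but for this particular lemma the reduction to injectives is not needed. Your direct unwinding is shorter and more transparent; the paper's approach, while slightly roundabout here, has the virtue of reusing the same Yoneda dictionary (lemmas \ref{lm-kunneth} and \ref{lm-adj-inj}) that drives the classification in theorem \ref{thm-mon-str-Hopf}.
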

\begin{proof}
We proceed in the same way as in the proof of lemma \ref{lm-B}.
By left exactness of $\hom(E,-)$ it suffices to prove the formula for standard
injectives $F_1=I^*_X$ and
$F_2=I^*_Y$. In that case, by lemmas \ref{lm-kunneth} and \ref{lm-adj-inj}, the first map identifies, through
Yoneda isomorphisms, with the map $\lambda^\vee: E(X)^\vee\otimes E(Y)^\vee\to 
E(X\oplus Y)^\vee$.
On the other hand, by lemmas \ref{lm-kunneth} and \ref{lm-adj-inj},
 the second map identifies with the composite:
$$E(X)^\vee\otimes E(Y)^\vee\xrightarrow[]{E(\pr_X)^\vee\otimes E(\pr_Y)^\vee}
E(X\oplus Y)^{\otimes 2}\xrightarrow[]{\Delta_E^\vee} E(X\oplus Y)^\vee\quad
(*)$$
Now by definition (lemma \ref{lm-A}) 
$\Delta_E=\lambda_{X\oplus Y,X\oplus Y}\circ E(\delta_2)$. By naturality of
$\lambda$, 
$(E(\pr_X)\otimes E(\pr_Y))\circ 
\lambda_{X\oplus Y,X\oplus Y}\circ E(\delta_2)$ equals $\lambda_{X,Y}\circ
E(\pr_X\oplus \pr_Y)\circ E(\delta_2)$ which in turn equals
$\lambda_{X,Y}$. Thus $(*)$ equals $\lambda^\vee$, and this concludes the
proof.
\end{proof}

Now we turn to Hopf monoidal structures on $\Ext$-groups. Let $E^*$ be an
$n$-graded functor in $\P_\A$ and suppose that $\hom(E,-)$ has an $n$-graded
monoidal structure $(\mu,\eta,\lambda,\epsilon)$. 
By taking injective resolutions, we obtain $(1+n)$-graded maps $\mu:
\bigotimes_i \Ext^*(E,F_i)\to \Ext^*(E,\bigotimes_i F_i)$, 
$\lambda:\Ext^*(E,\bigotimes_i F_i)\to \bigotimes_i \Ext^*(E,F_i)$, and
we also define $\eta:\k\to \hom(E,\k)=\Ext^*(E,\k)$ and 
$\epsilon:\Ext^*(E,\k)=\hom(E,\k)\to\k$. One easily sees that this
defines a $(1+n)$-graded Hopf monoidal structure on $\Ext^*(E^*,-)$ which
coincides with the Hopf monoidal structure of $\hom(E^*,-)$ in degree $(0,*)$.
Moreover, the resulting structure is a `$\delta$-Hopf monoidal
structure'
on $\Ext^*(E,-)$, that is, if we fix one of the two functors $F_i$, then $\mu$
and $\lambda$ become maps of $\delta$-functors. To sum up we have:

\begin{lemma}\label{lm-derive}
Let $\k$ be a field, and let $E^*\in\P_\A$ be an $n$-graded functor.
Derivation induces an injection:
$$ \left\{{
\text{
\begin{tabular}{c}
$n$-graded
Hopf monoidal\\
structures
on $\hom(E^*,-)$
\end{tabular}
}
}\right\}
\hookrightarrow
\left\{{
\text{
\begin{tabular}{c}
$(1+n)$-graded
$\delta$-Hopf monoidal\\
structures
on $\Ext^*(E^*,-)$
\end{tabular}
}
}\right\}\;.$$
\end{lemma}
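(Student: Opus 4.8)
The plan is to prove injectivity by exhibiting an explicit left inverse to derivation, namely restriction to cohomological degree zero; since any map of sets admitting a left inverse is injective, this suffices. First I would recall the construction described in the paragraph preceding the lemma: starting from an $n$-graded Hopf monoidal structure $(\mu,\eta,\lambda,\epsilon)$ on $\hom_{\P_\A}(E^*,-)$, one chooses injective resolutions $F_i\to J_i^\bullet$, uses that over a field $\k$ the tensor product $J_1^\bullet\otimes J_2^\bullet$ resolves $F_1\otimes F_2$ (by lemma \ref{lm-kunneth} together with the fact that a tensor product of injectives is injective), and passes to cohomology to obtain the derived maps on $\Ext^*_{\P_\A}(E^*,-)$. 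As already observed there, these maps assemble into a $(1+n)$-graded $\delta$-Hopf monoidal structure, so derivation is a well-defined map into the target set of the lemma.

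Second, I would exploit the identification of the cohomological degree zero part. By left exactness of $\hom_{\P_\A}(E^*,-)$ one has a natural isomorphism $\Ext^0_{\P_\A}(E^*,F)\simeq\hom_{\P_\A}(E^*,F)$, so that a class in $\Ext^0$ is just a cocycle in $\hom(E^*,J^0)$ lying in the kernel of the differential, that is, an honest element of $\hom(E^*,F)$. The derived product $\mu$ and coproduct $\lambda$ are induced at the cochain level by the monoidal maps of $\hom(E^*,-)$; evaluated on degree zero cocycles they return precisely the original $\mu$ and $\lambda$. For the unit $\eta$ and counit $\epsilon$ there is nothing to check, since they are defined directly on $\Ext^*_{\P_\A}(E^*,\k)=\hom_{\P_\A}(E^*,\k)$, the equality using that the constant functor $\k$ lies in the summand $\P_{0,\A}\simeq\V_\k$ and is therefore injective, whence $\Ext^{>0}(E^*,\k)=0$. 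Thus the $(0,*)$-graded part of the derived structure coincides with $(\mu,\eta,\lambda,\epsilon)$, exactly as asserted in the construction.

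Third, I would conclude. Restricting a $(1+n)$-graded $\delta$-Hopf monoidal structure on $\Ext^*_{\P_\A}(E^*,-)$ to its $(0,*)$-graded component yields, through the identification $\Ext^0\simeq\hom$, a candidate $n$-graded structure on $\hom_{\P_\A}(E^*,-)$; the preceding paragraph shows that performing this restriction after derivation returns the structure one began with. Hence restriction is a left inverse to derivation, and derivation is injective.

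The main obstacle will be bookkeeping rather than substance: one must check carefully that the degree zero restriction of the derived product and coproduct recovers the original maps at the cochain level, and not merely up to sign or up to the shuffle and K\"unneth identifications, so that all four pieces $(\mu,\eta,\lambda,\epsilon)$ are recovered simultaneously and compatibly. Once this degree zero compatibility is pinned down, the injectivity is purely formal.
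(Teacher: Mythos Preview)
Your proposal is correct and follows essentially the same approach as the paper: the paper's argument (contained in the paragraph preceding the lemma rather than in a separate proof environment) observes that the derived structure ``coincides with the Hopf monoidal structure of $\hom(E^*,-)$ in degree $(0,*)$'', which is precisely your left-inverse-by-restriction argument spelled out in more detail.
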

\begin{remark}
The map of lemma \ref{lm-derive} is not a bijection in general. Indeed,
the condition of being a $\delta$-Hopf monoidal structure does not guaranty
that the structure is of derived type, i.e. obtained by applying a Hopf
monoidal
structure on $\hom(E,-)$ to injective resolutions. 
To be more specific, the $\delta$ condition guaranties that
the product $\mu$ is of derived type (cf. \cite[XII, Thm 10.4]{ML})
but in general it is not sufficient to guaranty that the coproduct $\lambda$ is of
derived type 
(see also \cite[Notes of XII.9]{ML}).
\end{remark}

Now lemmas \ref{lm-derive}, \ref{lm-2-description} and theorem
\ref{thm-mon-str-Hopf} yield:
\begin{theorem}\label{thm-Hopfmon}
Let $\k$ be a field, and let $E^*\in\P_\A$ be an $n$-graded functor,
endowed with a Hopf monoidal structure $(\mu,\eta,\lambda,\epsilon)$.
The functor cohomology cup product associated (cf. \S \ref{subsec-fctcohom}) 
to the comultiplication
$\Delta_E:E(V)\xrightarrow[]{E(\delta_2)} E(V\oplus V)\xrightarrow[]{\lambda}
E(V)^{\otimes 2}$  equals the composite:
\begin{align*}
\Ext^*(E,F)\otimes \Ext^*(E,G)\xrightarrow[\simeq]{\kappa}\Ext^*(E^{\boxtimes
2},F\boxtimes G)\xrightarrow[]{\lambda^*} &\Ext^*(E(\Oplus),F\boxtimes G)
\\&\xrightarrow[\simeq]{\alpha}\Ext^*(E,F\otimes G)\;.
\end{align*}
Together with the following unit, counit and coproduct, 
they make $\Ext^*(E^*,-)$ into a $(1+n)$-graded Hopf monoidal functor:
$$\k= \Ext^*(\k,\k)\xrightarrow[]{\epsilon^*} \Ext^*(E,\k)\,,\quad  
\Ext^*(E,\k)\xrightarrow[]{\eta^*} \Ext^*(\k,\k)= \k\,,$$
\begin{align*}
\Ext^*(E,F\otimes G)\xrightarrow[\simeq]{\alpha^{-1}}
\Ext^*(E(\Oplus),&F\boxtimes G)\xrightarrow[]{\mu^*} \Ext^*(E^{\boxtimes
2},F\boxtimes G)\\
&\xrightarrow[\simeq]{\kappa^{-1}}\Ext^*(E,F)\otimes \Ext^*(E,G)\;.
\end{align*}
\end{theorem}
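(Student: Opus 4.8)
The plan is to transport the Hopf monoidal structure from $E^*$ all the way to $\Ext^*(E^*,-)$ through the chain of constructions assembled above, and then to identify the resulting product with the functor cohomology cup product by a $\delta$-functor uniqueness argument. First I would produce the Hopf monoidal functor structure on $\Ext^*(E^*,-)$. By theorem \ref{thm-mon-str-Hopf}, concretely via the bijection of lemma \ref{lm-B}, the given Hopf monoidal structure $(\mu,\eta,\lambda,\epsilon)$ on $E^*$ yields an $n$-graded Hopf monoidal structure on the functor $\hom(E^*,-)$, whose product is the composite through $\kappa$, $\lambda^*$, $\alpha$, whose coproduct is the composite through $\alpha^{-1}$, $\mu^*$, $\kappa^{-1}$, and whose unit and counit are $\epsilon^*$ and $\eta^*$. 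Applying lemma \ref{lm-derive}, I would take injective resolutions to derive this structure into a $(1+n)$-graded $\delta$-Hopf monoidal structure on $\Ext^*(E^*,-)$. The derived structure maps are obtained simply by replacing $\hom$ with $\Ext^*$ in the formulas of lemma \ref{lm-B}, and these are exactly the product, coproduct, unit, and counit displayed in the statement. This already establishes that $\Ext^*(E^*,-)$ is a $(1+n)$-graded Hopf monoidal functor.

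It then remains to identify the derived product with the functor cohomology cup product of \S\ref{subsec-fctcohom} attached to the comultiplication $\Delta_E$, that is with the map $c\otimes c'\mapsto \Delta_E^*(c\cross c')$. Both maps are natural in the pair of coefficient functors and, once one of the two variables is fixed, are maps of $\delta$-functors in the remaining one: for the derived product this is precisely the $\delta$-Hopf monoidal property furnished by lemma \ref{lm-derive}, whereas for the cup product it follows from the compatibility of the cross product $\cross$ and of the pullback $\Delta_E^*$ with connecting morphisms, using that over a field the cross product of extensions is exact. In degree $(0,*)$ the two maps coincide by the key formula of lemma \ref{lm-2-description}: composite $(1)$ there is exactly the degree zero part of the derived product through $\kappa$, $\lambda^*$, $\alpha$, while composite $(2)$, namely the tensor product of classes followed by $\Delta_E^*$, is exactly $\Delta_E^*\circ\cross$ in degree zero.

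Finally I would conclude by uniqueness of $\delta$-functor maps. Since the two products agree in degree zero and are maps of $\delta$-functors in each variable separately, the comparison theorem for derived functors \cite[XII, Thm 10.4]{ML} forces them to agree in all degrees, which gives the asserted equality of the two descriptions of the cup product. The main point requiring care is the verification that the functor cohomology cup product $c\otimes c'\mapsto \Delta_E^*(c\cross c')$ genuinely is a map of $\delta$-functors in each variable, so that the uniqueness argument applies; this rests on the good behaviour over a field of the cross product of extensions, already exploited in \S\ref{subsec-fctcohom}, together with the fact that pulling back an extension along the fixed map $\Delta_E$ commutes with the connecting homomorphisms.
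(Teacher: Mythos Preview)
Your proposal is correct and follows essentially the same route as the paper, which simply records that the theorem follows from lemmas \ref{lm-derive}, \ref{lm-2-description} and theorem \ref{thm-mon-str-Hopf}. You have unpacked this one-line justification accurately: lemma \ref{lm-B} transports the Hopf monoidal structure to $\hom(E^*,-)$, lemma \ref{lm-derive} derives it to $\Ext^*(E^*,-)$, lemma \ref{lm-2-description} identifies the product with the cup product in degree zero, and the $\delta$-functor uniqueness argument of \cite[XII, Thm 10.4]{ML} (which the paper invokes in the remark following lemma \ref{lm-derive}) propagates this identification to all degrees.
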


\begin{corollary}\label{cor-Hopfalg}
Let $\k$ be a field, and for $i=1,2$, let 
$A_i^*\in\P_\A$ be a $n_i$-graded Hopf algebra functor. The maps:
\begin{align*}
&\Ext^*(A_1,A_2)\otimes \Ext^*(A_1,A_2)\to \Ext^*(A_1,A_2\otimes A_2)\to
\Ext^*(A_1,A_2)\\& 
\k\to \Ext^*(A_1,\k)\to \Ext^*(A_1,A_2)
 \\
&\Ext^*(A_1,A_2)\to \Ext^*(A_1,A_2\otimes A_2)\to \Ext^*(A_1,A_2)\otimes
\Ext^*(A_1,A_2)\\ 
&
\Ext^*(A_1,A_2)\to \Ext^*(A_1,\k)\to \k
\end{align*}
make $\Ext^*_{\P_\A}(A_1^*,A_2^*)$ into a $(1+n_1+n_2)$-graded Hopf
algebra.
\end{corollary}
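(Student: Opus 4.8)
The plan is to recognise $\Ext^*_{\P_\A}(A_1^*,A_2^*)$ as the value of a Hopf monoidal functor on a Hopf monoid, so that the statement becomes a direct instance of Lemma \ref{lm-Hopf-monoidal-funct-monoids}. The key observation is that Theorem \ref{thm-Hopfmon} already does all the homological work in the first variable, while the Hopf algebra structure of $A_2^*$ supplies exactly the monoid/comonoid data in the second variable.

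First I would feed the first variable into Theorem \ref{thm-Hopfmon}. By Lemma \ref{lm-A}, the $n_1$-graded Hopf algebra structure $(m_{A_1},1_{A_1},\Delta_{A_1},\epsilon_{A_1})$ on $A_1^*$ corresponds to an $n_1$-graded Hopf monoidal structure on $A_1^*$; applying Theorem \ref{thm-Hopfmon} then endows the functor $\Ext^*_{\P_\A}(A_1^*,-)\colon\P_\A\to\kvect$ with a $(1+n_1)$-graded Hopf monoidal structure, where $\P_\A$ is viewed as the symmetric monoidal category $(\P_\A,\otimes,\k)$ under the pointwise tensor product. Next I would note that the $n_2$-graded Hopf algebra functor $A_2^*$ is precisely a Hopf monoid in $(\P_\A,\otimes,\k)$: since the monoidal product, unit and symmetry of $\P_\A$ are all computed pointwise, the Hopf monoid axioms for $A_2^*$ amount exactly to the assertion that each $A_2^*(V)$ is an $n_2$-graded Hopf algebra, naturally in $V$, which is the hypothesis.

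Applying Lemma \ref{lm-Hopf-monoidal-funct-monoids} to the Hopf monoidal functor $\Ext^*_{\P_\A}(A_1^*,-)$ and the Hopf monoid $A_2^*$ then produces a Hopf algebra structure on $\Ext^*_{\P_\A}(A_1^*,A_2^*)$. Its product, unit, coproduct and counit are obtained by postcomposing (resp. precomposing) the monoidal product $\mu$, unit $\eta$, comonoidal coproduct $\lambda$ and counit $\epsilon$ of Theorem \ref{thm-Hopfmon} with the images under $\Ext^*_{\P_\A}(A_1^*,-)$ of $m_{A_2}$, $1_{A_2}$, $\Delta_{A_2}$ and $\epsilon_{A_2}$; comparing with the construction in Lemma \ref{lm-monoidal-funct-monoids} shows these are exactly the four composites listed in the statement. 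The grading is $(1+n_1+n_2)$: the $(1+n_1)$ part is the grading of the functor $\Ext^*_{\P_\A}(A_1^*,-)$ supplied by Theorem \ref{thm-Hopfmon}, while the additional $n_2$ is the internal grading carried by the coefficient functor $A_2^*$.

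I do not expect a genuine obstacle here, since the corollary is a packaging of the preceding results; the only point requiring care is the grading and sign bookkeeping. One must check that the $(1+n_1)$-grading of the functor and the $n_2$-grading of $A_2^*$ combine compatibly, and in particular that the Koszul signs introduced by the symmetry $\tau^*$ of Definition \ref{def-Hopfmonstr} are governed by the total $(1+n_1+n_2)$-degree. This is precisely the kind of sign subtlety flagged in Remark \ref{rk-degrees} (and the reason axiom~(3) constrains the gradings), so it is worth verifying explicitly that the outputs of Lemma \ref{lm-Hopf-monoidal-funct-monoids} are genuine $(1+n_1+n_2)$-graded Hopf algebra operations rather than merely formal composites.
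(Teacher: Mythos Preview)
Your proposal is correct and follows exactly the paper's approach: apply Theorem~\ref{thm-Hopfmon} to obtain a $(1+n_1)$-graded Hopf monoidal structure on $\Ext^*_{\P_\A}(A_1^*,-)$, then feed in the Hopf monoid $A_2^*$ via Lemma~\ref{lm-Hopf-monoidal-funct-monoids}. The paper's proof is a single sentence noting that one needs a graded version of Lemma~\ref{lm-Hopf-monoidal-funct-monoids} (valid for additive functors), which is precisely the sign/grading bookkeeping you flag at the end.
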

\begin{proof}
To get corollary \ref{cor-Hopfalg} from theorem \ref{thm-Hopfmon}, we just
apply a graded version of lemma \ref{lm-Hopf-monoidal-funct-monoids}
 (which holds for additive functors).
\end{proof}

\begin{remark}
Corollary \ref{cor-Hopfalg} is a generalization of \cite[Lemma 1.11]{FFSS}.
Indeed, we don't require our functors $A_i^*$ to be exponential functors. In
section \ref{sec-applic}, we apply this corollary to Hopf algebra functors
which are \emph{not} exponential functors.
\end{remark}

\begin{remark}
In this section, the proofs rely on (1) Yoneda
isomorphisms for standard injectives, (2) adjunction between the sum and the
diagonal functors, (3) K\"unneth formulas. Properties (1) and (2) hold in the
category $\F_\A$ of ordinary functors with source an additive category $\A$ and
target $\kvect$. 
The K\"unneth formula also holds if one
assumes furthermore some finiteness conditions on the functors (either if
$F_1,F_2$ have finite dimensional values and
$G_1,G_2$ have injective resolutions by \emph{finite} sums of standard
injectives, or if $F_1,F_2$ have projective resolutions by \emph{finite}
sums of
projectives). Up to these slight finiteness conditions, the results of this
section holds in the category $\F_\A$.
This gives interesting applications for the stable cohomology of
the finite classical groups $O_{n,n}(\mathbb{F}_q)$ and 
$Sp_{n}(\mathbb{F}_q)$ with twisted coefficients \cite{DjamentVespa}.
\end{remark}

\section{Applications}\label{sec-applic}

\subsection{Stable products and coproducts for classical groups}

\begin{theorem}\label{thm-hopfmonstr-gp}
Let $\k$ be a field. 
Let $G_n$ be a product of copies of the groups $GL_n, Sp_n$ or $O_{n,n}$, and
let $F_1,F_2$ be strict polynomial functors adapted to $G_n$
(cf. terminology \ref{terminology}). If $O_{n,n}$ is a factor in $G_n$,
assume that $\k$ has odd characteristic. The stable rational
cohomology of $G_n$ is equipped with a coproduct:
$$\H^*(G_\infty, (F_1\otimes F_2)_\infty)\to \H^*(G_\infty, F_{1\;\infty})
\otimes \H^*(G_\infty, F_{2\;\infty})\;. $$
Together with the
usual cup product (cf. \S \ref{subsec-ratcohom}), 
they endow $\H^*(G_\infty,-)$ with the structure of a
graded Hopf monoidal functor.

Moreover, the cup product is a section of the coproduct. 
\end{theorem}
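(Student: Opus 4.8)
The plan is to produce the entire Hopf monoidal structure first on functor cohomology, where Theorem \ref{thm-Hopfmon} does all the work, and then to carry it over to rational cohomology through the isomorphisms $\phi_{G_n,-}$ of Theorem \ref{thm-generique}, using stabilization (Corollary \ref{cor-stab}) to make the construction land on the \emph{stable} cohomology.

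First I would observe that $E^*:=\Gamma^\star(F_G)$ is a graded Hopf algebra functor in $\P_G$. Indeed, for each $X\in\A$ the module $\Gamma^\star(F_G(X))$ carries the usual divided-power Hopf algebra structure of \S\ref{subsec-nota}, and all its structure maps are natural in $X$ (they are natural in the underlying module, and $F_G$ is a functor). Its comultiplication is precisely the coalgebra structure that defines the functor cohomology cup product in \S\ref{subsec-fctcohom}. By Lemma \ref{lm-A} this Hopf algebra structure is the same datum as a Hopf monoidal structure $(\mu,\eta,\lambda,\epsilon)$ on $E^*$, so Theorem \ref{thm-Hopfmon} applies: it endows $\Ext^*_{\P_G}(\Gamma^\star(F_G),-)$ with a Hopf monoidal structure whose product is the cup product and whose coproduct is the composite built from $\mu^*$.

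Next I would transport this structure. Fixing $F_1,F_2$ of degrees $d_1,d_2$ and any $n$ with $2n\ge d_1+d_2$ (and $2$ invertible in $\k$ if $O_{n,n}$ occurs), Theorem \ref{thm-generique} makes $\phi_{G_n,F}$ an isomorphism for $F=F_1,F_2,F_1\otimes F_2$, and Corollary \ref{cor-stab} identifies these groups with the stable cohomology. Since $\phi_{G_n,-}$ is natural and compatible with cup products, it matches the functor cohomology product with the usual cup product of \S\ref{subsec-ratcohom}; I would then simply \emph{define} the coproduct on $\H^*(G_\infty,(F_1\otimes F_2)_\infty)$ by conjugating, through $\phi$, the coproduct of $\Ext^*_{\P_G}(\Gamma^\star(F_G),-)$ provided by Theorem \ref{thm-Hopfmon}. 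The relation $\phi_{G_n,F}=\phi_{n,m}\circ\phi_{G_m,F}$ from the proof of Corollary \ref{cor-stab}, together with naturality of that coproduct, shows this is independent of $n$ and compatible with stabilization; the unit and counit transport in the same way. Because $\phi$ is a natural isomorphism intertwining all four structure maps, the Hopf monoidal axioms of Definition \ref{def-Hopfmonstr} for $\H^*(G_\infty,-)$ follow formally from those already proved for $\Ext^*_{\P_G}(\Gamma^\star(F_G),-)$.

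Finally, for the section property $\Delta\circ\cup=\mathrm{id}$, I would transport once more and reduce, after cancelling the $\alpha$'s in the formulas of Theorem \ref{thm-Hopfmon}, to the functor-level identity $\lambda_{V,W}\circ\mu_{V,W}=\mathrm{id}$ on $E^{\boxtimes 2}$. Using the explicit formulas of Lemma \ref{lm-A}, namely $\mu(x\otimes y)=E(\i_1)(x)\cdot E(\i_2)(y)$ and $\lambda(z)=(E(\pr_1)\otimes E(\pr_2))\Delta_E(z)$, together with the facts that $\Delta_E$ and the $E(\pr_i)$ are algebra maps, that $\pr_i\circ\i_j$ is $\mathrm{id}$ if $i=j$ and $0$ otherwise, and that $E$ sends the zero morphism to the unit–counit composite, the expression collapses by the counit axiom to $x\otimes y$. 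The hard part is not this routine verification but the bookkeeping of degrees: at finite $n$ the maps $\phi_{G_n,F}$ are isomorphisms only when $2n\ge\deg F$, so product and coproduct exist only partially, and it is precisely the passage to the stable value that upgrades these partial data into a genuine Hopf monoidal structure. I would also flag that $\mu$ is \emph{not} invertible here, in contrast to the bare exponential functor $\Gamma^*$, because $F_G$ is quadratic rather than additive; hence the section property genuinely rests on the counit axiom and is one-sided, consistent with $\cup$ being only a section of $\Delta$.
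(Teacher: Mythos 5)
Your proposal is correct and follows essentially the same route as the paper's proof: equip $E^*=\Gamma^\star(F_G)$ with its divided-power Hopf algebra structure (equivalently, via lemma \ref{lm-A}, a Hopf monoidal structure $(\mu,\eta,\lambda,\epsilon)$), apply theorem \ref{thm-Hopfmon} to get the Hopf monoidal structure on $\Ext^*_{\P_G}(\Gamma^\star(F_G),-)$, check $\lambda\circ\mu=\Id$ at the functor level to get the section property, and transport everything through the isomorphisms of theorem \ref{thm-generique} and corollary \ref{cor-stab}. The one point you leave implicit, which the paper states explicitly, is that the divided-power grading (with $\Gamma^d$ in degree $2d$, hence concentrated in even degrees) may be forgotten without violating axiom (3) of definition \ref{def-Hopfmonstr} -- cf. remark \ref{rk-degrees} -- and this is what converts the bigraded structure produced by theorem \ref{thm-Hopfmon} into the singly graded Hopf monoidal structure asserted in the statement.
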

\begin{proof}
We consider the usual graded Hopf algebra structure $\Gamma^*(V)$, with
$\Gamma^d(V)$ in degree $2d$, cf. paragraph \ref{subsec-nota}.
Let $F_G\in\P_G$ be the characteristic functor associated to $G_n$. If we set
$V=F_G$, the divided powers of $F_G$ are a graded Hopf
algebra, or equivalently a graded Hopf monoidal functor. To be more specific, 
the product $\mu$ and the coproduct $\lambda$ are given by the formulas:
\begin{align*}\mu: 
 \Gamma^*(F_G(V&))\otimes \Gamma^*(F_G(W))\to \Gamma^*(F_G(V\oplus
 W))^{\otimes 2}\\& \simeq  \Gamma^*(F_G(V\oplus
 W)^{\oplus 2})\to \Gamma^*(F_G(V\oplus
 W))\;,\\
\lambda: 
\Gamma^*(F_G(V&\oplus
 W))\to 
\Gamma^*(F_G(V\oplus
 W)^{\oplus 2})\\&\simeq
\Gamma^*(F_G(V\oplus
 W))^{\otimes 2}\to \Gamma^*(F_G(V))\otimes \Gamma^*(F_G(W))
\;.
\end{align*} 
In particular, one checks that $\lambda\circ\mu=\Id$. 
Thus, by theorem \ref{thm-Hopfmon}, $\Ext^*(\Gamma^\star(F_G),-)$ is a
bigraded Hopf
monoidal functor, and $\lambda\circ\mu=\Id$ implies that the external cup
product is a section of the coproduct. 

Since the divided powers of $F_G$ are concentrated in even
degree, we may forget the grading arising from the divided powers (cf. remark
\ref{rk-degrees}) and $\Ext^*(\Gamma^\star(F_G),-)$ is a
$*$-graded Hopf
monoidal functor. Then it suffices to apply theorem \ref{thm-generique} to
conclude the proof.
\end{proof}

\begin{corollary}\label{cor-inj}
Let $\k$ be a field. 
Let $G_n$ be a product of copies of the groups $GL_n, Sp_n$ or $O_{n,n}$, and
let $F_1,F_2$ be two functors of degree $d_1,d_2$ adapted to $G_n$. 
If $O_{n,n}$ is a factor in $G_n$,
assume that $\k$ has odd characteristic.
For all $n$ such that $2n\ge d_1+ d_2$, the cup product
induces a injection:
$$ \H^*(G_n, (F_1)_n)
\otimes \H^*(G_n, (F_2)_n)\hookrightarrow
\H^*(G_n, (F_1)_n\otimes (F_2)_n)\;.  $$ 
\end{corollary}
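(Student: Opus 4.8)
The plan is to reduce the statement at level $n$ to the stable statement of Theorem~\ref{thm-hopfmonstr-gp} by means of the cohomological stabilization of Corollary~\ref{cor-stab}. The structural heart of the argument is the final assertion of Theorem~\ref{thm-hopfmonstr-gp}: on $\H^*(G_\infty,-)$ the cup product $\mu$ is a section of the coproduct $\lambda$, that is $\lambda\circ\mu=\Id$. Since any map admitting a left inverse is injective, this says at once that the stable cup product
$$\H^*(G_\infty, F_{1\;\infty})\otimes \H^*(G_\infty, F_{2\;\infty})\to \H^*(G_\infty,(F_1\otimes F_2)_\infty)$$
is injective. Everything then comes down to identifying the level-$n$ cup product with this stable one.

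First I would record the degree bookkeeping. The functors $F_1$, $F_2$ and $F_1\otimes F_2$ are adapted to $G_n$ of degrees $d_1$, $d_2$ and $d_1+d_2$. As degrees of strict polynomial functors are nonnegative, the hypothesis $2n\ge d_1+d_2$ forces $2n\ge d_1$ and $2n\ge d_2$ as well, so all three functors lie in the stable range. Applying Corollary~\ref{cor-stab} (whose odd-characteristic hypothesis in the orthogonal case matches ours, $\k$ being a field) therefore gives isomorphisms $\H^*(G_n,(F_i)_n)\xrightarrow[]{\simeq}\H^*(G_\infty,F_{i\;\infty})$ for $i=1,2$ and likewise for $F_1\otimes F_2$.

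Next I would verify that these stabilization isomorphisms intertwine the level-$n$ and stable cup products; equivalently, that the external cup product is natural with respect to the maps $\phi_{n,m}$ of \S\ref{subsec-cohom-stab}. Recall that $\phi_{n,m}$ is the composite of the restriction along $G_n\hookrightarrow G_m$ and the coefficient map induced by $F(\pi)$. Restriction is compatible with external cup products, and naturality of the cup product in the coefficient modules handles the second factor, using $(F_1\otimes F_2)(\pi)=F_1(\pi)\otimes F_2(\pi)$. Consequently the level-$n$ cup product fits into a commutative square with the stable cup product whose vertical arrows are the isomorphisms of the previous step.

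Combining the three steps, the level-$n$ cup product is the composite of the injective stable cup product with isomorphisms on either side, hence is itself injective. The only point demanding genuine (if routine) verification is the naturality square of the third step; this is the main obstacle, and it follows directly from the chain-level formula for the external cup product recalled in \S\ref{subsec-ratcohom} together with the explicit description of $\phi_{n,m}$.
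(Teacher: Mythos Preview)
Your proof is correct and follows exactly the route the paper intends (the paper gives no explicit proof, leaving the corollary as an immediate consequence of Theorem~\ref{thm-hopfmonstr-gp}): the stable cup product is injective because it is split by the coproduct, and the condition $2n\ge d_1+d_2$ puts all three functors $F_1$, $F_2$, $F_1\otimes F_2$ in the stable range so that the level-$n$ and stable cup products coincide. Your naturality check for $\phi_{n,m}$ with respect to cup products is the one small point the paper leaves to the reader; alternatively, one may bypass $G_\infty$ altogether and argue directly via the cup-product compatibility of $\phi_{G_n,-}$ in Theorem~\ref{thm-generique}, which transports the injectivity from the $\Ext$ side (where $\lambda\circ\mu=\Id$ holds by the proof of Theorem~\ref{thm-hopfmonstr-gp}) to $\H^*(G_n,-)$.
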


\begin{remark}
The injectivity in odd degree cohomological degree 
does not contradict the usual commutativity formula $x\cup
y=(-1)^{\deg(x)\deg(y)} y\cup x$. Indeed, this latter formula holds only for 
\emph{internal} cup products. If $\tau$ denotes the
isomorphism $(F_1)_n\otimes (F_2)_n\simeq (F_2)_n\otimes (F_1)_n$, the
commutativity relation for external cup products is $x\cup
y=(-1)^{\deg(x)\deg(y)} \H^*(G_n,\tau)(y\cup x)\;.$ 
\end{remark}

\begin{corollary}\label{cor-Hopfalggp}
Let $\k$ be a field. 
Let $G_n$ be a product of copies of the groups $GL_n, Sp_n$ or $O_{n,n}$, and
let $A^*$ be an $n$-graded strict polynomial functor adapted to $G_n$, endowed
with the structure of a Hopf algebra. If $O_{n,n}$ is a factor in $G_n$,
assume that $\k$ has odd characteristic. The usual cup product
$\H^*(G_\infty,A^*_\infty)^{\otimes 2}\to \H^*(G_\infty,A^*_\infty)$ may be
supplemented with a coproduct 
$\H^*(G_\infty,A^*_\infty)\to\H^*(G_\infty,A^*_\infty)^{\otimes 2}$ which
endow $\H^*(G_\infty,A^*_\infty)$ with the structure of a $(1+n)$-graded Hopf
algebra. 
\end{corollary}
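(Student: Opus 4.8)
The plan is to deduce this corollary from Theorem \ref{thm-hopfmonstr-gp} exactly as Corollary \ref{cor-Hopfalg} was deduced from Theorem \ref{thm-Hopfmon}. The key observation is that the category $\P_G$ of functors adapted to $G_n$, equipped with the pointwise tensor product $\otimes$ and the constant functor $\k$ as unit object, is symmetric monoidal, and that an $n$-graded Hopf algebra functor $A^*$ is precisely an $n$-graded Hopf monoid in $(\P_G,\otimes,\k)$: its structure maps $m_A,1_A,\Delta_A,\epsilon_A$ are morphisms of functors satisfying the Hopf monoid axioms of \S\ref{subsec-hopfmonstr}.

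By Theorem \ref{thm-hopfmonstr-gp} (whose hypotheses, in particular the restriction to odd characteristic when $O_{n,n}$ is a factor, are exactly those assumed here), the assignment $F\mapsto \H^*(G_\infty,F_\infty)$ is a graded Hopf monoidal functor from $(\P_G,\otimes,\k)$ to $\kvect$. This functor is additive, so it extends degreewise to $n$-graded functors. I would then invoke the graded version of Lemma \ref{lm-Hopf-monoidal-funct-monoids}: a $1$-graded Hopf monoidal functor sends an $n$-graded Hopf monoid to a $(1+n)$-graded Hopf algebra. Applied to $\H^*(G_\infty,-)$ and the Hopf monoid $A^*$, this produces the desired $(1+n)$-graded Hopf algebra structure on $\H^*(G_\infty,A^*_\infty)$; concretely, the product is the cup product of Theorem \ref{thm-hopfmonstr-gp} postcomposed with the cohomology of $m_A$, and the coproduct is the coproduct $\lambda$ of Theorem \ref{thm-hopfmonstr-gp} precomposed with the cohomology of $\Delta_A$ (cf. the construction in Lemma \ref{lm-monoidal-funct-monoids}).

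The only delicate point, hence the step I expect to require the most care, is the grading bookkeeping. As Remark \ref{rk-degrees} warns, Hopf monoidal structures are sensitive to the chosen gradings, since axiom (3) of Definition \ref{def-Hopfmonstr} and the symmetry $\tau^*$ depend on them; collapsing any of the $1+n$ gradings could destroy the structure. The plan is therefore to retain all gradings simultaneously. Since Theorem \ref{thm-hopfmonstr-gp} already supplies a bona fide graded Hopf monoidal functor (the even divided-power grading having been safely discarded there, as justified in its proof), no further sign obstructions arise, and the graded form of Lemma \ref{lm-Hopf-monoidal-funct-monoids} applies without modification.
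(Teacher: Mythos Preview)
Your proposal is correct and matches the paper's intended argument: the paper states Corollary \ref{cor-Hopfalggp} without proof, but it is meant to follow from Theorem \ref{thm-hopfmonstr-gp} exactly as Corollary \ref{cor-Hopfalg} follows from Theorem \ref{thm-Hopfmon}, namely by applying the graded version of Lemma \ref{lm-Hopf-monoidal-funct-monoids} to the Hopf monoidal functor $\H^*(G_\infty,-)$ and the Hopf monoid $A^*$. Your attention to the grading bookkeeping from Remark \ref{rk-degrees} is appropriate and consistent with how the paper handles it in the proof of Theorem \ref{thm-hopfmonstr-gp}.
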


\subsection{A new statement for the universal classes}

As an application of theorem \ref{thm-hopfmonstr-gp}, we give a nicer
formulation  of the existence of the
universal cohomology classes \cite[Thm 4.1]{TVdK}. Consider the divided powers
$\Gamma^*(\H^*(GL_\infty, gl^{(1)}_\infty))$, with the usual Hopf algebra
structure but regraded in the following way: the bidegree
of an element in $\Gamma^{i}(\H^j(GL_\infty, gl^{(1)}_\infty))$ is $(2ij, 2i)$. We easily
get:
\begin{corollary}\label{cor-univ} Let $\k$
be a field of positive characteristic $p$. 
The existence of the universal cohomology classes 
is equivalent to the following statement.
  
There is a bigraded Hopf algebra morphism 
$$\psi:\Gamma^*(\H^*(GL_\infty, gl^{(1)}_\infty))\to 
\H^*(GL_\infty, \Gamma^*(gl^{(1)}_\infty))\;,$$
such that for all $n\ge p$ the following composite is a monomorphism:
$$\Gamma^*(\H^*(GL_\infty, gl^{(1)}_\infty))
\xrightarrow[]{\psi} \H^*(GL_\infty, \Gamma^*(gl^{(1)}_\infty))
\xrightarrow[]{\phi_{n,\infty}} \H^*(GL_n, \Gamma^*(gl_n^{(1)}))\;.$$
\end{corollary}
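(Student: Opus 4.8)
The plan is to translate the cohomological statement into a statement about a single morphism of bigraded Hopf algebras, after which the equivalence becomes a matter of unwinding definitions. First I would fix the target. The divided powers $\Gamma^*(gl^{(1)})$ carry the Hopf algebra functor structure of \S\ref{subsec-nota} (induced by the exponential isomorphism), so Corollary \ref{cor-Hopfalggp} applied with the group $GL_n$ and $A^*=\Gamma^*(gl^{(1)})$ makes $\H^*(GL_\infty,\Gamma^*(gl^{(1)}_\infty))$ into a bigraded Hopf algebra, the coproduct being supplied by Theorem \ref{thm-hopfmonstr-gp}; Theorem \ref{thm-GLn} identifies it with $\Ext^*_{\P(1,1)}(\Gamma^\star(gl),\Gamma^*(gl^{(1)}))$. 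I would record its bigrading as (cohomological degree, weight), where $\Gamma^d(gl^{(1)})$ is assigned weight $2d$, so that the second grading of source and target match. Note that this Hopf algebra is connected for the weight grading, since its weight-zero part is $\H^*(GL_\infty,\k)=\k$.

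Next I would recognize the source as a free object. Write $M=\H^*(GL_\infty,gl^{(1)}_\infty)=\H^*(GL_\infty,\Gamma^1(gl^{(1)}_\infty))$, a graded vector space concentrated in even cohomological degrees. Under the regrading $(2ij,2i)$ the space $\Gamma^*(M)$ is the free divided power Hopf algebra on its weight-$2$ primitives $\Gamma^1(M)=M$. Dually, the weight-$2$ part $M$ of the target consists of primitive elements, being the lowest positive weight piece of a connected bigraded Hopf algebra. Consequently a bigraded Hopf algebra morphism $\psi$ is exactly the data, for each basis vector $x$ of $M$, of a divided power sequence $(\psi(\gamma_i(x)))_{i\ge 0}$ in the target: the component $\psi(\gamma_1(x))$ is a primitive lift of $x$, and the higher components $\psi(\gamma_i(x))\in\H^*(GL_\infty,\Gamma^i(gl^{(1)}_\infty))$ must multiply and comultiply according to the divided power relations. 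Verifying that such a system does assemble into a genuine Hopf algebra morphism is precisely what the Hopf monoidal formalism of section \ref{sec-prodetcoprod}, in particular the key formula of lemma \ref{lm-2-description}, is designed to handle.

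I would then match this data with the universal classes of \cite[Thm 4.1]{TVdK}. By construction those classes are the higher divided power lifts $\psi(\gamma_i(x))$ of the detecting classes of $\H^*(GL_\infty,gl^{(1)}_\infty)$, so a system of universal classes gives $\psi$ and, conversely, the components of any $\psi$ recover such a system. The only remaining ingredient is to reconcile the two detection properties. Restriction from the stable level to finite $GL_n$ is the stabilization map $\phi_{n,\infty}$ of Corollary \ref{cor-stab}, so the requirement that the universal classes stay jointly detected at every finite level $n\ge p$ is exactly the condition that $\phi_{n,\infty}\circ\psi$ be a monomorphism. Reading this equivalence in both directions proves the corollary.

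The main obstacle, and the genuine content, is this last detection step. Because $\Gamma^*(gl^{(1)})$ has unbounded degree, Theorem \ref{thm-GLn} (equivalently Corollary \ref{cor-stab}) controls $\phi_{n,\infty}$ only on bounded weight pieces; for a fixed $n$ the map is very far from an isomorphism on the whole Hopf algebra. Thus injectivity of $\phi_{n,\infty}\circ\psi$ for all $n\ge p$ is not a formal consequence of stabilization but a true universality statement, and it is exactly this statement that is being shown to be equivalent to the existence of the TVdK classes. By contrast, the purely formal part — checking that $\psi$ respects product and coproduct simultaneously and is strictly bidegree-preserving once the factor of two in the regrading $(2ij,2i)$ is accounted for — is routine given the dictionary assembled above.
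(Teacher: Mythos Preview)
Your proposal is correct and supplies exactly the details the paper leaves implicit (its entire proof is the phrase ``We easily get''). Recognizing the target as a bigraded Hopf algebra via Corollary~\ref{cor-Hopfalggp}, identifying $\Gamma^*(M)$ as free on its weight-$2$ primitives, and matching the data of a bigraded Hopf morphism with the divided-power lifts and the finite-level detection condition of \cite[Thm~4.1]{TVdK} is precisely the intended unwinding.
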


\subsection{Cohomology computations for the orthogonal and symplectic groups}
In \cite{DjamentVespa}, Djament and Vespa showed how to obtain cohomological
computations for the orthogonal and symplectic groups from
the computations of
\cite{FFSS}. Their method adapts easily to the strict polynomial
functor setting. For all $r\ge 0$, we denote by $I^{(r)}\in\P$ the $r$-th Frobenius twist
\cite[(v) p.224]{FS}. We consider the Hopf algebra $S^*(I^{(r)})$ (resp. 
$\Lambda^*(I^{(r)})$) with
$S^d(I^{(r)})$ in degree $2d$ (resp. with $\Lambda^d(I^{(r)})$ in degree $d$).
We have:

\begin{theorem}\label{thm-calcul}
Let $\k$ be a field of odd characteristic. Let $r$ be a nonnegative integer.

\begin{itemize}
\item[(i)]
The bigraded Hopf algebra 
$\H^*(O_{\infty,\infty}, S^\star(I^{(r)})_\infty)$ 
is a symmetric Hopf algebra on
generators $e_m$ of bidegree $(2m, 4)$ for $0\le m< p^r$.
\item[(ii)] 
The bigraded Hopf algebra
$\H^*(Sp_\infty, S^\star(I^{(r)})_\infty)$ is trivial.
\item[(iii)] The bigraded Hopf algebra
$\H^*(O_{\infty,\infty}, \Lambda^\star(I^{(r)})_\infty)$
is trivial.
\item[(iv)] The bigraded Hopf algebra
$\H^*(Sp_\infty, \Lambda^\star(I^{(r)})_\infty)$ is a divided power Hopf algebra on generators $e_m$ of bidegree $(2m,2)$ 
for $0\le m< p^r$. 
\end{itemize}
\end{theorem}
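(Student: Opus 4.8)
The plan is to convert each of the four stable cohomology Hopf algebras into an extension algebra in $\P$ and then compute the latter by the method of Djament and Vespa, fed with the computations of \cite{FFSS}. First I would invoke Theorems \ref{thm-Spn} and \ref{thm-Onncase} (valid in odd characteristic) together with Corollary \ref{cor-Hopfalggp}, to obtain bigraded Hopf algebra isomorphisms
$$\H^*(O_{\infty,\infty}, S^\star(I^{(r)})_\infty)\cong \Ext^*_\P(\Gamma^\star(S^2), S^\star(I^{(r)}))\,,\quad \H^*(Sp_\infty, \Lambda^\star(I^{(r)})_\infty)\cong \Ext^*_\P(\Gamma^\star(\Lambda^2), \Lambda^\star(I^{(r)}))\,,$$
and the two mixed analogues. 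Here the stable value is read off degree by degree: for each internal degree $d$, the functor $S^d(I^{(r)})$ (resp. $\Lambda^d(I^{(r)})$) has polynomial degree $dp^r$, so Theorem \ref{thm-generique} identifies its cohomology with the corresponding Ext group as soon as $2n\ge dp^r$. This reduces the four assertions to extension computations in $\P$.

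Second, I would carry out the reduction of \cite[\S 4.2]{DjamentVespa}. Since $2$ is invertible, the diagonal functor $\otimes^2\colon V\mapsto V\otimes V$ splits as $S^2\oplus\Lambda^2$, so that $\Gamma^\star(S^2)$ and $\Gamma^\star(\Lambda^2)$ are the two factors of $\Gamma^\star(\otimes^2)\cong\Gamma^\star(S^2)\otimes\Gamma^\star(\Lambda^2)$, separated by the involution of $\otimes^2$ swapping the tensor factors. Writing $\otimes^2=T\circ D$ with $T(V,W)=V\otimes W\in\P_{\V\times\V}$ and $D$ the diagonal, the sum-diagonal adjunction (Remark \ref{rk-2adj}) yields
$$\Ext^*_\P(\Gamma^\star(\otimes^2), F)\cong \Ext^*_{\P_{\V\times\V}}(\Gamma^\star(T), F(\Oplus))\,,$$
whose right-hand side is a bifunctor (that is, $GL$-type) extension group computed in \cite{FFSS} through the bi-exponential structure of $\Gamma^\star(T)$ and $F(\Oplus)$. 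Decomposing according to the swap action isolates the $S^2$- and $\Lambda^2$-contributions, and matching internal gradings pairs the $S^\star$-coefficient with the $S^2$-factor and the $\Lambda^\star$-coefficient with the $\Lambda^2$-factor. This parity mechanism is precisely what forces the two mixed cases (ii) and (iii) to be trivial.

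Third, I would insert the explicit output of \cite{FFSS}. The generators $e_m$, $0\le m<p^r$, in cohomological degree $2m$ are a basis of the Friedlander--Suslin group $\Ext^*_\P(I^{(r)},I^{(r)})$, which is $p^r$-dimensional and concentrated in even degrees $0,2,\dots,2(p^r-1)$; the twist $I^{(r)}$ surfaces from the coefficient functor while the quadratic characteristic functor supplies the diagonal pairing. The computation should then exhibit, as bigraded Hopf algebras,
$$\Ext^*_\P(\Gamma^\star(S^2), S^\star(I^{(r)}))\cong S^\star\!\bigl(\Ext^*_\P(I^{(r)},I^{(r)})\bigr)\,,\qquad \Ext^*_\P(\Gamma^\star(\Lambda^2), \Lambda^\star(I^{(r)}))\cong \Gamma^\star\!\bigl(\Ext^*_\P(I^{(r)},I^{(r)})\bigr)\,,$$
with the stated bidegrees (namely $(2m,4)$ coming from $S^2(I^{(r)})$, and $(2m,2)$ coming from $\Lambda^2(I^{(r)})$), whence (i) and (iv).

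I expect the main obstacle to lie entirely in the second and third steps, and to be about structure rather than dimension. The delicate point is not the underlying bigraded vector spaces but the precise Hopf algebra structure furnished by Corollary \ref{cor-Hopfalggp} (equivalently, by the Hopf monoidal structure of Theorem \ref{thm-Hopfmon}): one must show that it realizes a free symmetric algebra in case (i) and a divided power algebra in case (iv). Distinguishing symmetric from divided power is a genuinely characteristic-sensitive duality statement — it is the reflection on $\Ext$ of the $S^*/\Gamma^*$ duality and of the sign twist exchanging the roles of $S^2$ and $\Lambda^2$, which makes the answers in (i) and (iv) Hopf-dual to one another — and verifying it requires careful tracking of which isotypic component of $\Gamma^\star(\otimes^2)$ carries which coalgebra structure through the sum-diagonal adjunction. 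Establishing the vanishing in the mixed cases, although conceptually a parity statement, likewise demands checking that no unexpected extensions survive the FFSS bookkeeping.
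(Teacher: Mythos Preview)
Your proposal is correct and follows essentially the same route as the paper: reduce to $\Ext^*_\P(\Gamma^\star(F_G),E^*)$ via the main comparison theorems, embed into $\Ext^*_\P(\Gamma^\star(\otimes^2),E^*)$ and analyze the swap involution, pass through the sum--diagonal adjunction, and finish with the FFSS computations, exactly as in Djament--Vespa. The one step you leave implicit is that after the adjunction the paper further converts the two-variable Ext into $\Ext^*_\P(E^{*\sharp},E^*)$ (via the equivalence $\P(2)\simeq\P(1,1)$ and \cite[Thm~1.5]{FF}), which is the object actually computed in \cite{FFSS} and on which the involution becomes $(-1)^{ij}\widetilde{\theta}$ with $\widetilde{\theta}=\Id$; this is precisely the ``parity mechanism'' you allude to and what pins down both the vanishing in (ii), (iii) and the symmetric-versus-divided-power dichotomy in (i), (iv).
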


We get a proof by following closely 
\cite[Section 4]{DjamentVespa}. For sake
of completeness, we give some details in the remainder of this paragraph.
Let $\k$ be a field of characteristic $p>2$. 
As in section \ref{sec-3}, we denote by $\P(1,1)$ the category of strict
polynomial functors with source $\V^\op_\k\times \V_\k$ and by $\P$ the
category of strict polynomial functors with source $\V_\k$. We also denote by
$\P(2)$ the category strict polynomial functors with source 
$\V_\k\times \V_\k$.

Let $E^*=S^*(I^{(r)})$ (with $S^d(I^{(r)})$ in degree $2d$) 
or $E^*=\Lambda^*(I^{(r)})$ (with $\Lambda^d(I^{(r)})$ 
in degree $d$), or more generally let $E^*$ be a `skew-commutative 
Hopf exponential functor'
(cf. \cite[p. 675 and Def. 1.9]{FFSS}. Equivalently, $E^*$ is a graded
functor in $\P$ satisfying all the
hypotheses of lemma \ref{lm-ExpHopf}). 
We wish to compute the bigraded 
Hopf algebra $\Ext^*_\P(\Gamma^\star(F_G), E^*)$, with $F_G=S^2$ or $F_G=\Lambda^2$ (it
is a trigraded Hopf algebra by corollary \ref{cor-Hopfalg}, and 
we may drop the gradation on
$\Gamma^\star(F)$, since this gradation is concentrated in even degrees, cf.
remark \ref{rk-degrees}). Indeed, 
by theorem \ref{thm-generique}, this bigraded Hopf
algebra is isomorphic to the bigraded Hopf algebra
$\H^*(G_\infty,
E^*_\infty)$ with $G_n=O_{n,n}$ for $F_G=S^2$, and $G_n=Sp_n$ for
$F_G=\Lambda^2$.

To do the computation, it suffices to compute the bigraded Hopf algebra
$\Ext^*_\P(\Gamma^\star(\otimes^2), E^*)$, together with
the involution $\theta$ 
of bigraded Hopf
algebras, induced by the
permutation $V\otimes V\simeq V\otimes V$ which exchanges the factors of
$\otimes^2$. Indeed, since $\k$ has characteristic $p\ne 2$, $F_G$ is a direct
summand in $\otimes^2$. As a result, $\H^*(G_\infty,
E^*_\infty)$ equals the image of $(1+\theta)/2$ 
in the orthogonal case and of $(1-\theta)/2$ in the symplectic case. So we now
concentrate on $\Ext^*_\P(\Gamma^\star(\otimes^2), E^*)$. 

Let $I\in\P$ denote the identity functor of $\V_\k$. 
Using the sum diagonal adjunction of remark \ref{rk-2adj}, 
and the exponential isomorphism for $E^*$,
we obtain an isomorphism of multigraded vector spaces (on the right, we 
take the total degree of $E^*\boxtimes E^*$):
\begin{align*}\Ext^*_P(\Gamma^\star(\otimes^2),E^*)\simeq 
\Ext^*_{P(2)}(\Gamma^\star(I\boxtimes I),E^*\boxtimes E^*)\;.
\end{align*}
Now if $B\in\P(2)$ is a strict polynomial functor with 2 covariant variables, one may precompose
the first variable of $B$ by the duality functor $-^\vee:\V_\k\to \V_\k^\op,
V\mapsto V^{\vee}$. One obtains a strict polynomial functor
$B(-^\vee,-)\in\P(1,1)$. This yields an equivalence of categories between
$\P(2)$ and $\P(1,1)$. Let $(E^i)^\sharp$ denote the strict polynomial 
functor $V\mapsto E^i(V^\vee)^\vee$. Using \cite[Thm 1.5 (1.11)]{FF} we obtain
isomorphisms of bigraded vector spaces (recall that we don't take
the gradation of $\Gamma^\star(\otimes^2)$ into account):
\begin{align*} 
\nonumber\Ext^*_{P(2)}(\Gamma^\star(I\boxtimes I),E^*\boxtimes E^*)&\simeq 
\Ext^*_{P(1,1)}(\Gamma^\star(gl),E^*(-^\vee)\boxtimes E^*)\\&\simeq
\Ext^*_\P(E^{*\;\sharp}, E^*)\;.
\end{align*}
To sum up, we have an isomorphism of bigraded vector spaces (on the left we
don't take the gradation of $\Gamma^*(\otimes^2)$ into account, on the right
we take the total gradation associated to the gradations of $E^{*\;\sharp}$
and $E^*$):
\begin{align*}\Ext^*_P(\Gamma^\star(\otimes^2),E^*)\simeq 
\Ext^*_\P(E^{*\;\sharp}, E^*)\quad (*)
\end{align*}
Both objects have a bigraded Hopf algebra structure by corollary
\ref{cor-Hopfalg}. We need the Hopf algebra structure of
$\Gamma^\star(\otimes^2)$ to define the bigraded Hopf algebra structure on the
left but not to define the one on the right. Nonetheless:
\begin{lemma}[{\cite[Prop 4.10]{DjamentVespa}}]\label{lm-dv1} 
For all `skew commutative
exponential functor' $E^*\in\P$, 
the isomorphism $(*)$ is compatible with the bigraded Hopf algebra structures.
\end{lemma}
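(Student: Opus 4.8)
The plan is to factor $(*)$ as a composite $\Phi_3\circ\Phi_2\circ\Phi_1$ of the three isomorphisms used to construct it, to equip each intermediate $\Ext$-group with its canonical bigraded Hopf algebra structure coming from corollary \ref{cor-Hopfalg}, and then to check that each $\Phi_i$ is a morphism of bigraded Hopf algebras. Here $\Phi_1$ is the sum-diagonal adjunction of remark \ref{rk-2adj} followed by the exponential isomorphism $E^*(\Oplus)\simeq E^*\boxtimes E^*$, $\Phi_2$ is the equivalence $\P(2)\simeq\P(1,1)$ given by precomposing the first variable with $-^\vee$, and $\Phi_3$ is the isomorphism of \cite[Thm 1.5]{FF}. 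Along the chain the source functor varies from $\Gamma^\star(\otimes^2)$ to $E^{*\,\sharp}$, whereas the coefficient data is in every case manufactured from the single Hopf algebra $E^*$. Consequently the coefficient part of each Hopf structure (the maps induced by the multiplication and comultiplication of $E^*$, together with the units) is transported by naturality and by the exponential isomorphism, and the substance of the proof is to match the source part, that is, the cup products and their dual coproducts. For the products this is controlled by theorem \ref{thm-Hopfmon} and the key formula of lemma \ref{lm-2-description}, which express the cup product entirely in terms of the sum-diagonal adjunction and the comultiplication of the source.

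First I would dispatch $\Phi_1$ and $\Phi_2$, which are the soft links. For $\Phi_1$ the content is supplied by section \ref{sec-prodetcoprod}: the sum-diagonal adjunction is exactly the device used to build the Hopf monoidal structure on $\Ext^*(\Gamma^\star(\otimes^2),-)$, so by lemmas \ref{lm-A}, \ref{lm-B} and \ref{lm-2-description} the adjunction already intertwines products and coproducts, while the exponential isomorphism identifies the comultiplication of the Hopf algebra $E^*$ with the comultiplication used on the $\P(2)$-side. This last identification is the point at which the skew-commutativity hypothesis on $E^*$ enters, through lemma \ref{lm-ExpHopf}(b) and remark \ref{rk-degrees}, to ensure that the Koszul signs on the two sides agree. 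For $\Phi_2$ the check is formal: precomposition with $-^\vee$ is a symmetric monoidal equivalence carrying $\Gamma^\star(I\boxtimes I)$ to $\Gamma^\star(gl)$ and carrying the external tensor products and Hopf objects to one another, so it induces a Hopf algebra isomorphism on the associated $\Ext$-groups.

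The main obstacle is $\Phi_3$. The isomorphism of \cite[Thm 1.5]{FF} is stated there only as an isomorphism of graded vector spaces, and it must be promoted to a morphism of bigraded Hopf algebras. Multiplicativity, that is compatibility with the cup products, I expect to extract from the multiplicative refinement of the general linear comparison, namely \cite[Thm 1.3]{Touze} together with theorem \ref{thm-GLn}. Comultiplicativity is the genuinely delicate point: by the remark following lemma \ref{lm-derive} the coproduct is not of derived type, so it cannot be pinned down by a $\delta$-functor uniqueness argument. To handle it I would use that $\Phi_3$ is an isomorphism of $\delta$-functors and that on both sides the coproduct is obtained by deriving its degree-zero counterpart, which reduces the problem to matching the two comonoidal structures at the level of $\hom$. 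There, by left-exactness, it suffices to check the identity on standard injective coefficients, where both comultiplications are computed explicitly through Yoneda isomorphisms and the formulas of lemmas \ref{lm-kunneth} and \ref{lm-adj-inj}; the required compatibility then becomes the assertion that the isomorphism of \cite{FF} commutes with the sum-diagonal adjunctions on the two sides, which is a direct diagram chase. Once each $\Phi_i$ is seen to respect product, coproduct, unit and counit, their composite is the sought Hopf algebra isomorphism $(*)$.
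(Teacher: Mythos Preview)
The paper does not prove this lemma at all: it is stated as a direct citation of \cite[Prop 4.10]{DjamentVespa}, with no argument given. So there is no ``paper's own proof'' to compare against; you are supplying one where the author chose to defer to the literature.

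Your outline is the natural one and is essentially the strategy of Djament--Vespa adapted to the strict polynomial setting. Two points deserve tightening. First, in your treatment of $\Phi_1$ you say that the sum-diagonal adjunction is ``exactly the device used to build the Hopf monoidal structure on $\Ext^*(\Gamma^\star(\otimes^2),-)$''. This is not quite right: the Hopf monoidal structure of section~\ref{sec-prodetcoprod} is built from the adjunction $\alpha$ of \S\ref{subsec-sum-diag-adj}, whereas the isomorphism $\Phi_1$ uses the \emph{other} adjunction $\beta$ of remark~\ref{rk-2adj} (the one the paper explicitly says it does not use in that section). The argument still goes through, since $\beta$ admits a parallel analysis via Yoneda on standard projectives dual to lemmas~\ref{lm-kunneth} and~\ref{lm-adj-inj}, but you should not invoke lemmas~\ref{lm-A}--\ref{lm-2-description} as if they applied verbatim.

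Second, your handling of the coproduct under $\Phi_3$ is a little tangled. You write that ``the coproduct is not of derived type'' and then, in the next sentence, that ``on both sides the coproduct is obtained by deriving its degree-zero counterpart''. These cannot both hold. In fact the remark after lemma~\ref{lm-derive} only says that a general $\delta$-Hopf monoidal structure need not be derived; the specific coproducts in play here \emph{are} of derived type by construction (this is exactly what lemma~\ref{lm-derive} produces). So your reduction to $\hom$ and then to standard injectives is legitimate, and the diagram chase you describe is what remains; you should simply drop the misleading sentence about the coproduct not being derived.
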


For $E^*=S^*(I^{(r)})$ or $E^*=\Lambda^*(I^{(r)})$, the Hopf algebra 
$\Ext^*_\P(E^{*\;\sharp}, E^*)$ is
computed in \cite[Thm 5.8]{FFSS}. So it remains to describe how 
the involution $\theta$ acts on these extension groups.
For all $F,G$, we have an isomorphism (see for example \cite[lemma
1.12]{FFSS}) $\Ext^*_\P(F^\sharp,G)\simeq
\Ext^*_\P(G^\sharp,F)$. With $F=G=E^*$, we obtain an involution:
$$\widetilde{\theta}:\Ext^*_\P(E^{*\;\sharp}, E^*)\xrightarrow[]{\simeq}
\Ext^*_\P(E^{*\;\sharp}, E^*)\;.$$
\begin{lemma}[{\cite[Lemme 4.12]{DjamentVespa}}]\label{lm-dv2} 
For all `skew commutative Hopf
exponential functor' $E^*\in\P$, we denote by $\widetilde{\theta}^*$ the
involution of $\Ext^*_\P(E^{*\;\sharp}, E^*)$ whose restriction to
$\Ext_\P^*(E^{i\,\sharp},E^j)$ equals $(-1)^{ij}\widetilde{\theta}$.
We have a commutative diagram:
$$\xymatrix{
\Ext^*_P(\Gamma^\star(\otimes^2),E^*)\ar[r]^-{\simeq}_{(*)}\ar[d]^-{\theta}
&\Ext^*_\P(E^{*\;\sharp}, E^*)\ar[d]^-{\widetilde{\theta}^*}\\
\Ext^*_P(\Gamma^\star(\otimes^2),E^*)\ar[r]^-{\simeq}_{(*)}
&\Ext^*_\P(E^{*\;\sharp}, E^*)\;.
}$$
\end{lemma}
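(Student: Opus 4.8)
The plan is to trace the involution $\theta$ through each of the three isomorphisms whose composite is $(*)$: the sum-diagonal adjunction $\beta$ of remark \ref{rk-2adj} together with the exponential isomorphism for $E^*$ (giving $\Ext^*_\P(\Gamma^\star(\otimes^2),E^*)\simeq\Ext^*_{\P(2)}(\Gamma^\star(I\boxtimes I),E^*\boxtimes E^*)$), the duality equivalence $\P(2)\simeq\P(1,1)$ obtained by precomposing the first variable with $-^\vee$, and the isomorphism of \cite[Thm 1.5]{FF}. Since $(*)$ is natural, it suffices to identify, at each stage, the involution that $\theta$ induces on the intermediate $\Ext$-group, and to match the final one with $\widetilde\theta^*$.

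First I would compute the image of $\theta$ under step 1. Writing $\otimes^2=(I\boxtimes I)(D)$ and $\Gamma^\star(\otimes^2)=(\Gamma^\star(I\boxtimes I))(D)$, the generator of $\theta$ is $\Gamma^\star(\tau)=(c_*)(D)$, where $c_*\colon\Gamma^\star(I\boxtimes I)\to s^*\Gamma^\star(I\boxtimes I)$ is induced by the tensor symmetry $c\colon V\otimes W\to W\otimes V$ and $s(V,W)=(W,V)$ is the functor swap (here one uses $s\circ D=D$). Feeding $c_*$ into the naturality square for $\beta$ and composing with the auto-equivalence $s^*$ of $\P(2)$ and the sum-swap $E^*(\sigma_\oplus)\colon E^*(\Oplus)\to s^*E^*(\Oplus)$, I expect $\theta$ to become the involution $\Theta_1$ on $\Ext^*_{\P(2)}(\Gamma^\star(I\boxtimes I),E^*\boxtimes E^*)$ given by the functor swap $s$, the source symmetry $c$, and the target symmetry coming from $E^*(\sigma_\oplus)$. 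The decisive point is that, because $E^*$ is a \emph{skew-commutative} Hopf exponential functor, lemma \ref{lm-ExpHopf}(b) identifies $E^*(\sigma_\oplus)$, read through the exponential isomorphism $E^*(\Oplus)\simeq E^*\boxtimes E^*$, with the graded symmetry $\tau^*$; this is what introduces the Koszul sign $(-1)^{ij}$ on the component $E^i\boxtimes E^j$ of the target.

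Next I would carry $\Theta_1$ through steps 2 and 3. The duality equivalence and the isomorphism of \cite[Thm 1.5]{FF} are both natural, and the combination of the functor swap $s$ with the source symmetry $c$ is exactly the transpose that underlies the duality isomorphism $\Ext^*_\P(F^\sharp,G)\simeq\Ext^*_\P(G^\sharp,F)$ of \cite[lemma 1.12]{FFSS}; with $F=G=E^*$ this is $\widetilde\theta$. The target symmetry $\tau^*$ survives this transport as a multiplicative factor, producing the sign $(-1)^{ij}$ on $\Ext^*_\P(E^{i\,\sharp},E^j)$. Assembling the two contributions gives $\Theta_1\mapsto(-1)^{ij}\widetilde\theta=\widetilde\theta^*$ on each bidegree component, which is the asserted commutativity.

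The main obstacle will be the sign bookkeeping. One must check that the only Koszul signs are those produced by $\tau^*$ on the target --- in particular that the source symmetry $c$ on $\Gamma^\star(I\boxtimes I)$ and the two dualities contribute none --- and reconcile the grading conventions (the placed degrees $2d$ resp.\ $d$ versus the power index $d$) so that the sign from $\tau^*$ is precisely the $(-1)^{ij}$ of the definition of $\widetilde\theta^*$. It is here, and only here, that the skew-commutativity hypothesis on $E^*$ is genuinely used, and isolating its effect cleanly is the crux; the rest is a formal naturality chase.
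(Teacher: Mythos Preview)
The paper does not prove this lemma: it is stated with a citation to \cite[Lemme 4.12]{DjamentVespa} and no proof is given. So there is no ``paper's own proof'' to compare against; your proposal is an attempt to reconstruct the argument that the cited reference presumably contains.

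Your outline is sound and isolates the essential point correctly. The isomorphism $(*)$ is a three-step composite, and you rightly propose to chase $\theta$ through each step. The crucial observation --- that skew-commutativity of $E^*$ (via lemma \ref{lm-ExpHopf}(b)) makes $E^*(\sigma_\oplus)$ correspond to the graded symmetry $\tau^*$ under the exponential isomorphism, and that this is the sole source of the Koszul sign $(-1)^{ij}$ --- is exactly right, and is indeed the only place the hypothesis on $E^*$ enters. The claim that the source symmetry $c$ on $\Gamma^\star(I\boxtimes I)$ contributes no sign is also correct (the functor $\Gamma^\star$ is applied to an ungraded swap, and its own grading is concentrated in even degrees).

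The one place where your sketch is thin is the final identification: you assert that ``the combination of the functor swap $s$ with the source symmetry $c$ is exactly the transpose that underlies the duality isomorphism $\Ext^*_\P(F^\sharp,G)\simeq\Ext^*_\P(G^\sharp,F)$.'' This is the right statement, but it is not automatic: one must unwind how the equivalence $\P(2)\simeq\P(1,1)$ and the isomorphism of \cite[Thm 1.5]{FF} interact with the swap $s$, and check that what emerges matches the definition of $\widetilde\theta$ given just before the lemma. This is where the genuine bookkeeping lies; the rest, as you say, is a formal naturality chase. If you were to write this out in full, that step would need the most attention.
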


We are now ready to use the computations of \cite{FFSS}. We first recall
the results we need. If $V^*$ is a graded vector space concentrated in even
degrees, we consider the
vector spaces $S^*(V^*)$ bigraded in the following way: the bidegree of an
element $S^i(V^j)$ is $(ij, 4i)$. With this grading, 
the usual Hopf algebra structure on the
symmetric powers makes $S^*(V^*)$ into a bigraded Hopf algebra. We have (recall
that an element in $\Ext^k_\P(\Gamma^\ell(I^{(r)}), S^m(I^{(r)}))$ has
bidegree $(k, 2\ell+2m)$):
\begin{lemma}[{\cite[Thm 4.5 and Thm 5.8]{FFSS}}]\label{lm-calcul-FFSS1}
For all $n\ge 0$, the bigraded Hopf algebra multiplication 
$$\Ext^*_\P(\Gamma^1(I^{(r)}),S^1(I^{(r)})))^{\otimes n}\to
\Ext^*_\P(\Gamma^n(I^{(r)}),S^n(I^{(r)})))$$
is surjective. It induces an isomorphism of Hopf algebras: 
$$S^*(\Ext^*_\P(I^{(r)},I^{(r)}))\simeq
\Ext^*_\P(\Gamma^*(I^{(r)}),S^*(I^{(r)})))\;.$$ 
\end{lemma}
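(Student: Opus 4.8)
The plan is to treat this as a special case of the ``exponential functor'' calculus, bootstrapping the whole bigraded Hopf algebra from its linear slot, for which I would invoke the Friedlander--Suslin computation as an external input. First I would record the structural facts. Since the Frobenius twist $I^{(r)}$ is additive and $\Gamma^\star,S^\star$ satisfy the exponential isomorphism, both $\Gamma^\star(I^{(r)})$ and $S^\star(I^{(r)})$ verify the hypotheses of lemma \ref{lm-ExpHopf}; hence they are Hopf exponential functors, and corollary \ref{cor-Hopfalg} endows $H:=\Ext^*_\P(\Gamma^\star(I^{(r)}),S^\star(I^{(r)}))$ with a bigraded Hopf algebra structure whose product is exactly the multiplication map in the statement. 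As $\Gamma^1=S^1=I$, the linear slot is $\Ext^*_\P(I^{(r)},I^{(r)})$, and I would quote from \cite{FS} that this slot is one-dimensional in each even cohomological degree $2m$ for $0\le m\le p^r-1$ and vanishes otherwise; these classes are the claimed generators $e_m$, sitting in bidegree $(2m,4)$ under the convention recalled above. Note also that since $\Gamma^a(I^{(r)})$ and $S^b(I^{(r)})$ are homogeneous of degrees $ap^r$ and $bp^r$, the groups $\Ext^*_\P(\Gamma^a(I^{(r)}),S^b(I^{(r)}))$ vanish unless $a=b$, so $H$ is genuinely graded by this common weight $n$ and is connected in weight $0$.

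Next I would prove surjectivity of the multiplication and produce the comparison map. The canonical map $S^*(\Ext^*_\P(I^{(r)},I^{(r)}))\to H$, sending a product of linear classes to their $H$-product, is a morphism of bigraded Hopf algebras: it is well defined because $H$ is graded-commutative, and since the generators lie in even degree it carries no Koszul signs. To see that it is onto, I would run the sum-diagonal reduction of \S\ref{subsec-sum-diag-adj}: precomposing with the sum functor and using the exponential isomorphisms gives $\Gamma^\star(I^{(r)})(\Oplus)\simeq \Gamma^\star(I^{(r)})\boxtimes\Gamma^\star(I^{(r)})$ and likewise for $S^\star$, and the adjunction isomorphism $\alpha$ (resp. $\beta$ of remark \ref{rk-2adj}) together with the K\"unneth isomorphism of lemma \ref{lm-kunneth} (available since $\k$ is a field) expresses the weight-$n$ piece of $H$ in terms of $\Ext^*_\P(I^{(r)},I^{(r)})^{\otimes n}$. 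Tracing the Hopf algebra product through these identifications, as in the key formula (lemma \ref{lm-2-description}), shows that the image of the $n$-fold product of linear classes exhausts the weight-$n$ piece; this is the asserted surjectivity, and it shows the comparison map is surjective.

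Finally I would upgrade surjectivity to an isomorphism by a Poincar\'e-series comparison. The sum-diagonal computation of the previous step, combined with the fact that the linear slot is concentrated in even degrees, would be used to compute $\dim_\k \Ext^*_\P(\Gamma^n(I^{(r)}),S^n(I^{(r)}))$ in each bidegree and match it with $\dim_\k S^n(\Ext^*_\P(I^{(r)},I^{(r)}))$; a surjection between graded pieces of equal finite dimension is then an isomorphism, and compatibility of coproducts is inherited from corollary \ref{cor-Hopfalg}.

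The hard part will be precisely this last identification of the answer as the \emph{symmetric} (polynomial) algebra rather than a truncated or divided-power variant. In characteristic $p$ the $\Sigma_n$-invariants and coinvariants of a tensor power differ, so the clean polynomial answer reflects a genuine asymmetry between the divided-power source $\Gamma^\star$ and the symmetric-power target $S^\star$; controlling this, equivalently ruling out $p$-th power relations among the $e_m$ and pinning down the exact Poincar\'e series, is the actual content of the Friedlander--Suslin and \cite{FFSS} calculations. The formalism of \S\ref{sec-prodetcoprod} organizes the Hopf structure and the reduction to the linear slot, but the size of that slot and the freeness of $H$ over it are external inputs, so this is where I would lean on \cite{FS} and \cite{FFSS} rather than on the machinery of the present paper.
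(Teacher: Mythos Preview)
The paper gives no proof of this lemma: it is stated purely as a citation of \cite[Thm 4.5 and Thm 5.8]{FFSS}, and is used as a black box in the computation of theorem \ref{thm-calcul}. So there is no ``paper's own proof'' to compare against.

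Your sketch correctly sets up the bigraded Hopf algebra structure via corollary \ref{cor-Hopfalg} and correctly identifies the linear slot $\Ext^*_\P(I^{(r)},I^{(r)})$ as the Friedlander--Suslin computation. However, your surjectivity argument has a gap. You write that the sum-diagonal adjunction together with K\"unneth ``expresses the weight-$n$ piece of $H$ in terms of $\Ext^*_\P(I^{(r)},I^{(r)})^{\otimes n}$'', but this is not what the adjunction gives you. The exponential isomorphism for $\Gamma^n(I^{(r)})(\Oplus)$ decomposes it as $\bigoplus_{\sum d_i=n}\boxtimes_i\Gamma^{d_i}(I^{(r)})$, and only one summand (the one with all $d_i=1$) contributes the tensor power of the linear slot; the other summands contribute $\Ext$-groups between lower divided and symmetric powers, which you have not yet controlled. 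So the adjunction by itself yields an inductive decomposition, not surjectivity from the linear part. The actual argument in \cite{FFSS} for Theorem 4.5 is computational (via explicit complexes and the hypercohomology spectral sequence), not a formal consequence of the Hopf monoidal machinery.

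You acknowledge this honestly in your final paragraph: the freeness and the Poincar\'e series are ``external inputs'' from \cite{FS} and \cite{FFSS}. That is correct, and it means your proposal, once the hand-waving in the surjectivity step is removed, amounts to the same thing the paper does---cite \cite{FFSS}. The machinery of \S\ref{sec-prodetcoprod} packages the Hopf structure but does not replace the computation.
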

Since the involution $\theta$ is compatible with the Hopf algebra structure,
the first part of lemma \ref{lm-calcul-FFSS1} shows that knowing
the involution $\widetilde{\theta}$ on $\Ext^*_\P(I^{(r)},I^{(r)})$is
sufficient to determine $\theta$.  The involution
$\widetilde{\theta}$ is already computed by Djament and Vespa:
\begin{lemma}[{\cite[Lemme 4.13]{DjamentVespa}}]\label{lm-dv3} 
The involution $\widetilde{\theta}$ equals the identity map.
\end{lemma}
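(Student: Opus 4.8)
The plan is to identify the involution concretely and then reduce everything to a single sign computation on the bottom generator. First I would recall the Friedlander--Suslin computation \cite{FS}: the graded vector space $\Ext^*_\P(I^{(r)},I^{(r)})$ is one-dimensional in each even degree $2i$ with $0\le i<p^r$ and vanishes in all other degrees. Next I would observe that $\widetilde\theta$ is exactly the involution induced by the duality $\sharp\colon F\mapsto F^\sharp$, $F^\sharp(V)=F(V^\vee)^\vee$: the isomorphism $\Ext^*_\P(F^\sharp,G)\simeq\Ext^*_\P(G^\sharp,F)$ of \cite[lemma 1.12]{FFSS} is the application of the exact contravariant self-equivalence $\sharp$ followed by the identification $F^{\sharp\sharp}=F$, and here $(I^{(r)})^\sharp\simeq I^{(r)}$. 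Since $\sharp$ preserves the cohomological degree, $\widetilde\theta$ acts on each one-dimensional line $\Ext^{2i}_\P(I^{(r)},I^{(r)})$ by a sign $\epsilon_i\in\{\pm1\}$, and on $\Ext^0_\P=\hom_\P(I^{(r)},I^{(r)})=\k\cdot\Id$ it is visibly $+1$. The whole point is thus to prove that every $\epsilon_i$ equals $+1$.

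Then I would exploit the algebra structure. Kuhn duality turns the Yoneda composition product into its opposite, but $\Ext^*_\P(I^{(r)},I^{(r)})$ sits in even degrees, hence is graded-commutative, so $\widetilde\theta$ is a graded $\k$-algebra automorphism. By \cite{FS} this algebra is generated, for the Yoneda product, by classes $e_{p^j}\in\Ext^{2p^j}_\P(I^{(r)},I^{(r)})$ with $0\le j<r$, so it suffices to check that $\widetilde\theta$ fixes each $e_{p^j}$. These generators are produced from the bottom class $e_1\in\Ext^2_\P(I^{(1)},I^{(1)})$ by the iterated Frobenius-twist construction of \cite{FS} (the operation multiplying the cohomological degree by $p$), together with the structural short exact sequences relating consecutive twists. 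The duality $\sharp$ commutes with the Frobenius twist, since $(F^{(1)})^\sharp\simeq(F^\sharp)^{(1)}$, and those structural sequences are $\sharp$-self-dual, as $\sharp$ fixes $I^{(r)}$ and $\Lambda^d$ while exchanging $S^d\leftrightarrow\Gamma^d$. Hence the sign of $\widetilde\theta$ on each $e_{p^j}$ coincides with its sign on $e_1$, and the claim reduces to showing $\widetilde\theta(e_1)=e_1$ in the line $\Ext^2_\P(I^{(1)},I^{(1)})$.

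Finally I would compute on $e_1$ directly. A model for $e_1$ is a $2$-fold Yoneda extension of the form
\[ 0\to I^{(1)}\xrightarrow{a} S^p\xrightarrow{b}\Gamma^p\xrightarrow{c} I^{(1)}\to 0, \]
built from the Frobenius inclusion $a\colon I^{(1)}\hookrightarrow S^p$, its Kuhn-dual divided Frobenius surjection $c\colon\Gamma^p\twoheadrightarrow I^{(1)}$, and a middle map $b$ coming from the degree-$p$ Koszul/de Rham data of \cite{FS}. Applying $\sharp$ reverses the arrows and, via $(S^p)^\sharp\simeq\Gamma^p$, $(\Gamma^p)^\sharp\simeq S^p$ and $(I^{(1)})^\sharp\simeq I^{(1)}$, returns a $2$-extension of exactly the same shape, in which $a^\sharp$ plays the role of $c$ and $c^\sharp$ that of $a$. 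Thus $\sharp$ carries the extension to an equivalent one, confirming $\widetilde\theta(e_1)=\epsilon_1 e_1$ with $\epsilon_1=\pm1$ (as it must be, $\Ext^2_\P$ being a line). The hard part, and the only place where a genuine verification is needed, is to pin down that the global sign is $+1$ and not $-1$: concretely one has to track the unit scalars relating $a^\sharp$ to $c$, $c^\sharp$ to $a$ and $b^\sharp$ to $b$ through the self-duality of the extension, equivalently to compare, under $\sharp$, a minimal injective resolution of $I^{(1)}$ with a minimal projective resolution of $I^{(1)}$. Granting $\epsilon_1=+1$, the reductions of the first two paragraphs yield $\widetilde\theta=\Id$, which is the assertion of the lemma.
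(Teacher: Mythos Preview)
The paper does not supply its own proof of this lemma: it simply cites \cite[Lemme 4.13]{DjamentVespa}, so there is no in-paper argument to compare your proposal against.

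Your outline is a reasonable strategy, and the reductions in the first two paragraphs are sound: $\widetilde\theta$ acts by a sign on each one-dimensional graded piece of $\Ext^*_\P(I^{(r)},I^{(r)})$; it is an anti-automorphism for Yoneda composition and hence an automorphism of this commutative even-degree algebra; and compatibility of $\sharp$ with Frobenius twist lets you push the question down to generators and ultimately to the bottom class $e_1\in\Ext^2_\P(I^{(1)},I^{(1)})$. However, you explicitly concede the crux when you write ``Granting $\epsilon_1=+1$''. That sign \emph{is} the content of the lemma. Observing that your $2$-extension is formally self-dual under $\sharp$ (same objects appearing in the same slots) only confirms $\widetilde\theta(e_1)=\pm e_1$, which you already knew from one-dimensionality; distinguishing $+1$ from $-1$ requires exhibiting an explicit chain of equivalences of Yoneda extensions with all scalars tracked, or the equivalent computation on a pair of resolutions. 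You describe what such a verification would entail but do not carry it out, so as written the proposal is an outline with the decisive step missing, not a proof.
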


Thus, by lemmas \ref{lm-dv2}, \ref{lm-calcul-FFSS1} and \ref{lm-dv3},
the map $(1+\theta)/2: \Ext^*_\P(\Gamma^\star(\otimes^2),S^*(I^{(r)}))\to
\Ext^*_\P(\Gamma^\star(\otimes^2),S^*(I^{(r)}))$ equals the identity map, so that we
have:
\begin{align*}&\Ext^*_\P(\Gamma^\star(S^2),S^*(I^{(r)}))\simeq 
\Ext^*_\P(\Gamma^\star(\otimes^2),S^*(I^{(r)}))\;,\\
&\Ext^*_\P(\Gamma^\star(\Lambda^2),S^*(I^{(r)}))\simeq 
0\;.
\end{align*}
Now we may apply lemmas \ref{lm-dv1} and \ref{lm-calcul-FFSS1} to conclude the
proof of \ref{thm-calcul}(i) and (ii). 
The computation for the
exterior powers $\Lambda^*(I^{(r)})$ is similar.


\begin{thebibliography}{99}
\bibitem{AJ} H.H.~Andersen, J.C.~Jantzen, Cohomology of induced 
representations for algebraic groups.  Math. Ann.  269  (1984),  
no. 4, 487--525.
\bibitem{Buehler} T.~B\"uhler, Exact Categories, Expositiones Mathematicae 28 (2010) 1--69,
doi: 10.1016/j.exmath.2009.04.004
\bibitem{Chal1} M.~Cha{\l}upnik, Extensions of strict polynomial functors. 
Ann. Sci. \'Ecole Norm. Sup. (4)  38  (2005),  no. 5, 773--792.
\bibitem{Chaput} E.~Chaput, M.~Romagny, On the adjoint quotient of Chevalley
groups over arbitrary base schemes, {\tt ArXiv:0805.2140}
\bibitem{CPSVdK} E. Cline, B. Parshall, L. Scott, W. van der Kallen,
Rational and generic cohomology, 
Invent. Math. 39 (1977), 143-163.
\bibitem{DCP} C.~de Concini, C.~Procesi,  A characteristic free approach to 
invariant theory.  Advances in Math.  21  (1976), no. 3, 330--354.
\bibitem{DjamentVespa} A.~Djament, C.~Vespa, Sur l'homologie des groupes
orthogonaux et symplectiques \`{a} coefficients tordus,  Ann. Sci. \'Ecole Norm. Sup. (4)  43  (2010),  no. 3.
\bibitem{FF}V. Franjou, E. M. Friedlander, Cohomology of bifunctors,
Proceedings of the London Mathematical Society (2008), doi:10.1112/plms/pdn005
\bibitem{FFSS} V.~Franjou, E.~Friedlander, A.~Scorichenko, A.~Suslin, 
General linear and functor cohomology over finite fields,   Ann. of Math. (2)
  150  (1999),  no. 2, 663--728.
\bibitem{FS} E.~Friedlander, A.~Suslin, Cohomology of finite group schemes over a field,
Invent. Math. 127 (1997), 209--270.
\bibitem{Grot} A.~Grothendieck, Sur quelques points d'alg\`ebre homologique. 
(French)  T\^ohoku Math. J. (2)  9  1957 119--221.
\bibitem{Jantzen}J.-C.\ Jantzen, Representations of Algebraic Groups,
Mathematical Surveys and Monographs vol. 107, Amer.\ Math.\ Soc.,
Providence, 2003.
\bibitem{BKINV} M.-A.~Knus, A.~Merkurjev, M.~Rost, J.-P.~Tignol, The book of involutions. American Mathematical Society Colloquium Publications, 44. American Mathematical Society, Providence, RI, 1998. xxii+593 pp. ISBN: 0-8218-0904-0.
\bibitem{ML} S.~Mac~Lane, Homology. Reprint of the 1975 edition. 
Classics in Mathematics. Springer-Verlag, Berlin, 1995. x+422 pp. 
ISBN: 3-540-58662-8.
\bibitem{MLCat} S.~Mac~Lane, Categories for the working mathematician. Second edition. 
Graduate Texts in Mathematics, 5. Springer-Verlag, New York, 
1998. xii+314 pp. ISBN: 0-387-98403-8.
\bibitem{Pira} T.~Pirashvili, Introduction to functor homology.  
Rational representations, the Steenrod algebra and functor homology,  
1--26, Panor. Synth\`eses, 16, Soc. Math. France, Paris, 2003.
\bibitem{Quillen} D.~Quillen, Higher algebraic $K$-theory. I.  Algebraic
$K$-theory, I: Higher $K$-theories (Proc. Conf., Battelle Memorial Inst.,
Seattle, Wash., 1972),  pp. 85--147. Lecture Notes in Math., Vol. 341,
Springer, Berlin 1973. 
\bibitem{SFB} A.~Suslin, E.~Friedlander, C.~Bendel, 
 Infinitesimal $1$-parameter subgroups and cohomology.  J. Amer. Math. Soc.  
 10  (1997),  no. 3, 693--728.
\bibitem{Touze}A. Touz\'e, Universal classes for algebraic groups, 
Duke Math. J.  151  (2010),  no. 2, 219--249.
\bibitem{TVdK} A. Touz\'e, W.~van der Kallen, Bifunctor cohomology and
cohomological finite generation for reductive groups, 
Duke Math. J.  151  (2010),  no. 2, 251--278.
\end{thebibliography}
\end{document}